\newtheorem{theorem}{Theorem}
\newtheorem{corollary}[theorem]{Corollary}
\newtheorem{definition}{Definition}
\newtheorem{lemma}{Lemma}
\newenvironment{proof}[1][Proof]{\noindent\textbf{#1.} }{\ \rule{0.5em}{0.5em}}
\begin{document}
\title{\textbf{Formation and Propagation of Discontinuity for\\ Boltzmann Equation in Non-Convex Domains}}
\author{\large{\textbf{Chanwoo Kim}}\\ \small{Department of Mathematics, Brown University, Providence, RI 02917, USA.
E-mail : ckim@math.brown.edu}}
\maketitle

\begin{abstract}
The formation and propagation of singularities for Boltzmann equation in bounded domains has been an important question in numerical studies as well as in theoretical studies. Consider the nonlinear Boltzmann solution near Maxwellians under in-flow, diffuse, or bounce-back boundary conditions. We demonstrate that discontinuity is created at the non-convex part of the grazing boundary, then propagates only along the forward characteristics inside the domain before it hits on the boundary again.
\end{abstract}

\section{Introduction}
A density of a dilute gas is governed by the Boltzmann equation
\begin{equation}
\partial _{t}F + v\cdot \nabla _{x}F = Q(F,F),  \label{boltzmann}
\end{equation}%
where $F(t,x,v)$ is a distribution function for the gas particles at a time $t\geq 0,$ a position $x\in \Omega\subset \mathbb{R}^3 \ $and a velocity $v\in \mathbb{R}^{3}.$ Throughout
this paper, the collision operator takes the form%
\begin{eqnarray}
Q(F_{1},F_{2})
&=&\int_{\mathbb{R}^{3}}\int_{\mathbb{S}^{2}} B(v-u,\omega)
F_{1}(u^{\prime })F_{2}(v^{\prime })d\omega du
-\int_{\mathbb{R}^{3}}\int_{\mathbb{S}^{2}} B(v-u,\omega) F_{1}(u)F_{2}(v)d\omega du  \notag \\
&\equiv &Q_{+}(F_{1},F_{2})-Q_{-}(F_{1},F_{2}),
\label{qgl}
\end{eqnarray}%
where $u^{\prime }=u+[(v-u)\cdot \omega ]\omega ,$ $v^{\prime
}=v-[(v-u)\cdot \omega ]\omega $ and
\begin{equation*}
B(v-u,\omega) = |v-u|^{\gamma} q_0 (\frac{v-u}{|v-u|}\cdot \omega),
\end{equation*}
with $0< \gamma \leq 1$ (hard potential) and
\begin{equation}
\int_{\mathbb{S}^2} q_0 (\hat{u}\cdot \omega) d\omega < +\infty, \ \ \ \text{(angular cutoff)}\label{Gradcutoff}
\end{equation}
for all $\hat{u}\in\mathbb{S}^2$.

If the gas is contained in a bounded region or flows past a solid bodies, the Boltzmann equation must be accompanied by boundary conditions, which describe the interaction of the gas molecules with the solid walls. Let the domain $\Omega$ be a smooth bounded domain. We consider three basic types of boundary conditions(\cite{Guo08},\cite{V}) for $F(t,x,v)$ at $(x,v)\in\partial\Omega\times\mathbb{R}^3$ with $v\cdot n(x)<0$, where $n(x)$ is an outward unit normal vector at $x$ :
\\
\newline 1. \textbf{In-flow injection boundary condition} : incoming particles are prescribed ;
\begin{equation}
F(t,x,v) = G(t,x,v).\label{Finflow}
\end{equation}
\newline 2. \textbf{Diffuse reflection boundary condition} : incoming particles are a probability average of the outgoing particles ;
\begin{equation}
F(t,x,v) = c_{\mu}\mu(v) \int_{v^{\prime}\cdot n(x)>0} F(t,x,v^{\prime})\{n(x)\cdot v^{\prime}\}dv^{\prime},
\label{Fdiffuse}
\end{equation}
with a normalized Maxwellian $\mu = e^{-\frac{|v|^2}{2}}$, a normalized constant $c_{\mu}>0$ such that
\begin{equation}
c_{\mu}\int_{v^{\prime}\cdot n(x)>0} \mu(v^{\prime})|n(x)\cdot v^{\prime}|dv^{\prime} =1.\label{normalizedconstant}
\end{equation}
\newline 3. \textbf{Bounce-back reflection boundary condition} : incoming particles bounce back at the reverse the velocity ;
\begin{equation}
F(t,x,v) = F(t,x,-v).\label{Fbounceback}
\end{equation}

The purpose of this paper is to investigate possible formation and propagation of discontinuity for the nonlinear Boltzmann equation under those boundary conditions. In order to state our results, we need following definitions.

\subsection{Domain}
Throughout this paper, we assume the domain $\Omega\subset\mathbb{R}^3$ is open and bounded and connected. For simplicity, We assume that the boundary $\partial\Omega$ is smooth, i.e. for each point $x_0 \in \partial\Omega$, there exists $r>0$ and a smooth function $\Phi_{x_0} : \mathbb{R}^2 \rightarrow \mathbb{R}$ such that - upon relabeling and reorienting the coordinates axes if necessary - we have
\begin{equation}
\Omega \cap B(x_0,r) = \{ x\in B(x_0 ,r) : x_3 > \Phi_{x_0}(x_1 ,x_2)
\}.\label{boundaryfunction}
\end{equation}
The outward normal vector at $
\partial \Omega $ is given by
\begin{equation*}
n(x_1 ,x_2)=\frac{1}{\sqrt{1+|\nabla_x \Phi (x_1,x_2)|^2}} ( \ \partial_{x_1}\Phi_{x_0}(x_1,x_2) , \ \partial_{x_2}\Phi_{x_0}(x_1,x_2) , \ -1 \ ).  \label{outwardnormal}
\end{equation*}
Given $(t,x,v)$, let $[X(s),V(s)]=[X(s;t,x,v),V(s;t,x,v)]=[x-(t-s)v,v]$ be
a trajectory (or a characteristics) for the Boltzmann equation (\ref%
{boltzmann}) :
\begin{equation*}
\frac{dX(s)}{ds}=V(s),\text{ \ \ \ \ \ \ \ \ \ \ \ \ \ \ \ \ \ \ }\frac{dV(s)%
}{ds}=0,  \label{ode}
\end{equation*}%
with the initial condition : $[X(t;t,x,v),V(t;t,x,v)]=[x,v].$
\begin{definition}
For $(x,v)\in\bar{\Omega}\times\mathbb{R}^3$, we
define the \textbf{backward exit time},   $t_{\mathbf{b}}(x,v)\geq 0$ to be the last moment at which the
back-time straight line $[X(s;0,x,v),V(s;0,x,v)]$ remains in the interior of
$ \ \Omega :$
\begin{equation*}
t_{\mathbf{b}}(x,v)=\sup(\{ 0 \}\cup \{\tau>0 : x-sv\in \Omega \ \text{for all } \ 0 < s < \tau \}).
\end{equation*}
We also define the \textbf{backward exit position} in $\partial\Omega$
\begin{equation*}
x_{\mathbf{b}}(x,v)=x-t_{\mathbf{b}}(x,v)v\in \partial \Omega,
\end{equation*}
and  we always have $v\cdot n(x_{\mathbf{b}}(x,v))\leq 0.$
\end{definition}
\subsection{Discontinuity Set and Discontinuity Jump}
We denote the phase boundary in the phase space $\Omega \times \mathbb{R}^{3}$ as $%
\gamma =\partial \Omega \times \mathbb{R}^{3},$ and split it into outgoing
boundary $\gamma _{+},$ the incoming boundary $\gamma _{-},$ and the
grazing boundary $\gamma _{0}$ $:$
\begin{eqnarray*}
\gamma _{+} &=&\{(x,v)\in \partial \Omega \times \mathbb{R}^{3}:\text{ \ }%
n(x)\cdot v>0\}, \\
\gamma _{-} &=&\{(x,v)\in \partial \Omega \times \mathbb{R}^{3}:\text{ \ }%
n(x)\cdot v<0\}, \\
\gamma _{0} &=&\{(x,v)\in \partial \Omega \times \mathbb{R}^{3}:\text{ \ }%
n(x)\cdot v=0\}.
\end{eqnarray*}
We need to study the grazing boundary $\gamma_0$ more carefully.
\begin{figure}[h]
\begin{center}
\epsfig{file=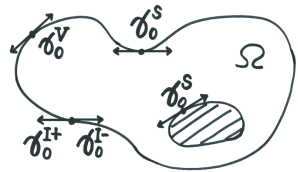,angle=90, height=3cm}
\end{center}
\caption{Grazing Boundary $\gamma_0$}
\label{fig1}
\end{figure}
\begin{definition}\label{grazingboundary}
We define the \textbf{concave(singular) grazing boundary} which is a subset of the grazing boundary $\gamma_0$ :
\begin{equation*}
\gamma_0^{\mathbf{S}} = \{(x,v)\in\gamma_0 : t_{\mathbf{b}}(x,v)\neq 0 \ \text{and} \ t_{\mathbf{b}}(x,-v)\neq 0\},
\end{equation*}
and the \textbf{outward inflection grazing boundary} in the grazing boundary $\gamma_0$ :
\begin{eqnarray*}
\gamma_0^{I+} =\{ (x,v) \in \gamma_0 : t_{\mathbf{b}}(x,v) \neq 0 \ \text{and} \ t_{\mathbf{b}}(x,-v)=0 \ \text{and there is \ } \delta>0 \ \text{such that } \ x+\tau v\in\bar{\Omega}^{c} \ \text{ for } \ \tau\in (0,\delta)
\},
\end{eqnarray*}
and the \textbf{inward inflection grazing boundary} in the grazing boundary $\gamma_0$ :
\begin{eqnarray*}
\gamma_0^{I-}=\{ (x,v) \in \gamma_0 : t_{\mathbf{b}}(x,v)=0 \ \text{and} \ t_{\mathbf{b}}(x,-v) \neq 0  \ \text{and there is \ } \delta>0 \ \text{such that } \ x-\tau v\in\bar{\Omega}^{c} \ \text{ for } \ \tau\in (0,\delta)
\},
\end{eqnarray*}
and the \textbf{convex grazing boundary} in the grazing boundary $\gamma_0$ :
\begin{eqnarray*}
\gamma_0^V = \{ (x,v)\in\gamma_0 : t_{\mathbf{b}}(x,v)=0 \ \text{and} \ \ t_{\mathbf{b}}(x,-v)=0
\}.
\end{eqnarray*}
\end{definition}
It turns out that the concave (singular) grazing boundary $\gamma_0^{\mathbf{S}} \ $ is the only part at which discontinuity can be created and propagates into the interior of the phase space $\Omega\times\mathbb{R}^3$.
\begin{definition}
Define the \textbf{discontinuity set} in $[0,\infty) \times\bar\Omega\times\mathbb{R}^3$ as
\begin{eqnarray}
\mathfrak{D} =
\Big\{(0,\infty)\times [ \ \gamma_0^{\mathbf{S}}\cup\gamma_0^V \cup\gamma_0^{I+} \ ]\Big\} \cup\Big\{ \ (t,x,v) \in (0,\infty)\times\{\Omega\times\mathbb{R}^3 \cup \gamma_+\} :  \ t\geq t_{\mathbf{b}}(x,v) \text{ and } \ (x_{\mathbf{b}}(x,v),v)\in\gamma_0^{\mathbf{S}} \ \Big\}
,\label{D}
\end{eqnarray}
and the \textbf{continuity set} in $[0,\infty)\times\bar\Omega\times\mathbb{R}^3$ as
\begin{eqnarray}
&&\mathfrak{C} \ = \ \Big\{ \{0\}\times \bar{\Omega}\times\mathbb{R}^3 \Big\} \cup
\Big\{ (0,\infty)\times [ \ \gamma_- \cup \gamma_0^{I-}\ ] \Big\}\nonumber\\
&& \ \ \ \ \ \ \cup \ \Big\{ \ (t,x,v) \in (0,\infty)\times\{\Omega\times\mathbb{R}^3 \cup \gamma_+\}: \ t<t_{\mathbf{b}}(x,v) \ \ \text{or} \ \ (x_{\mathbf{b}}(x,v),v) \in \gamma_- \cup \gamma_0^{I-} \
\Big\}.\label{C}
\end{eqnarray}
For bounce-back reflection boundary condition case (\ref{Fbounceback}) \  we need slightly different definitions : the \textbf{bounce-back discontinuity set} and the \textbf{bounce-back continuity set} are
\begin{eqnarray*}
&&\mathfrak{D}_{bb} \ = \
\mathfrak{D}  \ \cup \ \Big\{ (t,x,v) \in (0,\infty)\times\Omega\times\mathbb{R}^3 :  t\geq 2t_{\mathbf{b}}(x,v)+t_{\mathbf{b}}(x,-v) \text{ and } (x_{\mathbf{b}}(x,-v),-v)\in\gamma_0^{\mathbf{S}} \Big\},\\
&&\mathfrak{C}_{bb} \ = \ \Big\{ \{0\}\times \bar{\Omega}\times\mathbb{R}^3 \Big\} \cup
\Big\{ (0,\infty)\times [ \ \gamma_- \cup \gamma_0^{I-}\ ] \Big\}\\
&& \ \ \ \ \  \cup \ \Big\{ (t,x,v) \in [0,\infty)\times\{\Omega\times\mathbb{R}^3 \cup\gamma_+ \}: \ t<t_{\mathbf{b}}(x,v)   \ \text{or} \ \ \left[ \ (x_{\mathbf{b}}(x,v),v)\in\gamma_- \cup \gamma_0^{I-} \ \ \text{and} \ \ t<2t_{\mathbf{b}}(x,v)+t_{\mathbf{b}}(x,-v) \ \right] \ \ \ \ \ \ \ \ \\
&& \ \ \ \ \ \ \ \ \ \ \ \ \ \ \ \ \ \ \ \ \ \ \ \ \ \ \ \ \ \ \ \ \ \ \ \ \ \ \ \ \ \ \ \ \  \text{or} \ \ \left[ \ (x_{\mathbf{b}}(x,-v),-v)\in \gamma_- \cup \gamma_0^{I-} \ \ \text{and} \ \
(x_{\mathbf{b}}(x,v),v)\in\gamma_- \cup \gamma_0^{I-}
\ \right]
\Big\},
\label{Dbb}
\end{eqnarray*}
respectively.
\end{definition}

The discontinuity set $\mathfrak{D}$ consists of two sets : The first set of (\ref{D}) is the grazing boundary part $\gamma_0$ of $\mathfrak{D}$. This set mainly consists of the phase boundary where the backward exit time $t_{\mathbf{b}}(x,v)$ is not continuous (Lemma \ref{tbcon}). The second set of (\ref{D}) is mainly the interior phase space part of $\mathfrak{D}$, i.e. $\mathfrak{D}\cap \{[0,\infty)\times \Omega\times \mathbb{R}^3\}$, which is a subset of a union of all forward trajectories in the phase space emanating from $\gamma_0^{\mathbf{S}}$. Notice that $\mathfrak{D}$ does not include the forward trajectories emanating from $\gamma_0^{V} \cup \gamma_0^{I+}$ because those forward trajectories are not in the interior phase space $[0,\infty)\times\Omega\times\mathbb{R}^3$. We also exclude the case $t<t_{\mathbf{b}}(x,v)$ from $\mathfrak{D}$. In fact, considering the pure transport equation, $t<t_{\mathbf{b}}(x,v)$ implies the transport solution at $(t,x,v)$ equals the initial data at $(x-tv,v)$ and if the initial data is continuous, we expect the transport solution is continuous around $(t,x,v)$. Notice that we exclude the initial plane $\{0\}\times\bar{\Omega}\times\mathbb{R}^3$ from $\mathfrak{D}$ because we assume that the Boltzmann solution is continuous at $t=0$ .

The continuity set $\mathfrak{C}$ consists of points either emanating from the initial plane or from $\gamma_- \cup \gamma_0^{I-}$, but not $\gamma_0^{\mathbf{S}}$.\\
\newline Furthermore we define a set including the grazing boundary $\gamma_0$ and all forward trajectories emanating from the whole grazing boundary $\gamma_0$.
\begin{definition}\label{grazingset}
The \textbf{grazing set} is defined as
\begin{eqnarray}
\mathfrak{G} &=&\{(x,v)\in\bar{\Omega}\times\mathbb{R}^3 : n(x_{\mathbf{b}}(x,v))\cdot v=n(x-t_{\mathbf{b}}(x,v)v)\cdot v =0
\},\label{G2}
\end{eqnarray}
and the \textbf{grazing section} is
\begin{eqnarray*}
\mathfrak{G}_x = \{v\in\mathbb{R}^3 : (x,v)\in \mathfrak{G}\}
=\{v\in\mathbb{R}^3 : n(x_{\mathbf{b}}(x,v))\cdot v =0\}.
\end{eqnarray*}
\end{definition}
Obviously the grazing set $\mathfrak{G}$ includes the discontinuity set $\mathfrak{D}$. In order to study the continuity property of the Boltzmann solution we define :
\begin{definition}
For a function $\phi(t,x,v)$ defined on $[0,\infty)\times\{\bar{\Omega}\times\mathbb{R}^3\backslash \mathfrak{G}\}$ we define the \textbf{discontinuity jump in space and velocity}
\begin{equation*}
[\phi(t)]_{x,v} \ = \ \lim_{\delta\downarrow 0}\sup_{\begin{scriptsize}(x^{\prime},v^{\prime}), (x^{\prime\prime},v^{\prime\prime}) \in  \{\bar{\Omega}\times\mathbb{R}^3 \backslash \mathfrak{G}\} \cap \{B((x,v);\delta) \backslash (x,v)\}\end{scriptsize}} |\phi(t,x^{\prime},v^{\prime})-\phi(t,x^{\prime\prime},v^{\prime\prime})|,
\end{equation*}
and the \textbf{discontinuity jump in time and space and velocity}
\begin{equation*}
[\phi]_{t,x,v}  \ = \ \lim_{\delta\downarrow 0}\sup_{\begin{scriptsize}\begin{array}{ccc} t^{\prime},t^{\prime\prime}\in B(t;\delta)\\(x^{\prime},v^{\prime}), (x^{\prime\prime},v^{\prime\prime})\in \{\bar{\Omega}\times\mathbb{R}^3 \backslash \mathfrak{G}\} \cap \{B((x,v);\delta) \backslash (x,v)\}  \end{array}\end{scriptsize}} |\phi(t^{\prime},x^{\prime},v^{\prime})-\phi(t^{\prime\prime},x^{\prime\prime},v^{\prime\prime})|,
\end{equation*}
where $\mathfrak{G}$ is defined in Definition \ref{grazingset}.
We say a function $\phi$ is discontinuous in space and velocity (in time and space and velocity) at $(t,x,v)$ if and only if $[\phi(t)]_{x,v} \neq 0 \ ([\phi]_{t,x,v} \neq 0)$ and continuous in space and velocity (in time and space and velocity) at $(t,x,v)$ if and only if $[\phi(t)]_{x,v} = 0 \ ([\phi]_{t,x,v} = 0)$.
\end{definition}

Notice that the function $\phi$ is only defined away from the grazing set $\mathfrak{G}$. Once the discontinuity jump of given function $\phi$ is zero at $(t,x,v)$ then the function $\phi$ can be extended to $[0,\infty)\times\bar{\Omega}\times\mathbb{R}^3$ near $(t,x,v)$. Because of those definitions we can consider a function which has a removable discontinuity as a continuous function. And a non-zero discontinuity jump $[\phi]_{t,x,v}\neq 0$ means $\phi$ has a "real" discontinuity which is not removable.

\subsection{Main Result}

Now we are ready to state the main theorems of this paper. In order to state theorems in the unified way we use a weight function
\begin{equation}
w(v) = \{1+\rho^2 |v|^2 \}^{\beta},\label{weight}
\end{equation}
such that $w^{-2}(v)\{1+|v|\}^3 \in L^1$.
\begin{theorem}[Formation of Discontinuity]\label{formationofsingularity}
Let $\Omega$ be an open subset of $\mathbb{R}^3$ with a smooth boundary $\partial\Omega$. Assume $\Omega$ is non-convex, $\gamma_0^{\mathbf{S}} \neq \emptyset.$ Choose any $(x_0,v_0)\in\gamma_0^{\mathbf{S}}$ with $v_0 \neq 0$. For any small $\delta>0$,
\newline 1. There exist $t_0\in (0,\min\{\delta,t_{\mathbf{b}}(x_0,-v_0)\})$ and an initial datum $F_0(x,v)$ which is continuous on $\Omega\times\mathbb{R}^3 \cup \{\gamma_- \cup \gamma_0^{\mathbf{S}}\}$, and an in-flow boundary datum $G(t,x,v)$ which is continuous on $[0,\infty)\times\{\gamma_-\cup\gamma_0^{\mathbf{S}}\}$, satisfying
\begin{equation}
F_0 (x,v) = G(0,x,v) \ \ \text{for} \ \ (x,v)\in\gamma_-\cup \gamma_0^{\mathbf{S}},\label{Cinflow}
\end{equation}
and
\begin{equation}
\Big{|}\Big{|}
\ w \ \frac{F_0-\mu}{\sqrt{\mu}}
\Big{|}\Big{|}_{L^{\infty}(\bar{\Omega}\times\mathbb{R}^3)} + \sup_{t\in[0,\infty)} \Big|\Big| \ w \ \frac{G(t)-\mu}{\sqrt{\mu}} \Big|\Big|_{L^{\infty}(\gamma_- )} < \delta,\label{gsmallness}
\end{equation}
such that if $F$ on $[0,\infty)\times\bar{\Omega}\times\mathbb{R}^3$ is Boltzmann solution of (\ref{boltzmann}) with the in-flow boundary condition (\ref{Finflow}) then $F$ is discontinuous in space and velocity at $(t_0,x_0,v_0)$, i.e. $[F(t_0)]_{x_0,v_0}\neq 0.$
\newline 2. There exist $t_0\in (0,\min\{\delta,t_{\mathbf{b}}(x_0,-v_0)\})$ and an initial datum $F_0(x,v)$ which is continuous on $\Omega\times\mathbb{R}^3 \cup \{\gamma_- \cup \gamma_0^{\mathbf{S}}\}$, satisfying
\begin{equation}
F_0 (x,v) = c_{\mu}\mu(v) \int_{v^{\prime}\cdot n(x)>0} F_0(x,v^{\prime})\{n(x)\cdot v^{\prime}\}dv^{\prime} \ \ \text{for} \ \ (x,v)\in\gamma_-\cup \gamma_0^{\mathbf{S}},\label{Cdiffuse}
\end{equation}
and
\begin{equation}
\Big{|}\Big{|}
\ w \ \frac{F_0-\mu}{\sqrt{\mu}}
\Big{|}\Big{|}_{L^{\infty}(\bar{\Omega}\times\mathbb{R}^3)} < \delta,\label{smallness}
\end{equation}
such that if $F$ on $[0,\infty)\times\bar{\Omega}\times\mathbb{R}^3$ is Boltzmann solution of (\ref{boltzmann}) with the diffuse boundary condition (\ref{Fdiffuse}) with the compatibility condition (\ref{Cdiffuse}) then $F$ is discontinuous in space and velocity at $(t_0,x_0,v_0)$, i.e. $[F(t_0)]_{x_0,v_0}\neq 0.$
\newline 3. There exist $t_0\in (0,\min\{\delta, t_{\mathbf{b}}(x_0,-v_0), t_{\mathbf{b}}(x_0,v_0)\})$ and an initial datum $F_0(x,v)$ which is continuous on $\Omega\times\mathbb{R}^3 \cup \{\gamma_- \cup \gamma_0^{\mathbf{S}}\}$, satisfying (\ref{smallness}) and
\begin{equation}
F_0(x,v)=F_0(x,-v) \ \ \text{for} \ \ (x,v)\in\gamma_-\cup \gamma_0^{\mathbf{S}},\label{Cbounceback}
\end{equation}
such that if $F$ on $[0,\infty)\times\bar{\Omega}\times\mathbb{R}^3$ is Boltzmann solution of (\ref{boltzmann}) with the bounce-back boundary condition (\ref{Fbounceback}) then $F$ is discontinuous in space and velocity at $(t_0,x_0,v_0)$, i.e. $[F(t_0)]_{x_0,v_0}\neq 0.$
\end{theorem}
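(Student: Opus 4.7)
The plan is to work in the standard perturbative framework: write $F = \mu + \sqrt{\mu}\, f$ and derive the Duhamel equation for $f$ along backward characteristics. With $L = \nu(v) - K$ the linearized collision operator,
\begin{equation*}
f(t,x,v) = f^{\#}(t,x,v) + \int_{0}^{\min\{t,\, t_{\mathbf{b}}(x,v)\}} e^{-\nu(v) s}\bigl[K f + \Gamma(f,f)\bigr](t-s, x-sv, v)\, ds,
\end{equation*}
where the free-streaming term $f^{\#}$ equals $e^{-\nu(v) t} f_0(x-tv, v)$ when $t < t_{\mathbf{b}}(x,v)$, and equals the appropriate boundary contribution (inflow datum, diffuse integral, or bounce-back value) at $(t-t_{\mathbf{b}}, x_{\mathbf{b}}, v)$ when $t \geq t_{\mathbf{b}}(x,v)$. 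Under (\ref{gsmallness}) or (\ref{smallness}), standard weighted $L^\infty$ well-posedness yields $\|w f\|_\infty \lesssim \delta$.

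The key mechanism is the discontinuity of the backward exit map $(x,v)\mapsto t_{\mathbf{b}}(x,v)$ at points of $\gamma_0^{\mathbf{S}}$ (a fact that will be established separately in the paper). Concretely, near $(x_0, v_0) \in \gamma_0^{\mathbf{S}}$, away from the grazing set $\mathfrak{G}$, one exhibits an \emph{interior} approach sequence $(x_n', v_n') \to (x_0, v_0)$ with $t_{\mathbf{b}}(x_n', v_n') > t_0$, and a \emph{boundary} approach sequence $(x_n, v_n) \to (x_0, v_0)$ with $t_{\mathbf{b}}(x_n, v_n) \to 0$; the coexistence of both is exactly the concavity content of $\gamma_0^{\mathbf{S}}$. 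Fix $t_0 \in (0, \min\{\delta, t_{\mathbf{b}}(x_0, v_0), t_{\mathbf{b}}(x_0, -v_0)\})$. Along the interior sequence, $f^{\#}(t_0, x_n', v_n') \to e^{-\nu(v_0) t_0} f_0(x_0 - t_0 v_0, v_0)$ with $x_0 - t_0 v_0 \in \Omega$; along the boundary sequence, $f^{\#}(t_0, x_n, v_n)$ converges to the boundary contribution essentially at $(t_0, x_0, v_0)$.

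To force the two limits to disagree, the data are tailored. For inflow, take $G \equiv \mu$ on $\gamma_-$ and $F_0 = \mu + \epsilon\sqrt{\mu}\, w(v)^{-1}\varphi(x,v)$ with $\epsilon \sim \delta$, where $\varphi$ is a smooth bump supported in a small neighborhood of $(x_0 - t_0 v_0, v_0)$ disjoint from $\partial\Omega$, normalized so that $\varphi(x_0 - t_0 v_0, v_0) = 1$; then (\ref{Cinflow}) is automatic, the interior limit picks up an $O(\epsilon)$ contribution while the boundary limit returns $0$. The diffuse case uses the same bump: since $\mathrm{supp}(\varphi)$ is disjoint from $\partial\Omega$, $F_0 \equiv \mu$ on $\partial\Omega$ and (\ref{Cdiffuse}) reduces to the normalization identity (\ref{normalizedconstant}). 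For bounce-back, symmetrize to $\tfrac{1}{2}(\varphi(x,v)+\varphi(x,-v))$ to respect (\ref{Cbounceback}); the extra constraint $t_0 < t_{\mathbf{b}}(x_0, v_0)$ ensures the forward characteristic from $(x_0, v_0)$ cannot reflect back into the bump within time $t_0$, so the symmetrization does not cancel the $O(\epsilon)$ contribution along the interior sequence.

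The main obstacle is to show that the collision correction $\int_{0}^{\cdot} e^{-\nu s}[Kf + \Gamma(f,f)]\, ds$ does not erase the jump. Since $K$ is a velocity-averaging integral operator and $f$ is discontinuous only across forward characteristics emanating from $\gamma_0^{\mathbf{S}}$, a lower-dimensional set in phase space, $K f(t,x,\cdot)$ is continuous in $v$; the additional $s$-integration along characteristics smooths the remaining $(t,x)$-dependence, while $\Gamma(f,f)$ is $O(\delta^2)$ pointwise. Hence the collision correction contributes the same limit along both approach sequences up to $o(\delta)$, so the $O(\epsilon) \sim O(\delta)$ jump inherited from $f^{\#}$ survives and gives $[F(t_0)]_{x_0, v_0} \gtrsim \delta > 0$.
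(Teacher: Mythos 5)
Your overall strategy is the same as the paper's: write $F=\mu+\sqrt{\mu}f$, use the Duhamel representation along backward characteristics, exploit the jump of $t_{\mathbf{b}}$ at $\gamma_0^{\mathbf{S}}$ through an interior approach sequence (whose backward foot point at time $t_0$ sits in a prescribed bump of the initial datum) and a boundary approach sequence in $\gamma_-$ (where the boundary condition dictates the value), and beat the collision correction using the a priori $L^{\infty}$ bound. For the in-flow and bounce-back cases this is essentially the paper's proof, except for one imprecision in your last paragraph: the collision corrections do \emph{not} "contribute the same limit along both approach sequences up to $o(\delta)$" — along the boundary sequence the Duhamel integral has length $t_{\mathbf{b}}(x_n,v_n)\to 0$ while along the interior sequence it has length $t_0$, so they differ by as much as $O(t_0\delta)$. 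The paper does not attempt any cancellation or smoothing of $Kf$ here (that machinery is Section 3, used only for propagation/continuity); it simply bounds both corrections by $t_0C_{\mathbf{k}}C'\|h_0\|_{\infty}+(1-e^{-\nu t_0})C_{\Gamma}(C')^2\|h_0\|_{\infty}^2$ and chooses $t_0$ small so that this is dominated by the free-streaming jump $\|h_0\|_{\infty}e^{-\nu t_0}$, cf.\ (\ref{1/2}). You should replace your smoothing claim (which would itself need proof) by this quantitative choice of $t_0$; with that, your Parts 1 and 3 go through (your even-in-$v$ symmetrization for bounce-back is a harmless variant of the paper's two bumps of opposite sign, and the constraint $t_0<t_{\mathbf{b}}(x_0,v_0)$ is needed so the \emph{interior} backward characteristic does not hit the boundary before time $t_0$, not to prevent a reflection into the bump).

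The genuine gap is Part 2 (diffuse reflection). Your proposal only verifies the compatibility condition (\ref{Cdiffuse}) at $t=0$, but says nothing about the value of the solution along the boundary approach sequence at time $t_0$: there $h(t_0,x_n,v_n)$ equals the diffuse average $w(v_n)\sqrt{\mu(v_n)}\,c_\mu\int_{n\cdot v'>0}h(t_0,x_n,v')\frac{\mu(v')}{w(v')\sqrt{\mu(v')}}\{n\cdot v'\}dv'$ of the \emph{unknown} solution over outgoing velocities. Its size can only be bounded by $C'\|h_0\|_{\infty}$ times the prefactor $w(v_0)\sqrt{\mu(v_0)}\int(\cdots)$, which for a general fixed $v_0$ is not small; a priori this boundary value could coincide with the interior limit $O(\epsilon)$, and then no contradiction with continuity follows. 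This is exactly the extra difficulty the paper addresses by introducing $|v_0|$ as a second parameter: it chooses $|v_0|$ large so that $\tilde{w}(v_0)^{-1}=(1+\rho^2|v_0|^2)^{\beta}e^{-|v_0|^2/4}\leq \frac{1}{10C'}$, making the diffuse contribution at most $\|h_0\|_{\infty}/10$, see (\ref{D1/2}); it also makes the bump attain the global maximum $\|h_0\|_{\infty}$ so the interior value dominates this bound. Without this (or some other mechanism to control the diffuse average), your argument does not establish the discontinuity in the diffuse case.
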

The smallness of given data (\ref{gsmallness}), (\ref{smallness}) ensures the global existence of Boltmzann solution for all boundary conditions \cite{Guo08}.  Notice that we can observe the formation of discontinuity for any point of the concave(singular) grazing boundary $\gamma_0^{\mathbf{S}}$ of any generic non-convex domain $\Omega$. If we assume that $F_0(x,v)$ is continuous on $\Omega\times\mathbb{R}^3 \cup \{\gamma_- \cup \gamma_0^{\mathbf{S}}\}$ and $G(t,x,v)$ is continuous on $[0,\infty)\times\{\gamma_-\cup\gamma_0^{\mathbf{S}}\}$, and that the compatibility conditions (\ref{Cinflow}), (\ref{Cdiffuse}) and (\ref{Cbounceback}) are valid up to $\gamma_- \cup\gamma_0^{\mathbf{S}}$, then Theorem 1 implies the continuity breaks down at the concave(singular) grazing boundary $\gamma_0^{\mathbf{S}}$ after a short time $t_0 \in (0,\min\{\delta,t_{\mathbf{b}}(x_0,-v_0)\})$ for in-flow (\ref{Finflow}), diffuse (\ref{Fdiffuse}) boundary condition and $t_0 \in (0,\min\{\delta, t_{\mathbf{b}}(x_0,-v_0),t_{\mathbf{b}}(x_0,v_0)\})$ for bounce-back (\ref{Fbounceback}) boundary condition. For this generic cases, we said the Boltzmann solution $F$ has a \textbf{local-in-time formation of discontinuity} at $(t_0,x_0,v_0)$.\\
\\
Once we have the formation of discontinuity at $(t_0,x_0,v_0)\in\gamma_0^{\mathbf{S}}$, we further establish that the discontinuity propagates along the forward characteristics.
\begin{theorem}[Propagation of Discontinuity]
\label{propagation} Let $\Omega$ be an open bounded subset of $\mathbb{R}^3$ with a smooth boundary $\partial\Omega$.
Let $F(t,x,v)$ be the Boltzmann solution of (\ref{boltzmann}) with the initial datum $F_0$ which is continuous on $\Omega\times\mathbb{R}^3 \cup \{\gamma_- \cup \gamma_0^{\mathbf{S}}\}$, and with one of the following boundary conditions :
\newline 1. For in-flow boundary condition (\ref{Finflow}), let (\ref{Cinflow}) and (\ref{gsmallness}) be valid and $G(t,x,v)$ be continuous on $[0,\infty)\times\{\gamma_-\cup\gamma_0^{\mathbf{S}}\}$.
\newline 2. For diffuse boundary condition (\ref{Fdiffuse}), assume (\ref{smallness}) and (\ref{Cdiffuse}).
\newline 3. For bounce-back boundary condition (\ref{Fbounceback}), assume (\ref{smallness}) and (\ref{Cbounceback}).
\newline Then for all $t\in [t_0, t_0 + t_{\mathbf{b}}(x_0,-v_0))$ we have
\begin{equation}
[F]_{t,x_0 +(t-t_0)v_0,v_0}\leq  e^{-C_1(1+|v_0|)^{\gamma}(t-t_0)}[F(t_0)]_{x_0,v_0},\label{1sidedinequality}
\end{equation}
where $C_1>0$ only depends on $\big{|}\big{|}
 w  \frac{F-\mu}{\sqrt{\mu}}
\big{|}\big{|}_{L^{\infty}([0,\infty)\times\bar{\Omega}\times\mathbb{R}^3)}$.

On the other hand, assume $[F(t_0)]_{x_0,v_0}\neq 0$, and $t_0 \in (0,t_{\mathbf{b}}(x_0,-v_0))$ for in-flow and diffuse boundary conditions and $t_0 \in (0,\min\{t_{\mathbf{b}}(x_0,-v_0),t_{\mathbf{b}}(x_0,v_0)\})$ for bounce-back boundary condition, and a strict concavity of $ \partial\Omega$ at $x_0$ along $v_0$, i.e.
\begin{equation}
\sum_{i,j} (v_0)_{i} \partial_{x_i} \partial_{x_j} \Phi(x_0) (v_0)_j < -C_{x_0,v_0}.\label{convexity}
\end{equation}
Then for all $t\in [t_0, t_0 + t_{\mathbf{b}}(x_0,-v_0))$, the Boltzmann solution $F$ is discontinuous in time and space and velocity at $(t,x_0+ (t-t_0)v_0,v_0)$, i.e. $[F]_{t,x_0 +(t-t_0)v_0,v_0}\neq 0$ and
\begin{equation}
C   e^{-C_2 (1+|v_0|)^{\gamma}(t-t_0)}[F(t_0)]_{x_0,v_0}\leq [F]_{t,x_0 +(t-t_0)v_0,v_0},\label{decaydiscontinuity}
\end{equation}
where $0<C<1$, and $C_2 = C_2 (\big{|}\big{|}
 w  \frac{F-\mu}{\sqrt{\mu}}
\big{|}\big{|}_{L^{\infty}})\in\mathbb{R}$ which is positive for sufficiently small $\big{|}\big{|}
 w  \frac{F-\mu}{\sqrt{\mu}}
\big{|}\big{|}_{L^{\infty}([0,\infty)\times\bar{\Omega}\times\mathbb{R}^3)}$.
\end{theorem}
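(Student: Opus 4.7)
Because $t\in[t_0,t_0+t_{\mathbf{b}}(x_0,-v_0))$, the forward trajectory $s\mapsto x_0+(s-t_0)v_0$ stays in $\Omega$ on $[t_0,t]$, so I will work with the mild formulation
\[
F(t,x,v)=e^{-\int_{t_0}^{t}R(F)(\tau)d\tau}F(t_0,x-(t-t_0)v,v)+\int_{t_0}^{t}e^{-\int_{s}^{t}R(F)(\tau)d\tau}Q_{+}(F,F)(s,x-(t-s)v,v)ds,
\]
where $R(F)(\tau)=\int B(v-u,\omega)F(\tau,x-(t-\tau)v,u)d\omega du\sim(1+|v|)^{\gamma}$ under the weighted $L^\infty$ smallness assumption on $(F-\mu)/\sqrt\mu$. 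I then choose sequences $(x_k',v_k'),(x_k'',v_k'')\to(x_0+(t-t_0)v_0,v_0)$ in $(\bar\Omega\times\mathbb{R}^3)\setminus\mathfrak{G}$, subtract the two Duhamel identities, and reduce both (\ref{1sidedinequality}) and (\ref{decaydiscontinuity}) to estimating two pieces: the prefactor times the difference of $F(t_0,\cdot,\cdot)$, and the difference of the $Q_{+}$-integrals.

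\textbf{Upper bound.} The loss prefactor supplies the exponential damping $e^{-C_{1}(1+|v_0|)^{\gamma}(t-t_0)}$, and taking the $\limsup$ over the sequences and then $\delta\downarrow 0$ bounds the $F(t_0)$-difference by $[F(t_0)]_{x_0,v_0}$. For the integral piece I will verify that $Q_{+}(F,F)$ is continuous at every $(s,x_0+(s-t_0)v_0,v_0)$ with $s\in(t_0,t)$: the classical Carleman-type velocity smoothing of $Q_{+}$ (integration in $\omega\in\mathbb{S}^2$ and in $u\in\mathbb{R}^3$ against $\sqrt\mu$) averages out the codimension-one discontinuity set $\mathfrak{D}$ of $F$, and dominated convergence with the global weighted bound forces the integrand difference to zero. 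Combining the two contributions yields (\ref{1sidedinequality}).

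\textbf{Lower bound under strict concavity.} Using the local graph $x_3=\Phi(x_1,x_2)$ near $x_0$, the assumption (\ref{convexity}) says $\sum_{i,j}(v_0)_i(v_0)_j\partial_{x_i}\partial_{x_j}\Phi(x_0)<-C_{x_0,v_0}$, i.e.\ $\partial\Omega$ strictly bends inward at $x_0$ along the tangential direction $v_0$. A direct geometric computation then shows that every neighbourhood of $(x_0+(t-t_0)v_0,v_0)$, intersected with the complement of $\mathfrak{G}$, contains two disjoint families of phase points: one family whose backward straight line remains in $\Omega$ up to time $t_0$ and terminates in the interior near $x_0$, and one family whose backward straight line exits $\bar\Omega$ before time $t_0$ through a point of $\partial\Omega$ close to $x_0$. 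The limits of $F(t_0,x-(t-t_0)v,v)$ along the two families are exactly the limits realising $[F(t_0)]_{x_0,v_0}$ at the concave grazing point, while the $Q_{+}$-integral difference is $o(1)$ by the previous step. Combined with the prefactor damping this gives (\ref{decaydiscontinuity}); the sign of $C_2$ becomes positive precisely when the weighted $L^\infty$ norm is small enough for the loss-prefactor decay to dominate the gain-side perturbation.

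\textbf{Expected main obstacle.} The delicate technical point is the continuity of $Q_{+}(F,F)$ at a base point where $F$ itself is truly discontinuous, which amounts to showing that the pre-image under the Carleman-type change of variables $(u,\omega)\mapsto(u',v')$ of the discontinuity set $\mathfrak{D}$ has measure zero, uniformly in the nearby parameters, with the weighted $\sqrt\mu$ kernel furnishing a legitimate dominating function. The three boundary conditions enter only through the trace of $F$ at time $t_0$: for in-flow, continuity of $G$ on $\gamma_-\cup\gamma_0^{\mathbf{S}}$ together with (\ref{Cinflow}) propagates along characteristics; the diffuse hemispherical integral is automatically continuous in $x$; and for bounce-back the compatibility (\ref{Cbounceback}) together with the given time window guarantees that the reflected trajectory has not yet returned through the grazing boundary.
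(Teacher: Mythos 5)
Your skeleton (Duhamel along the forward characteristic, exponential damping from the loss term, continuity of the gain term to kill the integral difference, geometry of the concave graph for the lower bound) matches the paper's, but two steps that you treat as routine are exactly where the real work lies, and your proposed arguments for them would not go through as stated. First, the continuity of $Q_{+}(F,F)$ at a base point where $F$ itself jumps cannot be obtained by ``the pre-image of the discontinuity set has measure zero plus dominated convergence.'' In the Carleman representation the inner integral runs over the two-plane $E_{vv'}$, and although $\mathfrak{G}_x\cap\mathbb{S}^2$ has zero two-dimensional measure (Lemma \ref{hongjie}), $\mathfrak{G}_x$ is a cone and its intersection with $E_{vv'}$ can have large, even infinite, two-dimensional measure for particular $v'$ (e.g.\ when the plane nearly contains a ray of the cone, and always when $v=0$); so there is no uniform-in-$v'$ smallness and no usable dominating argument pointwise in $v'$. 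The paper's Theorem \ref{continuity} resolves this by showing the bad $v'$ are rare and integrating them out: it needs the radial structure of $\mathfrak{G}_x$, Guo's covering (Lemma \ref{bouncebacklower}), the splitting $\mathfrak{B}/\mathfrak{B}^c$, the projection $\mathbb{P}'$ with the Jacobian bound $3N^4/\varrho$, and a separate polar-coordinate argument for $v=0$. Moreover, to invoke any such continuity of $Q_+$ along the characteristic you must first know that $F$ is continuous on $[0,t]\times(\Omega\times\mathbb{R}^3)\setminus\mathfrak{G}$, i.e.\ you need Theorem \ref{continuityawayfromD} as an input; your proof of (\ref{1sidedinequality}) never secures this.

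Second, in the lower bound your key assertion that ``the limits of $F$ along the two families are exactly the limits realising $[F(t_0)]_{x_0,v_0}$'' is unproved, and it is the heart of (\ref{decaydiscontinuity}). A priori the jump at $(t_0,x_0,v_0)$ could be attained by two sequences of the same type (both reaching the initial plane, or both exiting through the boundary near $x_0$); one must rule this out by a contradiction argument using the continuity of the initial datum and of the boundary trace on $\gamma_-\cup\gamma_0^{\mathbf{S}}$ (the paper's Step 2). For in-flow this continuity is an assumption on $G$, but for diffuse it must be proved (continuity of the hemispherical average of a function that is discontinuous on $\mathfrak{G}$ is not ``automatic''; it again needs the covering lemma, as in Section 5.2), and for bounce-back it needs the representation argument of Section 6.3 together with $t_0<t_{\mathbf{b}}(x_0,v_0)$. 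In addition, your family whose backward ray exits $\bar\Omega$ before time $t_0$ picks up the boundary value at the exit time, not $F(t_0)$, so identifying its limit with one of the competitors in $[F(t_0)]_{x_0,v_0}$ requires precisely this boundary-trace continuity; and the family that is supposed to reach time $t_0$ in the interior must be shown not to clip the concave bump at intermediate times, which is where the strict concavity (\ref{convexity}) enters quantitatively (the paper's Steps 3 and 5, including the construction of the perturbed velocities $u_n''$ via the implicit function theorem and the bound (\ref{controlofs})). Without these two blocks the proposal is an outline of the right strategy rather than a proof; note also that the factor $0<C<1$ in (\ref{decaydiscontinuity}) arises exactly from the losses incurred in these approximation steps, which your sketch does not account for.
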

The strict concavity condition (\ref{convexity}) rules out some technical issue of the backward exit time $t_{\mathbf{b}}$. Our theorem characterize the propagation of discontinuity before the forward trajectory reaches the boundary. In the case that the forward trajectory reaches the boundary, i.e. $t\geq t_0+t_{\mathbf{b}}(x_0,-v_0)$, the situation is much more complicated. Denote $x_1 = x_0+t_{\mathbf{b}}(x_0,-v_0)v_0, t_1 =t_0 +t_{\mathbf{b}}(x_0,-v_0)$. If the trajectory hits on the boundary non-tangentially, i.e. $(x_1,v_0)\in\gamma_+$, for in-flow and diffuse boundary cases, the discontinuity disappears because of the continuity of the in-flow datum and the average property of diffuse boundary operator. For bounce-back case the discontinuity is reflected and continues to propagate along the trajectory. If the trajectory hits on the boundary tangentially, i.e. $(x_1,v_0)\in\gamma_0$, there are three possibilities. Firstly if $(x_1,v_0)\in\gamma_0^{I+}$ then the situation is same as the case $(x_1,v_0)\in\gamma_+$ above. Secondly, if the trajectory is contained in the boundary for awhile, i.e. there exists $\delta>0$ so that $x_1 +s v_0\in\partial\Omega$ for $s\in (0,\delta)$ then it is hard to predict the propagation of discontinuity in general. Assuming certain condition on $\Omega$ for example Definition \ref{linesegment}, we can rule such a unlikely case.

The last case is that $(x_1,v_0)\in\gamma_0^{\mathbf{S}}$. Assume we have a sequence of $\{t_n=t_{n-1}+t_{\mathbf{b}}(x_{n-1},-v_0)\}$ and $\{x_n=x_{n-1}+t_{\mathbf{b}}(x_{n-1},-v_0)v_0\}\in\partial\Omega$ so that $(x_n,v_0)\in\gamma_0^{\mathbf{S}}$, and a directional strict concavity (\ref{convexity}) is valid for each $(x_n,v_0)$. We can show the propagation of discontinuity also between the first and the second intersections, i.e. $[F]_{t,x_0 (t-t_0)v_0,v_0} \neq 0$ for $t\in[t_1,t_2)$ in general. For $t\geq t_2$, if we have very simple geometry, for example the first picture of Figure 2, we can show the propagation of discontinuity, i.e. $[F]_{t,x_0 (t-t_0)v_0,v_0} \neq 0$ for $t\in[t_n,t_{n+1})$ even for $n=2,3$. But in general, for example the second picture of Figure 2, we cannot show $[F]_{t,x_0 (t-t_0)v_0,v_0} \neq 0$ for $t\in[t_n,t_{n+1})$ for $n\geq 2$.
\begin{figure}[h]
\begin{center}
\epsfig{file=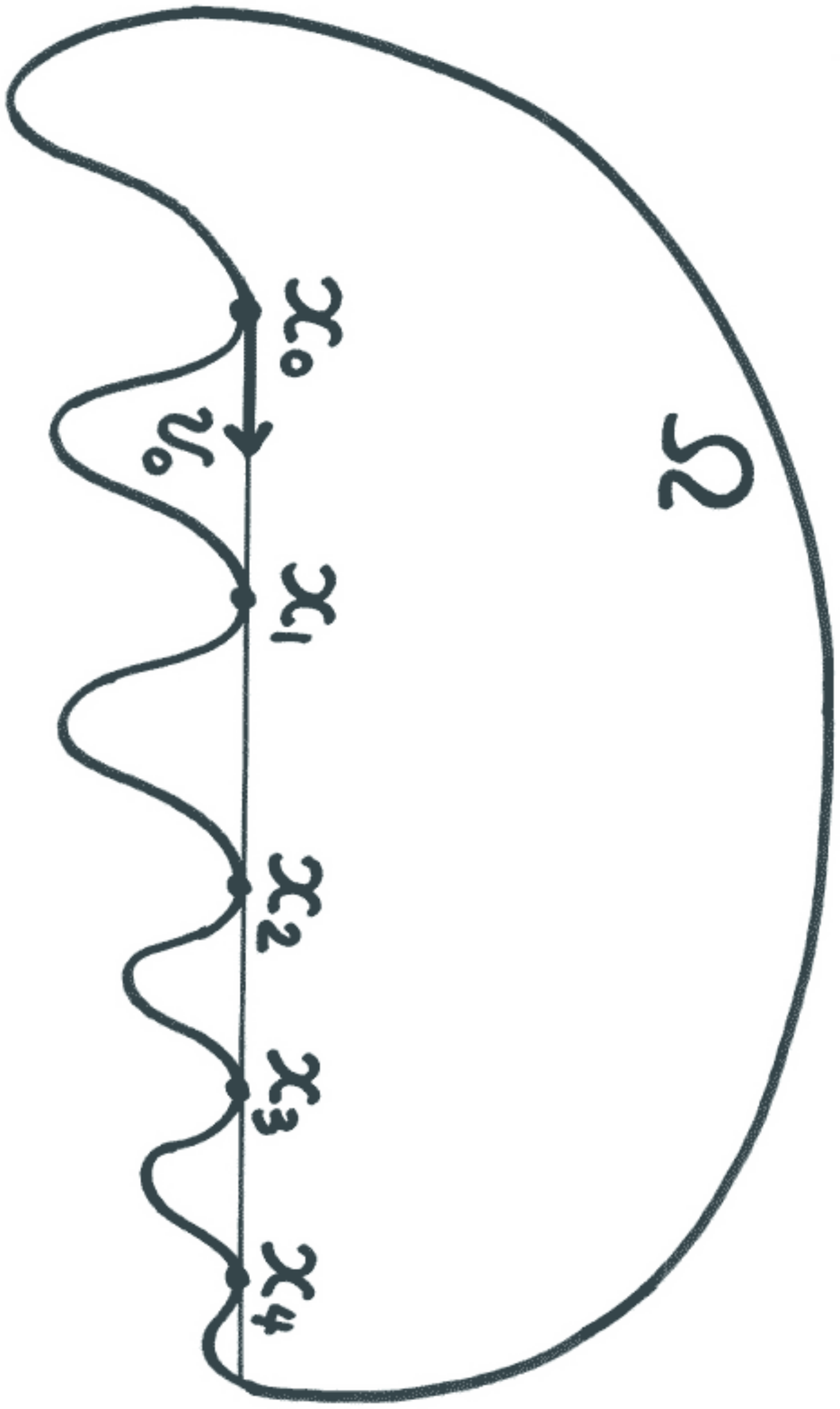,angle=90, height=3cm}\label{fig2} \ \ \ \ \ \ \ \ \ \ \
\epsfig{file=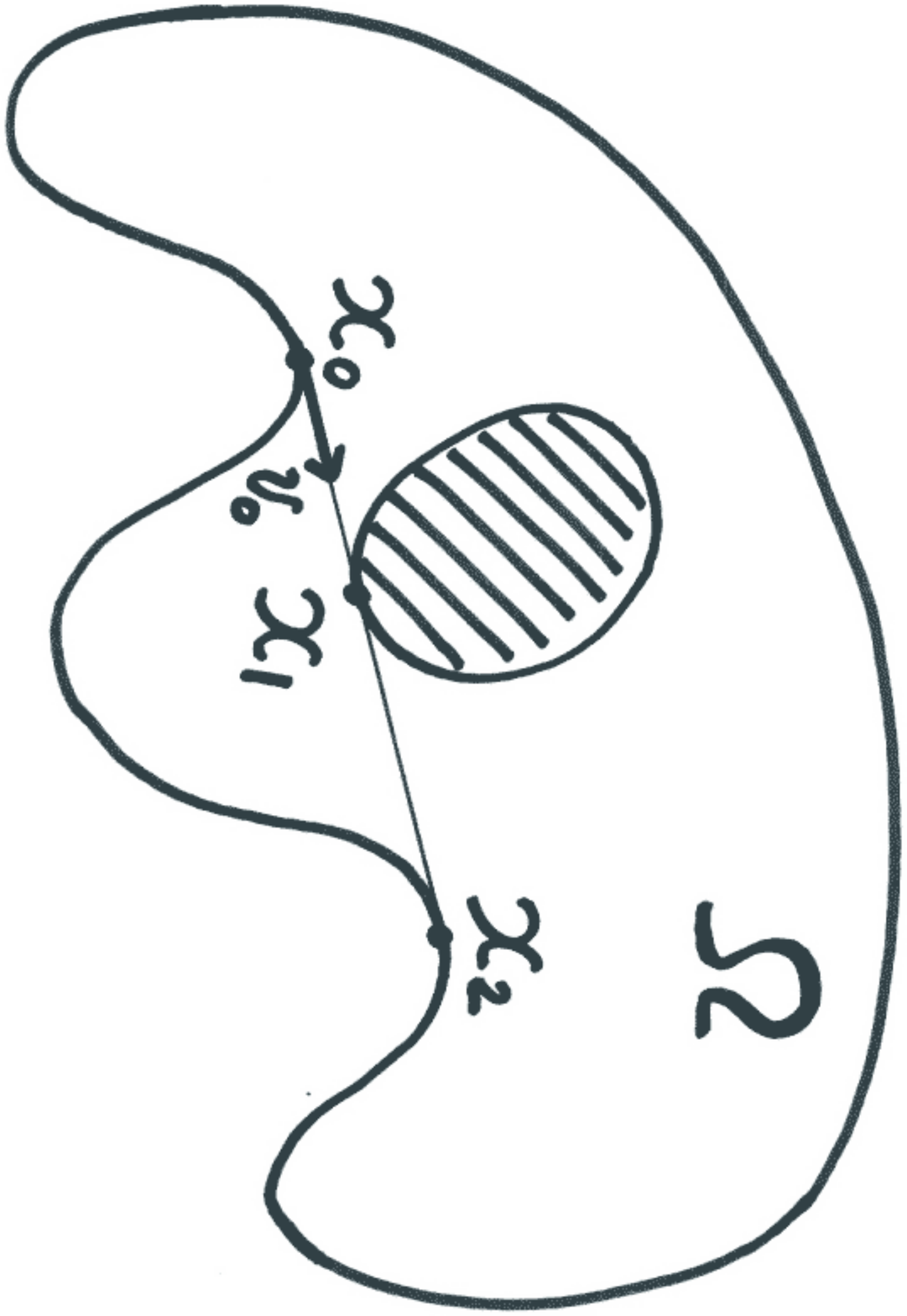,angle=90, height=3cm}\label{fig3}
\end{center}
\caption{Grazing Again}
\end{figure}
\\
The next result states that Theorem \ref{formationofsingularity} and Theorem \ref{propagation} capture all possible singularities (discontinuities), despite nonlinearity in the Boltzmann equation. In other words, the singularity of the Boltzmann solution is propagating as the linear Boltzmann equation and no new singularities created from the nonlinearity of Boltzmann equation.
\begin{theorem}[Continuity away from $\mathfrak{D}$]\label{continuityawayfromD}
Let $\Omega$ be an open bounded subset of $\mathbb{R}^3$ with a smooth boundary $\partial\Omega$. Let $F(t,x,v)$ be a Boltzmann solution of (\ref{boltzmann}) with the initial datum $F_0$ which is continuous on $\Omega\times\mathbb{R}^3 \cup \{\gamma_- \cup \gamma_+ \cup \gamma_0^{I-}\}$ and with one of
\newline 1. In-flow boundary condition (\ref{Finflow}). Assume (\ref{gsmallness}) is valid and the compatibility condition
\begin{equation}
F_0 (x,v) = G(0,x,v)
\ \ \text{for} \ \ (x,v)\in\gamma_- \cup \gamma_0^{I-},\label{CCinflow}
\end{equation}
and $G(t,x,v)$ is continuous on $[0,\infty)\times\{\gamma_-\cup\gamma_0^{I-}\}$.
\newline 2. Diffuse boundary condition (\ref{Fdiffuse}). Assume (\ref{smallness}) is valid and the compatibility condition
\begin{equation}
F_0 (x,v) = c_{\mu}\mu(v) \int_{v^{\prime}\cdot n(x)>0} F_0(x,v^{\prime})\{n(x)\cdot v^{\prime}\}dv^{\prime} \ \ \text{for} \ \ (x,v)\in\gamma_- \cup \gamma_0^{I-}. \label{CCdiffuse}
\end{equation}
\newline 3. Bounce-back boundary condition (\ref{Fbounceback}). Assume (\ref{smallness}) is valid and the compatibility condition
\begin{equation}
F_0(x,v)=F_0(x,-v) \ \ \text{for} \ \ (x,v)\in\gamma_- \cup \gamma_0^{I-}.\label{CCbounceback}
\end{equation}
Then $F(t,x,v)$ is a continuous function on $\mathfrak{C}$ for 1,2 and a continuous function on $\mathfrak{C}_{bb}$ for 3. If the domain $\Omega$ does not include a line segment (Definition \ref{linesegment}) then the continuity set $\mathfrak{C}$ and $\mathfrak{C}_{bb}$ are the complementary of $\mathfrak{D}$ and $\mathfrak{D}_{bb}$ respectively. Therefore $F(t,x,v)$ is continuous on $(\mathfrak{D})^c$ for 1,2 and continuous on $(\mathfrak{D}_{bb})^c$ for 3.
\end{theorem}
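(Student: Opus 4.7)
The plan is to work with the mild (characteristic) formulation of the Boltzmann equation and propagate continuity from the source data through Duhamel's formula. Writing $\nu(F)(t,x,v)=\int B(v-u,\omega)F(t,x,u)\,d\omega du$, the solution along the backward trajectory $(x-(t-s)v,v)$ satisfies
\begin{equation*}
F(t,x,v) \;=\; I(t,x,v)\, e^{-\int_{s_\ast}^{t}\nu(F)(\tau,x-(t-\tau)v,v)\,d\tau} \;+\; \int_{s_\ast}^{t} e^{-\int_{s}^{t}\nu(F)\,d\tau}\, Q_+(F,F)\bigl(s,x-(t-s)v,v\bigr)\,ds,
\end{equation*}
where $s_\ast=\max\{0,\,t-t_{\mathbf{b}}(x,v)\}$, and $I$ is either the initial datum $F_0(x-tv,v)$ when $t<t_{\mathbf{b}}(x,v)$, or the boundary value at $(s_\ast,x_{\mathbf{b}}(x,v),v)$ when $t\ge t_{\mathbf{b}}(x,v)$. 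The smallness assumptions (\ref{gsmallness})-(\ref{smallness}) give a global-in-time bound $\bigl\|w(F-\mu)/\sqrt{\mu}\bigr\|_{L^\infty}\lesssim\delta$ from \cite{Guo08}, which will be used to uniformly control $\nu(F)$ and $Q_+(F,F)$.

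First I would show continuity of the source $I$ at each $(t,x,v)\in\mathfrak{C}$. By Lemma \ref{tbcon} the backward exit time $t_{\mathbf{b}}$ is continuous at $(x,v)$ whenever $(x_{\mathbf{b}}(x,v),v)\notin\gamma_0\setminus\gamma_0^{I-}$. For the interior/outgoing part of $\mathfrak{C}$ this is immediate: either $t<t_{\mathbf{b}}(x,v)$ and nearby backward trajectories still reach the initial plane inside $\Omega$ where $F_0$ is continuous, or $(x_{\mathbf{b}}(x,v),v)\in\gamma_-\cup\gamma_0^{I-}$ and the hypotheses guarantee continuity of $F_0$ or $G$ there together with the compatibility relations (\ref{CCinflow})-(\ref{CCbounceback}). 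The delicate points are those in $\gamma_0^{I-}$: here $t_{\mathbf{b}}(x,v)=0$ but $x-\tau v\in\bar\Omega^c$ for small $\tau>0$, so the local structure of $\partial\Omega$ forces any nearby characteristic to enter $\Omega$ from $\gamma_-$ or stay in $\gamma_-\cup\gamma_0^{I-}$; the compatibility condition then matches $I$ across the boundary and gives one-sided continuity which extends to full continuity in $\mathfrak{C}$. For diffuse boundary data, the source on $\gamma_-$ is $c_\mu\mu(v)\int_{v'\cdot n(x)>0}F(t,x,v')\{n(x)\cdot v'\}dv'$; since the grazing set $\{v'\cdot n(x)=0\}$ has measure zero and $F$ is uniformly bounded, this average is continuous in $x$ whenever $F(t,x,\cdot)$ is continuous almost everywhere in $v'$, which follows by induction along the characteristic. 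For bounce-back we use $F(t,x,v)=F(t,x,-v)$ on $\gamma_-$ and iterate; $\mathfrak{C}_{bb}$ is precisely defined to track one extra reflection via $v\mapsto -v$.

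Next I would propagate continuity through the Duhamel term. Under the uniform bound $\|w(F-\mu)/\sqrt\mu\|_{L^\infty}<\infty$ and the integrability $w^{-2}(1+|v|)^3\in L^1$, dominated convergence shows that $(s,y,v)\mapsto Q_+(F,F)(s,y,v)$ is continuous in $(s,y)$ whenever $F(s,\cdot,\cdot)$ is continuous in $(y,u)$ off the grazing set. Combined with continuity of $\nu(F)$ along trajectories, and the already-established continuity of the straight-line map $(t,x,v)\mapsto x-(t-s)v$, a Picard iteration on the contraction $F\mapsto I+\text{Duhamel}(Q_+(F,F))$ produces a continuous fixed point on $\mathfrak{C}$ (resp.\ $\mathfrak{C}_{bb}$), which must coincide with $F$ by uniqueness. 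To iterate through several reflections one performs an induction on the number of boundary hits of the backward trajectory before time $t$, using the compatibility/boundary operators to pass continuity across each hit. The final identification of $\mathfrak{C}$ with $\mathfrak{D}^c$ under the no-line-segment hypothesis (Definition \ref{linesegment}) comes from the exhaustive decomposition $\gamma_0=\gamma_0^{\mathbf{S}}\cup\gamma_0^{V}\cup\gamma_0^{I+}\cup\gamma_0^{I-}$, which rules out the pathological case of a trajectory contained in $\partial\Omega$ for a whole interval.

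The main obstacle will be the local analysis near $\gamma_0^{I-}$ and, for bounce-back, the bookkeeping of discontinuity under the reflection $v\mapsto -v$: one has to verify that a small perturbation of a point in $\mathfrak{C}$ cannot send its backward (or reflected backward) trajectory onto the singular concave grazing boundary $\gamma_0^{\mathbf{S}}$, since such a jump would destroy continuity of $t_{\mathbf{b}}$ and of the source $I$. This requires the fine geometric description of $\partial\Omega$ (the inflection condition on $\gamma_0^{I-}$ and the exclusion of line segments) together with Lemma \ref{tbcon}, and is the place where the precise definitions of $\mathfrak{D}$ and $\mathfrak{C}$ are actually used.
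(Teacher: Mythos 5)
Your overall architecture is the same as the paper's: an iteration/Duhamel scheme, a transport lemma showing that solutions of $\{\partial_t+v\cdot\nabla_x+\phi\}h=q$ with continuous $\phi,q$ and continuous compatible data are continuous on $\mathfrak{C}$ (resp. $\mathfrak{C}_{bb}$), and passage to the limit using the uniform $L^\infty$ bound of \cite{Guo08}. But there is a genuine gap at the single most important step. You assert that ``dominated convergence shows that $(s,y,v)\mapsto Q_+(F,F)(s,y,v)$ is continuous'' when $F$ is merely bounded and continuous off the grazing set $\mathfrak{G}$. This is exactly the content of Theorem \ref{continuity}, which occupies all of Section 3 of the paper, and it does not follow from dominated convergence: the joint continuity in $(t,x,v)$ is needed (the transport lemma requires $q$ continuous in all variables), and when you vary $v$ the Carleman representation forces you to compare integrals over the $2$-planes $E_{vv'}$; the cone $\mathfrak{G}_x$, although of zero measure on $\mathbb{S}^2$ (Lemma \ref{hongjie}), can intersect such a plane in a set of large or even infinite $2$-dimensional measure, so the integrand need not converge pointwise a.e.\ on the relevant domain. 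Moreover the discontinuity set $\mathfrak{G}_{\bar x}$ moves with $\bar x$, so one needs a covering of $\mathfrak{G}_x$ that is uniform for nearby base points (Guo's covering, Lemma \ref{bouncebacklower}, and Lemma \ref{uniformlycontinuous}), the special changes of variables (\ref{changeofvariables1})--(\ref{changeofvariables2}), the projection Jacobian bound of Lemma \ref{Projection} (valid only for $v\neq 0$, with a separate polar-coordinate argument for $v=0$), and integration in $v'$ to show the bad planes are rare. Without some substitute for this analysis your Picard iteration has no reason to preserve continuity across $\mathfrak{G}$, which is the whole point of the theorem.

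A second, smaller gap concerns the diffuse case: the map $F\mapsto c_\mu\mu\int_{v'\cdot n>0}F\{n\cdot v'\}dv'$ has operator norm exactly $1$, so the fixed-point/induction you sketch does not close by a one-step contraction. The paper handles this by tracking finitely many bounces in the iteration (\ref{hm}), (\ref{iterationdiffuse}) and using the multi-bounce smallness estimate of Lemma 23--24 of \cite{Guo08} (the factor $\tilde C_\beta\rho^{2\beta-4}\{1/2\}^{C_2\rho^{5/4}}$) to obtain the Cauchy property in $L^\infty$; your proposal should either reproduce this or explain an alternative mechanism. Your treatment of the source term $I$ on $\mathfrak{C}$, of the compatibility conditions, of the bounce-back bookkeeping via $\mathfrak{C}_{bb}$, and of the identification $\mathfrak{C}=\mathfrak{D}^c$ under the no-line-segment hypothesis is consistent with the paper's Lemmas \ref{tbcon}, \ref{transportequation} and \ref{bbtransportequation}.
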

\begin{definition}\label{linesegment}
Assume $\Omega\in\mathbb{R}^3$ be open and the boundary $\partial\Omega$ be smooth. We say the boundary $\partial\Omega$ \textbf{does not include a line segment} if and only if
for each $x_0 \in\partial\Omega$ and for all $(u_1,u_2)\in\mathbb{S}^1$ there is no $\delta>0$ such that
\begin{equation*}
\Phi_{x_0}(\tau u_1,\tau u_2)
\end{equation*}
is a linear function for $\tau\in (-\delta,\delta)$ where $\Phi_{x_0}$ from (\ref{boundaryfunction}).
\end{definition}
\subsection{Previous Works and Significance of This Work}
There are many references for the mathematical study of different aspects of the boundary value problem of the Boltzmann equation, for example \cite{Guiraud}\cite{Guo08}\cite{Mischler} and the references therein. In \cite{Guo08}, an unified $L^2 - L^{\infty}$ theory in the near Maxwellian regime is developed to establish the existence, uniqueness and exponential decay toward a Maxwellian, for all four basic types of the boundary conditions and rather general domains.

The qualitative study of the particle-boundary interaction in a bounded domain and its effects on the global dynamics is a fundamental problem in the Boltzmann theory. One of challenging questions is the regularity theory of kinetic equations in bounded domain. This problem is hard because even for simplest kinetic equations with the differential operator $v\cdot \nabla_x$, the phase boundary $\partial\Omega\times\mathbb{R}^3$ is always characteristic but not uniformly characteristic at the grazing set $\gamma_0 = \{ (x,v) : x\in\partial\Omega, \ \text{and} \ v\cdot n(x) = 0 \}$. In a convex domain a continuity of the Boltzmann solution away from $\gamma_0$ is established in \cite{Guo08} for all four basic boundary conditions. In a convex domains, backward trajectories starting at interior points of the phase space cannot reach points of the grazing boundary $\gamma_0$, due to Velocity Lemma(\cite{Guo95}\cite{Hwang04}), where possible singularities may exist.

In general, on the other hand, in a non-convex domain, backward trajectories starting at interior points of the phase space can reach the grazing boundary. Therefore we expect singularities will be created at some part of grazing boundary $\gamma_0$ and propagate inside of the phase space. This question has been attracting a lot of attentions from early '90s, see references in pp.91--92 in Sone's book \cite{Sone}. For Boltzmann equation, most of works are numerical studies \cite{Sone}\cite{S-T}\cite{T-S-A} and few mathematical studies.

Once we enlarge our survey to propagation of singularities which already exist on initial data or boundary data, there are some mathematical works \cite{ABDG}\cite{B-D}\cite{B-D1}\cite{Cercignani}\cite{DLY} as well as numerical works \cite{ATAG}\cite{Sone}. In \cite{ABDG}, for linear BGK model, a propagation of discontinuity ,which exists already in the boundary data, is studied mathematically and also numerically. In \cite{B-D}, for the full Boltzmann equation in the near vacuum regime, a propagation of Sobolev $H^{1/25}$ singularity, which exists already in the initial data, is studied and same effect has been recently shown in the near Maxwellian regime \cite{B-D1}\cite{DLY}.

In Vlasov theory, we refer to \cite{ABL}\cite{G-M-P}\cite{voigt} for the boundary value problem. Singular solutions were studied in \cite{Guo95} extensively. In \cite{Guo95}, the non-convexity condition of boundary is replaced by the inward electric field which has a similar effect with non-convexity of the boundary. In convex domains, H\"{o}lder estimates of Vlasov solution with specular reflection boundary is solved recently \cite{Hwang04}\cite{H-V}, but Sovolev-type estimate is still widely open.
\\
\\
Our results give a rather complete characterization of formation and propagation of singularity for the nonlinear Boltzmann equation near Maxwellian in general domain under in-flow, diffuse, bounce-back boundary conditions. There is no restriction of the time interval. More precisely we show that for any non-convex point $x$ of the boundary and velocity tangent to $\partial\Omega$ at $x$, there exists an initial datum (and in-flow datum, for in-flow boundary condition case) such that the Boltzmann solution has a jump discontinuity at $(x,v)$. Once the discontinuity occurs at the grazing boundary, this discontinuity propagates inside along the forward trajectory until it hits the boundary again. And except those points, the grazing boundary and forward trajectories emanating from the grazing boundary, we can show that the Boltzmann solution is continuous.(Continuity away from $\mathfrak{D}$)
\subsection{Main Ingredients of the Proofs}
\textbf{1. The Equality induced by Non-Convex Domain} \\
We consider near Maxwellian regime and linearized Boltzmann equation (\ref{LinearBoltzmannEquation}). The formation of discontinuity is a consequence of following estimate. Assume $(x,v)\in\gamma_0^{\mathbf{S}}$ as below picture so that for sufficiently small $t>0$ the backward trajectory $x-tv$ is in an interior of the phase space. For simplicity we impose the trivial in-flow boundary condition $G(t,x,v)\equiv\mu(v)$ which corresponds $g(t,x,v)\equiv 0$ (\ref{inflow}). Consider points $(x^{\prime}_n,v^{\prime}_n)$ in $\gamma_-$ and $(x^{\prime\prime}_n,v^{\prime\prime}_n)$ missing the non-convex part near $(x,v)$ and both sequences converge $(x,v)$ as $n\rightarrow \infty$.
\begin{figure}[h]
\begin{center}
\epsfig{file=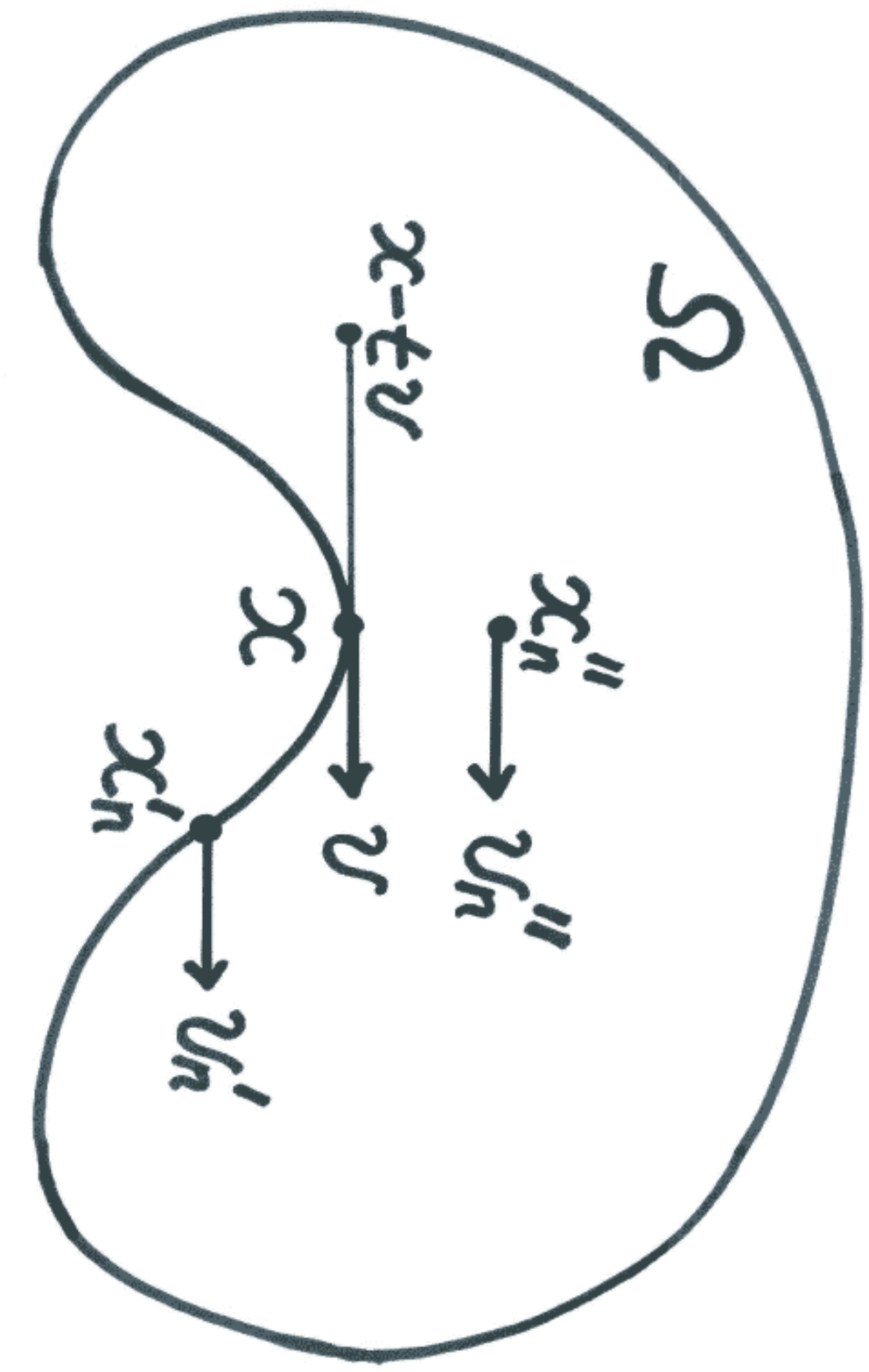,angle=90, height=3cm}
\end{center}
\label{fig4}
\caption{}
\end{figure}\\
Now suppose the solution $f$ of the linearized Boltzmann equation be continuous around $(x,v)$. Then the Boltzmann solution $f$ at $(x^{\prime}_n,v^{\prime}_n)$
\begin{equation*}
f(t,x^{\prime}_n,v^{\prime}_n) = g(t,x^{\prime}_n,v^{\prime}_n) =0,
\end{equation*}
and at $(x^{\prime\prime}_n,v^{\prime\prime}_n)$,
\begin{equation*}
f(t,x^{\prime\prime}_n,v^{\prime\prime}_n) = e^{-\nu(v^{\prime\prime}_n)t}f_0 (x^{\prime\prime}_n-tv^{\prime\prime}_n,v^{\prime\prime}_n) + \int_0^t e^{-\nu(v^{\prime\prime}_n)(t-s)}\{Kf+\Gamma(f,f)\}(s,x_n^{\prime\prime}-(t-s)v_n^{\prime\prime},v_n^{\prime\prime})ds
\end{equation*}
converges each other as $n\rightarrow\infty$. Then we have the following equality
\begin{equation}
f_0 (x-tv,v) = -\int_0^t e^{\nu(v)s}\{Kf + \Gamma(f,f)\}(s,x-(t-s)v,v)ds.
\label{inequalityinducedbynonconvexbody}
\end{equation}
Thanks to \cite{Guo08}, the pointwise estimate of $f$, with some standard estimates of $K,\Gamma$, the right hand side of above equality has magnitude $O(t)||f_0||_{\infty}(1+||f_0||_{\infty})$. If you choose $f_0(x-tv,v)=||f_0||_{\infty}$ then the above equality (\ref{inequalityinducedbynonconvexbody}) cannot be true for sufficiently small $t$ unless the trivial case $f_0\equiv 0(F\equiv \mu)$. Therefore the Boltzmann solution $f$ cannot be continuous at $(x,v)$. For diffuse (\ref{Fdiffuse}), bounce-back (\ref{Fbounceback}) boundary conditions we also obtain the equality induced by non-convex domain similar as (\ref{inequalityinducedbynonconvexbody}).
\\
\\
This argument bases on the idea that free transport effect is dominant to collision effect if time $t>0$ and the perturbation $\frac{F-\mu}{\sqrt{\mu}}$ is small.
\\
\newline\textbf{2. Continuity of the Gain Term $Q_+$}\\
The smoothing effect of the gain term $Q_+$ is one of the fundamental features of the Boltzmann theory. There are lots of results about the smoothing effect in Sobolev regularity, for example
\begin{equation*}
||Q_+(\phi,\psi)||_{H^{\frac{N-1}{2}}} \leq C ||\phi||_{L^1} ||\psi||_{L^2},
\end{equation*}
with some assumption on various collision kernels \cite{Lions}\cite{Wennberg94}\cite{Wennberg97}. To study the propagation of singularity and regularity, in the case of angular cutoff kernel (\ref{Gradcutoff}), it is standard to use Duhamel formulas and combine the Velocity average lemma and the regularity of $Q_+$ \cite{B-D}. For detail, see Villani's note \cite{V} especially pp. 77--79.
\\
\\
In order to study the propagation of discontinuity and continuity we need a totally different smooth effect of $Q_+$. For the discontinuity induced by the non-convex domain, we need following : Recall the grazing set $\mathfrak{G}$ in Definition \ref{grazingset}. A test function $\phi(t,x,v)$ is continuous on $[0,T]\times(\Omega\times\mathbb{R}^3) \backslash \mathfrak{G}$ and bounded on $[0,T]\times\Omega\times\mathbb{R}^3$. Then
\begin{equation}
Q_{+}(\phi,\phi)(t,x,v) \ \in \ C^0( [0,T]\times \Omega \times \mathbb{R}^3 ).\label{continuityQ+}
\end{equation}
Recall that the grazing set $\mathfrak{G}= \{(x,v)\in\bar{\Omega}\times\mathbb{R} : v\in \mathfrak{G}_x \}$. The grazing section $\mathfrak{G}_x = \{\tau u \in\mathbb{R}^3 : t\geq 0, \ u\in\mathfrak{G}_x \cap \mathbb{S}^2\}$ is a union of straight lines in velocity space $\mathbb{R}^3$ and two dimensional Lebesque measure of $\mathfrak{G}_x \cap \mathbb{S}^2$ is zero (Hongjie Dong's Lemma, Lemma 17 of \cite{Guo08}). Moreover, using continuous behavior of $\mathfrak{G}_x$ in $x$, one can invent a very effective covering of $\mathfrak{G}_x$ (Guo's covering, Lemma 18 of \cite{Guo08}). Because of those geometric and size restriction on $\mathfrak{G}$, even the gain term $Q_+$ is an integration operator in $v$ alone, we can prove the smoothing effect of $Q_+$ in $C^0( [0,T]\times \Omega \times \mathbb{R}^3 )$ for $t,x$ and $v$, see Theorem \ref{continuity}. Notice that those smoothing effect on $C^0_{t,x,v}$ has been believed to be true for long time without a mathematical proof in numerical communities \cite{Aoki}, p1587 of \cite{ABDG}, p502 of \cite{S-T}.
\\
\\
The main idea to prove the smoothing effect in $C^0_{t,x,v}$ is to use the Carleman's representation for $Q_+ (\phi,\phi)(t,x,v)$ which has been a very effective tool \cite{Guo03}\cite{Wennberg94}\cite{Wennberg97}.
\begin{equation}
\int_{\mathbb{R}^3} \phi(t,x,v^{\prime})\frac{1}{|v-v^{\prime}|^2} \int_{E_{vv^{\prime}}} \phi(t,x,v_1^{\prime}) B(2v-v^{\prime}-v_1^{\prime},\frac{v^{\prime}-v_1^{\prime}}{|v^{\prime}-v_1^{\prime}|}) dv_1^{\prime} dv^{\prime},\label{carleman1}
\end{equation}
with the hyperplane $E_{vv^{\prime}}=\{v_1^{\prime}\in\mathbb{R}^3 : (v_1^{\prime}-v)\cdot (v^{\prime}-v)=0 \}$. We will show the smallness of
\begin{equation}
|Q_+(\phi,\phi)(\bar{t},\bar{x},\bar{v})-Q_+(\phi,\phi)(t,x,v)|,\nonumber
\end{equation}
for $|(t,x,v)-(\bar{t},\bar{x},\bar{v})|< \delta$.
Assume we have sufficient decay of $\phi$ for large $v$. Replace the integrable kernel $\frac{1}{|v-v^{\prime}|^2}$ by smooth compactly supported function and cut off the singular part of $B(2v-v^{\prime}-v_1^{\prime},\frac{v^{\prime}-v_1^{\prime}}{|v^{\prime}-v_1^{\prime}|})$ to control the above difference as
\begin{eqnarray*}
&& \ O(\delta)||\phi||_{\infty}^2 \ + \ C\int_{|v^{\prime}|<N} |\phi(t,x,v^{\prime})-\phi(\bar{t},\bar{x},v^{\prime\prime})| \int_{E_{\bar{v}v^{\prime\prime}}\cap\{|v_1^{\prime\prime}|< N\}}|\phi(\bar{t},\bar{x},v_1^{\prime\prime})| dv_1^{\prime\prime}dv^{\prime} \\
&&+ \ C \int_{|v^{\prime}|<N} |\phi(t,x,v^{\prime})|
\left\{\int_{E_{vv^{\prime}} \cap \{|v_1^{\prime}|<N\}}\phi(t,x,v_1^{\prime}) dv_1^{\prime}
-\int_{E_{\bar{v}v^{\prime\prime}} \cap \{|v_1^{\prime\prime}|<N\}}\phi(\bar{t},\bar{x},v_1^{\prime\prime}) dv_1^{\prime\prime}
\right\}dv^{\prime},
\end{eqnarray*}
where $v^{\prime\prime}(v^{\prime})$ is chosen to be $v^{\prime}-(v-\bar{v})$ for convenience.

One can easily control the integration at the first line. Because for the first term, integrating over $v^{\prime}$, we can cut off a small neighborhood of $\mathfrak{G}_x$ from $|v^{\prime}|<N$. Away from that neighborhood, using the continuity of $\phi$ away from $\mathfrak{G}_x$ we can control the integrand pointwisely.

In order to control the second line integration we have to control the difference in big braces.
To do that we choose a special change of variables for $v_1^{\prime\prime}$ ,(\ref{changeofvariables2}). Under this change of variables the second line is bounded by
\begin{equation*}
C \int_{|v^{\prime}|<N} |\phi(t,x,v^{\prime})|
\int_{E_{vv^{\prime}} \cap \{|v_1^{\prime}|<N\}} | \ \phi(t,x,v_1^{\prime})-
\phi(\bar{t},\bar{x},v_1^{\prime\prime}) | \ dv_1^{\prime} dv^{\prime}.
\end{equation*}
The second integration term above is a function of $t,x,\bar{t},\bar{x}$ and $v$. Unfortunately one cannot expect a pointwise control (smallness) of the second integration for all $v$ : even the grazing section $\mathfrak{G}_x$, where $\phi(t,x,v_1^{\prime})$ might have discontinuity, is small, i.e. 2-dimensional Lebesque measure of $\mathfrak{G}_x \cap \mathbb{S}_x$ is zero, the measure on the plane $E_{vv^{\prime}}$ could be large(even infinite). However, in Section 3.3, we can show that that bad situation happens for very rare $v^{\prime}$ in $\{v^{\prime}\in\mathbb{R}^3 : |v^{\prime}| < N\}$ and use the integration over $v^{\prime}$ to control the above integration.\\
\newline \textbf{3. New Proof of Continuity of Boltzmann solution with Diffuse Boundary Condition}\\
In Section 5.2 we prove a continuity away from $\mathfrak{D}$ of Boltzmann solution with diffuse boundary condition using simple iteration scheme (\ref{hm}) with iteration diffuse boundary condition (\ref{iterationdiffuse}). This iteration scheme has several advantages. First it preserves a continuity away from $\mathfrak{D}$ as $m$ increasing, that is, if $h^m$ is continuous away from $\mathfrak{D}$ then $h^{m+1}$ is also continuous away from $\mathfrak{D}$. Second, the sequence $\{h^m\}$ has uniform $L^{\infty}$ bound and moreover it is Cauchy in $L^{\infty}$ for in-flow boundary condition $h^{m}|_{\gamma_-}= wg$. Therefore $h=\lim h^m$, a solution of the linear Boltzmann equation is continuous local in time. Combining with uniform-in-time boundedness of Boltzmann solution (\cite{Guo08}), we achieve the continuity for all time. In order to apply this idea to diffuse boundary condition, we use Guo's idea \cite{Guo08} : A norm of the diffuse boundary operator is less than 1 effectively, if we trace back several bounces. This approach gives simpler proof for the continuity of Boltzmann equation with diffuse boundary condition with convex domain (see Lemma $23~25$ of \cite{Guo08}).
\subsection{Structure of Paper}
In Section 2, we state some preliminary facts which are useful tools for this paper. In Section 3, we state and prove the continuity of $Q+$ (Theorem 4). In Section $4~6$, we deal with in-flow boundary, diffuse boundary and bounce-back boundary, respectively. For each section, first we prove the formation of discontinuity (Theorem 1). Then we show the continuity away from $\mathfrak{D}$ (Theorem 3). Using this continuity, combining with continuity of $Q_+$, we show the propagation of discontinuity (Theorem 2).

\section{Preliminary}
In this section we study continuity properties of the backward exit time $t_{\mathbf{b}}(x,v)$ and, a measure theoretic property and geometric covering of the grazing set $\mathfrak{G}$, and estimates of Boltzmann operators and the Carleman's representation.\\
\\
We use Lemma 1 of \cite{Guo08}, basic properties of the backward exist time $t_{\mathbf{b}}(x,v)$ :
\begin{lemma}
\label{huang} \cite{Guo08} Let $\Omega$ be an open bounded subset of $\mathbb{R}^3$ with a smooth boundary $\partial\Omega$. Let $(t,x,v)$ be connected with $(t-t_{\mathbf{b}}(x,v),x_{\mathbf{b%
}}(x,v),v)$ backward in time through a trajectory of (\ref{ode}).
\newline 1. The backward exit time $t_{\mathbf{b}}(x,v)$ is lower semicontinuous.
\newline 2 If
\begin{equation}
v\cdot n(x_{\mathbf{b}}(x,v))<0,  \label{negative}
\end{equation}%
then $(t_{\mathbf{b}}(x,v),x_{\mathbf{b}}(x,v))$ are smooth functions of $%
(x,v)$ so that%
\begin{eqnarray*}
\text{ \ \ \ }\nabla _{x}t_{\mathbf{b}} &=&\frac{n(x_{\mathbf{b}})}{v\cdot
n(x_{\mathbf{b}})},\text{ \ \ \ }\nabla _{v}t_{\mathbf{b}}=\frac{t_{\mathbf{b%
}}n(x_{\mathbf{b}})}{v\cdot n(x_{\mathbf{b}})},  \notag \\
\text{ \ \ \ }\nabla _{x}x_{\mathbf{b}} &=&I+\nabla _{x}t_{\mathbf{b}%
}\otimes v,\text{ \ \ }\nabla _{v}x_{\mathbf{b}}=t_{\mathbf{b}}I+\nabla
_{v}t_{\mathbf{b}}\otimes v.  \label{tbderivative}
\end{eqnarray*}%
\end{lemma}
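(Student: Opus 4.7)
The strategy is to prove the two parts by different means: Part 1 is essentially a topological argument that leverages openness of $\Omega$, compactness of finite time intervals, and continuous dependence of straight lines on their endpoints, while Part 2 is a direct application of the implicit function theorem to a smooth local defining function of $\partial\Omega$ near $x_{\mathbf{b}}$, followed by the chain rule.

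For Part 1, fix $(x,v)$ and set $T = t_{\mathbf{b}}(x,v)$; the case $T = 0$ is trivial since $t_{\mathbf{b}} \geq 0$, so assume $T > 0$. Given any $\epsilon \in (0, T)$, pick a small $\delta \in (0, \epsilon)$ and observe that the arc $A = \{x - sv : s \in [\delta, T - \epsilon]\}$ is a compact subset of the open set $\Omega$, so $\mathrm{dist}(A, \partial\Omega) \geq d > 0$. For $(x', v')$ sufficiently close to $(x, v)$, the perturbed arc $\{x' - sv' : s \in [\delta, T - \epsilon]\}$ stays in the $d$-neighborhood of $A$, hence in $\Omega$. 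The initial segment $s \in (0, \delta)$ must be handled separately: if $x \in \Omega$ the whole segment sits in $\Omega$ for $\delta$ small and is stable under perturbation; if $x \in \partial\Omega$ with $v \cdot n(x) > 0$, the transversal entry estimate $\mathrm{dist}(x - sv, \partial\Omega) \gtrsim s\, v \cdot n(x)$ persists under small perturbations of $(x,v)$; and if $(x,v)$ is grazing with $T > 0$ one must invoke a quadratic Taylor expansion of $\Phi_{x_0}$ at $x$ (using the concavity forced by $T > 0$) to show that the perturbed trajectory still lies on the $\Omega$-side for small $s$. Combining the cases gives $t_{\mathbf{b}}(x', v') \geq T - \epsilon$ whenever $(x', v')$ is close enough, which is lower semicontinuity.

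For Part 2, assuming $v \cdot n(x_{\mathbf{b}}) < 0$, take the smooth local defining function $\xi(y) = y_3 - \Phi_{x_{\mathbf{b}}}(y_1, y_2)$ of $\partial\Omega$ near $x_{\mathbf{b}}$; it satisfies $\xi > 0$ on $\Omega$, vanishes on $\partial\Omega$, and $\nabla\xi(x_{\mathbf{b}}) = -\sqrt{1 + |\nabla\Phi|^2}\, n(x_{\mathbf{b}})$. The exit time $t_{\mathbf{b}}(x,v)$ is locally characterized by $\xi(x - t_{\mathbf{b}} v) = 0$, and
\[
\partial_t\,\xi(x - tv)\Big|_{t = t_{\mathbf{b}}} \;=\; -v \cdot \nabla\xi(x_{\mathbf{b}}) \;=\; \sqrt{1 + |\nabla\Phi|^2}\, v \cdot n(x_{\mathbf{b}}) \;\neq\; 0
\]
by the hypothesis. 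The implicit function theorem yields smoothness of $t_{\mathbf{b}}$ in $(x,v)$. Differentiating $\xi(x - t_{\mathbf{b}}(x,v) v) = 0$ componentwise in $x_j$ and $v_j$ produces the stated gradient formulas, the $|\nabla\xi(x_{\mathbf{b}})|$ factors cancelling between numerator and denominator. Smoothness of $x_{\mathbf{b}} = x - t_{\mathbf{b}} v$ and its derivatives then follow from the product rule.

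The main obstacle is the grazing case in Part 1: at a point with $v \cdot n(x) = 0$ and $T > 0$, neither openness of $\Omega$ nor a first-order transversal entry estimate alone rules out a perturbed trajectory dipping briefly out of $\Omega$ near $s = 0$, so one must exploit the second-order geometry of $\partial\Omega$ along the direction $v$ (via the Hessian of $\Phi_{x_0}$) to guarantee that the perturbed trajectory stays on the $\Omega$-side for small $s$. All other steps reduce to compactness or routine applications of the implicit function theorem and chain rule.
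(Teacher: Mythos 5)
You should first be aware that the paper contains no proof of this lemma: it is quoted verbatim from \cite{Guo08} as a preliminary, so there is no in-paper argument to compare against, and your proposal has to stand on its own. Your Part 2 is essentially sound: with $\xi(y)=y_3-\Phi(y_1,y_2)$, the hypothesis $v\cdot n(x_{\mathbf{b}})<0$ makes $\partial_t\,\xi(x-tv)$ nonvanishing at $t=t_{\mathbf{b}}$, and the implicit function theorem plus the chain rule gives smoothness, provided you add the (easy, transversality-plus-lower-semicontinuity) remark that the implicit root actually coincides with the first exit time for nearby $(x,v)$. One caveat: implicit differentiation of $\xi(x-t_{\mathbf{b}}v)=0$ does \emph{not} literally produce the displayed formulas; it gives $\nabla_v t_{\mathbf{b}}=-\,t_{\mathbf{b}}\,n(x_{\mathbf{b}})/(v\cdot n(x_{\mathbf{b}}))$ and $\nabla_x x_{\mathbf{b}}=I-\nabla_x t_{\mathbf{b}}\otimes v$, etc. (test on the half-space $\{x_3>0\}$ with $v_3>0$, where $t_{\mathbf{b}}=x_3/v_3$), so you should flag the sign/convention discrepancy rather than assert agreement with the display as transcribed.

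The genuine gap is in Part 1, at precisely the step you identify as the main obstacle. Your compactness argument is complete whenever $x\in\Omega$ — and there no case analysis is needed at all, since the whole closed arc $\{x-sv:\,0\le s\le T-\epsilon\}$, including $s=0$, is a compact subset of the open set $\Omega$ — and the transversal case $x\in\partial\Omega$, $v\cdot n(x)>0$ is fine. But your proposed resolution of the grazing case, namely that a quadratic Taylor expansion of $\Phi$ shows the perturbed trajectory "still lies on the $\Omega$-side for small $s$," cannot work, because the conclusion is false there with the definition of $t_{\mathbf{b}}$ used in this paper. Take $\Omega$ locally $\{x_3>\Phi(x_1,x_2)\}$ with $\Phi(x_1,x_2)=-x_1^2$, $x=0$, $v=(1,0,0)$, so $(x,v)\in\gamma_0^{\mathbf{S}}$ and $t_{\mathbf{b}}(x,v)>0$; for $v_\eta=(1,0,\eta)$, $\eta>0$, the backward point $x-sv_\eta=(-s,0,-s\eta)$ lies strictly below the boundary for all $0<s<\eta$, because the linear tilt $s\eta$ beats the quadratic dip $s^2$, hence $t_{\mathbf{b}}(x,v_\eta)=0$ while $(x,v_\eta)\to(x,v)$. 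So at concave (and outward-inflection) grazing boundary points of a non-convex domain the exit time genuinely drops under arbitrarily small velocity perturbations, the second-order geometry works \emph{against} you rather than for you, and no refinement of your third case can close the argument; this is the same mechanism the paper itself exploits in Lemma \ref{tbcon} (note that there the base point is interior, where lower semicontinuity does hold and the discontinuity is an upward jump). The honest repair is to prove lower semicontinuity where it is actually used — at points with $x\in\Omega$, or boundary points with transversal backward entry (and to note that in a convex domain the problematic configuration never occurs, since a tangent line misses $\Omega$ and $t_{\mathbf{b}}=0$ there) — and to exclude, rather than claim, the grazing boundary case.
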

For a convex domain, if a point $(x,v)$ is in the interior of the phase space, i.e. $(x,v)\in\Omega\times\mathbb{R}^3$ then the condition (\ref{negative}) is always satisfied and hence $t_{\mathbf{b}}(x,v)$ is smooth due to Lemma \ref{huang}. However for a non-convex domain, there is a point $(x,v)$ in $\Omega\times\mathbb{R}^3$ but $(x_{\mathbf{b}}(x,v,),v)\in\gamma_0$, i.e. $v\cdot n(x_{\mathbf{b}}(x,v))=0$. We further investigate a continuity property of $t_{\mathbf{b}}$ for that case. Indeed, discontinuity behavior of $t_{\mathbf{b}}(x,v)$ for $(x_{\mathbf{b}}(x,v),v)\in\gamma_0^{\mathbf{S}}$ is a main ingredient of the formation of discontinuity.
\begin{lemma}\label{tbcon}
Let $\Omega\in\mathbb{R}^3$ be an open set with a smooth boundary $\partial\Omega$. Assume $(x_0,v_0)\in\Omega\times\mathbb{R}^3$ with $v_0\neq0$ and $t_{\mathbf{b}}(x_0,v_0)<\infty$. Consider $(x_0,v_0)\in\mathfrak{G}$, i.e. $(x_{\mathbf{b}}(x_0,v_0),v_0)\in\gamma_0$.
\begin{eqnarray*}
&&\text{If }(x_{\mathbf{b}}(x_0,v_0),v_0)\in\gamma_0^{I-} \ \ \text{then} \ t_{\mathbf{b}}(x,v) \ \ \text{is continuous around } (x_0,v_0).\\
&&\text{If }(x_{\mathbf{b}}(x_0,v_0),v_0)\in\gamma_0^{\mathbf{S}} \ \ \text{then} \ t_{\mathbf{b}}(x,v) \ \ \text{is not continuous around } (x_0,v_0).
\end{eqnarray*}
Recall $\gamma_0^{I-}$ and $\gamma_0^{\mathbf{S}}$ in Definition \ref{grazingboundary}.
\end{lemma}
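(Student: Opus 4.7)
The plan is to treat the two cases separately, each time starting from the lower semicontinuity of $t_{\mathbf{b}}$ supplied by Lemma \ref{huang}, and then pinning down the upper behavior of $t_{\mathbf{b}}$ under perturbations of $(x_0,v_0)$ by unpacking the defining condition of $\gamma_0^{I-}$, resp.\ $\gamma_0^{\mathbf{S}}$.

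For the $\gamma_0^{I-}$ case, the argument is a direct openness argument. Fix $\epsilon>0$. Since $(x_{\mathbf{b}}(x_0,v_0),v_0)\in\gamma_0^{I-}$, there exists $\tau\in(0,\epsilon)$ with $x_{\mathbf{b}}(x_0,v_0)-\tau v_0\in\bar\Omega^{c}$, i.e., $x_0-(t_{\mathbf{b}}(x_0,v_0)+\tau)v_0\in\bar\Omega^{c}$. Because $\bar\Omega^{c}$ is open and the straight-line map $(x,v)\mapsto x-(t_{\mathbf{b}}(x_0,v_0)+\tau)v$ is continuous, the same inclusion persists for all $(x,v)$ in a neighborhood of $(x_0,v_0)$, which forces $t_{\mathbf{b}}(x,v)\leq t_{\mathbf{b}}(x_0,v_0)+\tau<t_{\mathbf{b}}(x_0,v_0)+\epsilon$ on that neighborhood by the very definition of $t_{\mathbf{b}}$. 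Combined with Lemma \ref{huang}, this yields continuity of $t_{\mathbf{b}}$ at $(x_0,v_0)$.

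For the $\gamma_0^{\mathbf{S}}$ case, the strategy is to exhibit a sequence along which $t_{\mathbf{b}}$ fails to converge to $t_{\mathbf{b}}(x_0,v_0)$. By definition of $\gamma_0^{\mathbf{S}}$, $s^{*}:=t_{\mathbf{b}}(x_{\mathbf{b}}(x_0,v_0),v_0)>0$, meaning that the tangent line continues to traverse $\Omega$ on the far side of the grazing point throughout a positive parameter interval. Let $n_0:=n(x_{\mathbf{b}}(x_0,v_0))$ be the outward unit normal and set $x_\eta:=x_0-\eta n_0$, which lies in $\Omega$ for $\eta>0$ small since $x_0\in\Omega$. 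The target estimate is
\begin{equation*}
t_{\mathbf{b}}(x_\eta,v_0)\;\geq\; t_{\mathbf{b}}(x_0,v_0)+\tfrac{s^{*}}{2}\quad\text{for all sufficiently small }\eta>0,
\end{equation*}
which forces $\limsup_{\eta\downarrow 0}t_{\mathbf{b}}(x_\eta,v_0)\geq t_{\mathbf{b}}(x_0,v_0)+s^{*}/2>t_{\mathbf{b}}(x_0,v_0)$, contradicting continuity at $(x_0,v_0)$.

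To establish the target estimate one must check that the perturbed trajectory $s\mapsto x_\eta-sv_0=(x_0-sv_0)-\eta n_0$ lies in $\Omega$ throughout $s\in(0,t_{\mathbf{b}}(x_0,v_0)+s^{*}/2)$. On any closed subinterval of $(0,t_{\mathbf{b}}(x_0,v_0))\cup(t_{\mathbf{b}}(x_0,v_0),t_{\mathbf{b}}(x_0,v_0)+s^{*}/2)$ the unperturbed curve is at strictly positive distance from $\partial\Omega$ (by compactness together with the definitions of $t_{\mathbf{b}}(x_0,v_0)$ and $s^{*}$), so a small shift $-\eta n_0$ keeps it inside $\Omega$. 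The delicate moment is $s=t_{\mathbf{b}}(x_0,v_0)$ itself, where the unperturbed trajectory tangentially touches $\partial\Omega$ at $x_{\mathbf{b}}(x_0,v_0)$. Here one writes $\partial\Omega$ locally as a graph $x_3=\Phi_{x_{\mathbf{b}}(x_0,v_0)}(x_1,x_2)$ as in (\ref{boundaryfunction}) and uses $v_0\cdot n_0=0$ together with the $\gamma_0^{\mathbf{S}}$-induced nonpositivity of $\sum_{i,j}(v_0)_i\partial_{x_i}\partial_{x_j}\Phi\,(v_0)_j$ to Taylor expand the boundary graph against the shifted tangent trajectory and conclude that the first-order $-n_0$ push lifts the curve into $\Omega$ on a full neighborhood of the grazing time. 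The main obstacle is precisely this local analysis: one needs the linear-in-$\eta$ inward normal push to dominate the second-order tangential curvature of $\partial\Omega$ uniformly on an $O(\sqrt{\eta})$ window around $s=t_{\mathbf{b}}(x_0,v_0)$, after which patching with the strictly-interior estimates on either side completes the argument.
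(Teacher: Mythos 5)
Your plan reaches the right conclusions, and it is worth separating the two halves. For the $\gamma_0^{\mathbf{S}}$ half you do essentially what the paper does: the paper takes $x_n=x_0+\frac{1}{n}e_3$, i.e.\ a perturbation of $x_0$ along $-n(x_{\mathbf{b}}(x_0,v_0))$, and observes that $t_{\mathbf{b}}(x_n,v_0)\geq t_{\mathbf{b}}(x_0,v_0)+\varepsilon$ because the shifted ray clears the grazing point and keeps travelling inside $\Omega$ for an extra time governed by $s^{*}=t_{\mathbf{b}}(x_{\mathbf{b}}(x_0,v_0),v_0)>0$; your compactness patching away from the grazing time is the standard way to make that precise. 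For the $\gamma_0^{I-}$ half you argue differently from the paper: the paper writes $\partial\Omega$ locally as a graph and runs an intermediate value argument on $\Psi(x,v,t)=x_3-tv_3-\Phi(x_1-tv_1,x_2-tv_2)$ to trap $t_{\mathbf{b}}(x,v)$ in a small interval around $t_{\mathbf{b}}(x_0,v_0)$, whereas you obtain the upper bound from the openness of $\bar\Omega^{c}$ (via the $\delta$ in the definition of $\gamma_0^{I-}$) and the lower bound from the lower semicontinuity in Lemma \ref{huang}. Your version is shorter, coordinate-free, and perfectly legitimate; the paper's computation buys the same conclusion with more explicit control of where the new exit time lands.

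One correction to your own assessment of difficulty: the step you single out as the main obstacle in the $\gamma_0^{\mathbf{S}}$ case is immediate, and routing it through a Taylor expansion is the wrong move. In the coordinates of (\ref{boundaryfunction}) centered at the grazing point one has $n_0=(0,0,-1)$ exactly (since $\nabla\Phi=0$ there), so the push $-\eta n_0$ is exactly the $+x_3$ direction; on a time window around $s=t_{\mathbf{b}}(x_0,v_0)$ the unperturbed ray lies in $\bar\Omega$, hence satisfies $x_3(s)\geq\Phi(x_1(s),x_2(s))$, and the shift leaves the first two coordinates untouched, so the shifted ray satisfies $x_3(s)+\eta>\Phi(x_1(s),x_2(s))$ strictly on the whole window — no $O(\sqrt{\eta})$ restriction and no curvature input at all. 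This matters beyond elegance: the definition of $\gamma_0^{\mathbf{S}}$ only yields $\sum_{i,j}(v_0)_i\partial_{x_i}\partial_{x_j}\Phi\,(v_0)_j\leq 0$ at the single grazing point (the strict inequality (\ref{convexity}) is an extra hypothesis imposed only in Theorem \ref{propagation}), so an argument that genuinely needed the linear-in-$\eta$ push to beat a quantified second-order curvature, or needed a sign of the second derivative on a neighborhood, would have nothing to lean on in the degenerate case. Fortunately you never need it, and with that step simplified your proof is complete and matches the paper's.
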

\begin{proof}
Throughout this proof, without loss of generality we assume that $\partial\Omega$ is a graph of $\Phi$ locally and $\Phi(0,0)=0$ and $(\partial_{x_1}\Phi, \partial_{x_2}\Phi)(0,0)=(0,0)$. Moreover assume $x_0 =(|x_0|,0,0), v_0 =(|v_0|,0,0)$ and $t_{\mathbf{b}}(x_0,v_0)=\frac{|x_0|}{|v_0|}$ so that $x_{\mathbf{b}}(x_0,v_0)=(0,0,0)=(0,0,\Phi(0,0))$.

First, let $(x_{\mathbf{b}}(x_0,v_0),v_0)\in\gamma_0^{I-}$. By the definition of $\gamma_0^{I-}$, we have $\Phi(-\tau,0)>0$ and $\Phi(\tau,0)<0$ for $0<\tau<<1$. Using the continuity of $\Phi$, choose sufficiently small $\varepsilon>0, \ \delta>0$ such that $\Phi(-\delta,y)>\frac{\varepsilon}{2}$ and $\Phi(\delta,y)<-\frac{\varepsilon}{2}$ for $0<|y|<\delta$. Fix $x=(x_1,x_2,x_3)\sim x_0$ and $v=(v_1,v_2,v_3)\sim v_0$. We define
\begin{equation}
\Psi(x,v,t) = x_3 -tv_3 -\Phi(x_1-tv_1, x_2-tv_2).\nonumber
\end{equation}
For $t^{\prime}\equiv \frac{x_1 -\delta}{v_1}$, $\Psi(x,v,t_0^{\prime})= -\Phi(\delta, x_2 -\frac{x_1 -\delta}{v_1} v_2 )+x_3 -\frac{x_1-\delta}{v_1} v_3 >\frac{\varepsilon}{4}$ for $(x_1,x_2,x_3)\sim (|x_0|,0,0), \ (v_1,v_2,v_3)\sim (|v_0|,0,0)$. For $t^{\prime\prime} = \frac{x_1 + \delta}{v_1}$, $\Psi(x,v,t^{\prime\prime})=-\Phi(-\delta,x_2 -\frac{x_1+\delta}{v_1}v_2)+ x_3 -\frac{x_1 +\delta}{v_1}v_3 < -\frac{\varepsilon}{4}$  for $(x_1,x_2,x_3)\sim (|x_0|,0,0), \ (v_1,v_2,v_3)\sim (|v_0|,0,0)$. Using the continuity of $\Phi$ and $\Psi$, there exists $t_* \in (\frac{x_1}{v_1}-\frac{\delta}{v_1},\frac{x_1}{v_1}+\frac{\delta}{v_1})$ so that $\Psi(x,v,t_*)=0$, i.e. $t_{\mathbf{b}}(x,v)=t_*$. If $x\sim x_0$ and $v\sim v_0$ then $\frac{x_1}{v_1}-\frac{\delta}{v_1}\sim \frac{|x_0|}{|v_0|}-\frac{\delta}{|v_0|} = t_{\mathbf{b}}(x_0,v_0)-\frac{\delta}{|v_0|}$ and $\frac{x_1}{v_1}+\frac{\delta}{v_1}\sim \frac{|x_0|}{|v_0|}+ \frac{\delta}{|v_0|} \sim t_{\mathbf{b}}(x_0,v_0)+\frac{\delta}{|v_0|}$ so that $t_* \in (t_{\mathbf{b}}(x_0,v_0)-\frac{\delta}{|v_0|},t_{\mathbf{b}}(x_0,v_0)+\frac{\delta}{|v_0|})$.

Next, let $(x_{\mathbf{b}}(x,v),v)\in\gamma_0^{\mathbf{S}}$. By the definition of the concave grazing boundary $\gamma_0^{\mathbf{S}}$, we have $\Phi(-\tau,0)>0$ and $\Phi(\tau,0)<0$ for $0<\tau<<1$. Choose a sequence $x_n = (|x_0|,0,\frac{1}{n})$. There exists $\varepsilon>0$ such that $t_{\mathbf{b}}(x_n,v_0)> t_{\mathbf{b}}(x_0,v_0)+\varepsilon$ for sufficiently large $n$. This implies that $(x_n,v_0)\rightarrow (x_0,v_0)$ but $t_{\mathbf{b}}(x_n,v_0)\nrightarrow t_{\mathbf{b}}(x_0,v_0)$ as $n\rightarrow \infty$.
\end{proof}
\\
\\
In the next two lemmas, we consider the grazing set $\mathfrak{G}$ (Definition \ref{grazingset}) including the discontinuity set $\mathfrak{D}$. The next lemma, Lemma 17 of \cite{Guo08} due to Hongjie Dong, is important to control a size of $\mathfrak{G}$. We denote $m_2$ as a standard 2-dimensional Lebesque measure and $m_3$ as a standard 3-dimensional Lebesque measure. Recall that the grazing section $\mathfrak{G}_x$ in Definition \ref{grazingset}.
\begin{lemma}
\label{hongjie} \cite{Guo08}  If $\partial\Omega$ is $C^1$ then the grazing section $\mathfrak{G}_x$ restricted to $\mathbb{S}^2$ has zero 2-dimensional Lebesque measure, i.e.
\begin{equation*}
m_2(\mathfrak{G}_x \cap \mathbb{S}^2)=0,
\end{equation*}
for all $x\in\bar{\Omega}$.
\end{lemma}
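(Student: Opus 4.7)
The plan is to realize $\mathfrak{G}_x \cap \mathbb{S}^2$ as (essentially) the critical-value set of a $C^1$ map between two $2$-manifolds, and then invoke Sard's theorem.

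First, for $x \in \bar{\Omega}$ fixed, I would introduce the \emph{visibility map}
\[
\Psi_x : \partial\Omega \setminus \{x\} \longrightarrow \mathbb{S}^2, \qquad \Psi_x(y) = \frac{x-y}{|x-y|}.
\]
Because $\partial\Omega$ is $C^1$ and $y \neq x$, this is a $C^1$ map between $2$-dimensional manifolds. Every direction $v \in \mathfrak{G}_x \cap \mathbb{S}^2$ with $t_{\mathbf{b}}(x,v) > 0$ lies in its image: setting $y := x_{\mathbf{b}}(x,v) \in \partial\Omega$, one has $v = (x-y)/|x-y| = \Psi_x(y)$, and the grazing condition $v \cdot n(x_{\mathbf{b}}(x,v)) = 0$ translates into $(x-y) \cdot n(y) = 0$.

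Second, I would compute the differential of $\Psi_x$ at such a $y$, acting on a tangent vector $w \in T_y \partial\Omega$:
\[
d\Psi_x|_y(w) \;=\; \frac{1}{|x-y|}\!\left( -w + \frac{(x-y) \cdot w}{|x-y|^2}\,(x-y) \right).
\]
A nonzero $w \in T_y\partial\Omega$ lies in $\ker d\Psi_x|_y$ precisely when $w$ is a scalar multiple of $x-y$. Since $w \perp n(y)$, this is equivalent to $(x-y) \cdot n(y) = 0$. Hence the critical set of $\Psi_x$ is exactly $\{ y \in \partial\Omega \setminus \{x\} : (x-y) \cdot n(y) = 0 \}$, and $\mathfrak{G}_x \cap \mathbb{S}^2$ — apart from directions with $t_{\mathbf{b}}(x,v) = 0$ — is contained in the critical-value set of $\Psi_x$.

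Third, I would apply Sard's theorem, which for a $C^k$ map between manifolds of dimensions $n$ and $m$ requires $k > \max(n-m, 0)$; here $n = m = 2$, so the hypothesis $\partial\Omega \in C^1$ is exactly sufficient to conclude that the critical-value set of $\Psi_x$ has zero $2$-dimensional Lebesgue measure on $\mathbb{S}^2$. Finally, I would dispose of the excluded directions with $t_{\mathbf{b}}(x,v) = 0$: such $v$'s can arise only when $x \in \partial\Omega$, in which case $x_{\mathbf{b}}(x,v) = x$ and the grazing condition forces $v \cdot n(x) = 0$, so these $v$'s lie on the great circle $\{v \in \mathbb{S}^2 : v \cdot n(x) = 0\}$, which is itself $m_2$-null. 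Combining the two null contributions yields $m_2(\mathfrak{G}_x \cap \mathbb{S}^2) = 0$. The main technical subtlety (rather than an obstacle) is that Sard's theorem is being used at its sharp regularity threshold $C^1$ for an equidimensional map; beyond that, the argument is essentially bookkeeping, with the boundary case $x \in \partial\Omega$ being the one annoyance.
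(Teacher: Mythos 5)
The paper itself does not prove this lemma: it is imported verbatim from Lemma 17 of \cite{Guo08} (attributed there to Hongjie Dong), so there is no in-text proof to compare your argument against. On its own merits, your proposal is a correct and self-contained proof. The reduction of $\mathfrak{G}_x\cap\mathbb{S}^2$ (for directions with $t_{\mathbf{b}}(x,v)>0$) to the critical values of the $C^1$ map $\Psi_x(y)=(x-y)/|x-y|$ on the $2$-manifold $\partial\Omega\setminus\{x\}$ is sound: the differential you compute is $-|x-y|^{-1}$ times the orthogonal projection onto the plane normal to $x-y$, so a nonzero tangent vector is annihilated exactly when $x-y\in T_y\partial\Omega$, i.e.\ when $(x-y)\cdot n(y)=0$, which is precisely the grazing condition at $y=x_{\mathbf{b}}(x,v)$; hence every such grazing direction is a critical value. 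Since source and target are both two-dimensional, Sard's theorem for equidimensional maps requires only $C^1$ regularity --- and this is in fact the easy case of Sard, provable by a direct covering estimate, so you are not leaning on any delicate sharp-threshold version. The leftover directions with $t_{\mathbf{b}}(x,v)=0$ arise only for $x\in\partial\Omega$ and lie on the great circle $\{v\in\mathbb{S}^2: v\cdot n(x)=0\}$, which is $m_2$-null, as you note. Two points worth making explicit in a final write-up: the boundedness of $\Omega$ (a standing assumption of the paper) is what guarantees $t_{\mathbf{b}}(x,v)<\infty$, so that $x_{\mathbf{b}}(x,v)$ is defined for every $v\in\mathbb{S}^2$ and your two cases exhaust $\mathfrak{G}_x\cap\mathbb{S}^2$; and since Lebesgue measure is complete, containment in the union of the critical-value set and the great circle yields $m_2(\mathfrak{G}_x\cap\mathbb{S}^2)=0$ without any separate measurability discussion.
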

With condition $m_2(\mathfrak{G}_x \cap \mathbb{S}^2)=0$, we can construct the Guo's covering which is little bit stronger than the original one in Lemma 18 in \cite{Guo08}.
\begin{lemma}[Guo's covering]
\label{bouncebacklower}
 \cite{Guo08} Assume $m_2 (\mathfrak{G}_x \cap \mathbb{S}^2)=0$ is valid for all $x\in\bar{\Omega}$. Let $B_N =\{v\in\mathbb{R}^3 : |v|\leq N \}$. Then for any $\ \varepsilon >0$ and $N_*>0$
there exist $\delta _{\varepsilon ,N,N_*}>0,$ and $l_{\varepsilon,N,N_*,\Omega}$
balls $B(x_{1};r_{1}),B(x_{2};r_{2})...,B(x_{l};r_{l})\subset \bar{\Omega}$,
as well as open sets $O_{x_{1}},O_{x_{2},}...O_{x_{l}}$ of $B_{N}$ which are radial symmetric, i.e.
\begin{equation*}
O_{x_i} = \{ t\hat{v}\in\mathbb{R}^3 : t\geq 0, \ \hat{v}\in O_{x_i}\cap\mathbb{S}^2
\},
\end{equation*}
with $m_3(O_{x_{i}})<\frac{\varepsilon}{N_*} $ and $m_2(O_{x_{i}} \cap \mathbb{S}^2)\leq \frac{\varepsilon}{N^2 N_*}$ for all $1\leq i\leq l_{\varepsilon,N,N_*,\Omega},$ such that for any $x\in
\bar{\Omega},$ there exists $x_{i}$ so that $x\in B(x_{i};r_{i})\,\ $and for
$v\notin O_{x_{i}},$
\begin{equation*}
|v\cdot n(x_{\mathbf{b}}(x,v))|>\delta _{\varepsilon ,N, N_*}>0,
\end{equation*}
or equivalently
\begin{eqnarray*}
O_{x_i} \supset
\bigcup_{x\in B(x_i ;r_i)}\{ v\in B_{N} : |v\cdot n(x_{\mathbf{b}}(x,v))| \leq \delta_{\varepsilon ,N, N_*}
\}
\supset \bigcup_{x\in B(x_i ;r_i)} \mathfrak{G}_{x} \cap B_{N}.
\end{eqnarray*}
\end{lemma}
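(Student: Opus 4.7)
The plan is local-to-global: I would construct the data $(O_{x_0}, r_{x_0}, \delta_{x_0})$ for each $x_0 \in \bar\Omega$ separately, and then extract a finite subcover by compactness of $\bar\Omega$.

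At a fixed $x_0 \in \bar\Omega$, Lemma \ref{hongjie} gives $m_2(\mathfrak{G}_{x_0} \cap \mathbb{S}^2) = 0$, so by outer regularity of the $2$-dimensional Lebesgue measure on $\mathbb{S}^2$ I can choose an open set $U_{x_0} \subset \mathbb{S}^2$ containing $\mathfrak{G}_{x_0} \cap \mathbb{S}^2$ with $m_2(U_{x_0})$ smaller than $\min\{\varepsilon/(N^2 N_*),\, 3\varepsilon/(N^3 N_*)\}$. Setting
\begin{equation*}
O_{x_0} := \{\, t \hat v \in \mathbb{R}^3 : 0 \leq t \leq N, \ \hat v \in U_{x_0} \,\}
\end{equation*}
produces a radially symmetric (relatively) open subset of $B_N$, and a direct computation in spherical coordinates gives $m_3(O_{x_0}) = \tfrac{N^3}{3}\, m_2(U_{x_0}) \leq \varepsilon/N_*$ together with $m_2(O_{x_0} \cap \mathbb{S}^2) = m_2(U_{x_0}) \leq \varepsilon/(N^2 N_*)$, as required.

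Next I would exploit the scale invariance $x_{\mathbf{b}}(x, cv) = x_{\mathbf{b}}(x, v)$ for $c > 0$, which follows immediately from $t_{\mathbf{b}}(x, cv) = t_{\mathbf{b}}(x,v)/c$. In particular, $(y, \hat v) \mapsto \hat v \cdot n(x_{\mathbf{b}}(y, \hat v))$ is well-defined on $\bar\Omega \times \mathbb{S}^2$. By the construction of $U_{x_0}$, this function is nonzero at every point of $\{x_0\} \times (\mathbb{S}^2 \setminus U_{x_0})$, and by Lemma \ref{huang} it is smooth in $(y, \hat v)$ jointly on a neighborhood of each such point. A standard finite-subcover argument applied to the compact set $\mathbb{S}^2 \setminus U_{x_0}$ then produces $r_{x_0} > 0$ and $\delta'_{x_0} > 0$ with
\begin{equation*}
|\hat v \cdot n(x_{\mathbf{b}}(y, \hat v))| \geq \delta'_{x_0} \qquad \text{for all } y \in B(x_0; r_{x_0}) \cap \bar\Omega, \ \hat v \in \mathbb{S}^2 \setminus U_{x_0}.
\end{equation*}
Multiplying by $|v|$ and using the radial symmetry of $O_{x_0}$ yields $|v \cdot n(x_{\mathbf{b}}(y, v))| \geq |v| \delta'_{x_0}$ for every $v \in B_N \setminus O_{x_0}$. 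Finally, since $\bar\Omega$ is compact, I extract a finite subcover $\{B(x_i; r_i)\}_{i=1}^{l}$ from $\{B(x_0; r_{x_0})\}_{x_0 \in \bar\Omega}$ and set $\delta_{\varepsilon, N, N_*} := \min_i \delta'_{x_i}$ (scaled by the $|v|$ lower bound in the intended application).

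The main obstacle is the tension between the strict radial-symmetry constraint on $O_{x_i}$ and the factor $|v|$ appearing in the bound: a radially symmetric set of the given form cannot exclude a whole neighborhood of $v = 0$, yet the estimate $|v \cdot n| \geq |v| \delta'$ degenerates as $|v| \to 0$. The standard workaround, used throughout \cite{Guo08}, is to implicitly restrict attention to $|v|$ bounded below (which is automatic in all our applications), so that the factor $|v|$ can be absorbed into $\delta_{\varepsilon, N, N_*}$; alternatively one relaxes the radial-symmetry convention to allow inclusion of a small ball around the origin, which costs only a negligible amount in the measure estimates.
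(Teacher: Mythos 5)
First, note that the paper does not actually prove this lemma: it is imported verbatim (in a ``slightly stronger'' form) from Lemma 18 of \cite{Guo08}, so your argument can only be measured against what is needed for the statement. Much of your construction is fine: the cone $O_{x_0}$ over a small open spherical neighborhood $U_{x_0}\supset \mathfrak{G}_{x_0}\cap\mathbb{S}^2$, the measure computations, the scaling $x_{\mathbf{b}}(x,cv)=x_{\mathbf{b}}(x,v)$, the final compactness in $x$, and the small-$|v|$ caveat you flag is real and indeed harmless in the paper's applications (Lemma \ref{uniformlycontinuous} only uses the inclusion $O_{x_i}\supset\bigcup_{y\in B(x_i;r_i)}\mathfrak{G}_y\cap B_N$ together with $|v|\geq 1/M$).

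The genuine gap is in your compactness step. You assert that, because $\hat v\cdot n(x_{\mathbf{b}}(x_0,\hat v))\neq 0$ for $\hat v\in\mathbb{S}^2\setminus U_{x_0}$, Lemma \ref{huang} makes $(y,u)\mapsto u\cdot n(x_{\mathbf{b}}(y,u))$ smooth on a neighborhood of each such point, and you extract a uniform $\delta'_{x_0}$ from this. That continuity claim fails exactly in the situation this paper is about: take $x_0\in\partial\Omega$ at a non-convex point and $\hat v$ tangent to $\partial\Omega$ at $x_0$ with $t_{\mathbf{b}}(x_0,\hat v)>0$ and a transversal far exit (essentially $(x_0,\hat v)\in\gamma_0^{\mathbf{S}}$). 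Then $\hat v\notin\mathfrak{G}_{x_0}$, so $\hat v$ survives your excision of $U_{x_0}$; but tilting $\hat v$ slightly to the incoming side, $u\cdot n(x_0)=-\eta<0$, gives $t_{\mathbf{b}}(x_0,u)=0$, $x_{\mathbf{b}}(x_0,u)=x_0$, hence $|u\cdot n(x_{\mathbf{b}}(x_0,u))|=\eta$ arbitrarily small while $u$ need not lie in $U_{x_0}$. So the infimum of $|u\cdot n(x_{\mathbf{b}}(y,u))|$ over every neighborhood of $(x_0,\hat v)$ in $\bar{\Omega}\times\mathbb{S}^2$ is zero, and no $\delta'_{x_0}>0$ exists; equivalently, $t_{\mathbf{b}}$ and $x_{\mathbf{b}}$ are discontinuous at such boundary-tangential configurations, which is precisely the content of the paper's Lemma \ref{tbcon}, so Lemma \ref{huang} cannot be read as a statement about a full neighborhood when the base point lies on $\partial\Omega$ with tangential velocity. (In a convex domain all tangent directions at a boundary point belong to $\mathfrak{G}_{x_0}$, which is why the argument looks harmless; the failure is specific to non-convex domains.) To repair it one must excise, besides a neighborhood of $\mathfrak{G}_{x_0}\cap\mathbb{S}^2$, a thin band about the tangent great circle $\{\hat v\in\mathbb{S}^2 : \hat v\cdot n(x_0)=0\}$ when $x_0$ is at or near $\partial\Omega$ (a null set, so the measure bounds survive), and replace pointwise continuity by a compactness/contradiction argument: if $y_k\to x_0$, $v_k\to v_\infty$ with $|v_k|\geq 1/M$, $v_k\notin O_{x_0}$ and $|v_k\cdot n(x_{\mathbf{b}}(y_k,v_k))|\to 0$, then passing to the limit of the exit points along the closed segment forces either $v_\infty\in\mathfrak{G}_{x_0}$ or $v_\infty$ tangent to $\partial\Omega$ at $x_0$, contradicting the enlarged excision. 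This extra ingredient, together with the $|v|$ normalization you already noted, is the actual content of Guo's Lemma 18.
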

Combining Lemma \ref{hongjie} and Lemma \ref{bouncebacklower}, we have following lemma, which is useful to prove Theorem \ref{continuity}. Namely, a function which is continuous away from the grazing set $\mathfrak{G}$ is uniformly continuous except arbitrary small open set containing $\mathfrak{G}$.
\begin{lemma}\label{uniformlycontinuous}
Assume $\phi$ is continuous on $[0,T]\times (\Omega\times \{v\in\mathbb{R}^3 : \frac{1}{M}\leq |v|\leq N \})\backslash \mathfrak{G}$. For fixed $x\in \Omega$ and $\varepsilon>0$ and $N_*>0$, there exist
\begin{equation}
\delta=\delta(\phi,\Omega,\varepsilon,N_* ,x,\frac{1}{M},N)>0, \label{delta}
\end{equation}
and an open set $U_{x}\subset \{v\in\mathbb{R}^3 : \frac{1}{M}\leq |v|\leq N \}$ which is radial symmetric, i.e.
$U_x = \{ t\hat{v}\in\mathbb{R}^3 : t\geq 0 \ , \ \hat{v}\in U_x \cap \mathbb{S}^2 \}$
with $m_3(U_{x})<\frac{\varepsilon}{N_*}$ and $m_2(U_x \cap \mathbb{S}^2)< \frac{\varepsilon}{N_* N^2}$ such that
\begin{equation*}
|\phi(t,x,v)-\phi(\bar{t},\bar{x},\bar{v})| < \frac{\varepsilon}{N_*},
\end{equation*}
for $v\in \{v\in\mathbb{R}^3 : \frac{1}{M}\leq |v|\leq N \}\backslash U_{x}$ and $|(t,x,v)-(\bar{t},\bar{x},\bar{v})|< \delta.$
\end{lemma}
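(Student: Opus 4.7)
The plan is to combine Guo's covering (Lemma \ref{bouncebacklower}) with the Heine--Cantor theorem, using Lemma \ref{huang}\,(2) to pass from pointwise continuity of $\phi$ away from $\mathfrak{G}$ to uniform continuity on a compact neighborhood of the desired set.

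Apply Lemma \ref{bouncebacklower} with the given $\varepsilon,N,N_*$. Since $x\in\Omega$ is interior, fix an index $i$ with $x\in B(x_i;r_i)$, the associated radially symmetric open set $O_{x_i}\subset B_N$ with $m_3(O_{x_i})<\varepsilon/N_*$ and $m_2(O_{x_i}\cap\mathbb{S}^2)<\varepsilon/(N^2 N_*)$, and $\delta_*:=\delta_{\varepsilon,N,N_*}>0$ such that $|v\cdot n(x_\mathbf{b}(x',v))|>\delta_*$ whenever $x'\in B(x_i;r_i)$ and $v\in B_N\setminus O_{x_i}$. Define $U_x:=O_{x_i}\cap\{\tfrac{1}{M}\leq|v|\leq N\}$; it is open, radially symmetric in the intended sense, and inherits the required measure bounds from $O_{x_i}$. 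Shrink $r>0$ so that $\overline{B(x;r)}\subset B(x_i;r_i)\cap\Omega$.

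Introduce the open ``non-grazing'' region
\[
W:=\Big\{(x',v')\in\Omega\times\mathbb{R}^3:\ |v'\cdot n(x_\mathbf{b}(x',v'))|>\tfrac{\delta_*}{2},\ \tfrac{1}{2M}<|v'|<2N\Big\}.
\]
By Lemma \ref{huang}\,(2), $(x',v')\mapsto v'\cdot n(x_\mathbf{b}(x',v'))$ is smooth on $W$, so $W$ is genuinely open and disjoint from $\mathfrak{G}$. Guo's estimate forces the compact slab $K:=[0,T]\times\overline{B(x;r)}\times(\{\tfrac{1}{M}\leq|v|\leq N\}\setminus O_{x_i})$ into $[0,T]\times W$; a standard separation argument then yields $\eta>0$ so that the closed $\eta$-neighborhood $K'$ of $K$ inside $[0,T]\times\Omega\times\mathbb{R}^3$ is still contained in $[0,T]\times W$, hence compact and disjoint from $[0,T]\times\mathfrak{G}$. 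Since $\phi$ is continuous on $[0,T]\times((\Omega\times\mathbb{R}^3)\setminus\mathfrak{G})\supset K'$, the Heine--Cantor theorem gives uniform continuity of $\phi$ on $K'$.

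Choose $\delta\in(0,\eta)$ smaller than the uniform-continuity modulus on $K'$ corresponding to $\varepsilon/N_*$. For any $v\in\{\tfrac{1}{M}\leq|v|\leq N\}\setminus U_x$, the point $(t,x,v)$ lies in $K$, and every $(\bar t,\bar x,\bar v)$ within distance $\delta$ of $(t,x,v)$ lies in $K'$; in particular $(\bar x,\bar v)\notin\mathfrak{G}$, so $\phi(\bar t,\bar x,\bar v)$ is well defined and uniform continuity yields $|\phi(t,x,v)-\phi(\bar t,\bar x,\bar v)|<\varepsilon/N_*$. The main obstacle is the asymmetry of the statement---only $v$ is required to avoid $U_x$, while $\bar v$ may drift toward the grazing set where $\phi$ is a priori undefined---and it is resolved precisely by Lemma \ref{huang}\,(2), which upgrades Guo's pointwise estimate into the open condition defining $W$ and so furnishes the buffer $K'$ on which Heine--Cantor applies.
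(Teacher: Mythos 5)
Your proposal is correct, and it follows the same overall strategy as the paper: apply Guo's covering (Lemma \ref{bouncebacklower}) to isolate a small radially symmetric velocity set $O_{x_i}$ capturing $\bigcup_{y\sim x}\mathfrak{G}_y\cap B_N$, and then get the uniform estimate from Heine--Cantor on a compact set disjoint from $\mathfrak{G}$. The one place where you genuinely diverge is the buffer mechanism that handles the asymmetry (only $v$, not $\bar v$, is assumed to avoid the bad set): the paper enlarges $O_{x_i}$ to a slightly bigger open set $U_{x_i}\supset\bar{O}_{x_i}$ and uses the positive distance $\mathfrak{d}$ between $\bar{O}_{x_i}$ and $B_N\setminus U_{x_i}$ to force $\bar v\notin O_{x_i}$, then applies uniform continuity on $[0,T]\times\bar{B}(x;\delta)\times (B_N\setminus O_{x_i})$; you instead keep $U_x=O_{x_i}\cap\{\frac1M\leq|v|\leq N\}$ and buffer on the good side, using the quantitative bound $|v\cdot n(x_{\mathbf{b}})|>\delta_*$ together with Lemma \ref{huang}(2) to produce the open non-grazing region $W$ and a fattened compact set $K'\subset[0,T]\times W$ on which Heine--Cantor applies. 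Both devices do the same job; yours has the minor advantage of keeping the measure bounds $m_3(U_x)<\varepsilon/N_*$, $m_2(U_x\cap\mathbb{S}^2)\leq\varepsilon/(N_*N^2)$ exactly as inherited from $O_{x_i}$ (the paper's $U_{x_i}$ only satisfies $m_3(U_{x_i})\leq 2m_3(O_{x_i})$, a harmless constant slip), at the cost of the extra separation/openness argument for $W$ and $K'$. One shared looseness worth flagging: your $K'$ (like the paper's compact set $B_N\setminus O_{x_i}$) contains velocities slightly outside $\{\frac1M\leq|v|\leq N\}$, where the hypothesis does not literally assert continuity of $\phi$; this is immaterial in every application (where $\phi=\bar w h$ is continuous away from $\mathfrak{G}$ for all $v$), but a fastidious reader would either shrink $K'$ in the radial variable or state the hypothesis on a slightly larger annulus.
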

\begin{proof}
Let $x\sim\bar{x}$. Due to Guo's covering \cite{Guo08}, Lemma \ref{bouncebacklower}, we can choose $B(x_i ;r_i)$ including $x$ and $\bar{x}$, as well as $O_{x_i}\subset\mathbb{R}^3$ so that
\begin{equation*}
O_{x_i} \supset \bigcup_{y\in B(x_i;r_i)}\mathfrak{G}_y \cap B_{N} \supset \bigcup_{y\in B(x;\delta)}\mathfrak{G}_y \cap B_{N},
\end{equation*}
with $m_3(O_{x_i})<\frac{\varepsilon}{N_*}$. Notice that $m_3(\bar{O}_{x_i}) = m_3(O_{x_i})$. We can choose an open set $U_{x_i}$ so that $m_3(U_{x_i})\leq 2 m_3(O_{x_i})$ and $\bar{O}_{x_i} \subset U_{x_i}$. Since both of $\bar{O}_{x_i}$ and $B_{N}\backslash U_{x_i}$ are compact subsets of $B_N$, we have a positive distance between two sets, i.e.
\begin{equation*}
0< \mathfrak{d} =\inf\{ |\zeta-\xi| : \zeta \in \bar{O}_{x_i} \ \ \text{and} \ \ \xi\in B_{N}\backslash U_{x_i}
\}.
\end{equation*}
Assume $\delta < \mathfrak{d}/2$. Fix $x\in\bar{\Omega}$ and $v\in\{v\in\mathbb{R}^3 : \frac{1}{M}\leq |v|\leq N \}\backslash U_x$. Then $|(\bar{x},\bar{v})-(x,v)|<\delta$ implies that $\bar{v}\in \{v\in\mathbb{R}^3 : \frac{1}{M}\leq |v|\leq N \}\backslash O_{x_i}$. For such $x,v,\bar{x}$ and $\bar{v}$ consider the function $\phi$ as it's restriction on a compact set $[0,T]\times\bar{B}(x;\delta)\times B_N \backslash O_{x_i}$. Therefore $\phi_{[0,T]\times\bar{B}(x;\delta)\times B_N \backslash O_{x_i}}$ is uniformly continuous. Hence $|\phi(t,x,v)-\phi(\bar{t},\bar{x},\bar{v})|$ can be controlled small uniformly if $\delta>0$ is sufficiently small.
\end{proof}
\\
\\
We will use the Carleman's representation \cite{Guo03}\cite{Wennberg94} in the proof of Theorem \ref{continuity} crucially. Let $Q_+ (\phi,\psi)$ be defined by (\ref{qgl}) and let $\psi=\psi(v)$ and $\phi=\phi(v)$, $v\in\mathbb{R}^3$ make $Q_+ (\psi,\phi) <\infty$ almost everywhere. Then the \textbf{Carleman's representation} is
\begin{equation}
Q_+ (\psi,\phi)(v) = 2 \int_{\mathbb{R}^3}  \psi(v^{\prime}) \frac{1}{|v-v^{\prime}|^2} \int_{E_{vv^{\prime}}}  \phi(v_1^{\prime}) B(2v-v^{\prime}-v_1^{\prime},\frac{v^{\prime}-v_1^{\prime}}{|v^{\prime}-v_1^{\prime}|}) dv_1^{\prime} dv^{\prime},\label{carleman}
\end{equation}
where $E_{vv^{\prime}}$ is a hyperplane containing $v\in\mathbb{R}^3$ and perpendicular to $\frac{v^{\prime}-v}{|v^{\prime}-v|}\in\mathbb{S}^2$, i.e.
\begin{equation}
E_{vv^{\prime}}=\{v_1^{\prime}\in\mathbb{R}^3 : (v_1^{\prime} -v)\cdot (v^{\prime}-v)=0\}.\label{hyperplane}
\end{equation}
\\
In the proof of Theorem \ref{continuity} we need to control the integration over $E_{vv^{\prime}}$ in (\ref{carleman}) frequently :
\begin{lemma}\label{estimwhb}
For a rapidly decreasing function $\phi : \mathbb{R}_+ \rightarrow \mathbb{R}_+$, we have
\begin{equation}
\int_{E_{v v'}}\phi(|{{v_1}'}|) B\big(2v-{v'}-{{v_1}'},\frac{{v'}-{{v_1}'}}{|{v'}-{{v_1}'}|}\big)d{{v_1}'}
\leq C_{\phi} (1+|v-v'|^{\gamma}),\label{difference1}
\end{equation}
where $C_{\phi}$ only depends on $\phi$.
\end{lemma}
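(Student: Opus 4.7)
My plan is to parametrize the hyperplane $E_{vv'}$ via its orthogonal decomposition with respect to $v'-v$, pass to polar coordinates on that plane, and then exploit a one-variable substitution that turns the $q_0$-factor into an integral against the angular cutoff kernel. I start by writing $v_1' = v + \eta$ with $\eta$ in the two-dimensional subspace $E := (v'-v)^{\perp}$, so that $dv_1'$ is Lebesgue measure on $E$. Setting $\rho := |v-v'|$, the orthogonality $\eta \cdot (v'-v) = 0$ immediately yields the identities
$$|2v-v'-v_1'|^2 = \rho^2 + |\eta|^2,\qquad |v'-v_1'|^2 = \rho^2 + |\eta|^2,\qquad (2v-v'-v_1')\cdot (v'-v_1') = |\eta|^2 - \rho^2,$$
and consequently
$$B\!\left(2v-v'-v_1',\,\tfrac{v'-v_1'}{|v'-v_1'|}\right) \;=\; (\rho^2+|\eta|^2)^{\gamma/2}\,q_0\!\left(\tfrac{|\eta|^2-\rho^2}{|\eta|^2+\rho^2}\right).$$

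Next I introduce polar coordinates $\eta = r\sigma$ on $E$ with $r = |\eta| \geq 0$ and $\sigma \in \mathbb{S}^1 \subset E$. Denoting $\Psi_v(r) := \int_{\mathbb{S}^1}\phi(|v+r\sigma|)\,d\sigma$, the integral under study becomes
$$\int_0^\infty \Psi_v(r)\,(\rho^2+r^2)^{\gamma/2}\,q_0\!\left(\tfrac{r^2-\rho^2}{r^2+\rho^2}\right)r\,dr.$$
I would then apply the subadditive bound $(\rho^2+r^2)^{\gamma/2} \leq \rho^\gamma + r^\gamma$, valid because $0 < \gamma \leq 1$, splitting the integral into a $\rho^\gamma$-piece (which will produce the $\rho^\gamma$ part of the target bound) and an $r^\gamma$-piece (which will produce the constant $1$). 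On each piece I make the substitution $s = (r^2-\rho^2)/(r^2+\rho^2) \in [-1,1)$, under which $r(s) = \rho\sqrt{(1+s)/(1-s)}$ and $r\,dr = \rho^2(1-s)^{-2}\,ds$; the angular cutoff (\ref{Gradcutoff}) is equivalent to $\int_{-1}^1 q_0(s)\,ds < \infty$, so the $q_0$-factor becomes an $L^1$ weight in $s$.

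The hard part will be closing the estimate with a constant depending only on $\phi$, not on $v$. The essential uniform-in-$v$ input is the planar Radon-type bound
$$\int_{E_{vv'}}\phi(|w|)\,dw \;=\; 2\pi\!\int_d^\infty\!\phi(u)\,u\,du \;\leq\; 2\pi\!\int_0^\infty\!\phi(u)\,u\,du \;=:\; C_\phi,$$
where $d$ denotes the distance from the origin to the 2-plane $E_{vv'}$; reparametrizing $E_{vv'}$ by its closest point to the origin and writing $|w|^2 = d^2 + \tau^2$ in polar yields this bound, which translates into $\int_0^\infty \Psi_v(r)\,r\,dr \leq C_\phi$ uniformly. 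Combining this with the rapid decay of $\Psi_v(r(s))$ as $s\to 1^-$ (where $r\to\infty$), which tames the $(1-s)^{-2}$ singularity introduced by the substitution, and with the $L^1$-integrability of $q_0$, I would split the $s$-range into $[-1,0]$ (equivalently $r\leq \rho$) and $[0,1)$ (equivalently $r > \rho$) and treat the regimes $\rho \leq 1$ and $\rho \geq 1$ separately to assemble the final bound $C_\phi\,(1+\rho^\gamma)$.
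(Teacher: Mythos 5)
Your opening steps coincide with the paper's own proof: both arguments parametrize $E_{vv'}$ by $v_1'=v+\eta$ with $\eta\perp (v'-v)$, use the identities $|2v-v'-v_1'|^2=|v'-v_1'|^2=\rho^2+|\eta|^2$ and $(2v-v'-v_1')\cdot(v'-v_1')=|\eta|^2-\rho^2$ (with $\rho:=|v-v'|$), and finish the size count with $(\rho^2+|\eta|^2)^{\gamma/2}\le \rho^{\gamma}+|\eta|^{\gamma}$ plus integrability of a rapidly decreasing function over a $2$-plane. The divergence, and the genuine gap, is in how $q_0$ is handled. The paper's computation in effect uses a pointwise bound on $q_0$: in its display the factor $q_0(\cos\theta)$ is replaced by the cosine $\frac{|\eta|^2-\rho^2}{|\eta|^2+\rho^2}\le 1$ itself (i.e.\ Grad's hard cutoff $q_0(s)\lesssim |s|$), after which the estimate closes directly in the $\eta$-variable with no change of variables in the angular parameter. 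You instead plan to use only the $L^1$ cutoff $\int_{-1}^{1}q_0(s)\,ds<\infty$ coming from (\ref{Gradcutoff}), via the substitution $s=(r^2-\rho^2)/(r^2+\rho^2)$, and that is exactly the step that cannot be closed. The Jacobian you compute is $\rho^2(1-s)^{-2}=(r^2+\rho^2)^2/(4\rho^2)$; when $\rho$ is small the entire region $r\sim 1$, where $\Psi_v(r)$ has no smallness, is mapped into $1-s=O(\rho^2)$, so the ``rapid decay of $\Psi_v(r(s))$ as $s\to 1^-$'' is not uniform in $\rho$ and does not tame the $(1-s)^{-2}$ factor, whose size there is of order $\rho^{-2}$. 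No pairing of this against $\|q_0\|_{L^1(-1,1)}$ can produce a bound $C_\phi(1+\rho^{\gamma})$.

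In fact the inequality is false under (\ref{Gradcutoff}) alone, so no argument using only that hypothesis can succeed: take $\phi(u)=e^{-u^2}$, $q_0(s)=(1-s)^{-1/2}$ (which satisfies (\ref{Gradcutoff})) and $v=0$; on $E_{0v'}$ one has $1-s=\frac{2\rho^2}{\rho^2+|v_1'|^2}$, so the left-hand side of (\ref{difference1}) equals $\frac{1}{\sqrt{2}\,\rho}\int_{E_{0v'}}e^{-|v_1'|^2}(\rho^2+|v_1'|^2)^{(\gamma+1)/2}dv_1'\ge c/\rho$, which cannot be dominated by $C_\phi(1+\rho^{\gamma})$ as $\rho=|v-v'|\to 0$. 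The missing ingredient is therefore a pointwise hypothesis on $q_0$ (boundedness, or $q_0(s)\le C|s|$), which is what the paper silently invokes; once you have it, your own formula $B=(\rho^2+|\eta|^2)^{\gamma/2}q_0\big(\frac{|\eta|^2-\rho^2}{|\eta|^2+\rho^2}\big)\le C(\rho^2+|\eta|^2)^{\gamma/2}$ reduces the lemma to your planar Radon-type bound, and the $s$-substitution and the case split in $\rho$ become unnecessary. A smaller caveat: your $r^{\gamma}$-piece requires $\int_{E_{vv'}}\phi(|v_1'|)\,|v_1'-v|^{\gamma}dv_1'\le C_\phi$ uniformly in $v$, but the closest-point argument only yields $C_\phi(1+|v|^{\gamma})$ since $|v_1'-v|$ is not controlled by $|v_1'|$ when the in-plane component of $v$ is large; the paper glosses over the same point by writing the argument of $\phi$ as $\eta_1^2+\eta_2^2+|v|^2$ and then discarding $|v|^2$, so state explicitly how this term is absorbed in the applications.
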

\begin{proof}
For fixed $v^{\prime}$ and $v$, let us denote $\{\tilde{\mathbf{e}}_1 , \tilde{\mathbf{e}}_2 , \tilde{\mathbf{e}}_3 \}$, with $\tilde{\mathbf{e}}_3 = \frac{v^{\prime}-v}{|v^{\prime}-v|}$, be the orthonormal basis
of $\mathbb{R}^3$ such that any $v_1^{\prime}\in E_{vv^{\prime}}$ can be written as $v_1^{\prime} = v + \eta_1
\tilde{\mathbf{e}}_1 +\eta_2 \tilde{\mathbf{e}}_2$. Since $v^{\prime}-v \perp E_{v v'}$ from (\ref{hyperplane}), there is $\eta_3$ such that
$v^{\prime}-v=\eta_3\tilde{\mathbf{e}}_3$ where $|\eta_3|=|v-v^{\prime}|$. Then we can write
$2v-v^{\prime}-v_1^{\prime}=v-v^{\prime} +v-v^{\prime}_1=-\eta_1 \tilde{\mathbf{e}}_1 -\eta_2 \tilde{\mathbf{e}}_2
-\eta_3 \tilde{\mathbf{e}}_3$ and $|2v-v^{\prime}-v^{\prime}_1|^2=\eta_1^2 +\eta_2^2 +|v^{\prime}-v|^2$. Moreover
$v^{\prime}-v^{\prime}_1 = -\eta_1 \tilde{\mathbf{e}}_1 -\eta_2 \tilde{\mathbf{e}}_2 +\eta_3 \tilde{\mathbf{e}}_3$. We can write the left hand side of (\ref{difference1}) as
\begin{eqnarray*}
&&\int_{-\infty}^{\infty}  \int_{-\infty}^{\infty}  \phi(\eta_1^2 + \eta_2^2 + |v|^2)
\Bigg{|}
\left( \begin{array}{ccc}
-\eta_1\\
-\eta_2\\
-\eta_3
\end{array} \right)
\Bigg{|}^{\gamma}
\times
\frac{1}{\eta_1^2+\eta_2^2+|{v}-{v^{\prime}}|^2}
\left( \begin{array}{ccc}
-\eta_1\\
-\eta_2\\
-\eta_3
\end{array} \right)
\cdot
\left( \begin{array}{ccc}
-\eta_1\\
-\eta_2\\\
\eta_3
\end{array} \right) d\eta_1 d\eta_2 \nonumber\\
&\leq&
\int_{-\infty}^{\infty}  \int_{-\infty}^{\infty}  \phi(\eta_1^2 + \eta_2^2)
(\eta_1^2+\eta_2^2+|v'-v|^2)^{\frac{\gamma}{2}-1} (\eta_1^{2}+\eta_2^{2}-|v'-v|^2)d\eta_1 d\eta_2 \nonumber\\
&\leq&
\int_{-\infty}^{\infty}  \int_{-\infty}^{\infty}  \phi(\eta_1^2 + \eta_2^2)\big(\eta_1^2+\eta_2^2+|v'-v|^2\big)^{\frac{\gamma}{2}}d\eta_1 d\eta_2 \nonumber\\
&\leq&
C_{\phi}(1+|v'-v|^{\gamma}) .\label{estimE}
\end{eqnarray*}
\end{proof}
In terms of the standard perturbation $f$ such that $F=\mu +\sqrt{\mu }f,$
the Boltzmann equation can be rewritten as
\begin{equation}
\left\{ \partial _{t}+v\cdot \nabla +L\right\} f=\Gamma (f,f),\text{ \ \ \ \
}f(0,x,v)=f_{0}(x,v), \label{LinearBoltzmannEquation}
\end{equation}%
where the standard linear Boltzmann operator, see \cite{Guo03}, is given by
\begin{equation*}
Lf\equiv \nu f-Kf,
\end{equation*}%
with the collision frequency $\nu (v)\equiv \int |v-u|^{\gamma }\mu
(u)q_{0}(\frac{v-u}{|v-u|}\cdot \omega)d\omega du $ for $0< \gamma
\leq 1$ and
\begin{eqnarray*}
&& \ \ \ \ \frac{1}{C_{\nu}} (1+|v|)^{\gamma}\leq \nu(v) \leq C_{\nu}(1+|v|)^{\gamma}, \label{collisionfrequency}\\
Kf &\equiv& \int_{\mathbb{R}^3}\mathbf{k}(v,v^{\prime})f(v^{\prime})dv^{\prime} \equiv \frac{1}{\sqrt{\mu}}Q_+(\mu , \sqrt{\mu}f) + \frac{1}{\sqrt{\mu}}Q_+(\sqrt{\mu}f,\mu) - \frac{1}{\sqrt{\mu}}Q_-(\sqrt{\mu}f,\mu)\label{K},\\
\Gamma (f,f)&\equiv&\frac{1}{\sqrt{\mu }}Q_+(\sqrt{\mu }f,\sqrt{\mu }%
f) - \frac{1}{\sqrt{\mu }}Q_-(\sqrt{\mu }f,\sqrt{\mu }%
f)\equiv \Gamma _{+}(f,f)-\Gamma_{-}%
(f,f).  \label{gamma}
\end{eqnarray*}%
We recall two estimates of operators $K$ and $\Gamma$ from \cite{Guo08}.
The Grad estimate for hard
potentials :%
\begin{equation*}
|\mathbf{k}(v,v^{\prime })|\leq C_{\mathbf{k}}\{|v-v^{\prime }|+|v-v^{\prime }|^{-1}\}e^{-%
\frac{1}{8}|v-v^{\prime }|^{2}-\frac{1}{8}\frac{||v|^{2}-|v^{\prime
}|^{2}|^{2}}{|v-v^{\prime }|^{2}}}.  \label{grad}
\end{equation*}%
Recall $w$ in (\ref{weight})$.$ Let $0\leq \theta<\frac{1}{4}$. Then there exists $0\leq \varepsilon (\theta
)<1$ and $C_{\theta }>0$ such that for $0\leq \varepsilon <\varepsilon
(\theta ),$
\begin{equation}
\int \{|v-v^{\prime }|+|v-v^{\prime }|^{-1}\}e^{-\frac{1-\varepsilon }{8}%
|v-v^{\prime }|^{2}-\frac{1-\varepsilon }{8}\frac{||v|^{2}-|v^{\prime
}|^{2}|^{2}}{|v-v^{\prime }|^{2}}}\frac{w(v)e^{\theta |v|^2}}{w(v^{\prime }) e^{\theta|v^{\prime}|^2}}dv^{\prime
}\leq \frac{C_{\mathbf{k}}}{1+|v|}.  \label{wk}
\end{equation}
For the nonlinear collision operator
\begin{equation}
|w\Gamma(g_1,g_2)(v)|\leq C_{\Gamma}(1+|v|)^{\gamma}||wg_1||_{\infty}||wg_2||_{\infty}.\label{Gamma1}
\end{equation}
Also we recall a standard estimate
\begin{equation}
\int_{\mathbb{R}^3} \phi(v^{\prime})|v-v^{\prime}|^{\gamma} dv^{\prime} \sim (1+|v|)^{\gamma},\label{standardbound}
\end{equation}
for $\phi\in L^1(\mathbb{R}^3)$.
\section{Continuity of the Collision Operators}

In this section we mainly prove the following Theorem :\\
\begin{theorem}[Continuity of $Q_+$]\label{continuity}
Assume $F(t,x,v)$ is continuous on $[0,T]\times(\Omega\times\mathbb{R}^3) \backslash \mathfrak{G}$ and
\begin{equation*}
||\bar{w}^{-1}F ||_{L^{\infty}([0,T]\times\bar{\Omega}\times\mathbf{R}^3)} < +\infty,
\end{equation*}
where $\bar{w}= \frac{e^{-\frac{|v|^2}{4}}}{(1+\rho^2 |v|^2)^{\beta}}\label{barw}$ with $\rho\in\mathbb{R}$ and $\beta>0$.
Then $Q_{+}(F,F)(t,x,v)$ is continuous in  $ \ [0,T]\times \Omega \times \mathbb{R}^3$ and
\begin{equation}
\sup_{[0,T]\times \bar{\Omega} \times\mathbb{R}^3} |\nu^{-1} \bar{w}^{-1} Q_{+}(F,F)(t,x,v)|<\infty.\label{bdQ}
\end{equation}
\end{theorem}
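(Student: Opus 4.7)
The plan is to apply the Carleman representation (\ref{carleman}) to $Q_+(F,F)(t,x,v)$ and then exploit the decomposition outlined in the Main Ingredients section together with Lemma \ref{hongjie}, Lemma \ref{bouncebacklower}, Lemma \ref{uniformlycontinuous}, and the hyperplane estimate of Lemma \ref{estimwhb}. First I would establish the bound (\ref{bdQ}) by writing $F=\bar{w}\cdot(\bar{w}^{-1}F)$: since $\bar{w}$ contains a Gaussian, $|F(v_1')|$ decays rapidly, so Lemma \ref{estimwhb} controls the inner hyperplane integral by $C(1+|v-v'|^\gamma)$, and the outer integration against another copy of $\bar{w}$ on $v'$ is handled by (\ref{standardbound}), giving $|\nu^{-1}\bar{w}^{-1}Q_+(F,F)|\lesssim \|\bar{w}^{-1}F\|_\infty^2$.

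For the continuity, fix $(t,x,v)\in[0,T]\times\Omega\times\mathbb{R}^3$ and let $(\bar{t},\bar{x},\bar{v})$ satisfy $|(t,x,v)-(\bar{t},\bar{x},\bar{v})|<\delta$. Using the Gaussian decay from $\bar{w}$, I truncate every velocity integration to $B_N$ with tail error $O(e^{-cN^2})\|\bar{w}^{-1}F\|_\infty^2$, then replace the singular kernel $|v-v'|^{-2}$ by a smooth compactly supported bump and cut away the angularly singular part of $B$, producing a further $O(\delta)\|\bar{w}^{-1}F\|_\infty^2$ error. Setting $v''=v'-(v-\bar{v})$ and parametrizing $E_{\bar{v}v''}$ by the orthonormal basis adapted to $E_{vv'}$, the truncated $|Q_+(F,F)(t,x,v)-Q_+(F,F)(\bar{t},\bar{x},\bar{v})|$ is bounded by two pieces, exactly as in the Main Ingredients sketch: an outer difference $I_1=\int_{|v'|<N}|F(t,x,v')-F(\bar{t},\bar{x},v'')|\bigl(\int_{E_{\bar{v}v''}\cap\{|v_1''|<N\}}|F|\,dv_1''\bigr)dv'$, and an inner hyperplane difference $I_2=\int_{|v'|<N}|F(t,x,v')|\bigl|\int_{E_{vv'}\cap\{|v_1'|<N\}}F\,dv_1'-\int_{E_{\bar{v}v''}\cap\{|v_1''|<N\}}F\,dv_1''\bigr|dv'$.

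The integral $I_1$ is handled by Lemma \ref{uniformlycontinuous}: for $x$ and $\varepsilon>0$ fixed there is a radially symmetric open set $U_x\subset B_N$ of arbitrarily small three-dimensional measure containing $\bigcup_{|y-x|<\delta}\mathfrak{G}_y\cap B_N$, on whose complement $F$ has modulus of continuity at most $\varepsilon/N_*$; splitting the $v'$-integral at $U_x$, the $U_x$-piece is $O(m_3(U_x))\|\bar{w}^{-1}F\|_\infty^2$ via Lemma \ref{estimwhb}, and the complementary piece is $O(\varepsilon)$ times the same inner bound. For $I_2$, the change of variables $v_1''\mapsto v_1'$ transports the $E_{\bar{v}v''}$-integral to an $E_{vv'}$-integral whose integrand differs from $F(\bar{t},\bar{x},v_1'')$ only by a Jacobian/kernel perturbation of size $O(\delta)$, reducing $I_2$ (up to an $O(\delta)\|\bar{w}^{-1}F\|_\infty^2$ error) to $\int_{|v'|<N}|F(t,x,v')|\int_{E_{vv'}\cap\{|v_1'|<N\}}|F(t,x,v_1')-F(\bar{t},\bar{x},v_1'')|\,dv_1'\,dv'$.

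The main obstacle is this last expression: pointwise in $v'$ the slice $E_{vv'}\cap\mathfrak{G}_x$ may carry positive or even infinite two-dimensional measure although Lemma \ref{hongjie} only gives $m_2(\mathfrak{G}_x\cap\mathbb{S}^2)=0$, because the hyperplane $E_{vv'}$ can graze the conical grazing section tangentially, so no pointwise-in-$v'$ smallness of the inner integral is available. The resolution, which is the technical heart of Section 3.3, is to show that the set of such tangential directions $v'$ has small three-dimensional measure in $B_N$ by exploiting the radial symmetry of the covering in Lemma \ref{bouncebacklower} together with the conical structure of $\mathfrak{G}_x$: for these exceptional $v'$ one absorbs everything with the $L^\infty$ bound on $F$ against the small $v'$-measure, while for the remaining generic $v'$ the inner integral is $O(\varepsilon)$ from the uniform continuity of $F$ outside $U_x$. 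Choosing $N$ large, then $\varepsilon$ small, and finally $\delta$ small gives continuity of $Q_+(F,F)$ at $(t,x,v)$.
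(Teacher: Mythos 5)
Your overall architecture coincides with the paper's: boundedness from the Gaussian in $\bar{w}$, Carleman's representation, the change of variables $v''=v'-(v-\bar{v})$ and its companion (\ref{changeofvariables2}), the split into an outer-difference term and an inner hyperplane-difference term, and Lemmas \ref{hongjie}, \ref{bouncebacklower}, \ref{uniformlycontinuous} for the outer term. The gap is precisely in the step you call the technical heart. Your dichotomy --- an ``exceptional'' set of directions $v'$ of small three-dimensional measure versus ``generic'' $v'$ for which the inner integral is $O(\varepsilon)$ by uniform continuity of $F$ outside $U_x$ --- is not justified and is not correct as stated: even when $E_{vv'}$ stays far from the apex of the cone $\mathfrak{G}_x$, the plane still crosses the radial neighborhood $U_x$, and on $E_{vv'}\cap U_x$ continuity gives you nothing; what is needed is a bound on $m_2\big(E_{vv'}\cap U_x\cap\{\frac{1}{N}\leq|v_1'|\leq N\}\big)$. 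This is exactly what Lemma \ref{Projection} supplies: for $v'\notin\mathfrak{B}$, i.e. $\mathrm{dist}(0,E_{vv'})\geq\frac{\varrho}{3N}$, the radial projection of $\mathbb{S}^2$ onto the plane has Jacobian at most $\frac{3N^4}{\varrho}$, so the slice measure is controlled by $m_2(U_x\cap\mathbb{S}^2)$, and one must pre-select $U_x$ with $m_2(U_x\cap\mathbb{S}^2)\lesssim\frac{\varrho\varepsilon}{N_*N^2}$ (which is why the covering of Lemma \ref{bouncebacklower} is stated with a surface-measure bound, not only a volume bound). The ``bad'' set is then not a set of tangential directions but $\mathfrak{B}$, the set of $v'$ whose plane passes within $\frac{\varrho}{|v|}$ of the origin, of measure $O(N^2\sqrt{\varrho})$. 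Without this quantitative slice estimate your generic-$v'$ case does not close.

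Moreover, your argument silently assumes $v\neq 0$. At $v=0$ every hyperplane $E_{0v'}$ contains the apex of the cone, so the exceptional set in your dichotomy is all of $B_{2N}$ and the projection bound degenerates (the whole construction requires $\varrho<\frac{|v|^2}{2}$). The paper handles $v=0$ by a separate computation: writing $v'$ in spherical coordinates, $v_1'\in E_{0v'}$ in polar coordinates, and using the identity (\ref{identity}), the integration over the direction of $v'$ converts the one-dimensional angular trace of the radially symmetric set $U_x$ on each plane into its three-dimensional measure, which is small by construction. You need an argument of this kind (or an explicit reduction to it) for the continuity claim to hold at every $(t,x,v)\in[0,T]\times\Omega\times\mathbb{R}^3$.
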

Theorem \ref{continuity}, a smooth effect in $C^0_{t,x,v}$, is the crucial ingredient to prove Theorem \ref{propagation} and Theorem \ref{continuityawayfromD}. This smooth effect of the gain term ensures that there is no singularity created by the nonlinearity of Botlzmann equation.
\\
\\
\begin{proof}[Proof of (\ref{bdQ})]
It is easy to show the boundedness (\ref{bdQ}) from
\begin{eqnarray*}
\nu^{-1} \bar{w}^{-1} Q_{+}(F,F)(t,x,v)&\leq&\frac{1}{\nu(v)\bar{w}(v)} \int_{\mathbb{R}^3}\int_{\mathbb{S}^2} B(v-u,\omega) \bar{w}(u^{\prime})\bar{w}(v^{\prime}) d\omega du \times || \bar{w}^{-1} F||_{\infty}^2\\
&\leq&
\nu(v)^{-1} \int_{\mathbb{R}^3} \int_{\mathbb{S}^2} B(v-u,\omega) \frac{e^{-\frac{|u|^2}{4}}}{(1+\rho^2 |u|^2)^{\beta}} d\omega du \times ||\bar{w}^{-1}F||_{\infty}^2\\
&\leq& C \ \nu(v)^{-1} \nu(v) ||\bar{w}^{-1}F||_{\infty}^2 \leq C||\bar{w}^{-1}F||_{L^{\infty}([0,T]\times(\bar{\Omega}\times\mathbf{R}^3))}^2
\end{eqnarray*}
where we used (\ref{standardbound}) and $|u^{\prime}|^2 + |v^{\prime}|^2 = |u|^2 + |v|^2$.
\end{proof}
\\
\\
Next we will show the continuity part of Theorem \ref{continuity}. The goal of following three subsections is to show
\begin{eqnarray}
&&\text{For fixed  \ } \varepsilon>0  \ \text{and } (t,x,v)\in [0,T]\times \Omega\times\mathbb{R}^3, \ \text{there is \ } \delta>0 \text{ \ such that}\nonumber\\
&&|Q_{+}(\bar{w}h,\bar{w}h)(\bar{t},\bar{x},\bar{v})- Q_{+}(\bar{w}h,\bar{w}h)(t,x,v)| < \varepsilon \ \ \text{for \ } |(\bar{t},\bar{x},\bar{v})-(t,x,v)|< \delta. \label{continuitystatement}
\end{eqnarray}
\subsection{Decomposition and Change of Variables}
In this section, we use the Carleman's representation to split $Q_{+}(\bar{w}h,\bar{w}h)(\bar{t},\bar{x},\bar{v})- Q_{+}(\bar{w}h,\bar{w}h)(t,x,v)$ in a natural way (\ref{diff}), and introduce two change of variables (\ref{changeofvariables1}) and (\ref{changeofvariables2}).\\
\\
It is convenient to define
\begin{equation*}
h\equiv \bar{w}^{-1}F,
\end{equation*}
where $||h||_{\infty} \equiv ||h||_{L^{\infty}([0,T]\times (\bar{\Omega}\times\mathbb{R}^3))} = ||\bar{w}^{-1} F ||_{L^{\infty}([0,T]\times(\bar{\Omega}\times\mathbf{R}^3))}.$ Choose $(\bar{t},\bar{x},\bar{v})\sim(t,x,v)$. Using the Carleman's Representation (\ref{carleman}) we have
\begin{equation*}
Q_{+}(\bar{w}h,\bar{w}h)(\bar{t},\bar{x},\bar{v})-
Q_{+}(\bar{w}h,\bar{w}h)(t,x,v) \ \ \ \ \ \ \ \ \ \  \ \ \ \ \ \ \ \ \ \ \ \ \ \ \ \ \ \ \ \ \ \ \ \ \  \ \ \ \ \ \ \ \ \ \ \ \ \ \ \ \ \ \ \ \ \ \ \ \ \    \ \ \ \ \ \ \ \ \ \ \ \ \ \  \ \ \ \ \ \ \ \ \ \ \ \ \ \ \
\end{equation*}
\begin{eqnarray}
&=&2 \int_{\mathbb{R}^3}\underbrace{\bar{w}(v'')h(\bar{t},\bar{x},v'')
\frac{1}{|\bar{v}-v''|^2}}_{\mathcal{A}}
{\int_{E_{\bar{v} v''}}\underbrace{\bar{w}({v_1^{\prime\prime}})h(\bar{t},\bar{x},{v_1^{\prime\prime}})
B\big(2\bar{v}-{v''}-{v_1^{\prime\prime}},\frac{{v''}-{v_1^{\prime\prime}}}{|{v''}-{v_1^{\prime\prime}}|}\big)}_{\mathcal{B}}  d{v_1^{\prime\prime}}}   d{v''}\nonumber\\
&&-2\int_{\mathbb{R}^3}\underbrace{\bar{w}(v')h(t,x,v')
\frac{1}{|v-v'|^2}}_{\mathcal{A}^{\prime}}
{\int_{E_{v v'}}\underbrace{\bar{w}(v_1^{\prime})h(t,x,v_1^{\prime})
B\big(2v-v'-v_1^{\prime},\frac{v'-v_1^{\prime}}{|v'-v_1^{\prime}|}\big)}_{\mathcal{B}^{\prime}} dv_1^{\prime}}  dv' \nonumber\\
&=&2\int_{\mathbb{R}^3} \ \{\mathcal{A} - \mathcal{A}^{\prime}\} \ \int_{E_{\bar{v}v^{\prime\prime}}}  \mathcal{B} \ dv_1^{\prime\prime} dv^{\prime\prime} \
 + \ 2 \int_{\mathbb{R}^3} \ \mathcal{A}^{\prime} \ \int_{E_{\bar{v}v^{\prime\prime}}} \{\mathcal{B}-\mathcal{B}^{\prime} \}  \ dv_1^{\prime\prime} dv^{\prime}.
\label{diff}
\end{eqnarray}
In order to control the first term of (\ref{diff}), we need to compare arguments of $\mathcal{A}$ and $\mathcal{A}^{\prime}$. For that purpose, we introduce the following change of variables :
\begin{lemma}
For fixed $v$ and $\bar{v}$ in $\mathbb{R}^3$ we define
\begin{equation}
v^{\prime\prime} \equiv v^{\prime\prime}(v^{\prime};v,\bar{v}) = v^{\prime}-(v-\bar{v}).\label{changeofvariables1}
\end{equation}
Then two planes $E_{\bar{v} {v''}} \text{  and  }E_{vv'}$ have same normal direction. The distance between to planes is $|(\bar{v}-v)\cdot\frac{v'-v}{|v'-v|}|$.
\end{lemma}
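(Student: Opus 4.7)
The statement is short enough that I can lay out the whole plan in one pass. The claim has two parts: the two hyperplanes are parallel, and the distance between them has the stated value. Both should follow from a direct computation using the substitution $v^{\prime\prime} = v^{\prime} - (v-\bar{v})$.

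First I would check parallelism by computing the normal direction of $E_{\bar{v} v''}$. By the definition (\ref{hyperplane}), this plane is normal to $v^{\prime\prime} - \bar{v}$. Substituting the definition of $v^{\prime\prime}$ gives
\begin{equation*}
v^{\prime\prime} - \bar{v} \;=\; \bigl(v^{\prime} - (v-\bar{v})\bigr) - \bar{v} \;=\; v^{\prime} - v,
\end{equation*}
which is precisely the normal direction of $E_{vv'}$. Hence $E_{\bar{v}v''}$ and $E_{vv'}$ share the common unit normal $\tfrac{v'-v}{|v'-v|}$, so they are parallel.

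Next, to compute the distance between the two parallel planes, I would pick a point on each. By inspection of (\ref{hyperplane}), $v \in E_{vv'}$ (take $v_1' = v$) and $\bar{v} \in E_{\bar{v}v''}$ (take $v_1'' = \bar{v}$). Since the planes are parallel with common unit normal $\tfrac{v'-v}{|v'-v|}$, the distance between them is the absolute value of the projection of any connecting vector onto this normal, namely
\begin{equation*}
\Bigl|(v - \bar{v}) \cdot \frac{v' - v}{|v'-v|}\Bigr| \;=\; \Bigl|(\bar{v} - v) \cdot \frac{v' - v}{|v'-v|}\Bigr|,
\end{equation*}
which is exactly the claimed expression.

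There is no real obstacle here; the lemma is a one-line algebraic identity followed by elementary affine geometry. The only thing to be careful about is confirming that $\bar{v}$ actually lies on $E_{\bar{v}v''}$ (which is immediate from the definition of $E_{\cdot\cdot}$ since it always contains the first subscript), and being explicit that $v''-\bar{v}$ and $v'-v$ are literally the same vector rather than merely collinear, so no normalization issues arise.
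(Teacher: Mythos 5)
Your proof is correct and follows essentially the same route as the paper: both establish $v''-\bar{v}=v'-v$ by direct substitution, and both obtain the distance as the projection of the connecting vector $\bar{v}-v$ onto the common unit normal $\frac{v'-v}{|v'-v|}$ (the paper merely phrases this by parametrizing the normal line through $v$ and solving for the intersection parameter). No gaps.
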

\begin{proof}
Assume (\ref{changeofvariables1}). Clearly $\frac{\partial v^{\prime\prime}(v^{\prime})}{\partial v^{\prime}} = I $ where $I$ is $3\times3$ identity matrix. The normal direction of $E_{\bar{v} v^{\prime\prime}}$ is $\frac{v^{\prime\prime}-\bar{v}}{|v^{\prime\prime}-\bar{v}|}=\frac{v^{\prime}-v}{|v^{\prime}-v|}$ which is also the normal direction of $E_{vv'}$. To measure a distance between two planes $E_{vv'}$ and $E_{\bar{v} v^{\prime\prime}}$, we consider the line passing $v$ and directing $\frac{v^{\prime}-v}{|v^{\prime}-v|}$, which is $v(s)=\frac{v^{\prime}-v}{|v^{\prime}-v|} s + v$. The solution of $v(s_*)\in E_{\bar{v}v^{\prime\prime}}$ is a solution of $0=(v^{\prime\prime}-\bar{v})\cdot (v(s)-\bar{v}) = (v^{\prime}-v)\cdot (v(s)-\bar{v})= |v'-v|s+(v'-v)\cdot(v-\bar{v})$. Easily we have $s_*=\frac{(v'-v)\cdot (\bar{v}-v)}{|v' -v|}$. Since $v(s)$ is unit-speed line we know that $|v(s_*)-v(0)|$ is the distance between $E_{\bar{v} {v''}} \text{  and  }E_{vv'}$.
\end{proof}\\
\\
An important property of (\ref{changeofvariables1}) is that two planes $E_{\bar{v} {v''}} \text{  and  }E_{vv'}$ have the same normal direction. In order to control the second term of (\ref{diff}), we need to compare arguments of $\mathcal{B}$ and $\mathcal{B}^{\prime}$ especially $v_1^{\prime}\in E_{vv^{\prime}}$ and $v_1^{\prime\prime}\in E_{\bar{v}v^{\prime\prime}}$. For that purpose, we introduce the following change of variables :
\begin{lemma}
For fixed $v,v'$ and $\bar{v}$ in $\mathbb{R}^3$, we define a unit Jacobian change of variables
\begin{equation}
v_1^{\prime\prime} \equiv v_1^{\prime\prime}(v_1^{\prime};v,v^{\prime},\bar{v})
= v_1^{\prime}+\frac{v'-v}{|v'-v|}\{(\bar{v}-v)\cdot \frac{v'-v}{|v'-v|}\}.\label{changeofvariables2}
\end{equation}
In this change of variables $v_1^{\prime\prime} \in E_{\bar{v}v^{\prime\prime}}$ if and only if $ \ v_1^{\prime}\in E_{vv^{\prime}}$.
\end{lemma}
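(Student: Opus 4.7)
The plan is to verify the two assertions separately: that the map $v_1^{\prime}\mapsto v_1^{\prime\prime}$ has unit Jacobian, and that it induces a bijection between the hyperplanes $E_{vv^{\prime}}$ and $E_{\bar v v^{\prime\prime}}$.

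For the Jacobian, I would observe that, with $v,v^{\prime},\bar v$ held fixed, the quantity
\[
c \;=\; \frac{v^{\prime}-v}{|v^{\prime}-v|}\Bigl\{(\bar v-v)\cdot \frac{v^{\prime}-v}{|v^{\prime}-v|}\Bigr\}
\]
is a constant vector independent of $v_1^{\prime}$. Hence (\ref{changeofvariables2}) is just the translation $v_1^{\prime\prime}=v_1^{\prime}+c$, whose Jacobian determinant is $1$. Nothing further is needed for this part.

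For the plane correspondence, I would first recall from the previous lemma (using $v^{\prime\prime}=v^{\prime}-(v-\bar v)$) that $v^{\prime\prime}-\bar v=v^{\prime}-v$; thus the defining equation of $E_{\bar v v^{\prime\prime}}$ reads $(v_1^{\prime\prime}-\bar v)\cdot(v^{\prime}-v)=0$. I would then compute
\[
v_1^{\prime\prime}-\bar v \;=\; (v_1^{\prime}-v)\;+\;(v-\bar v)\;+\;\frac{v^{\prime}-v}{|v^{\prime}-v|^2}\bigl\{(\bar v-v)\cdot(v^{\prime}-v)\bigr\},
\]
and take the inner product with $v^{\prime}-v$. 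The last two terms contribute $(v-\bar v)\cdot(v^{\prime}-v)+(\bar v-v)\cdot(v^{\prime}-v)=0$, leaving
\[
(v_1^{\prime\prime}-\bar v)\cdot(v^{\prime}-v)\;=\;(v_1^{\prime}-v)\cdot(v^{\prime}-v).
\]
This identity immediately gives the desired equivalence: $v_1^{\prime\prime}\in E_{\bar v v^{\prime\prime}}$ iff the right-hand side vanishes, iff $v_1^{\prime}\in E_{vv^{\prime}}$.

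There is no real obstacle here; the statement is essentially a bookkeeping lemma that engineers the translation so as to realign the two hyperplanes (which already have the same normal by the previous lemma) on top of each other. The only point to be careful about is tracking that the shift vector $c$ lies along the common normal direction $\tfrac{v^{\prime}-v}{|v^{\prime}-v|}$, which is precisely what makes the two affine planes be translates of one another and therefore what makes the cancellation above work.
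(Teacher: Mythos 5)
Your proof is correct and follows essentially the same route as the paper: both note that the map is a translation (hence unit Jacobian) and reduce the plane correspondence to the identity $(v_1^{\prime\prime}-\bar{v})\cdot(v^{\prime}-v)=(v_1^{\prime}-v)\cdot(v^{\prime}-v)$, using $v^{\prime\prime}-\bar{v}=v^{\prime}-v$ from the first change of variables. The only difference is cosmetic bookkeeping in how the cancellation of $(v-\bar{v})\cdot(v^{\prime}-v)$ and $(\bar{v}-v)\cdot(v^{\prime}-v)$ is displayed.
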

\begin{proof}
Assume (\ref{changeofvariables1}) and (\ref{changeofvariables2}). Clearly $\frac{\partial v_1^{\prime\prime}(v_1^{\prime})}{\partial v_1^{\prime}}= I$. We can check following equality :
\begin{eqnarray*}
&&({v_1^{\prime\prime}}-\bar{v})\cdot({v''}-\bar{v})=({v_1}'-\bar{v}+\frac{v'-v}{|v'-v|}\{(\bar{v}-v)\cdot\frac{v'-v}{|v'-v|}\})\cdot(v'-v)\\
&=& ({v_1}'-\bar{v})\cdot(v'-v)+|v'-v|\{(\bar{v}-v)\cdot\frac{v'-v}{|v'-v|}\}= (v_1^{\prime} -v)\cdot (v^{\prime}-v) +(v-\bar{v})\cdot (v^{\prime}-v) +
(\bar{v}-v)\cdot (v^{\prime}-v)\\
&=&(v_1^{\prime} -v)\cdot (v^{\prime}-v).
\end{eqnarray*}
By definition, $v_1^{\prime}\in E_{vv^{\prime}}$ is equivalent to $(v_1^{\prime}-v) \cdot \frac{v^{\prime}-v}{|v^{\prime}-v|}=0$. Then, from the above equality, we conclude $({v_1^{\prime\prime}}-\bar{v})\cdot\frac{{v''}-\bar{v}}{|{v''}-\bar{v}|}=0$ which is equivalent to $v_1^{\prime\prime} \in E_{\bar{v}v^{\prime\prime}}$.
\end{proof}
\\
Under the first change of variables (\ref{changeofvariables1}), we can rewrite the first term of $(\ref{diff})$ as
\begin{eqnarray}
2\int_{\mathbb{R}^3}
\underbrace{\frac{1}{|v-v'|^2}
\Big\{
\bar{w}(v'')h(\bar{t},\bar{x},v'')-\bar{w}(v')h(t,x,v')
\Big\}}_{\mathcal{(C)}} \int_{E_{\bar{v} v^{\prime\prime}}}\underbrace{\bar{w}({v_1^{\prime\prime}})h(\bar{t},\bar{x},{v_1^{\prime\prime}})
B\big(2\bar{v}-{v''}-{v_1^{\prime\prime}},\frac{{v''}-{v_1^{\prime\prime}}}{|{v''}-{v_1^{\prime\prime}}|}\big)}_{\mathcal{(D)}} d{v_1^{\prime\prime}}  dv'. \label{estimategainterm}
\end{eqnarray}
Under the second change of variables (\ref{changeofvariables2}), we can rewrite the second term of $(\ref{diff})$ as
\begin{eqnarray}
2\int_{\mathbb{R}^3}\underbrace{\bar{w}(v')h(t,x,v')
\frac{1}{|v-v'|^2}}_{\mathcal{(E)}} \ \ \ \ \ \ \ \ \ \ \ \ \ \ \ \ \ \ \ \ \ \ \ \ \ \ \ \ \ \ \ \ \ \ \ \ \ \ \  \ \ \ \ \ \ \ \ \ \ \ \ \ \ \ \ \ \ \ \ \ \ \ \ \ \ \ \ \ \ \ \ \ \ \ \ \ \ \ \ \ \ \ \ \ \ \ \ \ \ \ \ \ \ \ \ \ \ \ \ \ \nonumber\\
\times\int_{E_{v v'}}
\underbrace{\Big\{\bar{w}({v_1^{\prime\prime}})h(\bar{t},\bar{x},{v_1^{\prime\prime}})
B\big(2\bar{v}-{v''}-{v_1^{\prime\prime}},\frac{{v''}-{v_1^{\prime\prime}}}{|{v''}-{v_1^{\prime\prime}}|}\big) -
\bar{w}(v_1^{\prime})h(t,x,v_1^{\prime})
B\big(2v-v'-v_1^{\prime},\frac{v'-v_1^{\prime}}{|v'-v_1^{\prime}|}\big)
\Big\}}_{\mathcal{(F)}}d{v_1^{\prime}} dv'.\label{estimategainterm2}
\end{eqnarray}
We will estimate (\ref{estimategainterm}) and (\ref{estimategainterm2}) separately in following two sections.

\subsection{Estimate of (\ref{estimategainterm})}
We divide into several cases :
\newline $\mathbf{Case \ 1 : }$
$|v|\geq N$. From Lemma \ref{estimwhb}, for $N>0$ we can estimate
\begin{eqnarray*}
Q_+ (\bar{w}h,\bar{w}h)(t,x,v)\mathbf{1}_{|v|>N}&\leq& C ||h||_{\infty}^2 \mathbf{1}_{|v| > N}\int_{\mathbf{R}^3} \bar{w}(v^{\prime}) \left( \frac{1}{|v-v'|^2}+\frac{1}{|v-v'|^{2-\gamma}}
\right)dv^{\prime}\nonumber\\
&\leq& C||h||_{\infty}^2  \left( \frac{1}{(1+|v|)^2} +\frac{1}{(1+|v|)^{2-\gamma}}
\right) \mathbf{1}_{|v|> N}
\leq \frac{C}{N}||h||_{\infty}^2.
\end{eqnarray*}
Hence we have
\begin{equation}
(\ref{estimategainterm})\mathbf{1}_{|v|\geq N} \leq \frac{C}{N}||h||_{\infty}^2.\label{sum1}
\end{equation}
\newline $\mathbf{Case \ 2 : }$
$|v|\leq N$ and $|v^{\prime}|\geq 2N$, or $|v|\leq N$ and $ |v^{\prime}|\leq\frac{1}{M}$. Also assume $0<\delta<<1$.
\begin{eqnarray}
&&2\times\mathbf{1}_{|v|\leq N} \int_{\{|v^{\prime}|\geq 2N \ \text{or} \ |v^{\prime}|\leq\frac{1}{M}\} }\mathcal{(C)} \ \int_{E_{\bar{v}v^{\prime\prime}}}\mathcal{(D)} \ dv_1^{\prime\prime} dv^{\prime}\nonumber\\
&\leq& C \mathbf{1}_{|v|\leq N}\int_{|v^{\prime}|\geq 2N} \left\{ \frac{1}{|v-v^{\prime}|^2} + \frac{1}{|v-v^{\prime}|^{2-\gamma}}
\right\} e^{-\frac{|v^{\prime}|^2}{8}} dv^{\prime} e^{\frac{\delta^2}{4}} || h||_{\infty}^2
+\underbrace{C\int_{|v^{\prime}|\leq \frac{1}{M}}\left\{\frac{1}{|v^{\prime}|^2}+ \frac{1}{|v^{\prime}|^{2-\gamma}}\right\}e^{-\frac{|v^{\prime}|^2}{8}}dv^{\prime}}_{o(\frac{1}{M})} e^{\frac{\delta^2}{4}} || h||_{\infty}^2
\nonumber\\
&\leq& C \left(\frac{1}{N^2} + \frac{1}{N^{2-\gamma}}\right) || h||_{\infty}^2
+o(\frac{1}{M}) ||h||_{\infty}^2,
\label{sum2}
\end{eqnarray}
where we have used $\bar{w}(v^{\prime})\leq e^{-\frac{|v^{\prime}|^2}{4}}$ and $\bar{w}(v^{\prime\prime}) \leq e^{-\frac{|v^{\prime}|^2}{8}} e^{\frac{\delta^2}{4}}$ and Lemma \ref{estimwhb}.
\newline $\mathbf{Case \ 3 : }$ $|v|\leq N$ and $\frac{1}{M}\leq|v^{\prime}|\leq 2N$.
\begin{eqnarray}
&&2\times\mathbf{1}_{|v|\leq N} \int_{\frac{1}{M}\leq|v^{\prime}|\leq 2N} \ \mathcal{(C)} \
\int_{E_{\bar{v}v^{\prime\prime}}} \ \mathcal{(D)} \ dv_1^{\prime\prime} dv^{\prime}\nonumber\\
&\leq&
C||h||_{\infty} \int_{\frac{1}{M}\leq|v^{\prime}|\leq 2N} \mathbf{1}_{|v|\leq N} \left(\frac{1}{|v-v^{\prime}|^2} + \frac{1}{|v-v^{\prime}|^{2-\gamma}}\right) \ |\bar{w}(v^{\prime\prime}) h(\bar{t},\bar{x},v^{\prime\prime}) -\bar{w}(v^{\prime})h(t,x,v^{\prime})| dv^{\prime}.\label{a1}
\end{eqnarray}
Since $\left(\frac{1}{|v-v^{\prime}|^2} +\frac{1}{|v-v^{\prime}|^{2-\gamma}}\right)$ is integrable we can choose a smooth function $\mathbf{z}(v,v^{\prime})$ with compact support such that
\begin{equation}
\sup_{|v|\leq N} \int_{|v^{\prime}|\leq 2N} \left| \left(\frac{1}{|v-v^{\prime}|^2} +\frac{1}{|v-v^{\prime}|^{2-\gamma}}\right) - \mathbf{z}(v,v^{\prime})
\right| dv^{\prime} \leq \frac{1}{N}.\label{estimate2}
\end{equation}
Therefore we can bound (\ref{a1}) by two parts
\begin{eqnarray}
&&C ||h||_{L^{\infty}}^2  \int_{|v^{\prime}|\leq 2N} \mathbf{1}_{|v|\leq N} \left| \left(\frac{1}{|v-v^{\prime}|^2} +\frac{1}{|v-v^{\prime}|^{2-\gamma}}\right) - \mathbf{z}(v,v^{\prime})
\right| e^{-\frac{|v^{\prime}|^2}{8}} e^{\frac{\delta^2}{4}} dv^{\prime}
\label{e111}\\
&+& C \sup_{|v|\leq N , |v^{\prime}|\leq 2N} |\mathbf{z}(v,v^{\prime})|\times
||h||_{L^{\infty}} \int_{\frac{1}{M}\leq|v^{\prime}|\leq 2N} \mathbf{1}_{|v|\leq N} | \bar{w}(v^{\prime\prime}(v^{\prime}))h(\bar{t},\bar{x},v^{\prime\prime}(v^{\prime}))
-\bar{w}(v^{\prime})h(t,x,v^{\prime})| dv^{\prime}. \label{estimate112}
\end{eqnarray}
From (\ref{estimate2}), it is easy to control the first term
\begin{equation}
|(\ref{e111})| \leq \frac{C}{N}||h||_{\infty}^2.\label{sum3}
\end{equation}
Now we are going to estimate the second term (\ref{estimate112}). Applying Lemma \ref{uniformlycontinuous} to $\bar{w}(v^{\prime})h(t,x,v^{\prime})$, we can choose $\delta=\delta(\bar{w}h,\Omega,\varepsilon,N_* ,x,\frac{1}{M},2N)>0$ and an open set $U_{x}\subset \{\frac{1}{M}\leq |v|\leq 2N\}$ with $|U_{x}|<\frac{\varepsilon}{N_*} $ such that
\begin{equation*}
|\bar{w}(v^{\prime\prime}(v^{\prime}))h(\bar{t},\bar{x},v^{\prime\prime}(v^{\prime}))
-\bar{w}(v^{\prime})h(t,x,v^{\prime})| < \frac{\varepsilon}{N_*},
\end{equation*}
for $v^{\prime}\in \{v\in\mathbb{R}^3 : \frac{1}{M}\leq |v|\leq N \}\backslash U_{x}$ and $|(\bar{t},\bar{x},\bar{v})-(t,x,v)|<\delta$. Therefore we can split the second part (\ref{estimate112}) as integration over $U_x$ and $U_x^c$ and control as
\begin{eqnarray}
C\sup_{|v|\leq N, |v^{\prime}|\leq 2N}|\mathbf{z}(v,v^{\prime})|\times||h||_{\infty}^2 \times m_3(U_{x})\nonumber
+ C ||h||_{\infty} \int_{\{ \frac{1}{M}\leq|v^{\prime}|\leq 2N\} \cap U_{x}^c} |\bar{w}(v^{\prime\prime}(v^{\prime}))h(\bar{t},\bar{x},v^{\prime\prime}(v^{\prime}))
-\bar{w}(v^{\prime})h(t,x,v^{\prime})| dv^{\prime}\\
\leq  C\sup_{|v|\leq N, |v^{\prime}|\leq 2N}|\mathbf{z}(v,v^{\prime})|\times||h||_{\infty}^2 \frac{\varepsilon}{N_*} \
+ \ C||h||_{\infty} N^3 \frac{\varepsilon}{N_*}. \ \ \ \ \ \ \ \ \ \ \ \ \ \ \ \ \ \ \ \ \ \ \ \ \ \ \ \ \ \ \ \ \ \ \ \ \ \ \ \ \ \ \ \ \ \ \ \ \ \ \ \ \ \ \ \ \ \ \ \ \ \ \ \ \ \ \ \ \ \
\label{sum4}
\end{eqnarray}
\newline In summary, combinig $(\ref{sum1}), (\ref{sum2}), (\ref{sum3})$ and $(\ref{sum4})$, we have established
\begin{equation*}
(\ref{estimategainterm}) \leq C||h||_{\infty}^2 \left\{ \frac{1}{N} + o(\frac{1}{M}) +  \sup_{|v|\leq N, |v^{\prime}|\leq 2N}|\mathbf{z}(v,v^{\prime})| \frac{\varepsilon}{N_*}
\right\}
+C||h||_{\infty}N^3 \frac{\varepsilon}{N_*}.
\label{tosum1}
\end{equation*}
Choosing sufficiently large $N,M>0$ and $N_*>0$ then
\begin{equation}
(\ref{estimategainterm})\leq\frac{\varepsilon}{2}.\label{finalest1}
\end{equation}
\subsection{Estimate of (\ref{estimategainterm2})}
The estimate of (\ref{estimategainterm2}) is much more delicate. The reason is that we cannot expect $\int_{E_{vv^{\prime}}} \ (\mathcal{F}) \ dv_1^{\prime}$ in (\ref{estimategainterm2}) is small for all $v^{\prime}\in\mathbb{R}^3$. We know that $h(t,x,v_1^{\prime})$ may not be continuous on $v_1^{\prime}\in \mathfrak{G}_x$. Even $\mathfrak{G}_x$ is radial symmetric and has a small measure by Lemma \ref{hongjie}, a bad situation, the intersection of $\mathfrak{G}_x$ and $E_{vv^{\prime}}$ could have large (even infinite) 2-dimensional Lebesque measure, can happen. However we can show that such bad situations only happen for very rare $v^{\prime}$'s in $\mathbb{R}^3$. Using the integration over $v^{\prime}\in\mathbb{R}^3$, we are able to control (\ref{estimategainterm2}) small.\\
\\
Recall $(\mathcal{E})$ and $(\mathcal{F})$ in (\ref{estimategainterm2}). We divide into several cases :
\newline\textbf{Case 1 : }$|v|\geq N$. Follow exactly same proof of \textbf{Case 1} of the previous subsection, we conclude
\begin{equation}
(\ref{estimategainterm2})\mathbf{1}_{|v|\geq N} \leq \frac{C}{N}||h||_{\infty}^2.\label{sum21}
\end{equation}
\newline\textbf{Case 2 : }$|v|\leq N$ and $|v^{\prime}|\geq 2N$. We go back to original formula, the second term of (\ref{diff}), and use Lemma \ref{estimwhb} to estimate
\begin{eqnarray}
2\int_{|v^{\prime}|\geq 2N} (\mathcal{E}) \int_{E_{vv^{\prime}}} (\mathcal{F}) dv_1^{\prime} dv^{\prime}\mathbf{1}_{|v|\leq N}\leq 4 ||h||_{\infty}^2 \int_{|v^{\prime}|\geq 2N} \bar{w}(v^{\prime})\frac{1}{|v-v^{\prime}|^2} (1+|v-v^{\prime}|)^{\gamma}dv^{\prime} \mathbf{1}_{|v|\leq N}
\leq 4||h||_{\infty}^2  \left(\frac{1}{N^2}+ \frac{1}{N^{2-\gamma}}\right).\label{sum22}
\end{eqnarray}
\newline\textbf{Case 3 : }$|v|\leq N$, $|v^{\prime}|\leq 2N$, and $|v_1^{\prime}|\leq \frac{1}{N}$ or $|v_1^{\prime}|\geq N$. In the case of $|v_1^{\prime}|\leq \frac{1}{N}$, we have
\begin{eqnarray}
&&2\times\mathbf{1}_{|v|\leq N} \int_{|v^{\prime}|\geq 2N} \mathcal{(E)} \int_{\{|v_1^{\prime}|\leq \frac{1}{N}\}\cap E_{vv^{\prime}}} \mathcal{(F)} \ dv_1^{\prime} dv^{\prime}\nonumber\\
&&\leq2 ||h||_{\infty}^2 \int_{\mathbb{R}^3} \frac{\bar{w}(v^{\prime})}{|v-v^{\prime}|^2} dv^{\prime} \int_{\{|v_1^{\prime}|\leq \frac{1}{N}\}\cap E_{vv^{\prime}}} \left\{e^{-\frac{|v_1^{\prime}|^2}{8}}e^{\frac{\delta^2}{4}}(4N+\frac{1}{N}+\delta)^{\gamma}
+e^{-\frac{|v_1^{\prime}|^2}{4}} (4N+\frac{1}{N})^{\gamma}
\right\} dv_1^{\prime}
\leq C \frac{||h||_{\infty}^2 }{N^{2-\gamma}}.\label{sum23}
\end{eqnarray}
In the case of $|v_1^{\prime}|\geq N$ we have
\begin{eqnarray}
&&2\times\mathbf{1}_{|v|\leq N} \int_{|v^{\prime}|\geq 2N} (\mathcal{E}) \int_{\{|v_1^{\prime}|\leq \frac{1}{N}\}\cap E_{vv^{\prime}}} (\mathcal{F}) \ dv_1^{\prime} dv^{\prime}\nonumber\\
&&\leq2 ||h||_{\infty}^2 \int_{\mathbb{R}^3} \frac{\bar{w}(v^{\prime})}{|v-v^{\prime}|^2} dv^{\prime} \int_{\{|v_1^{\prime}|\geq N\}\cap E_{vv^{\prime}}} \left\{e^{-\frac{|v_1^{\prime}|^2}{8}}e^{\frac{\delta^2}{4}}(4N+\frac{1}{N}+\delta)^{\gamma}
+e^{-\frac{|v_1^{\prime}|^2}{4}} (4N+\frac{1}{N})^{\gamma}
\right\} dv_1^{\prime}\nonumber\\
&&\leq C ||h||_{\infty}^2 e^{-\frac{N^2}{16}} \int_{\mathbb{R}^3} e^{-\frac{|v_1^{\prime}|^2}{16}} dv^{\prime}\times N^{\gamma}e^{-\frac{N^2}{16}}\leq C||h||_{\infty}^2 e^{-\frac{N^2}{16}}.\label{sum24}
\end{eqnarray}
\newline\textbf{Case 4 : }$|v|\leq N, |v^{\prime}|\leq 2N,$ and $\frac{1}{N}\leq |v_1^{\prime}| \leq N$.
In order to remove the unboundedness of $\frac{1}{|v-v^{\prime}|^{2}}$ in (\ref{estimategainterm2}), we choose a positive smooth function $\mathbf{Z}(v,v^{\prime})$ with compact support such that
\begin{eqnarray}
\sup_{|v|\leq N} \int_{|v^{\prime}|\leq 2N} \left| \frac{1}{|v-v^{\prime}|^2} -\mathbf{Z}(v,v^{\prime})
\right|dv^{\prime} < \frac{1}{N^{10}}.\label{z1}
\end{eqnarray}
Splitting $2\times\mathbf{1}_{|v|\leq N} \int_{|v^{\prime}|\leq 2N} (\mathcal{E}) \int_{\frac{1}{N}\leq |v_1^{\prime}| \leq N} (\mathcal{F}) \ dv_1^{\prime} dv^{\prime}$ into two parts
\begin{eqnarray}
&&2\times\mathbf{1}_{|v|\leq N} \int_{|v^{\prime}|\leq 2N}\bar{w}(v^{\prime})|h(t,x,v^{\prime})|\left|\frac{1}{|v-v^{\prime}|^2}-\mathbf{Z}(v,v^{\prime})\right|\int_{E_{vv^{\prime}}\cap \{\frac{1}{N}\leq |v_1^{\prime}|\leq N\}} (\mathcal{F}) \ dv_1 dv^{\prime}\leq C||h||_{\infty}^2\frac{1}{N^{10}}N^{\gamma+2},\label{sum25}\\
&&C \int_{|v^{\prime}|\leq 2N} ||h||_{\infty} \sup_{|v|\leq N, |v^{\prime}|\leq 2N}|\mathbf{Z}(v,v^{\prime})|   \int_{E_{vv^{\prime}}\cap\{\frac{1}{N}\leq |v_1^{\prime}|\leq N\}} (\mathcal{F}) \ dv_1^{\prime} dv^{\prime},\label{Z1}
\end{eqnarray}
where we used (\ref{z1}) for the first line. From now we will focus to estimate (\ref{Z1}).
\newline\textbf{Case 5 : } $|v|\leq N, |v^{\prime}|\leq 2N, \frac{1}{N}\leq |v_1^{\prime}|\leq N$ and $|2v-v^{\prime}-v_1^{\prime}|<\frac{1}{M}$ or $|v^{\prime}-v_1^{\prime}|<\frac{1}{M}$. This region included the part where the collision kernel $B(\cdot,\cdot)$ has a singular behavior.
\begin{eqnarray}
&&C \int_{|v^{\prime}|\leq 2N} ||h||_{\infty} \sup_{|v|\leq N, |v^{\prime}|\leq 2N}|\mathbf{Z}(v,v^{\prime})|   \int_{E_{vv^{\prime}}\cap\{\frac{1}{N}\leq |v_1^{\prime}|\leq N\}} (\mathcal{F}) \
\mathbf{1}_{\{|(2v-v^{\prime})-v_1^{\prime}|<\frac{1}{M} \ \text{or} \ |v^{\prime} -v^{\prime}_1| < \frac{1}{M} \}}(v^{\prime},v_1^{\prime}) \
dv_1^{\prime} dv^{\prime}\nonumber\\
&\leq&
C\sup_{|v|\leq N , |v^{\prime}|\leq 2N} |\mathbf{Z}(v,v^{\prime})| \times ||h||_{\infty}^2 \times \int_{|v^{\prime}|\leq 2N} dv^{\prime} e^{-\frac{|v^{\prime}|^2}{4}}\nonumber
 \int_{E_{vv^{\prime}}} dv_1^{\prime} \Big\{\mathbf{1}_{\{|(2v-v^{\prime})-v_1^{\prime}|<\frac{1}{M}\}}(v_1^{\prime})+
\mathbf{1}_{\{|v^{\prime}-v_1^{\prime}|< \frac{1}{M}\}}(v_1^{\prime})
\Big\}\times N^{\gamma}\nonumber\\
&\leq& C\sup_{|v|\leq N , |v^{\prime}|\leq 2N} |\mathbf{Z}(v,v^{\prime})| \times ||h||_{\infty}^2 \frac{N^{\gamma}}{M^2}.\label{sum26}
\end{eqnarray}
\newline\textbf{Case 6 : } $|v|\leq N , |v^{\prime}|\leq 2N , \frac{1}{N}\leq|v_1^{\prime}|\leq N$ and $|2v-v^{\prime}-v_1^{\prime}|>\frac{1}{M}$ and $|v^{\prime}-v_1^{\prime}|>\frac{1}{M}$ and $0<\delta < \frac{1}{10M}$. We estimate
\begin{eqnarray}
2\times\mathbf{1}_{|v|\leq N} \int_{|v^{\prime}|\leq 2N} dv^{\prime} \bar{w}(v^{\prime}) h(t,x,v^{\prime}) \mathbf{Z}(v,v^{\prime}) \int_{E_{vv^{\prime}} \cap \{\frac{1}{N}\leq |v_1^{\prime}|\leq N\}}
\{ \bar{w}(v_1^{\prime\prime})h(\bar{t},\bar{x},v_1^{\prime\prime})B(2\bar{v}-v^{\prime\prime}-v_1^{\prime\prime}, \frac{v^{\prime\prime}-v_1^{\prime\prime}}{|v^{\prime\prime}-v_1^{\prime\prime}|})\nonumber\\
-\bar{w}(v_1^{\prime})h(t,x,v_1^{\prime})B(2v-v^{\prime}-v_1^{\prime}, \frac{v^{\prime}-v_1^{\prime}}{|v^{\prime}-v_1^{\prime}|})
\}
\mathbf{1}_{\{ |2v-v^{\prime}-v_1^{\prime}|> \frac{1}{M}
\}} \mathbf{1}_{\{ |v^{\prime}-v_1^{\prime}|> \frac{1}{M}
\}}dv_1^{\prime}.\label{case5}
\end{eqnarray}
We need this step because of the singular behavior of
\begin{eqnarray*}
B(u_1 ,u_2) =|u_1|^{\gamma} q_0 (\frac{u_1}{|u_1|}\cdot \frac{u_2}{|u_2|}) = |u_1|^{\gamma} (q_0 \circ \mathfrak{F}) (u_1 ,u_2),
\end{eqnarray*}
where $\mathfrak{F} : \mathbb{R}^3 \times \mathbb{R}^3 \rightarrow \mathbb{R}$ with $\mathfrak{F}(u_1 ,u_2)= \frac{u_1}{|u_1|} \cdot \frac{u_2}{|u_2|}$. The function $\mathfrak{F}(u_1 ,u_2)$ is not continuous at $(u_1 ,u_2)=(0,0)$ and continuous away from $(0,0)$, i.e. the restriction of $\mathfrak{F}$ on a compact set,
\begin{equation*}
\mathfrak{F}_{M,N} : \{\frac{1}{2M} \leq |u_1| \leq 6N \} \times \{\frac{1}{2M} \leq |u_2| \leq 4N \} \rightarrow \mathbb{R}
\end{equation*}
is uniformly continuous.
From $|2v-v^{\prime}-v_1^{\prime}|> \frac{1}{M}$ and $|v-\bar{v}|< \delta < \frac{1}{10M}$ we have lower bound of
\begin{equation*}
|2\bar{v}-v^{\prime\prime}-v_1^{\prime\prime}|\geq \left| |2v-v^{\prime}-v_1^{\prime}|-|\bar{v}-v -\frac{v^{\prime}-v}{|v^{\prime}-v|}\{(\bar{v}-v)\cdot \frac{v^{\prime}-v}{|v^{\prime}-v|}\}| \right| \geq \frac{1}{2M}.
\end{equation*}
Similarly from $|v^{\prime}-v_1^{\prime}|>\frac{1}{M}$ and $|v-\bar{v}|<\delta <\frac{1}{10 M}$ we have a lower bound of
\begin{equation*}
|v^{\prime\prime}-v_1^{\prime\prime}|\geq \left| |v^{\prime}-v_1^{\prime}| - |\bar{v}-v -\frac{v^{\prime}-v}{|v^{\prime}-v|}\{(\bar{v}-v)\cdot \frac{v^{\prime}-v}{|v^{\prime}-v|}\}|
\right| \geq \frac{1}{2M}.
\end{equation*}
Therefore for any $\varepsilon >0$, we can choose $\delta>0$ so that
\begin{eqnarray}
&&\left| B(2\bar{v}-v^{\prime\prime}-v_1^{\prime\prime}, \frac{v^{\prime\prime}-v_1^{\prime\prime}}{|v^{\prime\prime}-v_1^{\prime\prime}|})
- B(2v-v^{\prime}-v_1^{\prime}, \frac{v^{\prime}-v_1^{\prime}}{|v^{\prime}-v_1^{\prime}|})
\right|\nonumber\\
&=& \Big| |2\bar{v}-v^{\prime\prime}-v_1^{\prime\prime}|^{\gamma} (q_0 \circ \mathfrak{F})(2\bar{v}-v^{\prime\prime}-v_1^{\prime\prime}, v^{\prime\prime}-v_1^{\prime\prime})
-|2v-v^{\prime}-v_1^{\prime}|^{\gamma} (q_0 \circ \mathfrak{F})(2v-v^{\prime}-v_1^{\prime}, v^{\prime}-v_1^{\prime})
\Big|
< \frac{\varepsilon}{N_*},\label{e1case5}
\end{eqnarray}
for $|2v-v^{\prime}-v_1^{\prime}|>\frac{1}{M}$ and $|v^{\prime}-v_1^{\prime}|>\frac{1}{M}$ and $0<\delta < \frac{1}{10M}$.
\\
\newline Now we split (\ref{case5}) by two parts
\begin{eqnarray}
&&2\times\mathbf{1}_{|v|\leq N} \int_{|v^{\prime}|\leq 2N} dv^{\prime} .. \int_{E_{vv^{\prime}}\cap \{\frac{1}{N}\leq |v_1^{\prime}|\leq N\}} \bar{w}(v_1^{\prime\prime})h(\bar{t},\bar{x},v_1^{\prime\prime})\left\{ B(2\bar{v}-v^{\prime\prime}-v_1^{\prime\prime}, \frac{v^{\prime\prime}-v_1^{\prime\prime}}{|v^{\prime\prime}-v_1^{\prime\prime}|})-
B(2v-v^{\prime}-v_1^{\prime}, \frac{v^{\prime}-v_1^{\prime}}{|v^{\prime}-v_1^{\prime}|})
\right\}\nonumber\\
&& \ \ \ \ \ \ \ \ \ \ \ \ \ \ \ \ \ \ \ \ \ \ \ \ \ \ \ \ \ \ \ \ \ \ \ \ \ \ \ \ \ \ \ \ \ \ \ \ \ \ \ \ \ \ \ \ \ \ \ \ \ \ \ \ \ \ \ \ \ \ \ \ \ \ \ \ \ \ \ \ \ \ \ \ \ \ \ \  \ \ \ \ \ \ \  \ \ \ \ \ \ \ \ \ \ \times\mathbf{1}_{\{ |2v-v^{\prime}-v_1^{\prime}|> \frac{1}{M}
\}} \mathbf{1}_{\{ |v^{\prime}-v_1^{\prime}|> \frac{1}{M}
\}} dv_1^{\prime}
\nonumber\\
&+& 2\times\mathbf{1}_{|v|\leq N} \int_{|v^{\prime}|\leq 2N} dv^{\prime} .. \int_{E_{vv^{\prime}}\cap \{\frac{1}{N}\leq |v_1^{\prime}|\leq N\}} \Big\{\bar{w}(v_1^{\prime\prime})h(\bar{t},\bar{x},v_1^{\prime\prime})-\bar{w}(v_1^{\prime})h(t,x,v_1^{\prime})\Big\}
B(2v-v^{\prime}-v_1^{\prime}, \frac{v^{\prime}-v_1^{\prime}}{|v^{\prime}-v_1^{\prime}|}).\label{ab2}
\end{eqnarray}
Using (\ref{e1case5}), the continuity of $B(\cdot,\cdot)$ away from $(0,0)$, the first line above is bounded by
\begin{equation}
C\sup_{v,v^{\prime}}|\mathbf{Z}(v,v^{\prime})| \times || h||_{\infty}^2 \frac{\varepsilon}{N_*}.\label{sum27}
\end{equation}
\\
In the remainder of this section we will focus on (\ref{ab2}) :\\
\newline\textbf{Estimate of (\ref{ab2})}
\\
\begin{equation}
(\ref{ab2})\leq CN^2 ||h||_{\infty} \sup_{v,v^{\prime}}|\mathbf{Z}(v,v^{\prime})|
\int_{|v^{\prime}|\leq 2N} \bar{w}(v^{\prime}) \underbrace{\int_{E_{vv^{\prime}}\cap \{\frac{1}{N}\leq |v_1^{\prime}|\leq N\}} {|\bar{w}(v_1^{\prime\prime})h(\bar{t},\bar{x},v_1^{\prime\prime})-\bar{w}(v_1^{\prime})h(t,x,v_1^{\prime})|} dv_1^{\prime}}_{\blacklozenge}dv^{\prime},\label{abc}
\end{equation}
where we used $\sup_{|v|\leq N , |v^{\prime}|\leq 2N , |v_1^{\prime}|\leq N} B(2v-v^{\prime}-v_1^{\prime},\frac{v^{\prime}-v_1^{\prime}}{|v^{\prime}-v_1^{\prime}|}) < \infty$.
Recall our choice of $v''$ and $v_1^{\prime\prime}$ from (\ref{changeofvariables1}) and (\ref{changeofvariables2}) to have
\begin{equation*}
|{v_1^{\prime\prime}}-v_1^{\prime}|\leq\Big{|}\frac{v'-v}{|v'-v|}\{(\bar{v}-v)\cdot\frac{v'-v}{|v'-v|}\}\Big{|}\leq |\bar{v}-v|<\delta. \label{smallness1}
\end{equation*}
We will use the followin strategy : separate $\int_{E_{vv^{\prime}}\cap\{\frac{1}{N}\leq |v_1^{\prime}|\leq N\}}... dv_1^{\prime}$ into two parts
\begin{equation}
\int_{U_x \cap E_{vv^{\prime}}\cap\{\frac{1}{N}\leq |v_1^{\prime}|\leq N\}}... dv_1^{\prime} + \int_{U_x^c \cap E_{vv^{\prime}}\cap\{\frac{1}{N}\leq |v_1^{\prime}|\leq N\}}... dv_1^{\prime}.\nonumber
\end{equation}
The first part is the integration over $U_x$, a neighborhood of $\mathfrak{G}_x$ that contains possible discontinuity of $h$. Moreover we expect the measure of the neighborhood $U_x$ is small so we can control the first term. For the second term, we will use the continuity of the integrand $\bar{w}h$. However if $v=0$ then $\mathfrak{G}_x$ could be a large measure set in $E_{vv^{\prime}}\cap \{\frac{1}{N}\leq |v_1^{\prime}|\leq N\}$. For example if $\mathfrak{G}_x \cap \mathbb{S}^2 = \{u\in\mathbb{S}^2 : u_3 =0 \}$ then $\mathfrak{G}_x$ is $xy-$plane and $E_{0\mathbf{e}_3}$ is also $xy-$plane. Therefore we have to divide two cases $v\neq 0$ and $v=0$ and study separately.
\\
\newline\textbf{Case of }$\mathbf{v\neq 0}$\\
In the case of $v\neq 0$, assume $\varrho < |v|^2 /2$ for sufficiently small $\varrho>0$. We will divide the velocity space $\mathbb{R}
^3$ into
\begin{equation*}
\mathfrak{B}=\left\{ v^{\prime}\in\mathbb{R}^3 : |v|-\frac{\varrho}{|v|} \leq v^{\prime}\cdot \frac{v}{|v|} \leq |v|+\frac{\varrho}{|v|}\right\} \ \ \text{and} \ \ \mathfrak{B}^c=\left\{ v^{\prime}\in \mathbb{R}^3 : \left|v^{\prime}\cdot\frac{v}{|v|}-|v|\right| > \frac{\varrho}{|v|}\right\}.\label{mathcalB}
\end{equation*}
The important property of $\mathfrak{B}$ is that if $v\in\mathfrak{B}^c$ then $E_{vv^{\prime}}$ does not contain zero.
We can split the integration part of (\ref{abc}), $\blacklozenge$ into
\begin{eqnarray}
\int_{v^{\prime} \in B_{2N} \cap \mathfrak{B}} \bar{w}(v^{\prime}) \int_{E_{vv^{\prime}}\cap\{\frac{1}{N}\leq |v_1^{\prime}| \leq N\}} |\bar{w}(v_1^{\prime\prime})h(\bar{t},\bar{x},v_1^{\prime\prime})-\bar{w}(v_1^{\prime})h(t,x,v_1^{\prime})| dv_1^{\prime} dv^{\prime} \label{B1}\\
+\int_{v^{\prime} \in B_{2N} \backslash\mathfrak{B}} \bar{w}(v^{\prime}) {\int_{E_{vv^{\prime}}\cap\{\frac{1}{N}\leq |v_1^{\prime}| \leq N\}} |\bar{w}(v_1^{\prime\prime})h(\bar{t},\bar{x},v_1^{\prime\prime})-\bar{w}(v_1^{\prime})h(t,x,v_1^{\prime})| dv_1^{\prime}} dv^{\prime}. \label{B2}
\end{eqnarray}
Notice that $\mathfrak{B}\cap B_{2N}$ has a small measure :
\begin{equation*}
m_3 (\mathfrak{B}\cap B_{2N}) \leq 2\pi (2N)^2 \times 2\frac{\varrho}{|v|} \leq 2\pi (2N)^2 \times 2\frac{\varrho}{\sqrt{2\varrho}} \leq 2\sqrt{2} \pi (2N)^2 \sqrt{\varrho}.
\end{equation*}
Therefore we have
\begin{equation}
|(\ref{B1})|\leq C N^4 ||h||_{L^{\infty}}\sqrt{\varrho}.\label{sum28}
\end{equation}
Now we are going to estimate (\ref{B2}). Here we use a property of $\mathfrak{B}^c$ : for $v^{\prime}\in \mathfrak{B}^c$ we have
\begin{equation*}
\text{dist}(0,E_{vv^{\prime}}) = \left| v\cdot \frac{v^{\prime}-v}{|v^{\prime}-v|} \right| = \frac{|v^{\prime}\cdot v -|v|^2|}{|v^{\prime}-v|} > \frac{\varrho}{|v^{\prime}-v|} > \frac{\varrho}{2N+|v|} \geq \frac{\varrho}{3N},
\end{equation*}
where we also have used $|v^{\prime}|\leq 2N$ and $|v|\leq N$.
From Lemma \ref{uniformlycontinuous} we use $U_x$, an open radial symmetric subset of $\{\frac{1}{N}\leq |v_1^{\prime}|\leq N\}$ with a small measure and $\bar{w}h$ is uniformly continuous on $U_x^c$, to split (\ref{B2}) into
\begin{eqnarray}
\int_{v^{\prime} \in B_{2N} \backslash\mathfrak{B}} \bar{w}(v^{\prime}) \int_{E_{vv^{\prime}}\cap\{\frac{1}{N}\leq |v_1^{\prime}| \leq N\} \cap U_x} |\bar{w}(v_1^{\prime\prime})h(\bar{t},\bar{x},v_1^{\prime\prime})-\bar{w}(v_1^{\prime})h(t,x,v_1^{\prime})| dv_1^{\prime} dv^{\prime}\label{U1}\\
+\int_{v^{\prime} \in B_{2N} \backslash\mathfrak{B}} \bar{w}(v^{\prime}) \int_{E_{vv^{\prime}}\cap\{\frac{1}{N}\leq |v_1^{\prime}| \leq N\} \cap U_x^c} |\bar{w}(v_1^{\prime\prime})h(\bar{t},\bar{x},v_1^{\prime\prime})-\bar{w}(v_1^{\prime})h(t,x,v_1^{\prime})| dv_1^{\prime} dv^{\prime}.\label{U2}
\end{eqnarray}
For the last line, we use Lemma \ref{uniformlycontinuous} to know estimate $|\bar{w}(v_1^{\prime\prime})h(\bar{t},\bar{x},v_1^{\prime\prime})-\bar{w}(v_1^{\prime})h(t,x,v_1^{\prime})|<\frac{\varepsilon}{N_*}$,
 for $v_1^{\prime}\in E_{vv^{\prime}}\cap \{\frac{1}{N}\leq |v_1^{\prime}|\leq N\}\backslash U_x$ and $|v_1^{\prime\prime}-v_1^{\prime}|\leq |v-\bar{v}|< \delta$. Therefore
\begin{equation}
|(\ref{U2})| \leq CN^2 \frac{\varepsilon}{N_*} ||h||_{\infty}.\label{sum30}
\end{equation}
In order to show that $(\ref{U1})$ is small, we introduce following projection :
\begin{lemma}\label{Projection}
Assume $0<\varrho < \frac{|v|^2}{2}$. Let $E_{vv^{\prime}}= \{ v_1^{\prime} \in \mathbb{R}^3 : (v_1-v)\cdot (v^{\prime}-v) =0 \}$ . We define a projection
\begin{eqnarray*}
\mathbb{P} \ : \ \mathbb{S}^2 &\rightarrow& E_{vv^{\prime}} \\
u \in \mathbb{S}^2 &\mapsto& \left\{\frac{v\cdot(v^{\prime}-v)}{u\cdot (v^{\prime}-v)}\right\} \ u \ \in \ E_{vv^{\prime}}.
\end{eqnarray*}
For $v^{\prime}\in \{v^{\prime}\in\mathbb{R}^3 : |v^{\prime}|\leq 2N\} \backslash \mathfrak{B}$, define the restricted projection
\begin{eqnarray*}
\mathbb{P}^{\prime} \equiv \mathbb{P}|_{\mathbb{P}^{-1}( E_{vv^{\prime}} \cap \{1/N \leq |v_1^{\prime}| \leq N\}
)} \ : \  \mathbb{P}^{-1}( E_{vv^{\prime}} \cap \{1/N \leq |v_1^{\prime}| \leq N\}
) \rightarrow E_{vv^{\prime}}\cap \{1/N \leq |v_1^{\prime}| \leq N\}.
\end{eqnarray*}
Then for $v^{\prime}\in B_{2N} \backslash \mathfrak{B}$ the Jacobian of $\mathbb{P}^{\prime}$ is bounded :
\begin{equation*}
Jac(\mathbb{P}^{\prime}) = \left|\frac{\partial\mathbb{P}^{\prime}}{\partial u}\right| = \left(v\cdot \frac{v^{\prime}-v}{|v^{\prime}-v|}\right)^2 | \sec^2 \theta \tan \theta | \leq  \frac{3N^4}{\varrho},
\end{equation*}
where $\theta$ is defined by $\cos\theta =u\cdot \frac{v^{\prime}-v}{|v^{\prime}-v|}$.
\end{lemma}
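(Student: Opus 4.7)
The plan is a direct computation using spherical coordinates on $\mathbb{S}^2$ aligned with the normal to the plane $E_{vv'}$, followed by careful bookkeeping of the size constraints that define $\mathbb{P}'$.

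First I would introduce an orthonormal frame $\{\hat n,\hat e_1,\hat e_2\}$ with $\hat n = (v'-v)/|v'-v|$ and write $u = \cos\theta\,\hat n + \sin\theta(\cos\phi\,\hat e_1 + \sin\phi\,\hat e_2)$, so that $u\cdot\hat n = \cos\theta$. Setting $\alpha := v\cdot\hat n$ and using $v\cdot(v'-v) = \alpha|v'-v|$, the defining formula collapses to
$$\mathbb{P}(u) \;=\; \frac{\alpha}{\cos\theta}\,u \;=\; \alpha\hat n + \alpha\tan\theta\bigl(\cos\phi\,\hat e_1 + \sin\phi\,\hat e_2\bigr),$$
whose $\hat n$-component is the constant $\alpha$, so $(\mathbb{P}(u)-v)\cdot\hat n = 0$ and $\mathbb{P}(u)\in E_{vv'}$. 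Parametrizing $E_{vv'}$ by the planar Euclidean coordinates $(y_1,y_2) = (\alpha\tan\theta\cos\phi,\alpha\tan\theta\sin\phi)$, a two-by-two determinant calculation yields
$$\det\frac{\partial(y_1,y_2)}{\partial(\theta,\phi)} \;=\; \alpha^2\sec^2\theta\tan\theta \;=\; \Bigl(v\cdot\tfrac{v'-v}{|v'-v|}\Bigr)^{\!2}\sec^2\theta\tan\theta,$$
which is the announced formula for $\mathrm{Jac}(\mathbb{P}')$.

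For the uniform bound I would combine two inequalities. On one side, the restriction $v'\notin\mathfrak{B}$ reads $|v\cdot v'-|v|^2|>\varrho$, and together with $|v'-v|\le 3N$ (from $|v'|\le 2N$ and $|v|\le N$) gives $|\alpha|\ge \varrho/(3N)$. On the other side, $u\in\mathbb{P}^{-1}(E_{vv'}\cap\{1/N\le|v_1'|\le N\})$ forces $|\mathbb{P}(u)| = |\alpha/\cos\theta|\le N$, i.e.\ $|\cos\theta|\ge |\alpha|/N$. These give simultaneously
$$\alpha^2\sec^2\theta \;\le\; N^2, \qquad |\tan\theta|\;\le\;|\sec\theta|\;\le\;\frac{N}{|\alpha|}\;\le\;\frac{3N^2}{\varrho},$$
and multiplying these two inequalities produces $|\alpha^2\sec^2\theta\tan\theta|\le 3N^4/\varrho$.

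The main obstacle, such as it is, is purely bookkeeping: one must verify that the Jacobian in the statement is computed against the flat $(\theta,\phi)$ volume element and not against the intrinsic area $\sin\theta\,d\theta\,d\phi$ on $\mathbb{S}^2$ — otherwise an extra factor $1/\sin\theta$ would appear and convert $\sec^2\theta\tan\theta$ into $\sec^3\theta$. This convention must in turn be used consistently when Lemma~\ref{Projection} is invoked later to estimate (\ref{U1}).
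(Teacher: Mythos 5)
Your proposal is correct and follows essentially the same route as the paper: an orthonormal frame aligned with $\frac{v'-v}{|v'-v|}$, the $(\theta,\phi)$ parametrization giving $\mathrm{Jac}(\mathbb{P}')=\left(v\cdot\frac{v'-v}{|v'-v|}\right)^2\sec^2\theta\,|\tan\theta|$, and the bound $3N^4/\varrho$ obtained from $\mathrm{dist}(0,E_{vv'})\geq \varrho/(3N)$ (from $v'\notin\mathfrak{B}$) together with $|\mathbb{P}'(u)|\leq N$. The only cosmetic difference is that you split the final estimate as $(\alpha^2\sec^2\theta)\cdot|\tan\theta|\leq N^2\cdot 3N^2/\varrho$, while the paper uses $|\tan\theta|\leq|\sec\theta|$ and $|\sec\theta|=|\mathbb{P}'(u)|/\mathrm{dist}(0,E_{vv'})$ to reach the same constant.
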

\begin{figure}[h]
\begin{center}
\epsfig{file=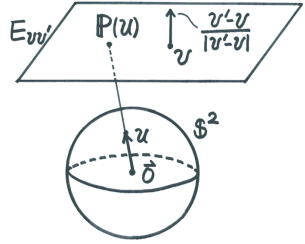,angle=90, height=3.5cm}
\end{center}
\caption{Projection Map}
\label{fig5}
\end{figure}
\begin{proof}
Without loss of generality, we may assume $\frac{v^{\prime}-v}{|v^{\prime}-v|}=(0,0,1)^T$. Using spherical coordinate,
\begin{equation*}
\mathbb{P}^{\prime}(u)=\frac{v\cdot (v^{\prime}-v)}{u\cdot (v^{\prime}-v)} u = \frac{v\cdot\frac{v^{\prime}-v}{|v^{\prime}-v|}}{u\cdot\frac{v^{\prime}-v}{|v^{\prime}-v|}} u
= \frac{v\cdot\frac{v^{\prime}-v}{|v^{\prime}-v|}}{\cos\theta}
\left(\begin{array}{ccc}
\sin\theta \cos\phi \\
\sin\theta\sin\phi \\
\cos\theta
\end{array}\right)
= v\cdot\frac{v^{\prime}-v}{|v^{\prime}-v|}
\left(\begin{array}{ccc}
\tan\theta \cos\phi \\
\tan\theta\sin\phi \\
1
\end{array}\right),
\end{equation*}
and a Jacobian matrix of $\mathbb{P}^{\prime}$,
\begin{equation*}
\frac{\partial\mathbb{P}^{\prime}}{\partial(\theta,\phi)}= v\cdot \frac{v^{\prime}-v}{|v^{\prime}-v|}
\left( \begin{array}{ccc}
\sec^2 \theta \cos\phi & -\tan\theta\sin\phi\\
\sec^2 \theta \sin \phi & \tan\theta \cos\phi\\
\end{array}\right).
\end{equation*}
Therefore a Jacobian of $\mathbb{P}^{\prime}$ is
\begin{equation*}
Jac (\mathbb{P}^{\prime}
) = \left| \frac{\partial\mathbb{P}^{\prime}}{\partial(\theta,\phi)} \right|
= \left(v\cdot\frac{v^{\prime}-v}{|v^{\prime}-v|}\right)^2 \sec^2 \theta |\tan\theta| \leq
\text{dist}(0,E_{vv^{\prime}})^2 |\sec\theta|^3
.
\end{equation*}
Notice that
\begin{eqnarray*}
|\sec\theta| =\frac{1}{|\cos\theta|} =
\frac{1}{\left|u\cdot\frac{v^{\prime}-v}{|v^{\prime}-v|}\right|}
=\left|\left\{\frac{v\cdot (v^{\prime}-v)}{u\cdot(v^{\prime}-v)}\right\}u\right|\frac{1}{\left|v\cdot\frac{v^{\prime}-v}{|v^{\prime}-v|}\right|} =\frac{|\mathbb{P}^{\prime}(u)|}{\text{dist}(0,E_{vv^{\prime}})}.
\end{eqnarray*}
Because $\mathbb{P}^{\prime}(u) \in \{\frac{1}{N} \leq |v_1^{\prime}| \leq N \}$ and $\text{dist}(0,E_{vv^{\prime}})\geq \frac{\varrho}{3N}$ we have
\begin{equation*}
Jac(\mathbb{P}^{\prime}) \leq \frac{|\mathbb{P}^{\prime}(u)|^3}{|\text{dist}(0,E_{vv^{\prime}})|} \leq \frac{3N^4}{\varrho}.
\end{equation*}
\end{proof}

Assume we choose $m_2(U_x \cap \mathbb{S}^2)\leq \frac{\varrho\varepsilon}{N_* N^2}$. By definition we know that $\mathbb{P}^{\prime}(U_x \cap \mathbb{S}^2) = E_{vv^{\prime}} \cap \{\frac{1}{N}\leq |v_1^{\prime}|\leq N\} \cap U_x$ and the 2-dimension Lebesque measure of $E_{vv^{\prime}} \cap \{\frac{1}{N}\leq |v_1^{\prime}|\leq N\} \cap U_x$ is bounded by
\begin{equation*}
m_2 (E_{vv^{\prime}} \cap \{\frac{1}{N}\leq |v_1^{\prime}|\leq N\} \cap U_x )
= m_2 (\mathbb{P}^{\prime}(U_x \cap \mathbb{S}^2) ) \leq Jac(\mathbb{P}^{\prime}) \times |U_x \cap \mathbb{S}^2 | \leq \frac{3N^4}{\varrho} \times \frac{\varepsilon}{N_* N^2}=\frac{3N^2}{\varrho N_* } \varepsilon.
\end{equation*}
Therefore we have an upper bound of (\ref{U1}) :
\begin{equation}
|(\ref{U1})| \leq C N^2 \varepsilon ||h||_{\infty}, \label{sum29}
\end{equation}
where $C=\int_{\mathbb{R}^3}\bar{w}(v^{\prime})dv^{\prime}$. In case of $v\neq 0$, from (\ref{sum28}), (\ref{sum30}) and (\ref{sum29}), we have
\begin{equation}
(\ref{ab2})\leq CN^2 ||h||_{\infty} \sup_{v,v^{\prime}}|\mathbf{Z}(v,v^{\prime})| \times \blacklozenge \leq
CN^4 ||h||_{\infty}^2 \sup_{|v|\leq N, |v^{\prime}|\leq 2N}|\mathbf{Z}(v,v^{\prime})|\{
N^2 \sqrt{\varrho} + (1+\frac{3N^2}{\varrho})\frac{\varepsilon}{N_*}\}
.\label{sum31}
\end{equation}
\\
\newline\textbf{Case of }$\mathbf{v= 0}$\\
In this case, we do not have a upper bound of the Jacobian of $\mathbb{P}^{\prime}$. Instead we will use the structure of $\mathfrak{G}_x$ of Lemma \ref{bouncebacklower} crucially.
In case of $v=0$, we split $(\ref{abc})$
\begin{eqnarray}
&&\int_{|v^{\prime}|\leq 2N} \bar{w}(v^{\prime}) \int_{E_{0v^{\prime}}\cap\{\frac{1}{N}\leq |v_1^{\prime}|\leq N\}} |\bar{w}(v_1^{\prime\prime})h(\bar{t},\bar{x},v_1^{\prime})-\bar{w}(v_1^{\prime})h(t,x,v_1^{\prime})| dv_1^{\prime} dv^{\prime}\nonumber\\
&&= \int_{|v^{\prime}|\leq 2N} \bar{w}(v^{\prime}) {\int_{E_{0v^{\prime}}\cap\{\frac{1}{N}\leq |v_1^{\prime}|\leq N\}} |\bar{w}(v_1^{\prime\prime})h(\bar{t},\bar{x},v_1^{\prime})-\bar{w}(v_1^{\prime})h(t,x,v_1^{\prime})|\times \mathbf{1}_{U_x}(v_1^{\prime})dv_1^{\prime}} dv^{\prime}\label{c1}\\
&&+\int_{|v^{\prime}|\leq 2N} \bar{w}(v^{\prime}) \int_{E_{0v^{\prime}}\cap\{\frac{1}{N}\leq |v_1^{\prime}|\leq N\}}
|\bar{w}(v_1^{\prime\prime})h(\bar{t},\bar{x},v_1^{\prime})-\bar{w}(v_1^{\prime})h(t,x,v_1^{\prime})|\times \mathbf{1}_{E_{0v^{\prime}}\cap \{\frac{1}{N}\leq |v_1^{\prime}|\leq N\} \backslash U_x}(v_1^{\prime})
dv_1^{\prime} dv^{\prime}\label{c2}
\end{eqnarray}
For $v^{\prime}$, we use a spherical polar coordinates $(r^{\prime},\theta^{\prime},\phi^{\prime})$ so that
\begin{equation}
v^{\prime} = (r^{\prime}\sin\theta^{\prime}\cos\phi^{\prime},r^{\prime}\sin\theta^{\prime}\sin\phi^{\prime} ,r^{\prime}\cos\theta^{\prime}).\label{polar1}
\end{equation}
By definition, $E_{0v^{\prime}}$ is a plane containing the origin and normal to $v^{\prime}$. We know that $E_{0v^{\prime}}$ is generated by two unit vectors
\begin{eqnarray*}
E_{0v^{\prime}} = \Bigg{\langle} \left(\begin{array}{ccc}\cos\theta^{\prime} \cos\phi^{\prime}\\ \cos\theta^{\prime}\sin\phi^{\prime} \\ -\sin\theta^{\prime} \end{array}\right),
\left( \begin{array}{ccc} -\sin\phi^{\prime} \\ \cos\phi^{\prime} \\ 0
\end{array}
\right)
\Bigg{\rangle}.
\end{eqnarray*}
We will use a polar coordinate $(r_1^{\prime},\theta_1^{\prime})$ for $v_1^{\prime}\in E_{0v^{\prime}}$, i.e.
\begin{eqnarray}
v_1^{\prime} =
\left(\begin{array}{ccc}
(v_1^{\prime})_1\\
(v_1^{\prime})_2\\
(v_1^{\prime})_3
\end{array}\right)(r_1^{\prime},\theta_1^{\prime};\theta^{\prime},\phi^{\prime})
\equiv r_1^{\prime}\left(\begin{array}{ccc}
\cos\theta^{\prime}\cos\phi^{\prime} & -\sin\phi^{\prime} & \sin\theta^{\prime}\cos\phi^{\prime}\\
\cos\theta^{\prime}\sin\phi^{\prime} & \cos\phi^{\prime} & \sin\theta^{\prime}\sin\phi^{\prime}\\
-\sin\theta^{\prime} & 0 & \cos\theta^{\prime}
\end{array}
\right)\left(\begin{array}{ccc}\cos\theta_1^{\prime}\\ \sin\theta_1^{\prime} \\ 0\end{array}\right).\label{polar2}
\end{eqnarray}
Direct computation gives $\det\left(\frac{\partial(v_1^{\prime})}{\partial(r_1^{\prime}, \theta_1^{\prime},\theta^{\prime})}\right)=$
\begin{eqnarray*}
&&(r_1^{\prime})^2 \cos\theta_1^{\prime} \det\left(\begin{array}{ccc}
\cos\theta^{\prime}\cos\phi^{\prime}\cos\theta_1^{\prime}-\sin\phi^{\prime}\sin\theta\theta_1^{\prime} & -\cos\theta^{\prime} \cos\phi^{\prime} \sin\theta_1^{\prime} -\sin\phi^{\prime}\cos\theta_1^{\prime} & \sin\theta^{\prime} \cos\phi^{\prime}\\
\cos\theta^{\prime} \sin\phi^{\prime}\cos\theta_1^{\prime}+ \cos\phi^{\prime} \sin\theta_1^{\prime} & -\cos\theta^{\prime} \sin\phi^{\prime}\sin\theta_1^{\prime} + \cos\phi^{\prime} \cos\theta_1^{\prime} & \sin\theta^{\prime}\sin\phi^{\prime}\\
\sin\theta^{\prime}\cos\theta_1^{\prime} & \sin\theta^{\prime}\sin\theta_1^{\prime} & \cos\theta^{\prime}
\end{array}\right)
=(r_1^{\prime})^2 \cos\theta_1^{\prime}.
\end{eqnarray*}
Therefore we have following identity
\begin{equation}
\int_{\mathbb{R}^3}\cdot\cdot\cdot \ dv_1^{\prime} = \int_0^{\infty} \int_0^{2\pi} \int_{0}^{\pi}\cdot\cdot\cdot \ (r_1^{\prime})^2 \cos\theta_1^{\prime} d\theta^{\prime} d\theta_1^{\prime} dr_1^{\prime}.\label{identity}
\end{equation}
Recall the standard 3-dimensional polar coordinates and 2-dimensional polar coordinates :
\begin{eqnarray*}
&&\int_{|v^{\prime}|\leq 2N}\cdot\cdot\cdot \ dv^{\prime} = \int_0^{2N} \int_0^{2\pi} \int_0^{\pi}\cdot\cdot\cdot \ (r^{\prime})^2 \sin\theta^{\prime} d\theta^{\prime}d\phi^{\prime}dr^{\prime},\\
&&\int_{E_{0v^{\prime}}\cap\{\frac{1}{N}\leq |v_1^{\prime}|\leq N\}}\cdot\cdot\cdot \ dv_1^{\prime} = \int_{\frac{1}{N}}^N \int_0^{2\pi}\cdot\cdot\cdot \ r_1^{\prime}d\theta_1^{\prime}dr_1^{\prime},
\end{eqnarray*}
and use above identities to control $(\ref{c1})$ by
\begin{eqnarray}
\int_0^{2N}dr^{\prime} (r^{\prime})^2  \bar{w}(r^{\prime}) \int_0^{2\pi} d\phi^{\prime}
\underbrace{\int_0^{\pi} d\theta^{\prime} \sin\theta^{\prime} \int_{\frac{1}{N}}^N dr_1^{\prime} r_1^{\prime} e^{-\frac{(r_1^{\prime})^2}{8}}
\int_0^{2\pi} d\theta_1^{\prime} \mathbf{1}_{U_x}(v_1^{\prime}(r_1^{\prime},\theta_1^{\prime};\theta^{\prime},\phi^{\prime}))}_{\bigotimes} ||h||_{\infty}.\label{C1}
\end{eqnarray}
We focus on the inner integration $\bigotimes$ and divide it into
\begin{eqnarray}
&&\int_0^{\pi}d\theta^{\prime}\sin\theta^{\prime} \int_{\frac{1}{N}}^N dr_1^{\prime} r_1^{\prime} e^{-\frac{(r_1^{\prime})^2}{8}}\int_0^{2\pi} d\theta_1^{\prime}\mathbf{1}_{U_x}(v_1^{\prime})
\mathbf{1}_{\theta_1^{\prime}\in (\frac{\pi}{2}-\varrho, \frac{\pi}{2}+\rho) \cup (\frac{3\pi}{2}-\varrho , \frac{3\pi}{2}+\varrho)}\label{X2}\\
&&+\int_0^{\pi}d\theta^{\prime}\sin\theta^{\prime} \int_{\frac{1}{N}}^N dr_1^{\prime} r_1^{\prime} e^{-\frac{(r_1^{\prime})^2}{8}}\int_0^{2\pi} d\theta_1^{\prime}\mathbf{1}_{U_x}(v_1^{\prime})\mathbf{1}_{\theta_1^{\prime}\in[0,\frac{\pi}{2}-\varrho]\cup[\frac{\pi}{2}+\varrho,\frac{3\pi}{2}-\varrho]\cup[\frac{3\pi}{2}+\varrho,2\pi]}\label{X1}.
\end{eqnarray}
Easily $(\ref{X2})\leq 2\varrho (e^{-\frac{1}{8N^2}}-e^{-\frac{N^2}{8}})\leq 4\varrho$. For (\ref{X1}), we use $1\leq\frac{\cos\theta_1^{\prime}}{\varrho}$ and $1\leq Nr_1^{\prime}$ on $\theta_1^{\prime}\in[0,\frac{\pi}{2}-\varrho]\cup[\frac{\pi}{2}+\varrho,\frac{3\pi}{2}-\varrho]\cup[\frac{3\pi}{2}+\varrho,2\pi]$ and $r_1^{\prime}\in [\frac{1}{N},N]$ to have
\begin{eqnarray}
(\ref{X1})\leq \varrho^{-1}N\int_0^{\pi} d\theta^{\prime}\int_{\frac{1}{N}}^N dr_1^{\prime} (r_1^{\prime})^2 \int_0^{2\pi} d\theta_1^{\prime} \cos\theta_1^{\prime}\mathbf{1}_{U_x}(v_1^{\prime}(r_1^{\prime},\theta_1^{\prime};\theta^{\prime},\phi^{\prime}))
= \varrho^{-1} N \times m_3(U_x\cap \{\frac{1}{N}\leq |v_1^{\prime}|\leq N\}),
\end{eqnarray}
where we used (\ref{identity}).
To sum we have
\begin{eqnarray}
(\ref{c1})\leq (\ref{C1})\leq C ||h||_{\infty} \left\{4\varrho + \varrho^{-1}N \times \frac{\varepsilon}{N_*}\right\}.\label{ssum1}
\end{eqnarray}
\\
On the other hand for (\ref{c2}) we can use Lemma \ref{uniformlycontinuous} to have
\begin{equation}
(\ref{c2}) \leq C \frac{\varepsilon}{N_*}. \label{ssum2}
\end{equation}
From (\ref{ssum1}) and (\ref{ssum2}) we have
\begin{eqnarray}
(\ref{ab2})\leq CN^2 ||h||_{\infty} \sup_{v,v^{\prime}}|\mathbf{Z}(v,v^{\prime})| \times \blacklozenge
&=&CN^2 ||h||_{\infty} \sup_{v,v^{\prime}}|\mathbf{Z}(v,v^{\prime})| \times \{(\ref{c1})+(\ref{c2})\}\nonumber\\
&\leq& CN^2 ||h||_{\infty} \sup_{v,v^{\prime}}|\mathbf{Z}(v,v^{\prime})|\left\{\frac{\varepsilon}{N_*}+||h||_{\infty}\Big\{
4\varrho + \varrho^{-1}N \times \frac{\varepsilon}{N_*}
\Big\}\right\}.\label{sum32}
\end{eqnarray}
To summarize, from (\ref{sum21}), (\ref{sum22}), (\ref{sum23}), (\ref{sum24}), (\ref{sum25}), (\ref{sum26}), (\ref{sum27}), (\ref{sum31}) and (\ref{sum32}), we have established
\begin{eqnarray}
&&(\ref{estimategainterm2})\leq C||h||_{\infty}^2 \{\frac{1}{N}+e^{-\frac{N^2}{16}}\} + C||h||_{\infty}^2 \sup_{|v|\leq N, |v^{\prime}|\leq 2N} |\mathbf{Z}(v,v^{\prime})| \frac{N^{\gamma}}{M^2}
+C||h||_{\infty}^2 \sup_{|v|\leq N, |v^{\prime}|\leq 2N} |\mathbf{Z}(v,v^{\prime})|(N^6 \sqrt{\varrho} + 4N^2 \varrho)\nonumber\\
&& \ \ \ \ \ \ \ \ \ \ +\frac{\varepsilon}{N_*}C ||h||_{\infty} \sup_{|v|\leq N, |v^{\prime}|\leq 2N} |\mathbf{Z}(v,v^{\prime})| \left\{
N^2 + ||h||_{\infty}\Big( 1+ \frac{3N^6}{\varrho}+ N^4 + N^3 \varrho
\Big)
\right\}.
\label{last}
\end{eqnarray}
We choose $N,M,N_*>0$ sufficiently large and $\varrho>0$ sufficiently small so that we can control $(\ref{estimategainterm2}) < \frac{\varepsilon}{2}$. Combining with the result of previous subsection (\ref{finalest1}), we conclude (\ref{continuitystatement}) and and prove Theorem \ref{continuity}.

\subsection{Continuity of Collision Operators $Kf$ and $\Gamma(f,f)$}

The following is a consequence of Theorem \ref{continuity}.
\begin{corollary}\label{continuity1}
Assume $f(t,x,v)$ is continuous on $[0,T]\times (\bar{\Omega}\times\mathbb{R}^3)\backslash\mathfrak{G}$ and
\begin{equation*}
w(v)f(t,x,v) = (1+\rho^2 |v|^2)^{\beta} f(t,x,v) \in L^{\infty}([0,T]\times (\bar{\Omega}\times\mathbb{R}^3)).
\end{equation*}
Then $Kf(t,x,v)$ and $\Gamma_+(f,f)(t,x,v)$ are continuous in $[0,T]\times\Omega\times\mathbb{R}^3$ and
\begin{equation*}
\sup_{[0,T]\times\bar{\Omega}\times\mathbb{R}^3}|\nu^{-1}(v)w(v)K(f)|<\infty, \ \
\sup_{[0,T]\times\bar{\Omega}\times\mathbb{R}^3}|\nu^{-1}(v)w(v)\Gamma_{+}(f,f)| < \infty.\label{boundedKGamma}
\end{equation*}
\end{corollary}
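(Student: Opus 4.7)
The plan is to reduce both assertions to Theorem \ref{continuity} via the explicit representations
\[
\Gamma_+(f,f) = \tfrac{1}{\sqrt{\mu}} Q_+(\sqrt{\mu}f,\sqrt{\mu}f), \qquad
Kf = \tfrac{1}{\sqrt{\mu}}\bigl[Q_+(\mu,\sqrt{\mu}f)+Q_+(\sqrt{\mu}f,\mu)-Q_-(\sqrt{\mu}f,\mu)\bigr],
\]
so that almost all of the work is already done. The first thing I will do is verify that the candidate ``$F$'' inputs satisfy the hypotheses of Theorem \ref{continuity}. Since $\sqrt{\mu}$ and $\mu$ are smooth functions of $v$ alone, both $\sqrt{\mu}f$ and $\mu$ are continuous on $[0,T]\times(\bar{\Omega}\times\mathbb{R}^3)\setminus\mathfrak{G}$. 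Moreover, recalling $\bar{w}(v)=e^{-|v|^2/4}(1+\rho^2|v|^2)^{-\beta}$ and $w(v)=(1+\rho^2|v|^2)^\beta$, a direct computation yields $\bar{w}^{-1}\sqrt{\mu}=(1+\rho^2|v|^2)^\beta e^{-|v|^2/4}$ and $\bar{w}^{-1}\sqrt{\mu}f=wf$, both in $L^\infty$ by hypothesis. By linearity, $F=\mu+\sqrt{\mu}f$ also satisfies the hypotheses of Theorem \ref{continuity}.

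For $\Gamma_+$, Theorem \ref{continuity} applied to $F=\sqrt{\mu}f$ immediately gives continuity of $Q_+(\sqrt{\mu}f,\sqrt{\mu}f)$ on $[0,T]\times\Omega\times\mathbb{R}^3$, and multiplication by the smooth factor $1/\sqrt{\mu(v)}=e^{|v|^2/4}$ preserves this continuity. For the two $Q_+$ pieces of $Kf$, Theorem \ref{continuity} is stated only for the diagonal $Q_+(F,F)$, so I will use the polarization identity
\[
Q_+(F_1,F_2)+Q_+(F_2,F_1) \;=\; Q_+(F_1+F_2,F_1+F_2)-Q_+(F_1,F_1)-Q_+(F_2,F_2),
\]
with $F_1=\mu$ and $F_2=\sqrt{\mu}f$. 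Each of the three diagonal terms on the right is continuous by Theorem \ref{continuity} (applied to $\mu$, $\sqrt{\mu}f$, $\mu+\sqrt{\mu}f$), and only the symmetric combination on the left appears in $Kf$, which is the only reason the argument closes without needing a bilinear refinement of Theorem \ref{continuity}.

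The remaining piece is the loss term $\frac{1}{\sqrt{\mu}}Q_-(\sqrt{\mu}f,\mu)(t,x,v)=\sqrt{\mu(v)}\int_{\mathbb{R}^3}\!\int_{\mathbb{S}^2} B(v-u,\omega)\sqrt{\mu(u)}f(t,x,u)\,d\omega\,du$, which sits outside Theorem \ref{continuity}. Here I will handle continuity directly by dominated convergence: the integrand is pointwise continuous in $(t,x,v)$ for all $u$ outside the slice $\mathfrak{G}_x$, which has zero three-dimensional Lebesgue measure as a radial union of lines whose intersection with $\mathbb{S}^2$ has zero two-dimensional measure by Lemma \ref{hongjie}; an integrable dominator is provided by $\|wf\|_\infty w(u)^{-1}\sqrt{\mu(u)}|v-u|^\gamma\int q_0\,d\omega$ together with (\ref{standardbound}). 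This is the only ``new'' step, and since $\mathfrak{G}_x$ already has measure zero the dominated-convergence argument is routine.

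The $L^\infty$ bounds are immediate from (\ref{bdQ}): the algebraic identity $w(v)/\sqrt{\mu(v)}=\bar{w}^{-1}(v)$ converts each $\nu^{-1}w\cdot\frac{1}{\sqrt{\mu}}Q_+(\cdot,\cdot)$ into $\nu^{-1}\bar{w}^{-1}Q_+(\cdot,\cdot)$, bounded by Theorem \ref{continuity}; the corresponding bound for the $Q_-$ piece follows from the Grad estimate (\ref{wk}) (or directly from (\ref{standardbound}) applied to $\sqrt{\mu}$). The main conceptual obstacle is therefore not analytical difficulty but the structural one of recognizing that the symmetric-pair combination appearing in $Kf$ is exactly what allows polarization to substitute for a bilinear version of Theorem \ref{continuity}.
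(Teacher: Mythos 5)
Your proposal is correct, and it diverges from the paper's own proof in two instructive places. For the gain terms you invoke polarization, $Q_+(\mu,\sqrt{\mu}f)+Q_+(\sqrt{\mu}f,\mu)=Q_+(\mu+\sqrt{\mu}f,\mu+\sqrt{\mu}f)-Q_+(\mu,\mu)-Q_+(\sqrt{\mu}f,\sqrt{\mu}f)$, so that the diagonal Theorem \ref{continuity} covers exactly the symmetric combination appearing in $K$; the paper simply treats these terms as already settled by Theorem \ref{continuity} (its Carleman-based proof does go through verbatim for the bilinear form), so your polarization step is a clean way to make that reduction rigorous without touching the proof of Theorem \ref{continuity}. (Minor slip: you wrote $\bar{w}^{-1}\sqrt{\mu}=(1+\rho^2|v|^2)^{\beta}e^{-|v|^2/4}$; that quantity is $\bar{w}^{-1}\mu$, while $\bar{w}^{-1}\sqrt{\mu}f=wf$ — both facts you actually need are true.) For the loss term the paper does not use dominated convergence: it shifts variables $u\mapsto u+(v-\bar{v})$ so the kernel $B(v-u,\omega)$ is unchanged, Taylor-expands the Gaussian, and controls the $f$-difference via Lemma \ref{uniformlycontinuous}, i.e.\ via Guo's covering (Lemma \ref{bouncebacklower}). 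Your dominated-convergence route works, but as stated it has a small gap: for fixed $u\notin\mathfrak{G}_x$ the map $(t',x')\mapsto f(t',x',u)$ need not be continuous at $(t,x)$, because $f$ is only continuous \emph{relative to} the complement of $\mathfrak{G}$ and a nearby point may satisfy $u\in\mathfrak{G}_{x'}$, where $f$ is merely bounded; $\mathfrak{G}$ is not closed in a non-convex domain, so this can happen for $x'$ arbitrarily close to $x$. The fix is one line — argue sequentially and enlarge the exceptional set to $\mathfrak{G}_x\cup\bigcup_n\mathfrak{G}_{x_n}$, a countable union of $m_3$-null sets by Lemma \ref{hongjie} and radial symmetry — or simply quote Lemma \ref{uniformlycontinuous}, which is precisely the paper's packaging of this issue (and is why the paper leans on Guo's covering rather than on nullity of a single slice $\mathfrak{G}_x$). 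Your treatment of the $L^\infty$ bounds via $w/\sqrt{\mu}=\bar{w}^{-1}$, (\ref{bdQ}), (\ref{wk})/(\ref{Gamma1}) and (\ref{standardbound}) matches the paper.
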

\begin{proof}
The above boundedness is a direct consequence of (\ref{wk}) and (\ref{Gamma1}). Thanks to Theorem \ref{continuity}, we already established the continuity of $\Gamma_+$. Therefore we only need to show the continuity of
\begin{eqnarray*}
\frac{1}{\sqrt{\mu}}Q_-(\sqrt{\mu}f,\mu)= e^{-\frac{|v|^2}{4}}\int_{\mathbb{R}^3}\int_{\mathbb{S}^2} B(v-u,\omega) f(t,x,u) e^{-\frac{|u|^2}{4}} d\omega du.
\end{eqnarray*}
Choose $(\bar{t},\bar{x},\bar{v})\sim (t,x,v)$ so that $|(\bar{t},\bar{x},\bar{v})-(t,x,v)|<\delta$. We will estimate
\begin{eqnarray}
&&\frac{1}{\sqrt{\mu}}Q_- (\sqrt{\mu}f,\mu)(\bar{t},\bar{x},\bar{v})-\frac{1}{\sqrt{\mu}}Q_- (\sqrt{\mu}f,\mu)(t,x,v)\nonumber\\
&=&\frac{1}{\sqrt{\mu}}\int_{\mathbb{R}^3} \int_{\mathbb{S}^2} e^{-\frac{|u|^2}{4}}\{B(v-u,\omega)f(t,x,v)-\underline{B(\bar{v}-u,\omega)f(\bar{t},\bar{x},u)}\} d\omega du\nonumber\\
&=&\frac{1}{\sqrt{\mu}}\int_{\mathbb{R}^3}\int_{\mathbb{S}^2} B(v-u,\omega)e^{-\frac{|u|^2}{4}} f(t,x,v) d\omega du
- \frac{1}{\sqrt{\mu}}\int_{\mathbb{R}^3} \int_{\mathbb{S}^2} B(v-u^{\prime},\omega) e^{-\frac{|u^{\prime}-(v-\bar{v})|^2}{4}} f(\bar{t},\bar{x},u^{\prime}-(v-\bar{v}))
d\omega du^{\prime}\nonumber\\
&\leq& \frac{1}{\sqrt{\mu}}\int_{u\in\mathbb{R}^3} \int_{\mathbb{S}^2} |B(v-u,\omega)| |e^{-\frac{|u|^2}{4}}-e^{-\frac{|u-(v-\bar{v})|^2}{4}}| w^{-1}(u-(v-\bar{v}))||wf||_{\infty} d\omega du\label{difference11}\\
&&+ \frac{1}{\sqrt{\mu}}\int_{\mathbb{R}^3} \int_{\mathbb{S}^2} \underbrace{|B(v-u,\omega)| e^{-\frac{|u|^2}{4}} |f(t,x,u)-f(\bar{t},\bar{x},u-(v-\bar{v}))|}_{\clubsuit}
d\omega du,\label{difference12}
\end{eqnarray}
where we used a change of variables $u^{\prime}= u + (v-\bar{v})$ for the underlined term. Using the Taylor's expansion we control
\begin{equation*}
e^{-\frac{|u-(v-\bar{v})|^2}{4}} = e^{-\frac{|u|^2}{4}} + \frac{1}{2}|u_*| e^{-\frac{|u_*|^2}{4}} |v-\bar{v}| \leq
\frac{1}{2}(|u|+\delta)e^{\frac{\delta^2}{4}} e^{-\frac{|u|^2}{4}} \times |v-\bar{v}|,
\end{equation*}
where $u_* = s_* \{u-(v-\bar{v})\} + (1-s_*) u$ for some $s_*\in (0,1)$ and $|v-\bar{v}|< \delta$. Therefore we control
\begin{eqnarray}
|(\ref{difference11})| &\leq&
e^{\frac{|v|^2}{4}} \int_{\mathbb{R}^3}
|v-u|^{\gamma} \frac{1}{2}(|u|+\delta) e^{\frac{\delta^2}{4}} e^{-\frac{|u|^2}{4}} du \times \sup_{v,u}\int_{\mathbb{S}^2} q_0 \big(\frac{v-u}{|v-u|}\cdot \omega\big) d\omega |v-\bar{v}|\times ||wf||_{\infty}\nonumber\\
&\leq& C (1+|v|)^{\gamma} e^{\frac{|v|^2}{4}} ||wf||_{\infty},\label{1-1}
\end{eqnarray}
where we have used the the angular cutoff assumption (\ref{Gradcutoff}).
Now we estimate (\ref{difference12}) with following steps :
\newline\textbf{Case 1 : } $|u|\geq N$. Since $e^{-\frac{|u|^2}{4}}\leq e^{-\frac{N^2}{8}} e^{-\frac{|u|^2}{8}}$, we estimate
\begin{equation}
\int_{|u|\geq N} \int_{\mathbb{S}^2} \ \clubsuit \ d\omega du \ \leq C e^{-\frac{N^2}{8}} \int_{\mathbb{R}^3}  e^{-\frac{|u|^2}{8}}|u-v|^{\gamma} du \times ||wf||_{\infty}\leq C e^{-\frac{N^2}{8}}\nu(v)||wf||_{\infty}.\label{1-2}
\end{equation}
\newline\textbf{Case 2 : } $|u|\leq N$. A function $f$ is continuous on $[0,T]\times(\bar{\Omega}\times B(0;N))\backslash\mathfrak{G}$. By the Lemma \ref{uniformlycontinuous}, we can choose $U_x \subset B(0;N)$ with $|U_x|<\frac{\varepsilon}{N}$ with $|f(t,x,u)-f(\bar{t},\bar{x},u-(v-\bar{v}))|< \frac{\varepsilon}{N}$ for $|(t,x,u)-(\bar{t},\bar{x},u-(v-\bar{v}))|\leq \delta$ with $u\in B(0;N)\backslash U_x$. Therefore $\int_{|u|\leq N} \int_{\mathbb{S}^2} \ \clubsuit \ d\omega du$ is bounded by
\begin{eqnarray}
\int_{u\in B(0;N)\cap U_x} \int_{\mathbb{S}^2} \ \clubsuit \ d\omega du + \int_{u\in B(0;N)\backslash U_x} \int_{\mathbb{S}^2} \ \clubsuit \ d\omega du
\leq C\frac{\varepsilon}{N} \nu(v) ||wf||_{\infty}.\label{1-3}
\end{eqnarray}
From (\ref{1-1}), (\ref{1-2}) and (\ref{1-3}), we summarize
\begin{equation*}
\frac{1}{\sqrt{\mu}}|Q_-(\sqrt{\mu}f,\mu)(\bar{t},\bar{x},\bar{v})-Q_-(\sqrt{\mu}f,\mu)(t,x,v)|\leq (o(\delta) + e^{-\frac{N^2}{8}}+\frac{\varepsilon}{N}) \frac{\nu(v)}{\sqrt{\mu}}||wf||_{\infty},
\end{equation*}
which is less than $\varepsilon$ for sufficiently large $N$ and sufficiently small $\delta$.
\end{proof}
\\
\\
\\
\newline In following sections, we will prove Theorem 1, Theorem 2 and Theorem 3, for each boundary conditions. In order to write theorems in the unified way \cite{Guo08} for all boundary condition cases, we use the weight function $w(v)=\{1+\rho^2 |v|^2\}^{\beta}$ in (\ref{weight}) and define
\begin{equation*}
h\equiv w(v)\times \frac{F-\mu}{\sqrt{\mu}}.\label{h}
\end{equation*}
In terms of $h$, the Boltzmann equation (\ref{boltzmann}) is equivalent to
\begin{eqnarray}
\{\partial_t + v\cdot\nabla_x + \nu -K_w\} h = w\Gamma(\frac{h}{w},\frac{h}{w}),\label{hBol}
\end{eqnarray}
where $K_w h \equiv w K(\frac{h}{w})$ with boundary conditions :
\begin{eqnarray}
&1.& \text{In-flow boundary condition : }  \ \ \ h(t,x,v)=w(v)g(t,x,v) \ \ \ \ \text{ for } \ (x,v)\in\gamma_-. \ \ \ \ \ \ \ \ \ \ \ \ \  \label{inflow}\\
&2.& \text{Diffuse boundary condition : } \nonumber\\
&& \ \ \ \ h(t,x,v)=w(v)\sqrt{\mu (v)}\int_{v^{\prime }\cdot
n(x)>0}h(t,x,v^{\prime })\frac{1}{w(v^{\prime})\sqrt{\mu(v^{\prime})}}
c_{\mu }\mu(v^{\prime})\{n_{x}\cdot v^{\prime
}\}dv^{\prime } \ \ \ \ \text{ for } \ (x,v)\in\gamma_-,\ \ \ \ \ \ \ \ \ \ \ \ \ \ \ \ \ \ \ \ \ \label{diffuse}\\
&& \ \ \ \ \ \ \ \ \ \ \ \ \ \ \ \ \ \ \ \ \ \ \ \ \ \ \ \ \ \ \ \ \  \ \ \ \ \ \ \ \ \ \ \  \ \ \ \ \ \ \ \ \ \ \ \ \ \ \ \ \ \ \ \ \ \ \ \ \ \ \ \ \ \ \ \ \ \ \text{             with a normalized constant } c_{\mu} \text{ in } (\ref{normalizedconstant}).\nonumber\\
&3.& \text{Bounce-back boundary condition : } \ \ \ h(t,x,v)=h(t,x,-v) \ \ \ \ \text{ for } \ (x,v)\in\gamma_-. \label{bounceback}
\end{eqnarray}

\section{In-Flow Boundary Condition}
In this section, we consider the linearized Boltzmann equation (\ref{hBol}) with the in-flow boundary condition (\ref{inflow}). First we will show the formation of discontinuity using a pointwise estimate of the Boltzmann solution \cite{Guo08}. Then we use the continuity of collision operators, Theorem \ref{continuity}, to show a continuity of solution on the continuity set $\mathfrak{C}$ and the propagation of discontinuity on the discontinuity set $\mathfrak{D}$.
\subsection{Formation of Discontinuity}
We prove Part 1 of Theorem \ref{formationofsingularity}. Without loss of generality we may assume $x_0=(0,0,0)$ and $v_0=(1,0,0)$ and $(x_0,v_0)\in\gamma_0^{\mathbf{S}}$. Locally the boundary is a graph, i.e. $\Omega\cap B(\mathbf{0};\delta) = \{(x_1,x_2,x_3)\in B(\mathbf{0};\delta) : x_3 > \Phi(x_1,x_2)\}$. The condition $(x_0,v_0)\in\gamma_0^{\mathbf{S}}$ implies $t_{\mathbf{b}}(x_0,v_0)\neq 0$ and $t_{\mathbf{b}}(x_0,-v_0)\neq 0$ which means $\Phi(\xi,0)<0$ for $\xi\in(-\delta,\delta)\backslash \{0\}$. (See Figure 3)
\\
For simplicity we assume a zero boundary datum, i.e. $g\equiv 0$. From Theorem 1 of \cite{Guo08}, we have a global solution of the linearized Boltzmann equation (\ref{hBol}) with zero in-flow boundary condition, satisfying
\begin{equation*}
\sup_{t\in [0,\infty)}||h(t)||_{\infty} \leq C^{\prime} e^{-\lambda t}||h_0||_{\infty},
\end{equation*}
for some $\lambda>0$. In the proof we do not use the decay estimate but just boundedness
\begin{equation}
\sup_{t\in [0,\infty)}||h(t)||_{\infty} \leq C^{\prime}||h_0||_{\infty}.\label{boundedness}
\end{equation}
Recall the constants $C_{\mathbf{k}}$ and $C_{\Gamma}$ from (\ref{wk}) and (\ref{Gamma1}).
Choose $t_0 \in (0,\min\{\frac{\delta}{2},\frac{t_{\mathbf{b}}(x_0,-v_0)}{2}\})$ sufficiently small so that
\begin{equation}
\frac{1}{2}\leq\left( e^{-\nu(1)t_0} -t_0 C_{\mathbf{k}}C^{\prime} -(1-e^{-\nu(1)t_0})C_{\Gamma}(C^{\prime})^2
\right),\label{1/2}
\end{equation}
where $\nu(1)\equiv\nu(v_0)$ for any $v_0\in\mathbb{R}^3$ with $|v_0|=1$.
This choice is possible because the right hand side of (\ref{1/2}) is a continuous function of $t_0\in\mathbb{R}$ and it has a value $1$ when $t_0 =0$. Furthermore assume a condition for our initial datum $h_0$ : there is sufficiently small $\delta^{\prime}=\delta^{\prime}(\Omega,t_0)>0$ such that $B((-t_0 ,0,0);\delta^{\prime}) \subset \Omega$ and
\begin{eqnarray}
h_0(x_0,v_0) \equiv ||h_0||_{\infty}>0 \ \ \ \text{for} \ \ (x,v)\in B((-t_0,0,0);\delta^{\prime}) \times B((1,0,0);\delta^{\prime}).\label{supp}
\end{eqnarray}
We claim the Boltzmann solution $h$ with such an initial datum $h_0$ and zero in-flow boundary condition is not continuous at $(t_0,x_0,v_0)=(t_0,(0,0,0),(1,0,0))$.
We will use a contradiction argument : Suppose
\begin{equation}
[h(t_0)]_{x_0,v_0} = 0.\label{contradiction}
\end{equation}
Choose sequences of points $(x_n^{\prime},v_n^{\prime})=((0,0,\frac{1}{n}), (1,0,0))$ and $(x_n ,v_n)=((\frac{1}{n},0,\Phi(\frac{1}{n},0)),(1,0,\frac{1}{n}))$. Because of our choice, for sufficiently large $n\in\mathbb{N}$, the characteristics $[X(0;t_0,x_0,v_0),V(0;t_0,x_0,v_0)]$ is near to $((-t_0,0,0),(1,0,0))$, i.e.
\begin{equation*}
(x_n^{\prime}-t_0 v_n^{\prime},v_n^{\prime} )=((-t_0 ,0, \frac{1}{n}),(1,0,0)) \in B((-t_0,0,0);\delta^{\prime}) \times B((1,0,0);\delta^{\prime}).
\end{equation*}
Hence the Boltzmann solution at $(t_0,x_n^{\prime},v_n^{\prime})$ is
\begin{eqnarray*}
h(t_0,x_n^{\prime},v_n^{\prime}) &=& h_0(x_n^{\prime}-t_0 v_n^{\prime},v_n^{\prime}) e^{-\nu(v_n^{\prime})t_0} + \int_0^{t_0} e^{-\nu(v_n^{\prime})(t_0 -\tau)}\left\{K_w h+w\Gamma(\frac{h}{w},\frac{h}{w})\right\}(\tau, x_n^{\prime}-v(t_0 -\tau),v_n^{\prime})d\tau\\
&=&||h_0||_{\infty}e^{-\nu(v_n^{\prime})t_0} + \int_0^{t_0} e^{-\nu(v_n^{\prime})(t_0 -\tau)}\left\{K_w h+w\Gamma(\frac{h}{w},\frac{h}{w})\right\}(\tau, x-v_n^{\prime}(t_0 -\tau),v_n^{\prime})d\tau.
\end{eqnarray*}
Combining $h(t_0,x_n,v_n) = w(v_n)g(t_0,x_n ,v_n) =0$ with (\ref{contradiction}), we conclude
\begin{equation}
h(t_0^{\prime},x_n^{\prime},v_n^{\prime}) \rightarrow 0 \ \text{as} \ n\rightarrow 0.\label{hhh}
\end{equation}
On the other hand, using (\ref{boundedness}) we can estimate
\begin{eqnarray*}
\liminf_{n\rightarrow \infty}|h(t_0 ,x_n^{\prime},v_n^{\prime})| &=& \liminf_{n\rightarrow \infty}|h(t_0 ,x_n^{\prime},v_n^{\prime})-h(t_0 ,x_n ,v_n)|\\
&\geq&\liminf_{n\rightarrow \infty} \big{|} ||h_0||_{\infty}e^{-\nu(v_n^{\prime})t_0} -
\int_0^{t_0} C_{\mathbf{k}}C^{\prime} ||h_0||_{\infty} d\tau
+\int_0^{t_0} \nu(v_n^{\prime}) e^{-\nu(v_n^{\prime})(t-\tau)} C_{\Gamma}(C^{\prime})^2 ||h_0||_{\infty}^2 d\tau
\big{|}\\
&\geq&  ||h_0||_{\infty} e^{-\nu(1)t_0} - t_0 C_{\mathbf{k}}C^{\prime}||h_0||_{\infty} - (1-e^{-\nu(1)t_0}) C_{\Gamma} (C^{\prime})^2 ||h_0||_{\infty}^2\\
&=& ||h_0||_{\infty} \left( e^{-\nu(1)t_0} -t_0 C_{\mathbf{k}}C^{\prime} -(1-e^{-\nu(1)t_0})C_{\Gamma}(C^{\prime})^2
\right) \geq \frac{||h_0||_{\infty}}{2}\neq 0,
\end{eqnarray*}
which is contradiction to (\ref{hhh}).
\subsection{Continuity away from $\mathfrak{D}$}
We aim to prove Part 1 of Theorem \ref{continuityawayfromD} in this section. First we recall Lemma 12 of \cite{Guo08}, the representation for solution operator $G(t,0)$ for the homogeneous transport equation with in-flow boundary condition :
\begin{lemma}
\label{ginflowdecay} \cite{Guo08} Let $h_{0}\in L^{\infty }$ and $wg\in L^{\infty }.$
Let $\{G(t,0)h_{0}\}$ be the solution to the transport equation
\begin{equation*}
\{\partial _{t}+v\cdot \nabla _{x}\}G(t,0)h_{0}=0,\text{ \ \ \ \ }%
G(0,0)h_{0}=h_{0},\text{ \ \ }\{G(t,0)h_{0}\}_{\gamma _{-}}=wg.
\end{equation*}%
For $(x,v)\notin \gamma _{0}\cap \gamma _{-},$
\begin{eqnarray}
\{G(t,0)h_{0}\}(t,x,v) =\mathbf{1}_{t-t_{\mathbf{b}}\leq 0}h_{0}(x-tv,v)  \notag +\mathbf{1}_{t-t_{\mathbf{b}}>0}\{wg\}(t-t_{%
\mathbf{b}},x-t_{\mathbf{b}}v,v).  \label{inflowformula}
\end{eqnarray}
\end{lemma}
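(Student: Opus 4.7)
The plan is to apply the method of characteristics. For any $(t,x,v)$, consider the backward trajectory $(X(s;t,x,v),V(s;t,x,v))=(x-(t-s)v,v)$ introduced in the paper, and define the auxiliary function $u(s):=\{G(s,0)h_0\}(s,x-(t-s)v,v)$ for $s$ in the interval on which $x-(t-s)v\in\bar\Omega$. By the chain rule and the transport equation,
\begin{equation*}
\frac{du}{ds}(s) \;=\; \bigl[(\partial_t+v\cdot\nabla_x)\{G(s,0)h_0\}\bigr](s,x-(t-s)v,v)\;=\;0,
\end{equation*}
so $u$ is constant along the characteristic wherever it stays inside $\bar\Omega$. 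The formula then follows by propagating the data (initial or boundary) forward to $s=t$.

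Next I would split into two cases according to the sign of $t-t_{\mathbf{b}}(x,v)$. If $t\le t_{\mathbf{b}}(x,v)$, then by the definition of $t_{\mathbf{b}}$ the point $x-(t-s)v$ lies in $\bar\Omega$ for every $s\in[0,t]$, so $u(t)=u(0)=h_0(x-tv,v)$, which matches the first term of \eqref{inflowformula}. If $t>t_{\mathbf{b}}(x,v)$, then the characteristic strikes $\partial\Omega$ at the earlier time $s_*:=t-t_{\mathbf{b}}(x,v)>0$ at the position $x-t_{\mathbf{b}}(x,v)v=x_{\mathbf{b}}(x,v)\in\partial\Omega$ with velocity $v$, and one always has $v\cdot n(x_{\mathbf{b}}(x,v))\le 0$ by the definition of $x_{\mathbf{b}}$. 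Because we excluded the grazing contribution from the admissible set, we in fact have $(x_{\mathbf{b}}(x,v),v)\in\gamma_-$, so the prescribed in-flow value applies and $u(s_*)=\{wg\}(s_*,x_{\mathbf{b}}(x,v),v)$; propagating by the constancy of $u$ gives $u(t)=\{wg\}(t-t_{\mathbf{b}},x-t_{\mathbf{b}}v,v)$, matching the second term.

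It remains to check that the function defined by the right-hand side of \eqref{inflowformula} is indeed \emph{the} solution. Existence is immediate: off the set $\{t=t_{\mathbf{b}}(x,v)\}$ one has the smoothness of $t_{\mathbf{b}}$ and $x_{\mathbf{b}}$ from Lemma \ref{huang} (under the non-grazing hypothesis that $v\cdot n(x_{\mathbf{b}})<0$), so one can differentiate directly to verify $(\partial_t+v\cdot\nabla_x)\{G(t,0)h_0\}=0$, the initial condition at $t=0$, and the boundary condition on $\gamma_-$. For uniqueness, any two solutions $G_1,G_2$ of the transport equation with the same data produce an $L^\infty$ function $w:=G_1-G_2$ which vanishes on $\{t=0\}$ and on $(0,\infty)\times\gamma_-$; the computation above applied to $w$ forces $w\equiv 0$ away from $\gamma_0$.

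The main obstacle, as signalled by the caveat $(x,v)\notin\gamma_0$ in the statement, is that at grazing points the value $v\cdot n(x_{\mathbf{b}})=0$ makes the boundary data assignment ambiguous and, by Lemma \ref{tbcon}, the map $(x,v)\mapsto t_{\mathbf{b}}(x,v)$ may fail to be continuous (in particular on the concave grazing boundary $\gamma_0^{\mathbf{S}}$). This is exactly why the representation \eqref{inflowformula} is stated only outside $\gamma_0$: there the characteristic direction is transversal to $\partial\Omega$ at the exit point, every pointwise computation above is rigorously justified, and the formula is unambiguous; handling $\gamma_0$ is precisely the content of the discontinuity analysis developed in the rest of the paper rather than of this representation lemma.
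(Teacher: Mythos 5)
Your overall strategy (constancy along characteristics, case split on $t\lessgtr t_{\mathbf{b}}$, then verification and uniqueness) is the standard one and is what the cited source does; the paper itself gives no proof, quoting \cite{Guo08}. However, there is one genuinely wrong step: the assertion that, because $(x,v)$ is excluded from the grazing set in the hypothesis, the backward exit point satisfies $(x_{\mathbf{b}}(x,v),v)\in\gamma_-$. The hypothesis restricts $(x,v)$ itself, not $(x_{\mathbf{b}}(x,v),v)$, and in a non-convex domain an interior point $(x,v)\in\Omega\times\mathbb{R}^3$ can perfectly well have a grazing backward exit, $v\cdot n(x_{\mathbf{b}}(x,v))=0$; this is precisely the grazing set $\mathfrak{G}$ of Definition \ref{grazingset} (e.g.\ points whose exit lies in $\gamma_0^{\mathbf{S}}$), which is the central object of the whole paper. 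At such points the second term of \eqref{inflowformula} evaluates $wg$ at a point of $\gamma_0$, where the in-flow datum is not prescribed, so the representation is not unambiguous there; the lemma has to be read as holding for almost every $(x,v)$ (equivalently, off the null set $\mathfrak{G}$), exactly as its bounce-back analogue, Lemma \ref{bouncebackformula}, is stated (``for almost any $(x,v)$'').

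The same confusion affects your verification step: the smoothness of $t_{\mathbf{b}}$ and $x_{\mathbf{b}}$ from Lemma \ref{huang} requires $v\cdot n(x_{\mathbf{b}}(x,v))<0$, which does not follow from $(x,v)\notin\gamma_0\cup\gamma_-$; where the exit is grazing, $t_{\mathbf{b}}$ can even be discontinuous (Lemma \ref{tbcon}), so the direct differentiation of the right-hand side of \eqref{inflowformula} is only legitimate away from $\mathfrak{G}$ and away from $\{t=t_{\mathbf{b}}\}$. If you replace your claim by the correct statement---either add the hypothesis $v\cdot n(x_{\mathbf{b}}(x,v))<0$, or state the formula a.e.\ and note that $\mathfrak{G}_x\cap\mathbb{S}^2$ has zero measure (Lemma \ref{hongjie}) so $\mathfrak{G}$ is null---the rest of your argument (the two-case propagation and the uniqueness via the same characteristic computation applied to the difference of two solutions) is correct and agrees with \cite{Guo08}. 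A cosmetic point: you denote the difference of two solutions by $w$, which collides with the weight $w(v)$ already in use.
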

Next we prove a generalized version of Lemma 13 in \cite{Guo08}.
\begin{lemma}[{Continuity away from $\mathfrak{D}$ : Transport Equation}]\label{transportequation}
Let $\Omega$ be an open subset of  $\mathbb{R}^3$ with a smooth boundary $\partial\Omega$ and an initial datum $h_0 (x,v)$ be continuous in $\Omega\times\mathbb{R}^3 \cup\{\gamma_- \cup \gamma_+ \cup \gamma_0^{I-}\}$ and a boundary datum $g$ be continuous in $[0,T]\times \{\gamma_-  \cup \gamma_0^{I-}\}$. Also assume $q(t,x,v)$ and $\phi(t,x,v)$ are continuous in the interior of $[0,T]\times\Omega\times\mathbb{R}^3$ and satisfy $\sup_{[0,T]\times\Omega\times\mathbb{R}^3}\big|q(t,x,v)\big| < \infty$ and $\sup_{[0,T]\times\Omega}\big|\phi(\cdot,\cdot,v)\big| < \infty$ for all $v\in\mathbb{R}^3$. Let $h(t,x,v)$ be the solution of
\begin{equation*}
\{\partial_t + v\cdot\nabla_x + \phi\}h = q \ , \ \ \ h(0,x,v) = h_0 \ , \ \ \ h|_{\gamma_-} = wg.
\end{equation*}
Assume the compatibility condition on $\gamma_- \cup \gamma_0^{I-}$,
\begin{equation}
h_0(x,v)=w(v)g(0,x,v).
\end{equation}
Then the Boltzmann solution $h(t,x,v)$ is continuous on the continuity set $\mathfrak{C}$. Furthermore, if the boundary $\partial\Omega$ does not include a line segment (Definition \ref{linesegment}) then $h(t,x,v)$ is continuous on a complementary set of the discontinuity set, i.e. $\{[0,T]\times\bar{\Omega}\times\mathbb{R}^3\}\backslash\mathfrak{D}$.
\end{lemma}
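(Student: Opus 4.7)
The plan is to write $h$ explicitly by integrating the transport ODE $\{\partial_t + v\cdot\nabla_x + \phi\}h = q$ along the backward straight-line characteristic $s \mapsto (s, x - (t-s)v, v)$, producing the Duhamel representation
\begin{equation*}
h(t,x,v) = e^{-\int_{s_*}^{t}\phi\,d\tau}\, H_* + \int_{s_*}^{t} e^{-\int_{s}^{t}\phi\,d\tau}\, q\big(s, x - (t-s)v, v\big)\,ds,
\end{equation*}
with $s_* := \max\{0, t - t_{\mathbf{b}}(x, v)\}$ and $H_* = h_0(x - tv, v)$ when $t \leq t_{\mathbf{b}}$, $H_* = w(v)\, g(t - t_{\mathbf{b}}, x_{\mathbf{b}}, v)$ when $t > t_{\mathbf{b}}$; the two branches agree at the interface $t = t_{\mathbf{b}}$ by virtue of the assumed compatibility $h_0 = w g|_{t=0}$ on $\gamma_- \cup \gamma_0^{I-}$. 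Continuity of $h$ on $\mathfrak{C}$ is then verified piece by piece using the regularity of $t_{\mathbf{b}}$ and $x_{\mathbf{b}}$ supplied by Lemmas \ref{huang} and \ref{tbcon}.

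The easy cases are dispatched quickly. On $\{0\}\times\bar{\Omega}\times\mathbb{R}^3$ only the $h_0$ term survives, so continuity is inherited from $h_0$. On $(0,\infty)\times\gamma_-$ one has $t_{\mathbf{b}}=0$ and $x_{\mathbf{b}}=x$, so $h = w g$ and continuity comes from that of $g$. For interior or $\gamma_+$ points with $t < t_{\mathbf{b}}$, the lower semi-continuity of $t_{\mathbf{b}}$ (Lemma \ref{huang}(1)) keeps the strict inequality open under perturbation, so only the $h_0$-branch and the source integral contribute; continuity of the latter follows from continuity of $q$ in the interior, boundedness of $\phi$, and dominated convergence along the straight-line parametrization. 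For interior points with $t > t_{\mathbf{b}}$ and $(x_{\mathbf{b}}, v) \in \gamma_-$, Lemma \ref{huang}(2) makes $(t_{\mathbf{b}}, x_{\mathbf{b}})$ smooth functions of $(x, v)$, so composition with $g$ gives continuity.

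The hard part is the case $(x_{\mathbf{b}}(x_0, v_0), v_0) \in \gamma_0^{I-}$, where $v_0 \cdot n(x_{\mathbf{b}}) = 0$ and Lemma \ref{huang}(2) fails. Here Lemma \ref{tbcon} still supplies continuity (though not smoothness) of $t_{\mathbf{b}}$ at $(x_0, v_0)$, so $x_{\mathbf{b}}(x_n, v_n) \to x_{\mathbf{b}}(x_0, v_0)$ for any $(x_n, v_n) \to (x_0, v_0)$. I will argue that for $n$ large the endpoint $(x_{\mathbf{b}}(x_n, v_n), v_n)$ remains in $\gamma_- \cup \gamma_0^{I-}$: indeed $t_{\mathbf{b}}(x_{\mathbf{b}}(x_n, v_n), -v_n) > 0$ automatically (since $x_{\mathbf{b}} + s v_n = x_n - (t_{\mathbf{b}} - s) v_n \in \Omega$), and the assumed exit condition $x_{\mathbf{b}}(x_0, v_0) - \tau v_0 \in \bar{\Omega}^c$ for small $\tau > 0$ persists for nearby $(x_{\mathbf{b}}(x_n, v_n), v_n)$ by openness of $\bar{\Omega}^c$ and smoothness of $\partial\Omega$. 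This keeps the boundary term evaluated where $g$ is continuous, and sequences straddling the interface $t_n = t_{\mathbf{b}}(x_n, v_n)$ are glued by the compatibility condition. The analogous matching argument handles points near the initial slice $\{0\}\times\gamma_0^{I-}$.

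For the complementary statement, observe that if $x_0 \in \Omega \cup \gamma_+$ then $x_{\mathbf{b}}(x_0, v_0) + s v_0 = x_0 - (t_{\mathbf{b}} - s) v_0 \in \Omega$ for all $s \in (0, t_{\mathbf{b}})$, which directly contradicts the defining conditions of $\gamma_0^V$ (requiring $t_{\mathbf{b}}(x_{\mathbf{b}}, -v_0) = 0$) and of $\gamma_0^{I+}$ (requiring $x_{\mathbf{b}} + \tau v_0 \in \bar{\Omega}^c$ for small $\tau > 0$). Hence $(x_{\mathbf{b}}, v)$ can land only in $\gamma_- \cup \gamma_0^{\mathbf{S}} \cup \gamma_0^{I-}$, and the distinction between $\mathfrak{D}$ and $\mathfrak{D}^c$ reduces to whether this endpoint sits on $\gamma_0^{\mathbf{S}}$ or not. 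The no-line-segment hypothesis (Definition \ref{linesegment}) rules out the only remaining pathology, a tangent trajectory sliding along a flat boundary piece, and guarantees $\gamma_0 = \gamma_0^{\mathbf{S}} \cup \gamma_0^V \cup \gamma_0^{I-} \cup \gamma_0^{I+}$; therefore $\mathfrak{C} = \mathfrak{D}^c$, and continuity on $\mathfrak{D}^c$ follows.
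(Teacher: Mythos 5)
Your overall architecture coincides with the paper's: a Duhamel representation along straight characteristics, the same case split over $\mathfrak{C}$ ($t<t_{\mathbf{b}}$; exit point in $\gamma_-$; exit point in $\gamma_0^{I-}$; the interface $t=t_{\mathbf{b}}$ glued by the compatibility condition), with Lemma \ref{huang} and Lemma \ref{tbcon} doing the same work, and the complementary statement handled by the same classification of where a backward exit point can land. However, the one ingredient you add beyond the paper is false: the claim that for $(x_n,v_n)\rightarrow(x_0,v_0)$ with $(x_{\mathbf{b}}(x_0,v_0),v_0)\in\gamma_0^{I-}$ the nearby exit points $(x_{\mathbf{b}}(x_n,v_n),v_n)$ remain in $\gamma_-\cup\gamma_0^{I-}$. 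Openness of $\bar{\Omega}^c$ only tells you that the nearby backward ray is outside $\bar{\Omega}$ at some fixed positive time, hence that $t_{\mathbf{b}}(x_n,v_n)$ stays bounded; it says nothing about \emph{how} the ray first leaves $\Omega$. Near an inflection the claim genuinely fails: take locally $\Phi(x_1,x_2)=-x_1^3$ (up to a harmless perturbation in $x_2$), so that $x_{\mathbf{b}}(x_0,v_0)=0$ with $v_0=e_1$ gives $(0,v_0)\in\gamma_0^{I-}$. The tangent line to the boundary at a nearby point $x_1=\tau_1>0$ lies inside $\Omega$ on both sides of the touching point, so an interior point $x_n$ on that tangent line with $v_n$ along it converges to $(x_0,v_0)$ while its backward exit is precisely the tangential touch, i.e. $(x_{\mathbf{b}}(x_n,v_n),v_n)\in\gamma_0^{\mathbf{S}}$. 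This is not a technicality: it is exactly the mechanism by which points of $\mathfrak{D}$ accumulate at the $\mathfrak{C}$-point $(t,x_0,v_0)$, so no persistence statement of this kind can hold, and your sentence ``this keeps the boundary term evaluated where $g$ is continuous'' is unjustified as written.

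The gap is repairable, and the repair is what the paper implicitly does: the lemma asserts continuity of $h$ \emph{on the set} $\mathfrak{C}$, so one only compares $h(t,x_0,v_0)$ with $h(\bar{t},\bar{x},\bar{v})$ for $(\bar{t},\bar{x},\bar{v})\in\mathfrak{C}$; for such test points the definition of $\mathfrak{C}$ itself guarantees that either $\bar{t}<t_{\mathbf{b}}(\bar{x},\bar{v})$ (the $h_0$ branch) or $(x_{\mathbf{b}}(\bar{x},\bar{v}),\bar{v})\in\gamma_-\cup\gamma_0^{I-}$ (where $g$ is assumed continuous), and Lemma \ref{tbcon} together with the compatibility condition closes the argument exactly as in your other cases, with no persistence claim needed. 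If you instead insist on continuity against arbitrary nearby points, including the $\mathfrak{D}$-points constructed above, you would have to follow the characteristic through the tangential touch and show its continued exit data converge to $(x_{\mathbf{b}}(x_0,v_0),v_0)$ --- an argument neither you nor the paper supplies and which the statement does not require. The remaining parts of your proposal (the easy cases, the gluing at $t=t_{\mathbf{b}}$, and the identification $\mathfrak{C}=\mathfrak{D}^c$ under the no-line-segment hypothesis, which the paper merely asserts) match the paper's treatment.
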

\begin{proof}
Continuity on $\{\{0\}\times\bar{\Omega}\times\mathbb{R}^3\}\cup \{(0,\infty)\times[\gamma_- \cup \gamma_0^{I-}]\}$ is obvious from the assumption. Fix $(t,x,v)\in \mathfrak{C}$.
Notice that
\begin{equation}
\left\{\frac{d}{ds} \{h(s,X(s),V(s))e^{-\int_s^t \phi(\tau,X(\tau),V(\tau))d\tau}\}
-q(s,X(s),V(s))e^{-\int_s^t \phi(\tau,X(\tau),V(\tau))d\tau}\right\}\mathbf{1}_{[\max\{0,t-t_{\mathbf{b}}(x,v)\},t]}(s)=0,\label{eqntalongXV}
\end{equation}
along the characteristics $X(s;t,x,v)=x-v(t-s), V(s;t,x,v)=v$ until the characteristics hits on the boundary. Choose $(\bar{t},\bar{x},\bar{v})\sim(t,x,v)$ and use a change of variables $\bar{s}= s-(\bar{t}-t)$ with $\bar{s}\in[t-\bar{t},t]$ to have
\begin{eqnarray}
&&\Big\{\frac{d}{d\bar{s}} \{h(\bar{s}+(\bar{t}-t),\bar{X}(\bar{s}),\bar{V}(\bar{s}))e^{-\int_{\bar{s}}^t \phi(\tau+(\bar{t}-t),\bar{X}(\tau),\bar{V}(\tau))d\tau}\}\nonumber
\\&& \ \ \ \ \ \ \ \ \ \ \ \ \ \ \ \ \ \ -q(\bar{s}+(\bar{t}-t),\bar{X}(\bar{s}),\bar{V}(\bar{s}))e^{-\int_{\bar{s}}^t \phi(\tau+(\bar{t}-t),\bar{X}(\tau),\bar{V}(\tau))d\tau}\Big\}\mathbf{1}_{[-(\bar{t}-t)+\max\{0,\bar{t}-t_{\mathbf{b}}(\bar{x},\bar{v})\},t]}(s)=0,\label{eqntalongXV1}
\end{eqnarray}
where $\bar{X}(\bar{s})=X(\bar{s}+(\bar{t}-t);\bar{t},\bar{x},\bar{v})$ and $\bar{V}(\bar{s})=V(\bar{s}+(\bar{t}-t);\bar{t},\bar{x},\bar{v})$.

By the definition $\mathfrak{C}$, we can separate two cases : $t<t_{\mathbf{b}}(x,v)$ , $(x_{\mathbf{b}}(x,v),v)\in \gamma_- \cup \gamma_0^{I-}$.\\
\newline \underline{Case of $t-t_{\mathbf{b}}(x,v)< 0$} \ \
From the assumption $t-t_{\mathbf{b}}(x,v)< 0$, we know that (\ref{eqntalongXV}) holds for $0\leq s\leq t$. Now we choose $(\bar{t},\bar{x},\bar{v})$ near $(t,x,v)$ so that $\bar{t}-t_{\mathbf{b}}(\bar{x},\bar{v})<0$, and $\bar{X}(\bar{s})=X(\bar{s}+(\bar{t}-t);\bar{t},\bar{x},\bar{v})$ is in the interior of $\Omega$ for all $\bar{s}\in[t-\bar{t},t]$. Taking the integration over $[\min\{0,t-\bar{t}\},t]$ of $(\ref{eqntalongXV})-(\ref{eqntalongXV1})$ to have
\begin{eqnarray*}
&&h(t,x,v)-h(\bar{t},\bar{x},\bar{v})=h_0(X(0),V(0))e^{-\int_0^t \phi(\tau,X(\tau),V(\tau))d\tau} -h_0(\bar{X}(t-\bar{t}),\bar{V}(t-\bar{t}))e^{-\int_{t-\bar{t}}^t \phi(\tau+(\bar{t}-t),\bar{X}(\tau),\bar{V}(\tau))d\tau}\\
&& \ \ \ \ \ \ \ \ \ \ \ \ \ \ \ \ \ \ \ \ \ \ \ \ \ \ \ \ \ \ +\int_{\min\{0,t-\bar{t}\}}^t
\Big\{
\mathbf{1}_{[\max\{0,t-t_{\mathbf{b}}(x,v)\},t]}(s) q(s,X(s),V(s))e^{-\int_s^t \phi(\tau,X(\tau),V(\tau))d\tau}\\
&& \ \ \ \ \ \ \ \ \ \ \ \ \ \ \ \ \ \ \ \ \ \ \ \ \ \ \ \ \ \ -\mathbf{1}_{[t-\bar{t}+\max\{0,\bar{t}-t_{\mathbf{b}}(\bar{x},\bar{v})\},t]}(s)
q(s+(\bar{t}-t),\bar{X}(s),\bar{V}(s))
e^{-\int_s^t \phi(\tau+(\bar{t}-t),\bar{X}(\tau),\bar{V}(\tau))d\tau}
\Big\}ds.
\end{eqnarray*}
Since $h_0$ and $\phi$ is continuous, it is easy to see that the first line above goes to zero when $(\bar{t},\bar{x},\bar{v})\rightarrow (t,x,v)$. For the remainder we separate cases : $t-\bar{t}>0$ and $t-\bar{t}\leq 0$. If $t-\bar{t}>0$ the remainder is bounded by
\begin{eqnarray}
\int_{t-\bar{t}}^t | q(s)e^{\int^t_s \phi(\tau)\tau} - q(s+(\tau{t}-t)) e^{-\int_s^t \phi(\tau+(\bar{t}-t)}
| + |t-\bar{t}| \sup_{0\leq s\leq t} ||q(s)||_{\infty} e^{t\sup_{0\leq s\leq t}||\phi(s)||_{\infty}},\nonumber
\end{eqnarray}
where the first term is small using continuity of $q$ and $\phi$, and the second term is small as $(\bar{t},\bar{x},\bar{v})\rightarrow (t,x,v)$. The case $t-\bar{t}\leq 0$ is similar.
\\
\newline \underline{Case of $(x_{\mathbf{b}}(x,v),v)\in \gamma_- \cup \gamma_0^{I-}$} \ \
We only have to consider cases of $t>t_{\mathbf{b}}(x,v)$ and $t=t_{\mathbf{b}}(x,v)$. By definition $(x_{\mathbf{b}}(x,v),v)\in\gamma_- \cup \gamma_0^{I-}$. From Lemma \ref{tbcon}, we know that $t_{\mathbf{b}}(x,v)$ is a continuous function when $(x_{\mathbf{b}}(x,v),v)\notin \gamma_- \cup \gamma_0^{I-}$. In the case of $t>t_{\mathbf{b}}(x,v)$, for $(\bar{t},\bar{x},\bar{v})\sim (t,x,v)$, we have $\bar{t}>t_{\mathbf{b}}(\bar{x},\bar{v})$. Taking the integration over $[\min\{0,t-\bar{t}\},t]$ of $(\ref{eqntalongXV})-(\ref{eqntalongXV1})$ to have
\begin{eqnarray*}
h(t,x,v)-h(\bar{t},\bar{x},\bar{v})&=&wg(t-t_{\mathbf{b}}(x,v),X(t_{\mathbf{b}}(x,v)),V(t_{\mathbf{b}}(x,v)))e^{-\int_{t-t_{\mathbf{b}}(x,v)}^t \phi(\tau,X(\tau),V(\tau))d\tau}\\
&&-wg(\bar{t}-t_{\mathbf{b}}(\bar{x},\bar{v}),X(t_{\mathbf{b}}(\bar{x},\bar{v})),V(t_{\mathbf{b}}(\bar{x},\bar{v})))e^{-\int^t_{\bar{t}-t_{\mathbf{b}}(\bar{x},\bar{v})} \phi(\tau+(\bar{t}-t),\bar{X}(\tau),\bar{V}(\tau))d\tau}\\
&&+\int_{t-t_{\mathbf{b}}(x,v)}^t q(s,X(s),V(s))e^{-\int_s^t \phi(\tau,X(\tau),V(\tau))d\tau}ds\\
&&-\int_{t-t_{\mathbf{b}}(\bar{x},\bar{v})}^t q(s+(\bar{t}-t),\bar{X}(s),\bar{V}(s))
e^{-\int_s^t \phi(\tau+(\bar{t}-t),\bar{X}(\tau),\bar{V}(\tau))d\tau}
ds.
\end{eqnarray*}
Using the continuity of $t_{\mathbf{b}}$ and $q$ and $\phi$, it is easy to show that $|h(t,x,v)-h(\bar{t},\bar{x},\bar{v})|\rightarrow 0$ as $(\bar{t},\bar{x},\bar{v})\rightarrow (t,x,v)$. In the case of $t=t_{\mathbf{b}}(x,v)$ we can choose $(\bar{t},\bar{x},\bar{v})\sim (t,x,v)$ so that $t_{\mathbf{b}}(\bar{x},\bar{v})\in (t-\epsilon ,t+\epsilon)$. Taking the integration over $[\min\{0,t-\bar{t}\},t]$ of $(\ref{eqntalongXV})-(\ref{eqntalongXV1})$ to have
\begin{eqnarray*}
&&|h(t,x,v)-h(\bar{t},\bar{x},\bar{v})|\leq wg(t-t_{\mathbf{b}}(x,v),X(t_{\mathbf{b}}(x,v)),V(t_{\mathbf{b}}(x,v)))e^{-\int_{t-t_{\mathbf{b}}(x,v)}^t \phi(\tau,X(\tau),V(\tau))d\tau} \\
&&\ \ \ \ \ \ \ \ \ \ \ \ \ \ \ \ \ \ \ \ \ \ \ \ \ \ \ \ \ \ -\mathbf{1}_{\bar{t}>t_{\mathbf{b}}(\bar{x},\bar{v})}wg(\bar{t}-t_{\mathbf{b}}(\bar{x},\bar{v}),X(t_{\mathbf{b}}(\bar{x},\bar{v}),V(t_{\mathbf{b}}(\bar{x},\bar{v}))))e^{-\int^t_{\bar{t}-t_{\mathbf{b}}(\bar{x},\bar{v})} \phi(\tau+(\bar{t}-t),\bar{X}(\tau),\bar{V}(\tau))d\tau}\\
&&\ \ \ \ \ \ \ \ \ \ \ \ \ \ \ \ \ \ \ \ \ \ \ \ \  \ \ \ \ -\mathbf{1}_{\bar{t}\leq t_{\mathbf{b}}(\bar{x},\bar{v})}h_0(\bar{X}(t-\bar{t}),\bar{V}(t-\bar{t}))e^{-\int_{t-\bar{t}}^t \phi(\tau+(\bar{t}-t),\bar{X}(\tau),\bar{V}(\tau))d\tau}\\
&&\ \ \ \ \ \ \ \ \ \ \ \ \ \ \ \ \ +\int_{t-t_{\mathbf{b}}(x,v)+\varepsilon}^t
\Big|q(s,X(s),V(s))e^{-\int_s^t \phi(\tau,X(\tau),V(\tau))d\tau}
- q(s+(\bar{t}-t),\bar{X}(s),\bar{V}(s))
e^{-\int_s^t \phi(\tau+(\bar{t}-t),\bar{X}(\tau),\bar{V}(\tau))d\tau}
\Big|
ds\\
&&\ \ \ \ \ \ \ \ \ \ \ \ \ \ \ \ \ \ \ \ \ \ \ \ \  \ \ \ \ + 2\varepsilon \sup_{0\leq s\leq t}||q(s)||_{\infty} e^{t\sup_{0\leq s\leq t}||\phi(s)||_{\infty}},
\end{eqnarray*}
where the first three lines can be small using the compatibility condition and continuity of $h_0$ in $\Omega\times\mathbb{R}^3 \cup\{\gamma_- \cup \gamma_+ \cup \gamma_0^{I-}\}$ and a continuity of $g$ on $[0,T]\times\{\gamma_- \cup \gamma_0^{I-}\}$ and continuity of $\phi$. For the fourth line above, we use the continuity of $q$ and $\phi$.
\\
If the boundary $\partial\Omega$ does not include a line segment (Definition \ref{linesegment}) we have $\mathfrak{C} = \{[0,T]\times\bar{\Omega}\times\mathbb{R}^3\}\backslash\mathfrak{D}$.
\end{proof}
\\
\newline\textbf{Proof of Part 1 of Theorem \ref{continuityawayfromD}}
\newline We will use the following iteration scheme
\begin{eqnarray}
\{\partial_t + v\cdot\nabla_x + \nu \}h^{m+1} = K_w h^m + w \Gamma_+\left(\frac{h^m}{w},\frac{h^m}{w}\right) -w \Gamma_-\left(\frac{h^m}{w},\frac{h^{m+1}}{w}\right),\label{hm}
\end{eqnarray}
with $h^{m+1}|_{t=0}=h_0$ and $h^{m+1}(t,x,v) = wg(t,x,v)$ with $(t,x,v)\in\gamma_- \cup \gamma_0^{I-}$. For simplicity we define
\begin{equation}
q^m = K_w h^m + w\Gamma_+\left(\frac{h^m}{w},\frac{h^m}{w}\right)
-w\Gamma_-\left(\frac{h^m}{w},\frac{h^{m+1}}{w}\right).\label{qmm}
\end{equation}
\newline \textbf{Step 1} : We claim
\begin{equation}
h^i \text{ \ is a continuous function in \ } \mathfrak{C}_T\label{mcontii}
\end{equation}
for all $i\in\mathbb{N}$ and for any $T>0$ where
\begin{equation}
\mathfrak{C}_T \equiv \mathfrak{C} \cap \{[0,T]\times\bar{\Omega}\times\mathbb{R}^3\}, \label{CT}
\end{equation}
where the continuity set $\mathfrak{C}$ is defined in (\ref{C}). We will use mathematical induction to show (\ref{mcontii}). We choose $h^0 =0$ then (\ref{mcontii}) is satisfied for $i=0$. Assume (\ref{mcontii}) for all $i=0,1,2,...,m$.
Rewrite $w\Gamma_-\left(\frac{h^m}{w},\frac{h^{m+1}}{w}\right) = \nu\left(\sqrt{\mu}\frac{h^m}{w}\right)h^{m+1}$ then the equation of $h^{m+1}$ is
\begin{eqnarray}
\{\partial_t + v\cdot\nabla_x + \nu(v) +\nu\left(\sqrt{\mu}\frac{h^m}{w}\right) \}h^{m+1} = K_w h^m + w \Gamma_+\left(\frac{h^m}{w},\frac{h^m}{w}\right).\label{hmequation}
\end{eqnarray}
From Theorem \ref{continuity} and Corollary \ref{continuity1} we know that $\nu\left(\sqrt{\mu}\frac{h^m}{w}\right)$ and $w\Gamma_+\left(\frac{h^m}{w},\frac{h^m}{w}\right)$ is continuous in $[0,T]\times\Omega\times\mathbb{R}^3$. Apply Lemma \ref{transportequation} where $\phi(t,x,v)$ corresponds to $\nu(v) +\nu(\sqrt{\nu}\frac{h^m}{w})$ and $q(t,x,v)$ corresponds to the right hand side of (\ref{hmequation}). Then we check (\ref{mcontii}) for $i=m+1$.
\newline \textbf{Step 2} : We claim that there exist $C>0$ and $\delta>0$ such that if $C\{||h_0||_{\infty} + \sup_{0\leq s < \infty}||wg(s)||_{\infty}\} < \delta$ and $C||h_0||_{\infty}<\delta$ then there exists $T= T(C,\delta)>0$ so that
\begin{equation}
\sup_{0\leq s\leq T}||h^m(s)||_{\infty} \leq C ||h_0||_{\infty},\label{boundedh-m}
\end{equation}
for all $m\in\mathbb{N}$. Moreover $\{h^m\}_{m=0}^{\infty}$ is Cauchy in $L^{\infty}([0,T]\times\bar{\Omega}\times\mathbb{R}^3)$.\\
First we will show a boundedness (\ref{boundedh-m}) for all $m\in\mathbb{N}$. We use mathematical induction on $m$. Assume $\sup_{0\leq s\leq T}||h^m(s)||_{\infty} \leq C ||h_0||_{\infty}$ where $T>0$ will be determined later. Integrating (\ref{hm}) along the trajectory, we have
\begin{eqnarray*}
h^{m+1}(t,x,v)&=&\mathbf{1}_{t<t_{\mathbf{b}}(x,v)}e^{-\nu(v)t} h_0 (x-tv,v) + \mathbf{1}_{t \geq t_{\mathbf{b}}(x,v)} e^{-\nu(v)t_{\mathbf{b}}(x,v)} w(v)g(t-t_{\mathbf{b}}(x,v),x_{\mathbf{b}}(x,v),v)\\
&&+\int_{\max\{t-t_{\mathbf{b}}(x,v),0\}}^t e^{-\nu(v)(t-s)}\big\{ {K_w h^m + w\Gamma_+\left(\frac{h^m}{w},\frac{h^m}{w}\right)
-w\Gamma_-\left(\frac{h^m}{w},\frac{h^{m+1}}{w}\right)}\big\}(s,x-(t-s)v,v) ds\\
&\leq& ||h_0||_{\infty} + \sup_{0\leq s\leq t}||wg(s)||_{\infty}
+ t C_{\mathbf{k}} \sup_{0\leq s\leq t}||h^m (s)||_{\infty}
+C_{\Gamma} \sup_{0\leq s\leq t} ||h^m(s)||_{\infty} \sup_{0\leq s\leq t}\left(
||h^m(s)||_{\infty}+||h^{m+1}(s)||_{\infty}
\right),
\end{eqnarray*}
and
\begin{eqnarray*}
\sup_{0\leq s \leq t} ||h^{m+1}(s)||_{\infty} &\leq& \frac{
1+tC_{\mathbf{k}}C + C_{\Gamma}C \{||h_0||_{\infty}+\sup_s ||wg(s)||_{\infty}\}
}{1-C_{\Gamma}C\{||h_0||_{\infty}+\sup_{s}||wg(s)||_{\infty}\}} \left\{ ||h_0||_{\infty} +\sup_{0\leq s\leq t}||wg(s)||_{\infty}
\right\}\\
&\leq& C \left\{ ||h_0||_{\infty} +\sup_{0\leq s\leq t}||wg(s)||_{\infty}
\right\},
\end{eqnarray*}
where we choose $C>4$ and then $ \{||h_0||_{\infty}+\sup_{0\leq s\leq t}||wg(s)||_{\infty}\}\leq\frac{1}{2C_{\Gamma}C}$ and then $T = \frac{C-3}{2C_{\mathbf{k}}C}$.\\
Newt we will show the sequence $\{h^m\}$ is Cauchy in $L^{\infty}([0,T]\times\bar{\Omega}\times\mathbb{R}^3)$. The equation of $h^{m+1}-h^m$ is
\begin{eqnarray}
&&\{\partial_t + v\cdot\nabla_x + \nu\}(h^{m+1}-h^m) = \tilde{q}^m,\label{hm1}\\
&& \ \ (h^{m+1}-h^m)|_{t=0} = 0 , \ \ (h^{m+1}-h^m)|_{\gamma_-} =0\nonumber
\end{eqnarray}
where
\begin{scriptsize}\begin{equation}\tilde{q}^m = K_w(h^m-h^{m-1}) + w\Gamma_+\left(\frac{h^m}{w},\frac{h^m-h^{m-1}}{w}\right)
-w\Gamma_+ \left(\frac{h^{m-1}-h^m}{w},\frac{h^{m-1}}{w}\right)-w\Gamma_-\left(\frac{h^m}{w},\frac{h^{m+1}-h^m}{w}\right)
+w\Gamma_-\left(\frac{h^{m-1}-h^m}{w},\frac{h^m}{w}\right).\label{tildeqm}
\end{equation}\end{scriptsize}
From (\ref{wk}) and (\ref{Gamma1}), we have a bound of $\tilde{q}_m$,
\begin{eqnarray}
&&\sup_{0\leq s\leq t}||\tilde{q}^m(s)||_{\infty} \leq C_{\mathbf{k}}\sup_{0\leq s\leq t}||\{h^m-h^{m-1}\}(s)||_{\infty}\label{qm}\\
&&+C_{\Gamma}\nu(v) \{\sup_{0\leq s\leq t} ||\{h^{m}-h^{m-1}\}(s)||_{\infty} + \sup_{0\leq s\leq t} ||\{h^{m+1}-h^{m}\}(s)||_{\infty} \} \times (\sup_{0\leq s\leq t}||h^m(s)||_{\infty} + \sup_{0\leq s\leq t}||h^{m+1}(s)||_{\infty}). \nonumber
\end{eqnarray}
Integrating (\ref{hm1}) along the trajectory, we have
\begin{eqnarray*}
||\{h^{m+1}-h^m\}(t)||_{\infty} &\leq& \int_0^t e^{-\nu(v)(t-s)} ||\tilde{q}^m(s,x-(t-s)v,v)||_{\infty} ds\\
&\leq&
C_{\mathbf{k}}t \sup_{0\leq s\leq t}||\{h^m -h^{m-1}\}(s)||_{\infty}\\
&&+ CC_{\Gamma}\left(||h_0||_{\infty}+ \sup_{0\leq s\leq t}||wg(s)||_{\infty}\right) \left\{ \sup_{0\leq s\leq t}||\{h^m-h^{m-1}\}(s)||_{\infty} +\sup_{0\leq s\leq t}||\{h^{m+1}-h^m\}(s)||_{\infty}
\right\}.
\end{eqnarray*}
If we choose $CC_{\Gamma}||h_0||_{\infty} \leq \frac{1}{4}$ and $C_{\mathbf{k}}T \leq \frac{1}{8}$ then
\begin{equation*}
\sup_{0\leq s\leq T} ||\{h^{m+1}-h^m \}(s)||_{\infty} \leq \frac{1}{2} \sup_{0\leq s\leq T} ||\{h^{m}-h^{m-1} \}(s)||_{\infty}.
\end{equation*}
Then we have
\begin{eqnarray*}
\sup_{0\leq s\leq T} ||\{h^{m} -h^{m-1}\}(s)||_{\infty} &\leq& \sup_{0\leq s\leq T}||\{h^m -h^{m-1}\}(s)||_{\infty} +\cdot\cdot\cdot + \sup_{0\leq s\leq T} ||\{h^{n+1}-h^n\}(s)||_{\infty}\\
&\leq& \{ \frac{1}{2^{m-n-1}} + \cdot\cdot\cdot + \frac{1}{2^0}\} \sup_{0\leq s\leq T} ||\{h^{n+1}-h^n\}(s)||_{\infty}\leq
 \frac{2}{2^{n}} \sup_{0\leq s\leq T} ||\{h^1-h^0\}(s)||_{\infty}\\
&\leq& \frac{4}{2^{n}} C\{||h_0||_{\infty} + \sup_{0\leq s\leq T}||wg(s)||_{\infty}\},
\end{eqnarray*}
which means that the sequence $\{h^m\}$ is Cauchy in $L^{\infty}([0,T]\times\bar{\Omega}\times\mathbb{R}^3)$.
\newline \textbf{Step 3} : From previous steps we obtain that $h$ with $\lim_{n\rightarrow\infty}h^n$ is continuous function on $\mathfrak{C}_T$. Now we claim that $h$ is continuous in $\mathfrak{C}$. Notice that $T$ only depends on $||h_0||_{\infty}$ and $\sup_{0\leq s\leq T}||wg(s)||_{\infty}$. Using unform bound of $\sup_{0\leq s < \infty}||h(s)||_{\infty}$ (Theorem 1 of \cite{Guo08}) we can obtain the continuity for $h$ for all time by repeating $[0,T],[T,2T],...$. If the boundary $\partial\Omega$ does not include a line segment (Definition \ref{linesegment}) then every step is valid with $[0,\infty)\times\{\bar{\Omega}\times\mathbb{R}^3\}\backslash\mathfrak{D}$ instead of $\mathfrak{C}$ and $[0,T]\times\{\bar{\Omega}\times\mathbb{R}^3\}\backslash\mathfrak{D}$ instead of $\mathfrak{C}_T$.
\subsection{Propagation of Discontinuity}
\textbf{Proof of 1 of Theorem \ref{propagation}}
\newline \textbf{Proof of (\ref{1sidedinequality})} \\
In order to show the upper bound of discontinuity jump (\ref{1sidedinequality}), we will show
\begin{equation}
[h(t)]_{x_0 +(t-t_0)v_0,v_0} \leq [h]_{t_0,x_0,v_0} e^{-(\frac{1}{C_{\nu}}+ \frac{C^{\prime}}{C_w}||h_0||_{\infty})(1+|v_0|)^{\gamma}(t-t_0)},
\end{equation}
when $(x_0,v_0)\in\gamma_0^{\mathbf{S}}$ and $t\in (t_0,t_0 +t_{\mathbf{b}}(x_0,-v_0))$. Choose two points $(x^{\prime},v^{\prime}),(x^{\prime\prime},v^{\prime\prime})\in \{\bar{\Omega}\times\mathbb{R}^3 \backslash \mathfrak{G}\}\cap B((x,v);\delta)\backslash (x,v)$ and compare the representation
\begin{scriptsize}
\begin{eqnarray*}
|h(t,x^{\prime},v^{\prime})-h(t,x^{\prime\prime},v^{\prime\prime})|
&\leq&
\Big{|}\mathbf{1}_{t-t_0\geq t_{\mathbf{b}}(x^{\prime},v^{\prime})}h(t-t_{\mathbf{b}}(x^{\prime},v^{\prime}), x_{\mathbf{b}}(x^{\prime},v^{\prime}), v^{\prime})e^{-\nu(v^{\prime})t_{\mathbf{b}}(x^{\prime},v^{\prime})
-\int_{t-t_{\mathbf{b}}(x^{\prime},v^{\prime})}^t
\nu(\sqrt{\mu}\frac{h}{w})(\tau,x^{\prime}-(t-\tau)v^{\prime},v^{\prime})d\tau}\label{firstterm}\\
&&+
\mathbf{1}_{t-t_0< t_{\mathbf{b}}(x^{\prime},v^{\prime})}h(t_0, x^{\prime}-(t-t_0)v^{\prime}, v^{\prime})e^{-\nu(v^{\prime})(t-t_0) -\int_{t_0}^t \nu(\sqrt{\mu}\frac{h}{w})(\tau,x^{\prime}-(t-\tau)v^{\prime},v^{\prime})d\tau}
\nonumber\\
&&-\mathbf{1}_{t-t_0\geq t_{\mathbf{b}}(x^{\prime\prime},v^{\prime\prime})}h(t-t_{\mathbf{b}}(x^{\prime\prime},v^{\prime\prime}), x_{\mathbf{b}}(x^{\prime\prime},v^{\prime\prime}), v^{\prime\prime})e^{-\nu(v^{\prime\prime})t_{\mathbf{b}}(x^{\prime\prime},v^{\prime\prime})-\int_{t-t_{\mathbf{b}}(x^{\prime\prime},v^{\prime\prime})}^t \nu(\sqrt{\mu}\frac{h}{w})(\tau,x^{\prime}-(t-\tau)v^{\prime},v^{\prime})d\tau}\label{thirdterm}\\
&&-\mathbf{1}_{t-t_0< t_{\mathbf{b}}(x^{\prime\prime},v^{\prime\prime})}h(t_0, x^{\prime\prime}-(t-t_0)v^{\prime\prime}, v^{\prime\prime})e^{-\nu(v^{\prime\prime})(t-t_0) -\int_{t_0}^t \nu(\sqrt{\mu}\frac{h}{w})(\tau,x^{\prime\prime}-(t-\tau)v^{\prime\prime},v^{\prime\prime})d\tau}
\Big{|}\nonumber\\
&+&\Big{|}\int_{\max\{0,t-t_0-t_{\mathbf{b}}(x^{\prime},v^{\prime})\}}^t \{K_w h + w\Gamma_+ (\frac{h}{w},\frac{h}{w})\}
(s,x^{\prime}-(t-s)v^{\prime},v^{\prime})
e^{-\nu(v^{\prime})(t-s)-\int_s^t \nu(\sqrt{\mu}\frac{h}{w})(\tau,x^{\prime}-(t-\tau)v^{\prime},v^{\prime})d\tau}
ds\nonumber\\
&&-\int_{\max\{0,t-t_0-t_{\mathbf{b}}(x^{\prime\prime},v^{\prime\prime})\}}^t \{K_w h + w\Gamma_+ (\frac{h}{w},\frac{h}{w})\}
(s,x^{\prime\prime}-(t-s)v^{\prime\prime},v^{\prime\prime})
e^{-\nu(v^{\prime\prime})(t-s)-\int_s^t \nu(\sqrt{\mu}\frac{h}{w})(\tau,x^{\prime\prime}-(t-\tau)v^{\prime\prime},v^{\prime\prime})d\tau}
ds\Big{|}\nonumber.
\end{eqnarray*}
\end{scriptsize}
It is easy to see that if $t-t_0 \geq t_{\mathbf{b}}(x^{\prime},v^{\prime})$ then as $\delta\rightarrow 0$ we have
\begin{eqnarray*}
t-t_{\mathbf{b}}(x^{\prime},v^{\prime}) \rightarrow t_0 \ , \ x_{\mathbf{b}}(x^{\prime},v^{\prime}) \rightarrow x_0 \ , \
\end{eqnarray*}
and if $t-t_0 < t_{\mathbf{b}}(x^{\prime},v^{\prime})$ then as $\delta\rightarrow 0$ we have
\begin{eqnarray*}
x^{\prime}-(t-t_0)v^{\prime} \rightarrow x_0.
\end{eqnarray*}
Therefore the first four lines converge to $[h]_{t_0,x_0,v_0} \times e^{-\nu(v_0)(t-t_0)-\int_{t_0}^t \nu(\sqrt{\mu}\frac{h}{w})(\tau,x_0 -(t_0-\tau)v_0,v_0)d\tau}$. For the last two lines, using the continuity of $K_w h, \Gamma(\frac{h}{w},\frac{h}{w}), \nu(\sqrt{\mu}\frac{h}{w})$ we conclude that it converges to zero. Therefore we have
\begin{eqnarray*}
[h(t)]_{x_0 + (t-t_0)v_0,v_0} &\leq& [h]_{t_0,x_0,v_0} e^{-\nu(v_0)(t-t_0)-\int_{t_0}^t \nu(\sqrt{\mu}\frac{h}{w})(\tau, x_0 -(t_0 -\tau)v_0, v_0) d\tau}\\
&\leq& [h]_{t_0,x_0,v_0} \times e^{-(\frac{1}{C_{\nu}}-C_w C^{\prime}||h_0||_{\infty})(1+|v_0|)^{\gamma} (t-t_0)},
\end{eqnarray*}
where we used
\begin{eqnarray}
\nu_w(v)\equiv \int_{\mathbb{R}^3}\int_{\mathbb{S}^2}B(v-u,\omega)e^{-\frac{|u|^2}{4}}w^{-1}(u) d\omega du
\label{nuw}\\
\text{with} \ \ \ \frac{1}{C_w}(1+|v|)^{\gamma} \leq \nu_w (v) \leq C_w (1+|v|)^{\gamma}.\label{lowerupperboundsnuw}
\end{eqnarray}
Remark that \textbf{Proof of (\ref{1sidedinequality})} is valid for in-flow, diffuse and bounce-back cases.
\newline \textbf{\newline \textbf{Proof of (\ref{decaydiscontinuity})}}\\
Assume $[h(t_0)]_{x_0,v_0}\neq 0$ and $t_0\in (0,t_{\mathbf{b}}(x_0,-v_0))$ with $(x_0,v_0)\in\gamma_0^{\mathbf{S}}$. Further assume that the boundary $\partial\Omega$ is strictly concave at $x_0$ along $v_0$ direction (\ref{convexity}).
\\
\textbf{Step 1 } Claim : We can choose sequences $(t_n^{\prime},x_n^{\prime},v_n^{\prime}), (t_n^{\prime\prime},x_n^{\prime\prime},v_n^{\prime\prime}) \in [0,\infty)\times\bar{\Omega}\times\mathbb{R}^3 \cap B((t_0,x_0,v_0);\frac{1}{n})\backslash (t_0,x_0,v_0)$ such that $\lim_{n\rightarrow \infty} |h(t_n^{\prime},x_n^{\prime},v_n^{\prime})-h(t_n^{\prime\prime},x_n^{\prime\prime},v_n^{\prime\prime})|\geq \frac{1}{2}[h(t_0)]_{x_0,v_0}\neq 0$. \\
From $[h(t_0)]_{x_0,v_0}\neq 0$ we may assume
\begin{equation}
\sup_{(x_0^{\prime},v_0^{\prime}),(x_0^{\prime\prime},v_0^{\prime\prime})\in B((x_0,v_0);\frac{1}{n})\backslash (x_0,v_0)} |h(t_0,x_0^{\prime},v_0^{\prime})- h(t_0,x_0^{\prime\prime},v_0^{\prime\prime})| \geq \frac{3}{4} [h(t_0)]_{x_0,v_0}\neq 0,
\end{equation}
for all $n\in \mathbb{N}$. And for each $n\in\mathbb{N}$ we can choose desired sequences.
\\
\\
\textbf{Step 2 } Claim : For given $\varepsilon>0$, up to subsequence we may assume that
\begin{eqnarray}
(x_{\mathbf{b}}(x_n^{\prime},v_n^{\prime}),v_n^{\prime})\in B((x_0,v_0);\varepsilon)\backslash \mathfrak{G} \ , \ (x_{\mathbf{b}}(x_n^{\prime\prime},v_n^{\prime\prime}),v_n^{\prime\prime})\notin B((x_0,v_0);\varepsilon)\cup \mathfrak{G}
\ \ \ \text{for all } n\in\mathbb{N}. \ \label{twosequences}
\end{eqnarray}
We remark that a continuity $G(t,x,v)=w(v)g(t,x,v)$ on $[0,\infty)\times\{\gamma_- \cup \gamma_0^{\mathbf{S}}\}$, i.e.
\begin{eqnarray}
[ \ G |_{[0,\infty)\times\gamma_-}]_{t_0,x_0,v_0} = w(v_0)[ \ g |_{[0,\infty)\times\gamma_-}]_{t_0,x_0,v_0}=0
 \ \ \ \text{for all } \ (t_0,x_0,v_0)\in[0,\infty)\times\{\gamma_- \cup \gamma_0^{\mathbf{S}}\} \label{contG}
\end{eqnarray}
is crucially used in this step. In order to show the final goal (\ref{twosequences}) of this step, we need to prove following statement.
\begin{eqnarray}
&&\text{Assume } (x_0,v_0)\in\gamma_0^{\mathbf{S}} \text{ and } t_{\mathbf{b}}(x_0,v_0)>t_0. \text{ Then for sufficiently small } \varepsilon>0 \text{ there exists } N>0 \text{ such that }\nonumber\\
&&\text{if }(x,v)\in B((x_0,v_0);\frac{1}{n}) \text{ for } n>\mathbf{N}
\text{ and } x_{\mathbf{b}}(x,v)\notin B((x_0,v_0);\varepsilon)
\text{ then we have } t_{\mathbf{b}}(x,v)>t_0.\label{canreachthere}
\end{eqnarray}
We will prove (\ref{canreachthere}) later and show (\ref{twosequences}) using (\ref{canreachthere}). It suffices to show that there are only finite $n\in\mathbb{N}$ such that
\begin{eqnarray}
&&(x_{\mathbf{b}}(x_n^{\prime},v_n^{\prime}),v_n^{\prime})\in B((x_0,v_0);\frac{1}{n})\backslash\mathfrak{G} \ , \ (x_{\mathbf{b}}(x_n^{\prime\prime},v_n^{\prime\prime}),v_n^{\prime\prime})\in B((x_0,v_0);\frac{1}{n})\backslash\mathfrak{G}
\label{sequence1}
\\
&&or \ \ (x_{\mathbf{b}}(x_n^{\prime},v_n^{\prime}),v_n^{\prime})\notin B((x_0,v_0);\frac{1}{n})\cup\mathfrak{G} \ , \ (x_{\mathbf{b}}(x_n^{\prime\prime},v_n^{\prime\prime}),v_n^{\prime\prime})\notin B((x_0,v_0);\frac{1}{n})\cup\mathfrak{G}.\label{sequence2}
\end{eqnarray}
Suppose there are infinitely many $n^{\prime}\in\mathbb{N}$ satisfying (\ref{sequence1}). If $\varepsilon>0$ is sufficiently small then (\ref{sequence1}) implies that $t_0 >t_{\mathbf{b}}(x_{n^{\prime}}^{\prime},v_{n^{\prime}}^{\prime})$ and $t_0 >t_{\mathbf{b}}(x_{n^{\prime}}^{\prime\prime},v_{n^{\prime}}^{\prime\prime})$. The Boltzmann solution $h$ at $(t_0,x_{n^{\prime}}^{\prime},v_{n^{\prime}}^{\prime})$ is
\begin{eqnarray*}
h(t_0,x_{n^{\prime}}^{\prime},v_{n^{\prime}}^{\prime})=
{h(t_0 -t_{\mathbf{b}}(x_{n^{\prime}}^{\prime},v_{n^{\prime}}^{\prime}),x_{\mathbf{b}}(x_{n^{\prime}}^{\prime},v_{n^{\prime}}^{\prime}),v_{n^{\prime}}^{\prime})}
e^{-\nu(v_{n^{\prime}}^{\prime})(t_0-t_{\mathbf{b}}(x_{n^{\prime}}^{\prime},v_{n^{\prime}}^{\prime}))-\int_{t_0-t_{\mathbf{b}}(x_{n^{\prime}}^{\prime},v_{n^{\prime}}^{\prime})}^{t_0} \nu(\sqrt{\mu}\frac{h}{w})(\tau,x_{n^{\prime}}^{\prime}-(t_0-\tau)v_{n^{\prime}}^{\prime},v_{n^{\prime}}^{\prime})d\tau}\label{wg}\\
+\int_{t_0-t_{\mathbf{b}}(x_{n^{\prime}}^{\prime},v_{n^{\prime}}^{\prime})}^{t_0} \{K_w h + \Gamma_+(\frac{h}{w},\frac{h}{w})\}(s,x_{n^{\prime}}^{\prime}-(t_0-s)v_{n^{\prime}}^{\prime},v_{n^{\prime}}^{\prime}) e^{-\nu(v_{n^{\prime}}^{\prime})(t_0 -s)-\int_{s}^{t_0} \nu(F)(\tau,x_{n^{\prime}}^{\prime}-(t_0-\tau)v_{n^{\prime}}^{\prime},v_{n^{\prime}}^{\prime})d\tau} ds,\nonumber
\end{eqnarray*}
and a similar representation for $h(t_0,x_{n^{\prime}}^{\prime\prime},v_{n^{\prime}}^{\prime\prime})$. Compare representations of $h(t_0,x_{n^{\prime}}^{\prime},v_{n^{\prime}}^{\prime})$ and $h(t_0,x_{n^{\prime}}^{\prime\prime},v_{n^{\prime}}^{\prime\prime})$ to conclude
\begin{eqnarray*}
&&\lim_{n^{\prime}\rightarrow\infty}|h(t_0,x_{n^{\prime}}^{\prime},v_{n^{\prime}}^{\prime})
-h(t_0,x_{n^{\prime}}^{\prime\prime},v_{n^{\prime}}^{\prime\prime})|
=\lim_{n^{\prime}\rightarrow\infty}|h(t_0-t_{\mathbf{b}}(x_{n^{\prime}}^{\prime}, v_{n^{\prime}}^{\prime}),x_{\mathbf{b}}(x_{n^{\prime}}^{\prime}, v_{n^{\prime}}^{\prime}) , v_{n^{\prime}}^{\prime})
-h(t_0-t_{\mathbf{b}}(x_{n^{\prime}}^{\prime\prime},v_{n^{\prime}}^{\prime\prime}),x_{\mathbf{b}}(x_{n^{\prime}}^{\prime\prime}, v_{n^{\prime}}^{\prime\prime}) ,v_{n^{\prime}}^{\prime\prime})|\nonumber\\
&&\ \ \ \ \ \ \ \ \ \ \ \ \ \ \ \ \ \ \ \ \ \ \ \ \ \ \ \ \ \ \ \ \ \ \ \ \ \ \ \ \ \ \ \ \ \ \ \ \ \ \ \ \times
e^{-\nu(v_0)(t_0 -t_{\mathbf{b}}(x_0,v_0))-\int_{t_0 -t_{\mathbf{b}}(x_0,v_0)}^{t_0} \nu(\sqrt{\mu}\frac{h}{w})(\tau,x_0 -(t_0-\tau)v_0,v_0)d\tau}\\
&&\leq
[h|_{[0,\infty)\times\gamma_-}]_{t_0 -t_{\mathbf{b}}(x_0,v_0),x_{\mathbf{b}}(x_0,v_0),v_0}
\times
e^{-\nu(v_0)(t_0 -t_{\mathbf{b}}(x_0,v_0))-\int_{t_0 -t_{\mathbf{b}}(x_0,v_0)}^{t_0} \nu(\sqrt{\mu}\frac{h}{w})(\tau,x_0 -(t_0-\tau)v_0,v_0)d\tau},
\label{compareh}
\end{eqnarray*}
where we used the continuity of $\nu(\sqrt{\mu}\frac{h}{w})$ and $\Gamma_+ (\frac{h}{w},\frac{h}{w})$. Further using the in-flow boundary condition $h|_{\gamma_-}=wg$, we have
\begin{eqnarray*}
\lim_{n^{\prime}\rightarrow\infty}|h(t_0,x_{n^{\prime}}^{\prime},v_{n^{\prime}}^{\prime})
-h(t_0,x_{n^{\prime}}^{\prime\prime},v_{n^{\prime}}^{\prime\prime})|
\leq [ \ g|_{[0,\infty)\times \gamma_-}]_{t_0, x_0,v_0}
 w(v_0)
e^{-\nu(v_0)(t_0 -t_{\mathbf{b}}(x_0,v_0))-\int_{t_0 -t_{\mathbf{b}}(x_0,v_0)}^{t_0} \nu(\sqrt{\mu}\frac{h}{w})(\tau,x_0 -(t_0-\tau)v_0,v_0)d\tau} =0,\label{continuityuptogamma0}
\end{eqnarray*}
where we used the continuity of $g$ on $[0,\infty)\times\{\gamma_- \cup \gamma_0\}$, (\ref{contG}) at the last line. This is contradict because we choose the sequences $(x_{n^{\prime}}^{\prime},v_{n^{\prime}}^{\prime}) , (x_{n^{\prime}}^{\prime\prime},v_{n^{\prime}}^{\prime\prime})$ satisfying $\lim_{n\rightarrow \infty}|h(t_0,x_{n^{\prime}}^{\prime},v_{n^{\prime}}^{\prime})-h(t_0,x_{n^{\prime}}^{\prime\prime},v_{n^{\prime}}^{\prime\prime})|\geq \frac{1}{2}[h(t_0)]_{x_0,v_0}\neq 0$ in \textbf{Step 1}.\\
Now suppose there are infinitely many $n^{\prime\prime}\in\mathbb{N}$ satisfying (\ref{sequence2}). Because of (\ref{canreachthere}) we have $t_0 < t_{\mathbf{b}}(x^{\prime}_{n^{\prime\prime}},v^{\prime}_{n^{\prime\prime}})$ and $t_0 < t_{\mathbf{b}}(x^{\prime\prime}_{n^{\prime\prime}},v^{\prime\prime}_{n^{\prime\prime}})$.
The Boltzmann solution $h$ at $(t_0,x_{n^{\prime\prime}}^{\prime},v_{n^{\prime\prime}}^{\prime})$ is
\begin{eqnarray*}
&&h(t_0,x_{n^{\prime\prime}}^{\prime},v_{n^{\prime\prime}}^{\prime})=
h_0(x_{n^{\prime\prime}}^{\prime}-t_0 v_{n^{\prime\prime}}^{\prime},v_{n^{\prime\prime}}^{\prime}),v_{n^{\prime\prime}}^{\prime})
e^{-\nu(v_{n^{\prime\prime}}^{\prime})t_0-\int_{0}^{t_0} \nu(\sqrt{\mu}\frac{h}{w})(\tau,x_{n^{\prime\prime}}^{\prime}-(t_0-\tau)v_{n^{\prime\prime}}^{\prime},v_{n^{\prime\prime}}^{\prime})d\tau}\\
&&+\int_{0}^{t_0} \{K_w h + \Gamma_+(\frac{h}{w},\frac{h}{w})\}(s,x_{n^{\prime\prime}}^{\prime}-(t_0-s)v_{n^{\prime\prime}}^{\prime},v_{n^{\prime\prime}}^{\prime}) e^{-\nu(v_{n^{\prime\prime}}^{\prime})(t_0 -s)-\int_{s}^{t_0} \nu(\sqrt{\mu}\frac{h}{w})(\tau,x_{n^{\prime\prime}}^{\prime}-(t_0-\tau)v_{n^{\prime\prime}}^{\prime},v_{n^{\prime\prime}}^{\prime})d\tau} ds,
\end{eqnarray*}
and same representation for $h(t_0,x_{n^{\prime\prime}}^{\prime\prime},v_{n^{\prime\prime}}^{\prime\prime})$. Using the continuity of $h_0$ we see that
\begin{eqnarray*}
&&\lim_{n\rightarrow\infty}|h(t_0,x_{n^{\prime\prime}}^{\prime},v_{n^{\prime\prime}}^{\prime})-h(t_0,x_{n^{\prime\prime}}^{\prime\prime},v_{n^{\prime\prime}}^{\prime\prime})|\nonumber\\
&&= \lim_{n\rightarrow\infty}|h_0(x_{n^{\prime\prime}}^{\prime}-t_0 v_{n^{\prime\prime}}^{\prime} ,v_{n^{\prime\prime}}^{\prime})-h_0(x_{n^{\prime\prime}}^{\prime\prime}-t_0 v_{n^{\prime\prime}}^{\prime\prime},v_{n^{\prime\prime}}^{\prime\prime})|
 w(v_0)
e^{-\nu(v_0)(t_0 -t_{\mathbf{b}}(x_0,v_0))-\int_{t_0 -t_{\mathbf{b}}(x_0,v_0)}^{t_0} \nu(\sqrt{\mu}\frac{h}{w})(\tau,x_0 -(t_0-\tau)v_0,v_0)d\tau}\nonumber\\
&&=0,
\end{eqnarray*}
which is also contradiction.\\
Now we prove (\ref{canreachthere}). We can choose $\varepsilon>0$ sufficiently small so that $\partial\Omega \cap B(x_0;\varepsilon) = \{
(x_1,x_2,\Phi(x_1,x_2))\in B(x_0 ;\varepsilon)\}$. From $t_{\mathbf{b}}(x_0,v_0)> t_0$ we know that a line segment between $x_0$ and $x_0-t_0 x_0$ has only one intersection point $x_0$ with $\partial\Omega$, i.e.$\overline{x_0,x_0-t_0 v_0} \cap \partial\Omega = \{x_0\}$. Furthermore we can choose $\varrho>0$ so large that $\bigcup_{s\in [0,t_0]}B(x_0-sv_0;\varrho) \cap \partial\Omega \subset B(x_0;\varepsilon)$. Choose $N\in\mathbb{N}$ sufficiently large so that $\overline{x,x-t_0 v} \subset \bigcup_{s\in [0,t_0]}B(x_0-sv_0;\varrho) $ for all $(x,v)\in B((x_0,v_0);\frac{1}{n})$. If $x_{\mathbf{b}}(x,v)\notin B((x_0,v_0);\varepsilon)$ then $\overline{x,x-t_0 v} \cap \partial\Omega = \emptyset$ and this implies $t_{\mathbf{b}}(x,v)>t_0$. \\
\\
\textbf{Step 3 } Claim : Choose $t>0$ so that $t-t_0 \in [0,t_{\mathbf{b}}(x_0,-v_0))$ and denote $x=x_0 + (t-t_0) v_0 , \ v=v_0$. Then there exists $N\in\mathbb{N}$ so that $t-t_0 < t_{\mathbf{b}}(x_{n}^{\prime},-v_{n}^{\prime})$ for all $n>N$. \\
Using (\ref{canreachthere}) we only have to prove $x_{\mathbf{b}}(x_{n}^{\prime},-v_{n}^{\prime}) \notin B((x_0,-v_0);\varepsilon)$. From (\ref{twosequences}) we know that $x_{\mathbf{b}}(x_n^{\prime},v_n^{\prime})\in B(x_0 ; \varepsilon)$. We assume that $\Omega \cap B(x_0 ; \varepsilon) = \{x\in B(x_0 ; \varepsilon) : x_3 > \Phi(x_1,x_2)\}$ and $n(x_0)= (0,0,-1)$ and $v_0 = |v_0|(1,0,0)$. Let's define
\begin{equation}
\Psi(s) = \Phi((x_n^{\prime})_1+ s(v_n^{\prime})_1, (x_n^{\prime})_2 + s(v_n^{\prime})_2) - ((x_n^{\prime})_3 + s(v_n^{\prime})_2).\nonumber
\end{equation}
Since $x_n^{\prime}\in \Omega$ we have $\Psi(0)<0$ and $\Psi(t_{\mathbf{b}}(x_n^{\prime},-v_n^{\prime}))=0=\Psi(-t_{\mathbf{b}}(x_n^{\prime},v_n^{\prime}))$.
Because of the strict concavity along $v_0$ direction at $x_0$ (\ref{convexity}), for sufficiently large $n$ so that $(x_n^{\prime},v_n^{\prime})\sim (x_0,v_0)$ we have
\begin{equation}
\Psi^{\prime\prime}(s) = ((v_n^{\prime})_1, (v_n^{\prime})_2) \left(\begin{array}{ccc} \partial_{x_1}^2 \Phi & \partial_{x_1}\partial_{x_2} \Phi \\
\partial_{x_2}\partial_{x_1} \Phi & \partial_{x_2}^2 \Phi
\end{array}\right)
\left(\begin{array}{ccc} (v_n^{\prime})_1 \\ (v_n^{\prime})_2
\end{array}\right) < -\frac{1}{2} C_{x_0,v_0},\nonumber
\end{equation}
where the Hessian of $\Phi$ is evaluated at $((x_n^{\prime})_1+ s(v_n^{\prime})_1 ,(x_n^{\prime})_2+ s(v_n^{\prime})_2 )$. Since $\{ x_n^{\prime} + s v_n^{\prime} : s \in (-t_{\mathbf{b}}(x_n^{\prime},v_n^{\prime}),t_{\mathbf{b}}(x_n^{\prime},-v_n^{\prime}) )
\}\subset \Omega$ we have $\Psi(s)<0$ for $s\in (-t_{\mathbf{b}}(x_n^{\prime},v_n^{\prime}),t_{\mathbf{b}}(x_n^{\prime},-v_n^{\prime}) ).$ Therefore $\Phi^{\prime}(-t_{\mathbf{b}}(x_n^{\prime},v_n^{\prime}))\leq 0$ and $\Phi^{\prime}(t_{\mathbf{b}}(x_n^{\prime},-v_n^{\prime}))\geq 0$. This is contradiction because
\begin{equation*}
0 \leq \Phi^{\prime}(t_{\mathbf{b}}(x_n^{\prime},-v_n^{\prime})) = \Phi^{\prime}(-t_{\mathbf{b}}(x_n^{\prime},v_n^{\prime})) + \int_{-t_{\mathbf{b}}(x_n^{\prime},v_n^{\prime})}^{t_{\mathbf{b}}(x_n^{\prime},-v_n^{\prime})} \Phi^{\prime\prime}(s) ds \leq 0 -\frac{1}{2} C_{x_0,v_0} \{t_{\mathbf{b}}(x_n^{\prime},-v_n^{\prime})+ t_{\mathbf{b}}(x_n^{\prime},v_n^{\prime})\}<0.
\end{equation*}
The consequence of this step is that for $n> N$ we have a representation of $h$ at $(t,x,v)$
\begin{eqnarray}
h(t,x_n^{\prime}+ (t-t_0)v_n^{\prime} ,v_n^{\prime}) &=& h(t_0 ,x_n^{\prime},v_n^{\prime}) e^{-\nu(v_n^{\prime})(t-t_0)-\int_{t_0}^t \nu(\sqrt{\mu}\frac{h}{w})(\tau,x_n^{\prime}+(\tau-t_0)v_n^{\prime},v_n^{\prime})d\tau}\label{htxv}\\
&+& \int_{t_0}^t \{K_w + w\Gamma_+ (\frac{h}{w},\frac{h}{w})\}(s, x_n + (s-t_0)v_n^{\prime},v_n^{\prime})
e^{-\nu(v_n^{\prime})(t-s)-\int_{s}^t \nu(\sqrt{\mu}\frac{h}{w})(\tau, x_n^{\prime}+(\tau -t_0)v_n^{\prime},v_n^{\prime})d\tau} ds\nonumber
\end{eqnarray}
\\
\textbf{Step 4 } Claim : For given $\varepsilon>0$ there exists $\delta>0$ so that if $|(y,u)-(x_0,v_0)|<\delta$ and $|(x,v)-(x_0,v_0)|<\delta$ and $t_0 < t_{\mathbf{b}}(y,u)$ and $t_0 < t_{\mathbf{b}}(x,v)$ then
\begin{equation}
|h(t_0 , y , u ) - h(t_0, x , v )| < \varepsilon.  \ \label{unifconti}
\end{equation}
We have $h(t_0, y , u )=h_0(y-t_0 u, u) e^{-\nu(u)t_0 -\int_{0}^{t_0}\nu(\sqrt{\mu}\frac{h}{w})(\tau,y-(t_0 -\tau)u, u)d\tau}$
\begin{eqnarray*}
+\int_{0}^{t_0}\{ K_w h + \Gamma_+ (\frac{h}{w},\frac{h}{w})\}(s,y-(t_0-s)u, u) e^{-\nu(u)(t_0 -s) -\int_s^{t_0}\nu(\sqrt{\mu}\frac{h}{w})(\tau,y-(t_0-\tau)u,u)d\tau}ds,
\end{eqnarray*}
and similarly $h(t_0, x , v )=h_0(x-t_0 v, v ) e^{-\nu(v)t_0 -\int_{0}^{t_0}\nu(\sqrt{\mu}\frac{h}{w})(\tau,x-(t_0 -\tau)v, v)d\tau}$
\begin{eqnarray*}
+\int_{0}^{t_0}\{ K_w h + \Gamma_+ (\frac{h}{w},\frac{h}{w})\}(s,x-(t_0-s)v, v) e^{-\nu(v)(t_0 -s) -\int_s^{t_0}\nu(\sqrt{\mu}\frac{h}{w})(\tau,x-(t_0-\tau)v,v)d\tau}ds.
\end{eqnarray*}
Let's compare the arguments of two representations :
\begin{eqnarray*}
|(y-t_0 u,u)-(x-t_0 v,v)|< 2(1+t_0)\delta &\text{ for }& h_0,\\
|(\tau,y-(t_0 -\tau)u,u)-(\tau,x-(t_0 -\tau)v, v)| < 2(1+t_0)\delta &\text{ for }& \nu(\sqrt{\mu}\frac{h}{w}),\\
|(s,y-(t_0 -s)u,u)-(s,x-(t_0 -s)v, v)| < 2(1+t_0)\delta &\text{ for }& K_w h + \Gamma_+ (\frac{h}{w},\frac{h}{w}).
\end{eqnarray*}
Using the continuity of $h_0, \ \nu(\sqrt{\mu}\frac{h}{w}), \ K_w h$ and $\Gamma_+(\frac{h}{w},\frac{h}{w})$ we can choose desired $\varepsilon>0$ to conclude (\ref{unifconti}).\\
\\
\textbf{Step 5 } Claim : Choose $t>0$ so that $t\in [t_0,t_0+t_{\mathbf{b}}(x_0,-v_0))$ and denote $x=x_0 + (t-t_0) v_0 , \ v=v_0$. Let $\varepsilon \leq \frac{1}{10}[h(t_0)]_{x_0,v_0} $ and $\delta > 0$ be chosen in \textbf{Step 4}. Then we can choose $u_{n}^{\prime\prime}\in\Omega$ so that $|u_{n}^{\prime\prime}-v_{n}^{\prime\prime}|< \delta$ and $t_0 < t_{\mathbf{b}}(x_{n}^{\prime\prime},u_{n}^{\prime\prime})$ and $t-t_0 < t_{\mathbf{b}}(x_{n}^{\prime\prime},-u_{n}^{\prime\prime})$. \\
If there are infinitely many $u_n^{\prime\prime}$ so that $t_0 < t_{\mathbf{b}}(x_n^{\prime\prime},u_n^{\prime\prime})$ and $t-t_0 < t_{\mathbf{b}}(x_n^{\prime\prime},-u_n^{\prime\prime})$ then up to subsequence we can define $u_n^{\prime\prime} = v_n^{\prime\prime}$. Therefore we may assume $t-t_0 \geq t_{\mathbf{b}}(x_n^{\prime\prime},-v_n^{\prime\prime})$ for all $n\in\mathbb{N}$. We assume that $\Omega \cap B(x_0 ; \varepsilon) = \{x\in B(x_0 ;\varepsilon) : x_3 > \Phi(x_1 ,x_2)\}$ and $n(x_0)= (0,0,-1)$ and $v_0 = |v_0|(1,0,0)$. Now we illustrate how to choose such a $u_n^{\prime\prime}$. Denote $x_n^{\prime\prime}=x=(x_1,x_2, x_3 )$ and $v_n^{\prime\prime}=(v_1,v_2,v_3)$. First we will choose $(u_1,u_2,u_3)$ and $s>0$ so that
\begin{equation}
n(x_{\mathbf{b}}(x,-u))\cdot u=0,\label{grazing}
\end{equation}
and $x_{\mathbf{b}}(x,-u)=(x_1 + s\frac{u_1}{\sqrt{u_1^2 + u_2^2}},x_2 + s\frac{u_2}{\sqrt{u_1^2 + u_2^2}}, \Phi(x_1 + s\frac{u_1}{\sqrt{u_1^2 + u_2^2}}, x_2 + s\frac{u_2}{\sqrt{u_1^2 + u_2^2}}) )$. The condition (\ref{grazing}) implies that
\begin{eqnarray}
\frac{u_3}{\sqrt{u_1^2 + u_2^2}}=\frac{d}{ds}\Phi(x_1 + s\frac{u_1}{\sqrt{u_1^{2}+u_2^2}}, x_2 + s\frac{u_2}{\sqrt{u_1^2 + u_2^2}}) = \frac{\Phi(x_1+ s\frac{u_1}{\sqrt{u_1^2 +u_2^2}})-x_3}{s}.\label{tangent}
\end{eqnarray}
In order to use the implicit function theorem we define
\begin{eqnarray*}
&&\Psi(x_1,x_2,x_3;u_1,u_2;s) = \Phi(x_1 + s\frac{u_1}{\sqrt{u_1^{2}+u_2^2}}, x_2 + s\frac{u_2}{\sqrt{u_1^2 + u_2^2}})-x_3\\
&&-s \left\{\frac{u_1}{\sqrt{u_1^2 + u_2^2}} \partial_{x_1}\Phi(x_1 + s\frac{u_1}{\sqrt{u_1^2 + u_2^2}},x_2+s\frac{u_2}{\sqrt{u_1^2 +u_2^2}})+ \frac{u_2}{\sqrt{u_1^2 + u_2^2}} \partial_{x_2}\Phi(x_1 + s\frac{u_1}{\sqrt{u_1^2 + u_2^2}},x_2+s\frac{u_2}{\sqrt{u_1^2 +u_2^2}})\right\},
\end{eqnarray*}
and compute ,using (\ref{convexity})
\begin{equation}
\partial_{s}\Psi = -s (\frac{u_1}{\sqrt{u_1^2+u_2^2}},\frac{u_2}{\sqrt{u_1^2 + u_2^2}}) \left(\begin{array}{ccc}\partial_{x_1}^2 \Phi & \partial_{x_1}\partial_{x_2}\Phi \\ \partial_{x_1}\partial_{x_2}\Phi & \partial_{x_2}^2 \Phi\end{array}\right)\left(\begin{array}{ccc} \frac{u_1}{\sqrt{u_1^2+u_2^2}}\\ \frac{u_2}{\sqrt{u_1^2+u_2^2}} \end{array}\right)< -\frac{1}{2}C_{x_0,v_0},
\end{equation}
for $x\sim x_0$, $v\sim v_0$ and the Hessian is evaluated at $(x_1 +s\frac{u_1}{\sqrt{u_1^2+u_2^2}} ,x_2 +s\frac{u_2}{\sqrt{u_1^2+u_2^2}})$. Hence $s=s(x_1,x_2,x_3;w_1,w_2)$ is a smooth function near $x\sim x_0$ and $(u_1,u_2)\sim(v_1,v_2)$. In order to study the behavior of $s$ we use the Taylor's expansion : from $\Psi(x_1,x_2,x_3;u_1,u_2;s)=0$ we have
\begin{eqnarray*}
\Phi(x_1,x_2)-x_3 = \frac{1}{u_1^2 +u_2^2}\bigg\{ (u_1,u_2)\underbrace{\left(\begin{array}{ccc} \partial_{x_1}^2 \Phi & \partial_{x_1}\partial_{x_2}\Phi\\ \partial_{x_1}\partial_{x_2}\Phi & \partial_{x_2}^2 \Phi
\end{array}\right)}_{(*)}\left(\begin{array}{ccc}u_1 \\u_2\end{array}\right) - \frac{1}{2}(u_1,u_2)\underbrace{\left(\begin{array}{ccc} \partial_{x_1}^2 \Phi & \partial_{x_1}\partial_{x_2}\Phi\\ \partial_{x_1}\partial_{x_2}\Phi & \partial_{x_2}^2 \Phi
\end{array}\right)}_{(**)}\left(\begin{array}{ccc}u_1 \\u_2\end{array}\right)
\bigg\} s^2,
\end{eqnarray*}
where the Hessian $(*)$ is evaluated at $(x_1 +s_* \frac{u_1}{\sqrt{u_1^2 +u_2^2}}, x_2 +s_* \frac{u_2}{\sqrt{u_1^2 +u_2^2}})$ and the Hessian $(**)$ is evaluated at $(x_1 +s_{**} \frac{u_1}{\sqrt{u_1^2 +u_2^2}}, x_2 +s_{**} \frac{u_2}{\sqrt{u_1^2 +u_2^2}})$ with $s_* , s_{**}\in (0,s)$. For $x\sim x_0$ and $(u_1,u_2)\sim (v_1,v_2)$ we know that the right hand side of the above equation converges to
\begin{equation*}
-\frac{1}{2(v_1^2 + v_2^2)} (v_1,v_2)\left(\begin{array}{ccc} \partial_{x_1}^2 \Phi ((x_0)_1,(x_0)_2) & \partial_{x_1}\partial_{x_2} \Phi ((x_0)_1,(x_0)_2) \\ \partial_{x_1}\partial_{x_2}\Phi((x_0)_1,(x_0)_2)  & \partial_{x_2}^2 \Phi((x_0)_1,(x_0)_2)
\end{array}\right)\left(\begin{array}{ccc}v_1 \\ v_2\end{array}\right) \neq 0.
\end{equation*}
Hence we have a control of $s$, i.e
\begin{equation}
\frac{1}{C} |\Phi(x_1,x_2)-x_3|^{\frac{1}{2}} \leq s\leq C |\Phi(x_1,x_2)-x_3|^{\frac{1}{2}}. \label{controlofs}
\end{equation}
From (\ref{tangent}), $u_3 = \sqrt{u_1^2 + u_2^2} \frac{d}{ds}\Phi(x_1 + s\frac{u_1}{\sqrt{u_1^2 + u_2^2}},x_2 + s\frac{u_2}{\sqrt{u_1^2 + u_2^2}} )$ equals
\begin{eqnarray}
\sqrt{u_1^2 + u_2^2} \left(\begin{array}{ccc}\frac{u_1}{\sqrt{u_1^2 + u_2^2}}\\ \frac{u_2}{\sqrt{u_1^2 + u_2^2}}\end{array}\right)\cdot \left(\begin{array}{ccc}
\partial_{x_1}\Phi(x_1,x_2) + \frac{u_1}{\sqrt{u_1^2+u_2^2}} \partial_{x_1}^2 \Phi(x^{\prime}_1 ,x^{\prime}_2) s + \frac{u_2}{\sqrt{u_1^2 + u_2^2}}\partial_{x_1}\partial_{x_2}\Phi(x^{\prime}_1,x^{\prime}_2)s\\
\partial_{x_2}\Phi(x_1,x_2) + \frac{u_1}{\sqrt{u_1^2+u_2^2}} \partial_{x_1}\partial_{x_2} \Phi(x^{\prime}_1 ,x^{\prime}_2) s + \frac{u_2}{\sqrt{u_1^2 + u_2^2}}\partial_{x_2}^2\Phi(x^{\prime}_1,x^{\prime}_2)s
\end{array}\right),\label{dd1}
\end{eqnarray}
where $x_1^{\prime} = x_1 + s^{\prime}\frac{u_1}{\sqrt{u_1^2 + u_2^2}}, x_2^{\prime} = x_2 + s^{\prime}\frac{u_2}{\sqrt{u_1^2 + u_2^2}}$ for some $0<s^{\prime}<s\leq C |\Phi(x_1,x_2)-x_3|^{\frac{1}{2}}$. Using the smoothness of $\Phi$ we can bound (\ref{dd1}) as
\begin{eqnarray}
\frac{1}{C}|(u_1,u_2)|\left( |(x_1,x_2)| + |\Phi(x_1,x_2)-x_3|^{\frac{1}{2}}
\right) \leq (\ref{dd1})\leq
C |(u_1,u_2)|\left( |(x_1,x_2)| + |\Phi(x_1,x_2)-x_3|^{\frac{1}{2}}
\right).\label{ubounded}
\end{eqnarray}
To sum for fixed $x$ and direction $\frac{1}{\sqrt{u_1^2 + u_2^2}}(u_1,u_2)$ we can choose $u_3$ such that $n(x_{\mathbf{b}}(x,-(u_1,u_2,u_3)))\cdot (u_1,u_2,u_3)=0$ and $u_3$ is controlled by (\ref{ubounded}).
Finally we choose $(u_1,u_2)= \sqrt{\frac{u_1^2 + u_2^2}{v_1^2 + v_2^2}}(v_1,v_2)$ and find the corresponding $u_3$ so that $|v|=|(u_1,u_2,u_3)|$. Define $u_n^{\prime\prime}= -v + 2(v\cdot\frac{(u_1,u_2,u_3)}{|(u_1,u_2,u_3)|})(u_1,u_2,u_3)$. Then we have desired $u_n^{\prime\prime}$ for sufficiently large $n\in\mathbb{N}$. \\
\\
\textbf{Step 6 } To sum for $(t,x_n^{\prime\prime}+(t-t_0)u_n^{\prime\prime}, u_n^{\prime\prime})$ we have $t-t_0 < t_{\mathbf{b}}(x_n^{\prime\prime},-u_n^{\prime\prime})$ and $t_0 < t_{\mathbf{b}}(x_n^{\prime\prime},u_n^{\prime\prime})$ and $|h(t_0 ,x_n^{\prime\prime},u_n^{\prime\prime})-h(t_0,x_n^{\prime\prime},v_n^{\prime\prime})|< \frac{1}{10}[h(t_0)]_{x_0,v_0}$. Hence the representation of the Boltzmann solution $h$ at $(t,x_n^{\prime\prime}+(t-t_0)u_n^{\prime\prime}, u_n^{\prime\prime})$ is given by
\begin{eqnarray*}
h(t,x_n^{\prime\prime}+(t-t_0)v_n^{\prime\prime},u_n^{\prime\prime}) &=& h(t_0,x_n^{\prime\prime},u_n^{\prime\prime})e^{-\nu(u_n^{\prime\prime})(t-t_0)-\int_{t_0}^t \nu(\sqrt{\mu}\frac{h}{w})(\tau, x_n^{\prime\prime}+(\tau-t_0)u_n^{\prime\prime},u_n^{\prime\prime})d\tau}\\
&+& \int_{t_0}^t \{K_w h + w\Gamma(\frac{h}{w},\frac{h}{w})\}(s,x_n^{\prime\prime}+(s-t_0)u_n^{\prime\prime},u_n^{\prime\prime}) e^{-\nu(u_n^{\prime\prime})(t-s)-\int_{s}^t \nu(\sqrt{\mu}\frac{h}{w})(\tau, x_n^{\prime\prime}+(\tau-t_0)u_n^{\prime\prime},u_n^{\prime\prime})d\tau}ds.
\end{eqnarray*}
Using (\ref{htxv}) we have
\begin{eqnarray*}
&&\lim_{n\rightarrow \infty} |h(t,x_n^{\prime}+(t-t_0)v_n^{\prime},v_n^{\prime})-h(t,x_n^{\prime\prime}+(t-t_0)u_n^{\prime\prime},u_n^{\prime\prime})|\\
&&= \lim_{n\rightarrow \infty} |h(t_0,x_n^{\prime},v_n^{\prime})-h(t_0 ,x_n^{\prime\prime},u_n^{\prime\prime})| e^{-\nu(v_0)(t-t_0)-\int_{t_0}^t \nu(\sqrt{\mu}\frac{h}{w})(\tau, x_0 +(\tau - t_0)v_0,v_0)d\tau}\\
&&\geq \left\{
 \lim_{n\rightarrow \infty} |h(t_0,x_n^{\prime},v_n^{\prime})-h(t_0 ,x_n^{\prime\prime},v_n^{\prime\prime})|
 - \lim_{n\rightarrow \infty} |h(t_0,x_n^{\prime\prime},v_n^{\prime\prime})-h(t_0 ,x_n^{\prime\prime},u_n^{\prime\prime})|
\right\}e^{-\nu(v_0)(t-t_0)-\int_{t_0}^t \nu(\sqrt{\mu}\frac{h}{w})(\tau, x_0 +(\tau - t_0)v_0,v_0)d\tau}\\
&&\geq \frac{1}{4} [h(t_0)]_{x_0,v_0}e^{-\nu(v_0)(t-t_0)-\int_{t_0}^t \nu(\sqrt{\mu}\frac{h}{w})(\tau, x_0 +(\tau - t_0)v_0,v_0)d\tau},
\end{eqnarray*}
which implies that
\begin{equation*}
[h(t)]_{x,v} \geq \frac{1}{4}[h(t_0)]_{x_0,v_0} \times e^{-(C_{\mu} + C^{\prime}C_w ||h_0||_{\infty})(1+|v|)^{\gamma}(t-t_0)} \neq 0.
\end{equation*}
\newline \textbf{Remark}  \textit{Through Step 1 to Step 6, we only used the in-flow boundary datum $g$ explicitely in Step 2. All the other steps are valid for diffuse and bounce-back boundary condition cases. In Step 2, we only used (\ref{contG}) the continuity of $G=wg$ on $[0,\infty)\times\{\gamma_- \cup \gamma_0^{\mathbf{S}}\}$. Therefore, if we can show the continuity of $F$ on $[0,\infty)\times\{\gamma_- \cup \gamma_0^{\mathbf{S}}\}$ then we can prove (\ref{decaydiscontinuity}). For diffuse and bounce-back boundary we will prove such a continuity to conclude (\ref{decaydiscontinuity}).}

\section{Diffuse Reflection Boundary Condition}
In this section, we consider the linearized Boltzmann equation (\ref{hBol}) with the diffuse boundary condition (\ref{diffuse}). In spite of the averaging effect of diffuse reflection operator, we can observe the formation and propagation of discontinuity. Continuity away from $\mathfrak{D}$ is also established.
\subsection{Formation of Discontinuity}
We prove Part 2 of Theorem \ref{formationofsingularity}. The idea of proof is similar to in-flow case but we also use $|v_0|$ not only $t_0$ as a parameter. Without loss of generality we may assume $x_0=(0,0,0)$ and $v_0=(|v_0|,0,0)$ and $(x_0,v_0)\in\gamma_0^{\mathbf{S}}$. Locally the boundary is a graph, i.e. $\Omega\cap B(\mathbf{0};\delta) = \{(x_1,x_2,x_3)\in B(\mathbf{0};\delta) : x_3 > \Phi(x_1,x_2)\}$ and $\Phi(\xi,0)<0$ for $\xi\in(-\delta,\delta)\backslash\{0\}$. (See Figure 3)

Assume that $||h_0||_{\infty}<\delta$ is sufficiently small so that the global solution $h$ of (\ref{hBol}) with diffuse boundary (\ref{diffuse}) has a uniform bound (\ref{boundedness}), from Theorem 4 of \cite{Guo08}. Choose $t_0 \in (0,\min\{\delta, t_{\mathbf{b}}(x_0,-v_0)\})$ sufficiently small and $|v_0|>0$ sufficiently large so that
\begin{equation}
\frac{1}{2}\leq \left( e^{-\nu(|v_0|)t_0} -t_0 C_{\mathbf{k}}C^{\prime} -(1-e^{-\nu(|v_0|)t_0})C_{\Gamma}(C^{\prime})^2 -C^{\prime}\frac{1}{\tilde{w}(v_0)}\int_{\{v_1^{\prime}>0\}}\tilde{w}(v^{\prime})d\sigma(v^{\prime})
\right),\label{D1/2}
\end{equation}
where $\nu(|v|)=\nu(v)$ and $C_{\mathbf{k}}$ and $C_{\Gamma}$ from (\ref{wk}) and (\ref{Gamma1}). More precisely, first choose $|v_0|>0$ large enough to have
\begin{equation*}
\frac{1}{\tilde{w}(v_0)} = \frac{(1+\rho^2 |v_0|^2)^{\beta}}{e^{\frac{|v_0|^2}{4}}} \leq \frac{1}{10C^{\prime}},
\end{equation*}
then choose $t_0>0$ as
\begin{equation*}
0<t_0 = \min\left\{\frac{\delta}{2}, \frac{t_{\mathbf{b}}(x_0,-v_0)}{2}, \frac{\delta}{|v_0|},\frac{1}{10\nu(|v_0|)}, \frac{1}{10C_{\mathbf{k}}C^{\prime}} , \frac{1}{\nu(|v_0|)}\log\left(\frac{10C_{\Gamma}(C^{\prime})^2}{10C_{\Gamma}(C^{\prime})^2 -1}\right)
\right\}.
\end{equation*}
Assume the condition for initial datum $h_0$ : there is sufficiently small $\delta^{\prime}=\delta^{\prime}(\Omega,t_0|v_0|)>0$ such that $B((-t_0|v_0| ,0,0),\delta^{\prime}) \subset \Omega$ and
\begin{equation}
h_0(x_0,v_0) \equiv ||h_0||_{\infty}>0 \ \ \ \text{for} \ \ (x,v)\in B((-t_0|v_0|,0,0);\delta^{\prime}) \times B((|v_0|,0,0);\delta^{\prime}).\label{supp1}
\end{equation}
We claim that the Boltzmann solution $h$ with such initial datum $h_0$ is not continuous at $(t_0,x_0,v_0)=(t_0,(0,0,0),(|v_0|,0,0))$. We will use a contradiction argument : Assume the Boltzmann solution $h$ is continuous at $(t_0,x_0,v_0)$, i.e (\ref{contradiction}) is valid. Choose sequences of points $(x_n^{\prime},v_n^{\prime})=((0,0,\frac{1}{n}), (|v_0|,0,0))$ and $(x_n ,v_n)=((\frac{1}{n},0,\Phi(\frac{1}{n},0)),\frac{1}{\sqrt{1+\frac{1}{n^2}}}(|v_0|,0,\frac{|v_0|}{n}))$. Because of our choice, for sufficiently large $n\in\mathbb{N}$, we have
\begin{equation*}
(x_n^{\prime}-t_0 v_n^{\prime},v_n^{\prime} )=((-t_0 |v_0| ,0, \frac{1}{n}),(|v_0|,0,0)) \in B((-t_0 |v_0|,0,0);\delta^{\prime}) \times B((|v_0|,0,0);\delta^{\prime}).
\end{equation*}
Hence the Boltzmann solution at $(t_0,x_n^{\prime},v_n^{\prime})$ is
\begin{eqnarray*}
h(t_0,x_n^{\prime},v_n^{\prime}) &=& h_0(x_n^{\prime}-t_0 v_n^{\prime},v_n^{\prime}) e^{-\nu(v_n^{\prime})t_0} + \int_0^{t_0} e^{-\nu(v_n^{\prime})(t_0 -\tau)}\left\{K_w h+w\Gamma(\frac{h}{w},\frac{h}{w})\right\}(\tau, x_n^{\prime}-v(t_0 -\tau),v_n^{\prime})d\tau\\
&=&||h_0||_{\infty}e^{-\nu(|v_n^{\prime}|)t_0} + \int_0^{t_0} e^{-\nu(|v_n^{\prime}|)(t_0 -\tau)}\left\{K_w h+w\Gamma(\frac{h}{w},\frac{h}{w})\right\}(\tau, x-v_n^{\prime}(t_0 -\tau),v_n^{\prime})d\tau. \\
\end{eqnarray*}
Using the diffuse boundary condition (\ref{diffuse}), the Boltzmann solution at $(t_0,x_n,v_n)\in[0,\infty)\times\gamma_-$ is
\begin{eqnarray*}
h(t_0,x_n,v_n) &=& \frac{1}{\tilde{w}(|v_0|)}\int_{\mathcal{V}(x_n)} h(t_0 ,x_n ,v^{\prime})\tilde{w}(v^{\prime})d\sigma(v^{\prime}).
\end{eqnarray*}
Using a pointwise boundedness (\ref{boundedness}) of $h$,  and $||h_0||_{\infty}\leq 1$, we can estimate
\begin{eqnarray*}
&&|h(t_0 ,x_n^{\prime},v_n^{\prime})-h(t_0 ,x_n ,v_n)|\\
&& \ \ \ \ \ \ \ \geq\big{|} \ ||h_0||_{\infty}e^{-\nu(|v_0|)t_0} -
\int_0^{t_0} \{C_{\mathbf{k}}C^{\prime} ||h_0||_{\infty}
+ \nu(v_n^{\prime}) e^{-\nu(v_n^{\prime})(t_0-\tau)} C_{\Gamma}(C^{\prime})^2 ||h_0||_{\infty}^2\} d\tau
-C^{\prime}||h_0||_{\infty} \frac{1}{\tilde{w}(|v_0|)}\int_{\mathcal{V}}\tilde{w}(v^{\prime})d\sigma{(v^{\prime})}
\big{|}\\
&& \ \ \ \ \ \ \ \geq  ||h_0||_{\infty} e^{-\nu(|v_0|)t_0} - t_0 C_{\mathbf{k}}C^{\prime}||h_0||_{\infty} - (1-e^{-\nu(|v_0|)t_0}) C_{\Gamma} (C^{\prime})^2 ||h_0||_{\infty}^2
-C^{\prime}||h_0||_{\infty} \frac{1}{\tilde{w}(|v_0|)}\int_{\mathcal{V}}\tilde{w}(v^{\prime})d\sigma{(v^{\prime})}
\\
&& \ \ \ \ \ \ \ = ||h_0||_{\infty} \left( e^{-\nu(|v_0|)t_0} -t_0 C_{\mathbf{k}}C^{\prime} -(1-e^{-\nu(|v_0|)t_0})C_{\Gamma}(C^{\prime})^2 -C^{\prime} \frac{1}{\tilde{w}(|v_0|)}\int_{\mathcal{V}}\tilde{w}(v^{\prime})d\sigma{(v^{\prime})}
\right) \geq \frac{||h_0||_{\infty}}{2}\neq 0,
\end{eqnarray*}
which is contradiction to (\ref{contradiction}).

\subsection{Continuity away from $\mathfrak{D}$}
Instead of using the argument of \cite{Guo08} to show continuity in the case of diffuse reflection boundary condition we will use the sequence (\ref{hm}) with the boundary condition (\ref{iterationdiffuse}) and Lemma \ref{transportequation}. This argument also gives a new proof of the continuity of Boltzmann solution in a strictly convex domain with simpler way than \cite{Guo08}.
\newline \textbf{Proof of 2 of Theorem \ref{continuityawayfromD}}
\newline We will use the sequence (\ref{hm}) with $h^{m+1}|_{t=0}=h_0$ with following boundary condition
\begin{equation}
h^{m+1}(t,x,v)=\frac{1}{\tilde{w}(v)}\int_{\mathcal{V}(x)}h^m (t,x,v^{\prime})\tilde{w}(v^{\prime})d\sigma, \label{iterationdiffuse}
\end{equation}
with $(t,x,v)\in\gamma_-$.
\newline \textbf{Step 1} : We claim that
\begin{equation}
\frac{1}{\tilde{w}(v)}\int_{\mathcal{V}(x)} h^m(t,x,v^{\prime})\tilde{w}(v^{\prime})d\sigma(v^{\prime}),
\end{equation}
is continuous function on $[0,T]\times\gamma$ even if $h^m\in L^{\infty}([0,T]\times\bar{\Omega}\times\mathbb{R}^3)$ is only continuous on $[0,T]\times\bar{\Omega}\times\mathbb{R}^3 \backslash \mathfrak{G}$. We will show as $(\bar{t},\bar{x},\bar{v})\rightarrow (t,x,v)$,
\begin{equation}
\frac{1}{\tilde{w}(v)}\int_{\mathcal{V}(x)}h^m (t,x,v^{\prime})\tilde{w}(v^{\prime})d\sigma(v^{\prime}) \rightarrow \frac{1}{\tilde{w}(\bar{v})}\int_{\mathcal{V}(\bar{x})}h^m(\bar{t},\bar{x},v^{\prime})\tilde{w}(v^{\prime})d\sigma(v^{\prime}).\label{136}
\end{equation}
Using the fact $|\mathcal{V}(x)\backslash \mathcal{V}(\bar{x})|, |\mathcal{V}(x)\backslash \mathcal{V}(\bar{x})| \rightarrow 0$ as $\bar{x}\rightarrow x$ and the exponentially decay weight function of $\tilde{w}d\sigma$ it suffices to show that
\begin{equation}
\int_{\mathcal{V}(x)\cap \mathcal{V}(\bar{x})\cap \{|v^{\prime}|\leq M\}}\{
\tilde{w}(v)^{-1} h^m (t,x,v^{\prime})\tilde{w}(v^{\prime}) - \tilde{w}(\bar{v})^{-1}h^m(\bar{t},\bar{x},v^{\prime})\tilde{w}(v^{\prime})
\} d\sigma(v^{\prime}),
\end{equation}
for sufficiently large $M>0$. Using Lemma \ref{uniformlycontinuous} we can choose open set $U_x\subset \{v^{\prime}\in \mathbb{R}^3 : |v^{\prime}|\leq M\}$ so that $|U_x|$ is small and $h^m$ is uniformly continuous on $\{|v^{\prime}|\leq M\}\backslash U_x$. Therefore we can make $\int_{\mathcal{V}(x)\cap \mathcal{V}(\bar{x})\cap\{|v^{\prime}|\leq M\}\cap U_x}$ small using the smallness of $U_x$ and make $\int_{\mathcal{V}(x)\cap \mathcal{V}(\bar{x})\cap\{|v^{\prime}|\leq M\}\backslash U_x}$ small using the uniformly continuity of $h^m$ on $\{|v^{\prime}|\leq M\}\backslash U_x$. Hence (\ref{136}) is valid.
\newline \textbf{Step 2} : We claim
\begin{equation}
h^i \text{ \ is a continuous function in \ } \mathfrak{C}_T\label{mconti}
\end{equation}
for all $i\in\mathbb{N}$ where $\mathfrak{C}_T$ is defined in (\ref{CT}). By induction choose $h^0 =0$ and (\ref{mconti}) is satisfied for $i=0$. Assume (\ref{mconti}) for all $i=0,1,2,...,m$. Let $w\Gamma_-\left(\frac{h^m}{w},\frac{h^{m+1}}{w}\right) = \nu\left(\frac{h^m}{w}\right)h^{m+1}$. Then the equation of $h^{m+1}$ is
\begin{eqnarray*}
\{\partial_t + v\cdot\nabla_x + \nu(v) +\nu\left(\frac{h^m}{w}\right) \}h^{m+1} = K_w h^m + w \Gamma_+\left(\frac{h^m}{w},\frac{h^m}{w}\right).
\end{eqnarray*}
From Theorem \ref{continuity} and Corollary \ref{continuity1} we know that $\nu\left(\frac{h^m}{w}\right)$ and $w\Gamma_+\left(\frac{h^m}{w},\frac{h^m}{w}\right)$ are both continuous in $[0,T]\times\Omega\times\mathbb{R}^3$. Because of Step 1 we know that $\frac{1}{\tilde{w}(v)}\int_{\mathcal{V}(x)} h^m(t,x,v^{\prime})\tilde{w}(v^{\prime})d\sigma(v^{\prime})$ is also continuous function on $[0,T]\times\gamma$. So we can apply Lemma \ref{transportequation} to conclude (\ref{mconti}) is valid for $i=m+1$.
\newline \textbf{Step 3} : We claim $h^m$ is a Cauchy sequence in $\mathfrak{C}_T$ for some small $T>0$. First we will compute some constants explicitly. From (\ref{normalizedconstant}) the normalized constant $c_{\mu}$ is $\left[\int_{n(x)\cdot v^{\prime}>0} e^{-\frac{|v^{\prime}|^2}{2}}\{n(x)\cdot v^{\prime}\}dv^{\prime}\right]^{-1}$.
Choose $n(x)=(1,0,0)$ and then we can compute the right hand side of above term :
\begin{eqnarray*}
\int_0^{\infty} dv_1 \ v_1 e^{-\frac{v_1^2}{2}} \int_{-\infty}^{\infty} dv_2 \ e^{-\frac{v_2^2}{2}} \int_{-\infty}^{\infty} dv_3 \ e^{-\frac{v_3^2}{2}}
= \int_0^{\infty} \ \frac{d}{dv_1}\left(-e^{-\frac{v_1^2}{2}}\right) dv_1 \times (\sqrt{2\pi})^2 = 2\pi \left[-e^{-\frac{v_1^2}{2}}\right]_{0}^{\infty} = 2\pi.
\end{eqnarray*}
Therefore we have $c_{\mu} =\frac{1}{2\pi}$. Next we will show
\begin{eqnarray}
\frac{1}{\tilde{w}(v)}\underbrace{\int_{v^{\prime}\cdot n(x)>0}\tilde{w}(v^{\prime}) d\sigma(v^{\prime})}_{\spadesuit}
\leq \tilde{C}_{\beta} \rho^{2\beta-4},
\label{coefficient}
\end{eqnarray}
where $\tilde{w}(v)^{-1}=(1+\rho^2 |v|^2)^{\beta} e^{-\frac{|v|^2}{4}}$.
We follow the computation of Lemma 25 in \cite{Guo08}. For $\frac{1}{\tilde{w}(v)}$, in the case of $\beta\rho^2 > \frac{1}{4}$ we can see that $\tilde{w}(v)^{-1}$ has a maximum value at $|v|=\sqrt{\frac{4\beta\rho^2-1}{\rho^2}}$ which is
\begin{equation}
(1+\rho^2 |v|^2)^{\beta} e^{-\frac{|v|^2}{4}}\big|_{|v|=\sqrt{\frac{4\beta\rho^2-1}{\rho^2}}} = 4^{\beta}\beta^{\beta} e^{-\beta} e^{\frac{1}{4\rho^2}} \rho^{2\beta},\label{1111}
\end{equation}
and
\begin{eqnarray*}
\spadesuit &\leq& \int_{v^{\prime}\cdot n(x)>0} \tilde{w}(v^{\prime}) d\sigma(v^{\prime}) = \frac{1}{2\pi} \int_{v_1^{\prime} >0} (1+\rho^2 |v^{\prime}|^2)^{-\beta} e^{\frac{|v^{\prime}|^2}{4}} e^{-\frac{|v^{\prime}|^2}{2}} v_1^{\prime} dv^{\prime}\\
&=&\frac{1}{2\pi} \int_{u_1>0} (1+|u|^2)^{-\beta} e^{\frac{-2|u|^2}{4\rho}} \rho^{-4} u_1 du
\leq \rho^{-4} \times \frac{1}{2\pi} \int_{u_1 >0} \frac{1}{(1+|u|^2)^{\beta -\frac{1}{2}}} du\\
&=& C_{\beta} \rho^{-4},
\end{eqnarray*}
where $\beta \geq 2$ and combining with (\ref{1111}) we conclude (\ref{coefficient}).

First we will show a boundedness (\ref{boundedh-m}).
\begin{lemma}
Let $h^m$ be a solution of (\ref{hm}) with $h^{m+1}_{t=0}=h_0$ and the boundary condition (\ref{iterationdiffuse}). Then there exist $T_*, C, \delta >0$ such that if $||h_0||_{\infty}< \delta$ then
\begin{equation*}
\sup_{0\leq s\leq T_*} ||h^m(s)||_{\infty} < C||h_0||_{\infty} \ \ \text{for all } \  m\in\mathbb{N}.
\end{equation*}
\end{lemma}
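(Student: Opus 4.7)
The plan is to integrate the scheme (\ref{hm}) along backward characteristics and close the uniform estimate by induction on $m$, handling the diffuse boundary contribution through the key estimate (\ref{coefficient}). Set $A_m(T)\equiv \sup_{0\leq s\leq T}\|h^m(s)\|_\infty$ and $h^0\equiv 0$ so $A_0=0$; I expect to derive a recursion of the schematic form
\begin{equation*}
A_{m+1} \ \leq \ \|h_0\|_\infty \ + \ \tilde{C}_\beta\rho^{2\beta-4}\, A_m \ + \ T_*C_{\mathbf{k}}A_m \ + \ T_*C_\Gamma A_m\bigl(A_m+A_{m+1}\bigr)
\end{equation*}
and then close it by choosing the parameters $(\rho,\beta,T_*,\delta,C)$ in the right order.

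First I would rewrite $w\Gamma_-(h^m/w,h^{m+1}/w)=\nu(\sqrt{\mu}h^m/w)\,h^{m+1}$ and absorb it into the transport operator, producing an effective collision frequency $\nu_{\mathrm{eff}}^m(\tau,x,v)=\nu(v)+\nu(\sqrt{\mu}h^m(\tau)/w)$, which is nonnegative for small data. Integrating along $X(s;t,x,v)=x-(t-s)v$ gives the standard Duhamel representation: when $t<t_{\mathbf{b}}(x,v)$ the contribution of the initial datum is bounded by $\|h_0\|_\infty$; when $t\geq t_{\mathbf{b}}(x,v)$ the boundary term equals
\begin{equation*}
e^{-\int_{t-t_{\mathbf{b}}}^t \nu_{\mathrm{eff}}^m}\,\frac{1}{\tilde w(v)}\int_{\mathcal{V}(x_{\mathbf{b}})} h^m(t-t_{\mathbf{b}},x_{\mathbf{b}},v')\tilde w(v')\,d\sigma(v'),
\end{equation*}
which by (\ref{coefficient}) is bounded in absolute value by $\tilde C_\beta\rho^{2\beta-4}A_m(t)$; and the source integral $\int_{\max\{0,t-t_{\mathbf{b}}\}}^t e^{-\int_s^t\nu_{\mathrm{eff}}^m}\{K_w h^m+w\Gamma_+(h^m/w,h^m/w)\}$ is controlled by $T_*C_{\mathbf{k}}A_m+T_*C_\Gamma A_m^2$ via (\ref{wk}) and (\ref{Gamma1}), while the piece coming from $w\Gamma_-$ that I moved to the left contributes $T_*C_\Gamma A_m A_{m+1}$ if one decides to keep it on the right instead. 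Taking sup over $t\leq T_*$ yields the schematic recursion above.

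To close the induction, I would fix a target $C=4$ and, assuming $A_m\leq C\|h_0\|_\infty$, choose the weight parameters so that $\tilde C_\beta\rho^{2\beta-4}\leq 1/8$, then choose $T_*>0$ so small that $T_*C_{\mathbf{k}}\leq 1/8$, and finally $\delta>0$ so small that $T_*C_\Gamma\cdot C\delta\leq 1/8$. Substituting into the recursion,
\begin{equation*}
A_{m+1}\ \leq\ \|h_0\|_\infty+\tfrac18\cdot 4\|h_0\|_\infty+\tfrac18\cdot 4\|h_0\|_\infty+\tfrac18\bigl(4\|h_0\|_\infty+A_{m+1}\bigr),
\end{equation*}
which after rearrangement gives $A_{m+1}\leq (20/7)\|h_0\|_\infty<4\|h_0\|_\infty=C\|h_0\|_\infty$, completing the induction. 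The base case $A_0=0\leq C\|h_0\|_\infty$ is immediate.

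The main obstacle is the boundary term: in unweighted $L^\infty$ the diffuse reflection operator has operator norm exactly one (it is an averaging against a probability measure), so without additional input the induction would fail. The crucial gain is that the weight $\tilde w^{-1}(v)=(1+\rho^2|v|^2)^\beta e^{-|v|^2/4}$ converts the boundary Gaussian into the small factor $\tilde C_\beta\rho^{2\beta-4}$ recorded in (\ref{coefficient}), and this is precisely the dissipation of the diffuse operator in the weighted norm. If the weight parameters happened not to deliver smallness at a single bounce, one would instead have to iterate the diffuse formula through several backward bounces (as in Lemmas~23--25 of \cite{Guo08}) to pick up a compounded factor; but here (\ref{coefficient}) makes the one-bounce argument sufficient.
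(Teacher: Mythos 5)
There is a genuine gap at the heart of your argument: the claim that one can ``choose the weight parameters so that $\tilde C_\beta\rho^{2\beta-4}\leq 1/8$''. This is impossible. The one-bounce boundary coefficient in the weighted sup-norm is
$\sup_{v\cdot n(x)<0}\tilde w(v)^{-1}\int_{v'\cdot n(x)>0}\tilde w(v')\,d\sigma(v')$,
and since $d\sigma$ is a probability measure and $\tilde w$ is radial, $\int\tilde w\,d\sigma\geq\inf\tilde w=\big(\sup\tilde w^{-1}\big)^{-1}$, so this coefficient is always $\geq 1$ no matter how $\rho,\beta$ are chosen; concretely, in the computation leading to (\ref{coefficient}) one needs $\beta>2$ for the integral defining $\tilde C_\beta$ to converge, and then $\rho^{2\beta-4}$ grows with $\rho$, so the paper treats $\tilde C_\beta\rho^{2\beta-4}$ as a possibly \emph{large} constant (it later requires $C>10(1+\tilde C_\beta\rho^{2\beta-4})$), not a small one. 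With a coefficient $\kappa\geq1$ multiplying $A_m$, your recursion $A_{m+1}\leq\|h_0\|_\infty+\kappa A_m+\dots$ cannot close: it yields at best linear (and generically exponential) growth in $m$, so the uniform-in-$m$ bound fails. This is exactly the ``operator norm one'' obstruction you correctly identify for the unweighted case; a radial weight does not remove it at a single bounce.

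The paper's proof therefore does what you mention only as a fallback and then discard: it expands the boundary condition through $k$ backward bounces using the representation of Lemma 24 of \cite{Guo08}, splitting $h^{m+1}$ into contributions that terminate at the initial datum, source contributions of $q^m,\dots,q^{m-k}$ along each segment, and a ``many bounces'' remainder involving $h^{m-k+1}$ at a time $t_{k+1}>0$. The smallness comes from Lemma 23 of \cite{Guo08}: with $k\approx\rho^{5/4}$, the measure of velocity sequences $(v_1,\dots,v_{k})$ whose back-time cycle has not yet reached $t=0$ is at most $\{1/2\}^{C_2\rho^{5/4}}$, so the remainder carries the factor $\tilde C_\beta\rho^{2\beta-4}\{1/2\}^{C_2\rho^{5/4}}$, which \emph{is} small for $\rho$ large because the exponential beats the polynomial. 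The initial-data and source terms are then controlled by $(1+\tilde C_\beta\rho^{2\beta-4})\|h_0\|_\infty$ and by $k\,\tilde C_\beta\rho^{2\beta-4}(C_{\mathbf k}T_*+CC_\Gamma\|h_0\|_\infty)$ respectively, and the induction closes by choosing $\rho$ large first, then $T_*$ small, then $C$ large (absorbing $1+\tilde C_\beta\rho^{2\beta-4}$), then $\delta$ small. To repair your proof you would need to replace the single-bounce step by this multi-bounce expansion; the rest of your Duhamel/induction framework (treatment of $\Gamma_-$ as an effective frequency, the $K_w$ and $\Gamma_+$ bounds via (\ref{wk}) and (\ref{Gamma1})) is consistent with the paper.
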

\begin{proof}
We will use mathematical induction. Choose $h^0=h_0$ and assume $||h_0||_{\infty} < \delta$ and
\begin{equation}
\sup_{0\leq s\leq T_*} ||h^i(s)||_{\infty}\leq C||h_0||_{\infty},\label{inductionhypothesis}
\end{equation}
for $i=0,1,2,...,m$, where $\delta, C, T_* >0$ will be determined later. From Lemma 24 of \cite{Guo08} the representation of $h^{m+1}$ which is a solution of (\ref{hm}) with the boundary condition (\ref{iterationdiffuse}) is given by
\begin{eqnarray}
h^{m+1}(t,x,v) &=&\mathbf{1}_{t_1 \leq 0}(t,x,v) \Big\{ \underbrace{h_0 (x-tv,v)e^{-\nu(v)t}}_{[\mathrm{initial \ data}]} + \underbrace{\int_0^t e^{-\nu(v)(t-s)}q^m (s,x-(t-s)v,v) ds}_{\mathbf{I}} \Big\}\label{hm11}\\
&+&\mathbf{1}_{0<t_1}(t,x,v)\Big\{ \underbrace{\int_{t_1}^{t} e^{-\nu(v)(t-s)}q^m (s,x-(t-s)v,v) ds}_{\mathbf{II}} + \frac{e^{-\nu(v)(t-t_1)}}{\tilde{w}(v)} \int_{\prod_{j=1}^{k}\mathcal{V}_j}H
\Big\},\label{hm12}
\end{eqnarray}
where $q^m$ was defined (\ref{qmm}) and
\begin{eqnarray}
H= \sum_{l=1}^{k} \underbrace{\mathbf{1}_{t_{l+1}\leq 0< t_l} h_0(x_l -t_l v_l ,v_l)}_{[\mathrm{initial \ data}]} d\Sigma_l(0)
+\sum_{l=1}^{k} \underbrace{\int_0^{t_l} \mathbf{1}_{t_{l+1}\leq 0 < t_l} q^{m-l}(s,x_l -(t_l -s)v_l,v_l) d\Sigma_l(s) ds}_{\mathbf{III}}\label{hm13}\\
+\sum_{l=1}^{k} \underbrace{\int_{t_{l+1}}^{t_l} \mathbf{1}_{0<t_{l+1}} q^{m-l}(s,x_l-(t_l-s)v_l,v_l) d\Sigma_l(s) ds}_{\mathbf{IV}}
+\underbrace{\mathbf{1}_{0< t_{k+1}} h^{m-k+1}(t_{k+1},x_{k+1},v_{k}) d\Sigma_{k}(t_{k+1})}_{[\mathrm{many \ bounces}]}.\label{hm14}
\end{eqnarray}
Here $d\Sigma_{k}(t_{k+1})$ is evaluated at $s=t_{k+1}$ of
\begin{equation*}
d\Sigma_l(s) = \{ \Pi_{j=l+1}^{k} d\sigma_j\} \{ e^{-\nu(v_l)(t_l -s)}\tilde{w}(v_l) d\sigma_l\} \Pi_{j=1}^{l-1}\{ e^{-\nu(v_j)(t_j -t_{j+1})} d\sigma_j\}.
\end{equation*}
\\
First we can estimate $[\mathrm{initial \ data}]$ in (\ref{hm11}) and (\ref{hm13}),
\begin{eqnarray*}
&&\int_{_{\prod_{j=1}^{k}\mathcal{V}_j}} \left\{\mathbf{1}_{t_1\leq 0}|h_0(x-tv,v)| +\frac{1}{\tilde{w}(v)}\sum_{l=1}^{k}\mathbf{1}_{t_{l+1}\leq 0< t_l} |h_0(x_l-t_l v_l,v_l)| \tilde{w}(v_l) \right\}
d\sigma_1...d\sigma_{k}\\
&\leq&\max\Big\{ 1, \frac{1}{\tilde{w}(v)} \max_{1\leq l\leq k}\int_{\prod_{j=1}^{k}\mathcal{V}_j} \tilde{w}(v_l)d\sigma_1...d\sigma_{k}
\Big\}|| h_0||_{\infty}\\
&\leq& \left\{ 1+\tilde{C}_{\beta}\rho^{2\beta-4}
\right\}||h_0||_{\infty},
\end{eqnarray*}
where we used (\ref{coefficient}).\\
Next we estimate $[\mathrm{many \ bounces}]$ term in (\ref{hm14}) which is crucial estimate in this proof. We use Lemma 23 in \cite{Guo08} to bound a contribution of $[\mathrm{many \ bounces}]$ term in (\ref{hm14}) in the last term of (\ref{hm12}) by
\begin{eqnarray*}
&&\frac{1}{\tilde{w}(v)}\int_{\prod_{j=1}^{k}\mathcal{V}_{j}} \mathbf{1}_{\{ t_{k+1}(t,x,v,v_1,v_2,...,v_{k})>0\}} \tilde{w}(v_{k}) d\sigma_{k} d\sigma_{k-1}...d\sigma_1 \times \sup_{0\leq s\leq t}||h^{m-k+1}(s)||_{\infty}\\
&\leq&
\frac{1}{\tilde{w}(v)} \int_{\mathcal{V}_k} \tilde{w}(v_k) d\sigma_k \int_{\prod_{j=1}^{k-1}\mathcal{V}_j} \mathbf{1}_{\{ t_{k}(t,x,v,v_1,...,v_{k-1})>0\}} d\sigma_{k-1}...d\sigma_1 \times \sup_{0\leq s\leq t}||h^{m-k+1}(s)||_{\infty}\\
&\leq& \tilde{C}_{\beta} \rho^{2\beta-4}\left\{\frac{1}{2}\right\}^{C_2 \rho^{5/4}} \sup_{0\leq s\leq t}||h^{m-k+1}(s)||_{\infty} \leq \tilde{C}_{\beta} \rho^{2\beta-4}\left\{\frac{1}{2}\right\}^{C_2 \rho^{5/4}} C||h_0||_{\infty},
\end{eqnarray*}
where we used (\ref{coefficient}) at the last step.
The remainders \textbf{I}, \textbf{II}, \textbf{III} and \textbf{IV} are contributions of $q^m ,..., q^{m-k}$. We introduce a notation
\begin{eqnarray}
\mathcal{H}_i &\equiv&
tC_{\mathbf{k}}\sup_{0\leq s\leq t}||h^i(s)||_{\infty} + C_{\Gamma} \sup_{0\leq s\leq t}||h^i(s)||_{\infty}\left( \sup_{0\leq s\leq t}||h^i(s)||_{\infty} + \sup_{0\leq s\leq t}||h^{i+1}(s)||_{\infty}
\right)\\
&\leq& C||h_0||_{\infty}(C_{\mathbf{k}}T_* + 2CC_{\Gamma}||h_0||_{\infty}), \label{inequalityH}
\end{eqnarray}
where the above inequality holds for $0\leq t\leq T_*$ and $i=0,1,2,...,m-1$ and
\begin{equation}
\mathcal{H}_{m} \leq (T_* C_{\mathbf{k}} + C_{\Gamma}C||h_0||_{\infty}) C ||h_0||_{\infty} + C_{\Gamma}C||h_0||_{\infty} \sup_{0\leq s\leq T_*} ||h^{m+1}(s)||_{\infty},\label{inequalityH2}
\end{equation}
where we used the induction hypothesis (\ref{inductionhypothesis}) for (\ref{inequalityH}) and (\ref{inequalityH2}).
Easily we have
\begin{eqnarray*}
\mathbf{I},\mathbf{II}&\leq&\mathcal{{H}}_m,\\
\mathbf{III},\mathbf{IV}&\leq& \sum_{l=1}^{k} \frac{1}{\tilde{w}(v)} \int_{\mathcal{V}_1} d\sigma_1..\int_{\mathcal{V}_{l-1}}d\sigma_{l-1}\int_{\mathcal{V}_{l+1}} d\sigma_{l+1}..\int_{\mathcal{V}_k}d\sigma_k
\int_{\mathcal{V}_{l}}\int_0^{t_l} \mathcal{{H}}_{m-l}
e^{-\nu(v_l)(t_l -s)}\tilde{w}(v_l)ds d\sigma_l \\
&\leq&
\sum_{l=1}^k \mathcal{H}_{m-l}\frac{1}{\tilde{w}(v)} \int_{\mathcal{V}_l} \tilde{w}(v_l) d\sigma_l
\leq \tilde{C}_{\beta}\rho^{2\beta-4}\sum_{l=1}^k \mathcal{H}_{m-l}.
\end{eqnarray*}
To summarize, we can estimate all terms of representation of $h^{m+1}(t,x,v)$ in (\ref{hm11}) to obtain
\begin{eqnarray*}
|h^{m+1} (t,x,v)|&\leq&
||h_0||_{\infty} \Big\{C \Big[ 2T_* C_{\mathbf{k}} + 2C_{\Gamma}C||h_0||_{\infty} + k\tilde{C}_{\beta} \rho^{2\beta-4} (C_{\mathbf{k}}T_* + 2CC_{\Gamma}||h_0||_{\infty}) + \tilde{C}_{\beta}\rho^{2\beta -4} \left\{\frac{1}{2}\right\}^{C_2 \rho^{5/4}}
\Big]\\
&& \ \ \ \ \ \ \ \ \ \ \  +1 + \tilde{C}_{\beta} \rho^{2\beta-4}\Big\} + C_{\Gamma}C||h_0||_{\infty} \sup_{0\leq s\leq T_*} ||h^{m+1}(s)||_{\infty}.
\end{eqnarray*}
Choose $k=\rho^{5/4}$. Choose $\rho>0$ sufficiently large so that $\tilde{C}_{\beta}\rho^{2\beta-4}\left\{\frac{1}{2}\right\}^{C_2 \rho^{5/4}}\leq \frac{1}{30}$ and then choose $T_*>0$ sufficiently small so that $T_* \times C_{\Gamma}(1+\tilde{C}_{\beta}\rho^{5/4}\rho^{2\beta-4}) \leq \frac{1}{30}$ and then choose $C>0$ sufficiently large $C> 10(1+\tilde{C}_{\beta}\rho^{2\beta-4})$ and choose $\delta = \min\left\{\frac{1}{20C_{\Gamma}C} \ , \  \frac{1}{30C_{\Gamma}}\big( C\tilde{C}_{\beta} \rho^{5/4}\rho^{2\beta-4}
\big)^{-1}
\right\}$. Finally assume $||h_0||_{\infty}\leq \delta$.
Then we have
\begin{eqnarray*}
\sup_{0\leq s\leq T_*}||h^{m+1}(s)||_{\infty}&\leq&\frac{1}{1-C_{\Gamma}C||h_0||_{\infty}} ||h_0||_{\infty} \Big\{ 1+\tilde{C}_{\beta}\rho^{2\beta-4} +C\Big[ \tilde{C}_{\beta} \rho^{2\beta-4}\left\{\frac{1}{2}\right\}^{C_2 \rho^{5/4}} + tC_{\Gamma}(1+ \tilde{C}_{\beta}\rho^{5/4}\rho^{2\beta-4}) \\
&& \ \ \ \ \ \ \ \ \ \ \ \ \ \ \ \ \ \ \ \ \ \ \ \ \ \ \ \ \ \ \ \ \ +C_{\Gamma}C||h_0||_{\infty} + 2C_{\Gamma}\tilde{C}_{\beta}\rho^{5/4}\rho^{2\beta-4}C||h_0||_{\infty}
\Big]
\Big\}\\
&\leq& \frac{20}{19}||h_0||_{\infty} \left\{ \frac{C}{10} + C\Big[ \frac{1}{30} + \frac{1}{30} + \frac{1}{20} +\frac{1}{15}
\Big]
\right\}\leq C||h_0||_{\infty}.
\end{eqnarray*}
\end{proof}
\\
Next we will show that $h^m$ is a Cauchy sequence in $L^{\infty}$.
\begin{lemma}
Let $h^m$ be a solution of (\ref{hm}) with $h^{m+1}|_{t=0} = h_0$ and the boundary condition (\ref{iterationdiffuse}). Then there exist $T_*, C, \delta >0$ so that if $||h_0||_{\infty}< \delta$ then $h^m$ is Cauchy in $L^{\infty}([0,T_*]\times\bar{\Omega}\times\mathbb{R}^3)$.
\end{lemma}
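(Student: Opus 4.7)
The equation for the difference $d^m\equiv h^{m+1}-h^m$ is
\begin{equation*}
\{\partial_t+v\cdot\nabla_x+\nu\}d^m=\tilde q^m,\qquad d^m|_{t=0}=0,
\end{equation*}
with boundary condition $d^m(t,x,v)|_{\gamma_-}=\tilde w(v)^{-1}\int_{\mathcal V(x)}d^{m-1}(t,x,v')\tilde w(v')\,d\sigma(v')$ and source $\tilde q^m$ from (\ref{tildeqm}). Writing $D^m\equiv\sup_{0\leq s\leq T_*}\|d^m(s)\|_\infty$, the bound (\ref{qm}) together with the uniform estimate $\sup_m\sup_{[0,T_*]}\|h^m\|_\infty\leq C\|h_0\|_\infty$ just established in the preceding lemma yields $\sup_{s}\|\tilde q^m(s)\|_\infty\leq (C_{\mathbf k}+2CC_\Gamma\|h_0\|_\infty)D^{m-1}+2CC_\Gamma\|h_0\|_\infty D^m$.

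My plan is to apply the same $k$-bounce stochastic-cycle representation (Lemma 24 of \cite{Guo08}) used to prove the preceding boundedness lemma, now to $d^m$ rather than $h^{m+1}$. Because $d^m$ has zero initial data, every [initial data] piece in (\ref{hm11}) and (\ref{hm13}) vanishes. What survives are the transport pieces $\mathbf I,\mathbf{II}$ and the intermediate-bounce pieces $\mathbf{III},\mathbf{IV}$, which are driven by $\tilde q^{m-l}$ for $l=0,\dots,k-1$, together with the [many bounces] piece, which for $d^m$ becomes $\tilde w(v)^{-1}\int_{\prod\mathcal V_j}\mathbf 1_{0<t_{k+1}}d^{m-k}(t_{k+1},x_{k+1},v_k)\,d\Sigma_k(t_{k+1})$. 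The intermediate pieces are bounded verbatim as in the boundedness lemma, using (\ref{coefficient}), by $(1+\tilde C_\beta\rho^{2\beta-4})\bigl[(T_*C_{\mathbf k}+2CC_\Gamma\|h_0\|_\infty)\max_{0\leq l\leq k-1}D^{m-l-1}+2CC_\Gamma\|h_0\|_\infty\max_{0\leq l\leq k-1}D^{m-l}\bigr]$, while the [many bounces] piece is controlled via Lemma 23 of \cite{Guo08} by $\tilde C_\beta\rho^{2\beta-4}\bigl(\tfrac12\bigr)^{C_2\rho^{5/4}}D^{m-k}$.

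Collecting these estimates I obtain a linear $k$-step recurrence
\begin{equation*}
D^m\leq A\max_{m-k\leq j\leq m-1}D^j+\tilde C_\beta\rho^{2\beta-4}\bigl(\tfrac12\bigr)^{C_2\rho^{5/4}}D^{m-k}+E\,D^m,
\end{equation*}
in which $A=A(T_*,\|h_0\|_\infty,\rho)$ and $E=E(\|h_0\|_\infty,\rho)$ both vanish as $T_*,\|h_0\|_\infty\to 0$. Following the identical three-step tuning used in the boundedness lemma -- first choose $k=\rho^{5/4}$ and $\rho$ large enough that $\tilde C_\beta\rho^{2\beta-4}\bigl(\tfrac12\bigr)^{C_2\rho^{5/4}}\leq 1/8$, then shrink $T_*$, finally shrink $\delta$ so that $\|h_0\|_\infty<\delta$ -- I arrange $E\leq 1/2$ and $A\leq 1/4$, and absorbing $ED^m$ to the left-hand side yields $D^m\leq \tfrac34\max_{m-k\leq j\leq m-1}D^j$ for all $m\geq k$. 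A standard $k$-step contraction argument then gives geometric decay $D^m\leq (3/4)^{\lfloor m/k\rfloor}\max_{0\leq j\leq k}D^j$, so $\sum_m D^m<\infty$ and $\{h^m\}$ is Cauchy in $L^\infty([0,T_*]\times\bar\Omega\times\mathbb R^3)$.

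The main obstacle, already handled in the preceding boundedness lemma, is balancing the multiplicative factor $\tilde C_\beta\rho^{2\beta-4}$ picked up each time (\ref{coefficient}) is applied against the stochastic-cycle decay $\bigl(\tfrac12\bigr)^{C_2\rho^{5/4}}$ from Lemma 23 of \cite{Guo08}; once $\rho$ has been fixed large enough to beat this factor, the rest of the parameter tuning is routine and parallels the earlier argument line-for-line.
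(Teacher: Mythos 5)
Your proposal is correct and follows essentially the same route as the paper: the same equation for the difference with the shifted diffuse boundary condition, the same $k$-bounce representation (Lemma 24 of \cite{Guo08}) with the initial-data terms vanishing, the same smallness of the many-bounce term via Lemma 23 of \cite{Guo08} and (\ref{coefficient}), and the same order of parameter tuning (first $\rho$ and $k=\rho^{5/4}$, then $T_*$, then $\delta$). The only difference is in closing the recurrence — you use a max/block contraction while the paper iterates the sum $\tilde{\tau}\sum_{l=1}^{k}$ explicitly — and your minor bookkeeping slips (the intermediate-bounce terms actually reach $D^{m-k-1}$ and carry an extra factor $k$ from the sum over $l$) are harmless since $k$ is fixed before $T_*$ and $\delta$ are shrunk.
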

\begin{proof}
The equation of $h^{m+1}-h^m$ is
\begin{eqnarray*}
\{\partial_t + v\cdot\nabla_x +\nu\}(h^{m+1}-h^m) &=& \tilde{q}^m\\
\text{with} \ \ \ \{h^{m+1}-h^m\}|_{t=0}=0  &,&  \{h^{m+1}-h^m\}|_{\gamma_-} = \frac{1}{\tilde{w}(v)}\int_{\mathcal(x)}\{h^m (t,x,v^{\prime})-h^{m-1}(t,x,v^{\prime})\} \tilde{w}(v^{\prime})d\sigma(v^{\prime}),
\end{eqnarray*}
where $\tilde{q}^m$ is defined at (\ref{tildeqm}).
From Lemma 24 of \cite{Guo08} we have the representation
\begin{eqnarray}
\{h^{m+1}-h^m\}(t,x,v)&=&\mathbf{1}_{t_1 \leq 0}(t,x,v) \underbrace{\int_0^t e^{-\nu(v)(t-s)}\tilde{q}^m (s,x-(t-s)v,v) ds}_{\mathbf{\tilde{I}}} \label{hm+1}\\
&&+ \mathbf{1}_{0<t_1}(t,x,v)\Big\{ \underbrace{\int_{t_1}^{t} e^{-\nu(v)(t-s)}\tilde{q}^m (s,x-(t-s)v,v) ds}_{\mathbf{\tilde{II}}} + \frac{e^{-\nu(v)(t-t_1)}}{\tilde{w}(v)} \int_{\prod_{j=1}^{k}\mathcal{V}_j}\tilde{H}
\Big\},\nonumber
\end{eqnarray}
where
\begin{eqnarray*}
\tilde{H}&=& \sum_{l=1}^{k} \underbrace{\int_0^{t_l} \mathbf{1}_{t_{l+1}\leq 0 < t_l} \tilde{q}^{m-l}(s,x_l -(t_l -s)v_l,v_l) d\Sigma_l(s) ds}_{\mathbf{\tilde{III}}}\\
&&+\sum_{l=1}^{k} \underbrace{\int_{t_{l+1}}^{t_l} \mathbf{1}_{0<t_{l+1}} \tilde{q}^{m-l}(s,x_l-(t_l-s)v_l,v_l) d\Sigma_l(s) ds}_{\mathbf{\tilde{IV}}}
+\underbrace{\mathbf{1}_{0< t_{k+1}} \{h^{m-k+1}-h^{m-k}\}(t_{k+1},x_{k+1},v_{k}) d\Sigma_{k}(t_{k+1})}_{\mathrm{[[many \ bounces]]}}.
\end{eqnarray*}
First using Lemma 24 of \cite{Guo08}, we estimate $\mathrm{[[many \ bounces]]}$ term for sufficiently large $k>0$ by
\begin{eqnarray*}
&&\frac{1}{\tilde{w}(v)}\int_{\prod_{j=1}^{k}\mathcal{V}_{j}} \mathbf{1}_{\{ t_{k+1}(t,x,v,v_1,v_2,...,v_{k})>0\}} \tilde{w}(v_{k}) d\sigma_{k} d\sigma_{k-1}...d\sigma_1 \times \sup_{0\leq s\leq t}||\{h^{m-k+1}-h^{m-k}\}(s)||_{\infty}\\
&\leq&
\frac{1}{\tilde{w}(v)} \int_{\mathcal{V}_k} \tilde{w}(v_k) d\sigma_k \int_{\prod_{j=1}^{k-1}\mathcal{V}_j} \mathbf{1}_{\{ t_{k}(t,x,v,v_1,...,v_{k-1})>0\}} d\sigma_{k-1}...d\sigma_1 \times \sup_{0\leq s\leq t}||\{h^{m-k+1}-h^{m-k}\}(s)||_{\infty}\\
&\leq& \tilde{C}_{\beta} \rho^{2\beta-4}\left\{\frac{1}{2}\right\}^{C_2 \rho^{5/4}} \sup_{0\leq s\leq t}||\{h^{m-k+1}-h^{m-k}\}(s)||_{\infty}.
\end{eqnarray*}
Easily we have $ \ \mathbf{\tilde{I}}, \ \mathbf{\tilde{II}} \ \leq \ \mathcal{\delta{H}}_m  , \ \  \ \mathbf{\tilde{III}}, \ \mathbf{\tilde{IV}} \ \leq \ \tilde{C}_{\beta}\rho^{2\beta-4} \mathcal{\delta{H}}_{m-l}$
where
\begin{eqnarray*}
\delta\mathcal{H}_i &\equiv&
tC_{\mathbf{k}}\sup_{0\leq s\leq t}||\{h^i-h^{i-1}\}(s)||_{\infty} + C||h_0||_{\infty}C_{\Gamma}\big(\sup_{0\leq s\leq t}||\{h^i-h^{i-1}\}(s)||_{\infty} + \sup_{0\leq s\leq t}||\{h^{i+1}-h^i\}(s)||_{\infty}
\big)\\
&\leq& \frac{\tau}{4} \left\{ \sup_{0\leq s\leq t}||\{h^i -h^{i-1}\}(s)||_{\infty} +\sup_{0\leq s\leq t}||\{h^{i+1}-h^i\}(s)||_{\infty} \right\},
\end{eqnarray*}
with $\tau=4 \max\{ tC_{\mathbf{k}}, C||h_0||_{\infty}C_{\Gamma}
\}$.
\newline To summarize, we can estimate all terms of representation of $h^{m+1}(t,x,v)-h^m (t,x,v)$ in (\ref{hm+1}) for any $m>k$ to obtain
\begin{eqnarray*}
\sup_{0\leq s\leq t}||\{h^{m+1}-h^m\}(s)||_{\infty}
\leq
\frac{1}{1-2\tau} \bigg\{
\frac{\tau}{2}\tilde{C}_{\beta}\rho^{2\beta-4} \sum_{l=1}^{k}\Big(
\sup_{0\leq s\leq t} ||\{h^{m-l}-h^{m-l-1}\}(s)||_{\infty} + \sup_{0\leq s\leq t} ||\{h^{m-l+1}-h^{m-l}\}(s)||_{\infty}\Big) \nonumber\\
 +\frac{\tau}{2} \sup_{0\leq s\leq t} ||\{h^m -h^{m-1}\}(s)||_{\infty} +
\tilde{C}_{\beta}\rho^{2\beta-4}\left\{\frac{1}{2}\right\}^{C_2 \rho^{5/4}} \sup_{0\leq s\leq t}
||\{h^{m-k+1}-h^{m-k}\}(s)||_{\infty}\bigg\},
\end{eqnarray*}
which is our starting point. Fix a small number $\tilde{\tau}>0$ chosen later. Choose $\rho>0$ sufficiently large so that $2\tilde{C}_{\beta}\rho^{2\beta-4}\left\{\frac{1}{2}\right\}^{C_2 \rho^{5/4}} < \frac{\tilde{\tau}}{4}$ and then choose $\tau>0$ so small that $\frac{\tau/2}{1-2\tau}\tilde{C}_{\beta} \rho^{2\beta-4} < \frac{\tilde{\tau}}{4}$ and $\frac{\tau/2}{1-2\tau}< \frac{\tilde{\tau}}{4}$. Then we have
\begin{equation}
\sup_{0\leq s\leq t}||\{h^{m+1}-h^m\}(s)||_{\infty}
\leq \tilde{\tau} \left\{ \sup_{0\leq s\leq t} ||\{h^m-h^{m-1}\}(s)||_{\infty} + ...+ \sup_{0\leq s\leq t} ||\{h^{m-k+1}-h^{m-k}\}(s)||_{\infty}\right\}.\label{termbyterm}
\end{equation}
Using (\ref{termbyterm}) for $m,j\in\mathbb{N}$ so that $m-(i+1)k>0$ and $j=0,1,...,m-1 \ $it is easy to show
\begin{eqnarray*}
\sup_{0\leq s\leq t} ||\{h^{m-ik+1+j}-h^{m-ik+j}\}(s)||_{\infty} \leq \ \ \ \ \ \ \ \ \ \ \ \ \ \ \ \ \ \ \ \ \ \ \ \ \ \ \ \ \ \ \ \ \ \ \ \ \ \ \ \ \ \ \ \ \ \ \ \ \ \ \ \ \ \ \ \ \ \ \ \ \ \ \ \ \ \ \ \ \ \ \ \ \ \ \ \ \ \ \ \ \ \ \ \ \\
\tilde{\tau}(1+\tilde{\tau})^{j} \left\{ \sup_{0\leq s\leq t}||\{h^{m-ik}-h^{m-ik-1}\}(s)||_{\infty}+...+\sup_{0\leq s\leq t}||\{
h^{m-(i+1)k+1}-h^{m-(i+1)k}
\}(s)||_{\infty}
\right\}.
\end{eqnarray*}
We apply the above inequality term by term in (\ref{termbyterm}) to have
\begin{eqnarray*}
\sup_{0\leq s\leq t}||\{h^{m+1}-h^m\}(s)||_{\infty}\leq\tilde{\tau}\{(1+\tilde{\tau})^k -1\} \{ \sup_{0\leq s\leq t}||\{h^{m-k}-h^{m-k-1}\}(s)||_{\infty}+...+\sup_{0\leq s\leq t}||\{
h^{m-2k+1}-h^{m-2k}
\}(s)||_{\infty}
\}\\
\leq\tilde{\tau}\{(1+\tilde{\tau})^k -1\}^i \{ \sup_{0\leq s\leq t}||\{h^{m-ik}-h^{m-ik-1}\}(s)||_{\infty}+...+\sup_{0\leq s\leq t}||\{
h^{m-(i+1)k+1}-h^{m-(i+1)k}
\}(s)||_{\infty}
\}.
\end{eqnarray*}
Now we estimate
\begin{eqnarray*}
&&\sup_{0\leq s\leq t} ||\{h^{m}-h^n\}(s)||_{\infty}\leq \sum_{l=0}^{m-n-1} \sup_{0\leq s\leq t}||\{h^{m-l}-h^{m-l-1}\}(s)||_{\infty}\\
&\leq& \sum_{l=0}^{m-n-1} \tilde{\tau}\{(1+\tilde{\tau})^k -1\}^i \{ \sup_{0\leq s\leq t}||h^{m-ik-l-1}-h^{m-ik-l-2}||_{\infty} +...+ \sup_{0\leq s\leq t}||h^{m-(i+1)k-l} -h^{m-(i+1)k-l-1}||_{\infty}\}\\
&\leq& \sum_{l=0}^{m-n-1} \tilde{\tau} \{(1+\tilde{\tau})^k-1\}^{\left[\frac{m-l-1}{k}\right]-1}\{ \sup_{0\leq s\leq t}||h^{2k}-h^{2k-1}||_{\infty} + ...+ \sup_{0\leq s\leq t}||h^{1}-h^{0}||_{\infty}
\}
\end{eqnarray*}
\begin{eqnarray*}
&\leq& \tilde{\tau} \{(1+\tilde{\tau})^k -1\}^{\left[\frac{n}{k}\right]-1} \sum_{l=0}^{m-n-1} \{(1+\tilde{\tau})^k -1\}^{\left[\frac{m-l-1}{k}\right]-\left[\frac{n}{k}\right]}\{ \sup_{0\leq s\leq t}||h^{2k}-h^{2k-1}||_{\infty} + ...+ \sup_{0\leq s\leq t}||h^{1}-h^{0}||_{\infty}\}\\
&\leq& \tilde{\tau} \{(1+\tilde{\tau})^k -1\}^{\left[\frac{n}{k}\right]-1} \frac{1}{2-(1+\tilde{\tau})^k} \{ \sup_{0\leq s\leq t}||h^{2k}-h^{2k-1}||_{\infty} + ...+ \sup_{0\leq s\leq t}||h^{1}-h^{0}||_{\infty}\},
\end{eqnarray*}
where we choose $i=\left[\frac{m-l-1}{k}\right]-1$ so that $m-(i+1)k-l-1 \in [0,k)$. If $\tilde{\tau}>0$ is chosen sufficiently small so that ${(1+\tilde{\tau})^k-1}\leq \frac{1}{2}$ then $\{(1+\tilde{\tau})^k -1\}^{\left[\frac{n}{k}\right]-1} \rightarrow 0$ as $n\rightarrow\infty$ which implies that
\begin{equation}
\sup_{0\leq s\leq t} ||\{h^{m}-h^n\}(s)||_{\infty} \rightarrow 0,
\end{equation}
as $m,n \rightarrow \infty$. Thus $h^m$ is Cauchy in $L^{\infty}$.
\end{proof}
\newline \textbf{Step 4} : We claim that $h$ is continuous in $\mathfrak{C}$. Notice that $T$ only depends on $||h_0||_{\infty}$ and $\sup_{0\leq s\leq T}||wg(s)||_{\infty}$ (Theorem 1 of \cite{Guo08}). Using a unform bound of $\sup_{0\leq s < \infty}||h(s)||_{\infty}$, we can obtain the continuity of $h$ for all time by repeating $[0,T],[T,2T],...$.
If the boundary $\partial\Omega$ does not include a line segment (\ref{linesegment}) then every step is valid with $[0,\infty)\times\{\bar{\Omega}\times\mathbb{R}^3\}\backslash\mathfrak{D}$ instead of $\mathfrak{C}$ and $[0,T]\times\{\bar{\Omega}\times\mathbb{R}^3\}\backslash\mathfrak{D}$ instead of $\mathfrak{C}_T$.
\subsection{Propagation of Discontinuity}
\textbf{Proof of 2 of Theorem \ref{propagation}}\\
\textbf{Proof of (\ref{1sidedinequality})} : The proof is exactly same as in-flow case in Section 4.3.
\\
\textbf{Proof of (\ref{decaydiscontinuity})}
The proof is exactly same as the proof of in-flow case in Section 4.3 except \textbf{Step 2}. As we mentioned in Remark of \textbf{Step 2}, we need to show a continuity of a boundary datum on $\gamma_- \cup \gamma_0^{\mathbf{S}}$. In diffuse reflection boundary condition case, we need
\begin{eqnarray*}
0&=&[ \ h|_{[0,\infty)\times\gamma_-}]_{t,y,v} = \lim_{\delta\downarrow 0} \sup_{\begin{scriptsize}\begin{array}{ccc} t^{\prime},t^{\prime\prime}\in B(t;\delta)\\(y^{\prime},v^{\prime}), (y^{\prime\prime},v^{\prime\prime})\in \gamma_- \cap B((y,v);\delta) \backslash (y,v)  \end{array}\end{scriptsize}}
|h(t^{\prime},y^{\prime},v^{\prime})- h(t^{\prime\prime},y^{\prime\prime},v^{\prime\prime})|\\
&=& \lim_{\delta\downarrow 0} \sup_{\begin{scriptsize}\begin{array}{ccc} t^{\prime},t^{\prime\prime}\in B(t;\delta)\\(y^{\prime},v^{\prime}), (y^{\prime\prime},v^{\prime\prime})\in \gamma_- \cap B((y,v);\delta) \backslash (y,v)  \end{array}\end{scriptsize}}
\left|\frac{1}{\tilde{w}(v^{\prime})}\int_{\mathcal{V}(y^{\prime})} h(t^{\prime},y^{\prime},\mathfrak{v}) \tilde{w}(\mathfrak{v}) d\sigma(\mathfrak{v})-
\frac{1}{\tilde{w}(v^{\prime\prime})}\int_{\mathcal{V}(y^{\prime\prime})} h(t^{\prime\prime},y^{\prime\prime},\mathfrak{v}) \tilde{w}(\mathfrak{v}) d\sigma(\mathfrak{v})\right|
\end{eqnarray*}
for $(y,v)\in\gamma_- \cup \gamma_0^{\mathbf{S}}$. This is already proven in section 5.2  Continuity away from $\mathfrak{D}$.
\section{Bounce-Back Boundary Condition}
In this section, we consider the linear Boltzmann equation (\ref{hBol}) with the bounce-back boundary condition (\ref{bounceback}).
\subsection{Formation of Discontinuity}
We prove part 3 of Theorem \ref{formationofsingularity}. Without loss of generality we may assume $x_0=(0,0,0)$ and $v_0=(1,0,0)$ and $(x_0,v_0)\in\gamma_0^{\mathbf{S}}$. Locally the boundary is a graph, i.e. $\Omega\cap B(\mathbf{0};\delta) = \{(x_1,x_2,x_3)\in B(\mathbf{0};\delta) : x_3 > \Phi(x_1,x_2)\}$. The condition $(x_0,v_0)\in\gamma_0^{\mathbf{S}}$ implies $t_{\mathbf{b}}(x_0,v_0)\neq 0$ and $t_{\mathbf{b}}(x_0,-v_0)\neq 0$ which means $\Phi(\xi,0)<0$ for $\xi\in(-\delta,\delta)\backslash\{0\}$. (See Figure 3)

Assume that $||h_0||_{\infty}<\delta$ is sufficiently small so that the global solution $h$ of (\ref{hBol}) with bounce-back boundary (\ref{bounceback}) has a uniform bound (\ref{boundedness}), from Theorem 2 of \cite{Guo08}.

Recall the constants $C_{\mathbf{k}}$ and $C_{\Gamma}$ from (\ref{wk}) and (\ref{Gamma1}).
Choose $t_0 \in (0,\min\{\frac{\delta}{2},\frac{t_{\mathbf{b}}(x_0,-v_0)}{2},\frac{t_{\mathbf{b}}(x_0,v_0)}{2}\})$ sufficiently small so that
\begin{equation}
\frac{1}{2}\leq\left( e^{-\nu(1)t_0} -t_0 C_{\mathbf{k}}C^{\prime} -(1-e^{-\nu(1)t_0})C_{\Gamma}(C^{\prime})^2
\right).\label{1/2B}
\end{equation}
Assume a condition for the initial datum $h_0$ : there is sufficiently small $\delta^{\prime}=\delta^{\prime}(\Omega,t_0)>0$ such that\\ $B((-t_0 ,0,0),\delta^{\prime}), \ B((t_0 ,0,0),\delta^{\prime}) \subset \Omega$ and
\begin{eqnarray*}
&&h_0(x,v) \equiv ||h_0||_{\infty}>0 \ \ \ \text{for} \ \ (x,v)\in B((-t_0,0,0);\delta^{\prime}) \times B((1,0,0);\delta^{\prime}),\\
&&h_0(x,v) \equiv -||h_0||_{\infty}>0 \ \ \ \text{for} \ \ (x,v)\in B((t_0,0,0);\delta^{\prime}) \times B((-1,0,0);\delta^{\prime}).
\label{supp}
\end{eqnarray*}
We will use a contradiction argument : Assume the Boltzmann solution $h$ is continuous at $(t_0,x_0,v_0)$, i.e. (\ref{contradiction}) is valid.
Choose sequences of points $(x_n^{\prime},v_n^{\prime})=((0,0,\frac{1}{n}), (1,0,0))$ and $(x_n ,v_n)=((\frac{1}{n},0,\Phi(\frac{1}{n},0)),\frac{1}{\sqrt{1+\frac{1}{n^2}}}(1,0,\frac{1}{n}))$. Because of our choice, for sufficiently large $n\in\mathbb{N}$, we have
\begin{eqnarray*}
(x_n^{\prime}-t_0 v_n^{\prime},v_n^{\prime} )&=&((-t_0 ,0, \frac{1}{n}),(1,0,0)) \in B((-t_0,0,0);\delta^{\prime}) \times B((1,0,0);\delta^{\prime}),\\
(x_n - t_0 (-v_n),-v_n) &=&((\frac{1}{n}+\frac{t_0}{\sqrt{1+{1}/{n^2}}},0,\Phi(\frac{1}{n},0)+\frac{t_0}{n\sqrt{1+{1}/{n^2}}}),\frac{1}{\sqrt{1+{1}/{n^2}}}(-1,0,-\frac{1}{n}))\\
&\in& B((t_0,0,0);\delta^{\prime}) \times B((-1,0,0);\delta^{\prime}).
\end{eqnarray*}
Hence the Boltzmann solution at $(t_0,x_n^{\prime},v_n^{\prime})$ and $(t_0,x_n,v_n)$ is
\begin{eqnarray*}
&&h(t_0,x_n^{\prime},v_n^{\prime}) = ||h_0||_{\infty} e^{-\nu(v_n^{\prime})t_0} + \int_{0}^{t_0} e^{-\nu(-v_n^{\prime})(t_0-\tau)}\{ K_w h + w\Gamma\left( \frac{h}{w},\frac{h}{w}
\right)
\}(\tau, x_n^{\prime} -(-v_n^{\prime})(t_0 -\tau),-v_n^{\prime})
d\tau,\\
&&h(t_0,x_n,v_n)=h(t_0,x_n,-v_n)\\
&& \ \ \ \ \ \ \ \ \ \ \ \ \ \ \ \  =-||h_0||_{\infty} e^{-\nu(-v_n)t_0} + \int_0^{t_0} e^{-\nu(-v_n)(t_0-\tau)}\{ K_w h + w\Gamma\left( \frac{h}{w},\frac{h}{w}
\right)
\}(\tau, x_n -(-v_n)(t_0 -\tau),-v_n)
d\tau.
\end{eqnarray*}
Using a pointwise boundedness (\ref{boundedness}) of $h$ with (\ref{wk}) and (\ref{Gamma1}), we have
\begin{eqnarray*}
h(t_0,x_n^{\prime},v_n^{\prime}) &\geq&
||h_0||_{\infty}e^{-\nu(1)t_0} -t_0 C_{\mathbf{k}}C^{\prime} ||h_0||_{\infty}
-(1-e^{-\nu(1)t_0})C_{\Gamma}(C^{\prime})^2 ||h_0||_{\infty}^2
, \\
h(t_0,x_n,v_n) &\leq& -||h_0||_{\infty}e^{-\nu(1)t_0} + t_0 C_{\mathbf{k}}C^{\prime}||h_0||_{\infty} + (1-e^{-\nu(1)t_0})C_{\Gamma}(C^{\prime})^2 ||h_0||_{\infty}^2
\end{eqnarray*}
Therefore using (\ref{1/2B}),
\begin{eqnarray*}
h(t_0 ,x_n^{\prime},v_n^{\prime})-h(t_0 ,x_n ,v_n)\geq 2||h_0||_{\infty} \left( e^{-\nu(1)t_0} -t_0 C_{\mathbf{k}}C^{\prime} -(1-e^{-\nu(1)t_0})C_{\Gamma}(C^{\prime})^2
\right) \geq ||h_0||_{\infty}\neq 0,
\end{eqnarray*}
which is contradiction to (\ref{contradiction}).
\subsection{Continuity away from $\mathfrak{D}_{bb}$}
We recall some basic facts to study the bounce-back boundary condition from \cite{Guo08}.
\begin{definition}\cite{Guo08}
\label{bouncebackcycles}(\textbf{Bounce-Back Cycles}) Let $(t,x,v)\notin
\gamma _{0}\cup \gamma _{-}.$ Let $(t_{0},x_{0},v_{0})=(t,x,v)$ and
inductively define for $k\geq 1:$%
\begin{equation*}
(t_{k+1},x_{k+1},v_{k+1})=(t_{k}-t_{\mathbf{b}}(x_{k},v_{k}),x_{\mathbf{b}%
}(x_{k},v_{k}),-v_{k}).
\end{equation*}%
We define the back-time cycles as:
\begin{equation}
X_{\mathbf{cl}}(s;t,x,v)=\sum_{k}\mathbf{1}_{[t_{k+1},t_{k})}(s)%
\{x_{k}+(s-t_{k})v_{k}\},\text{ \ }V_{\mathbf{cl}}(s;t,x,v)=\sum_{k}\mathbf{1%
}_{[t_{k+1},t_{k})}(s)v_{k}.\label{bouncebackcycle}
\end{equation}
\end{definition}
Clearly, we have $v_{k+1}\equiv (-1)^{k+1}v,$ for $k\geq 1,$
\begin{equation}
x_{k}=\frac{1-(-1)^{k}}{2}x_{1}+\frac{1+(-1)^{k}}{2}x_{2},
\label{bouncebackx}
\end{equation}%
where $x_1 = x-t_{\mathbf{b}}(x,v) v$ and $x_2 = x-[2t_{\mathbf{b}}(x,v)+t_{\mathbf{b}}(x,-v)](-v)$
and let $d=t_{1}-t_{2},$ then $t_{k}-t_{k+1}=d\geq t_{\mathbf{b}}(t,x,v)>0$ for $k\geq 1,$ and
\begin{eqnarray}
t_1(t,x,v) &=& t-t_{\mathbf{b}}(x,v) \ , \nonumber\\
t_2(t,x,v) &=& t_1 -t_{\mathbf{b}}(x_1,v_1) = t_1 -(t_{\mathbf{b}}(x,v)+ t_{\mathbf{b}}(x_1,v_1)) = t_1 -(2t_{\mathbf{b}}(x,v)+t_{\mathbf{b}}(x,-v)) \ , \nonumber\\
&\vdots&\nonumber\\
t_{k+1}(t,x,v) &=& t_1 -k (2t_{\mathbf{b}}(x,v)+t_{\mathbf{b}}(x,-v)).\label{tk1}
\end{eqnarray}
\begin{lemma}\cite{Guo08}
\label{bouncebackformula}Let $h_{0}\in L^{\infty }(\Omega \times \mathbf{R}%
^{3})$ and $\phi(t,x,v)$ with $\sup_{[0,T]\times\Omega}|\phi(\cdot,\cdot,v)|< \infty$. There exists a unique solution $G(t)h_{0}$ of
\begin{equation*}
\{\partial _{t}+v\cdot \nabla _{x} +\phi\}\{G(t)h_{0}\}=0,\text{ \ \ \ \ }%
\{G(0)h_{0}\}=h_{0},  \label{gh0}
\end{equation*}%
with the bounce-back reflection $\{G(t)h_{0}\}(t,x,v)=\{G(t)h_{0}\}(t,x,-v)$
for $x\in \partial \Omega .$ For almost any $(x,v)\in \bar{\Omega}\times
\mathbf{R}^{3}\setminus \gamma _{0},$
\begin{equation}
\{G(t)h_{0}\}(t,x,v)=\sum_{k}\mathbf{1}_{[t_{k+1},t_{k})}(0) h_{0}\left( X_{\mathbf{cl}}(0),V_{\mathbf{cl}}(0)\right) e^{-\int_0^t \phi(\tau,X_{\mathbf{cl}}(\tau),V_{\mathbf{cl}}(\tau))d\tau},
\label{bouncebackformular}
\end{equation}%
where $X_{\mathbf{cl}}(\tau)= X_{\mathbf{cl}}(\tau;t,x,v)$ and $V_{\mathbf{cl}}(\tau)=V_{\mathbf{cl}}(\tau;t,x,v)$ in (\ref{bouncebackcycle}).
\end{lemma}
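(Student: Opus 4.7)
The plan is to derive the formula \eqref{bouncebackformular} by the method of characteristics applied segment by segment along bounce-back cycles, then use the uniqueness of the ODE along characteristics to establish uniqueness of the PDE solution.

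First I would fix $(t,x,v)\in \bar\Omega \times \mathbb{R}^3 \setminus \gamma_0$ with $v\neq 0$ (the set where the bounce-back cycle is well defined for all $k\geq 0$) and show that on each open interval $s\in (t_{k+1},t_k)$ the pair $(X_{\mathbf{cl}}(s),V_{\mathbf{cl}}(s))=(x_k+(s-t_k)v_k,v_k)$ stays in the interior $\Omega\times\mathbb{R}^3$, because the cycle touches $\partial\Omega$ only at $s=t_k,t_{k+1}$. On such an interval any classical solution $u(s,y,\mathfrak{v})$ of $\{\partial_s + \mathfrak{v}\cdot\nabla_y +\phi\}u=0$ satisfies
\begin{equation*}
\frac{d}{ds}\Big[u(s,X_{\mathbf{cl}}(s),V_{\mathbf{cl}}(s))\,e^{\int_0^s \phi(\tau,X_{\mathbf{cl}}(\tau),V_{\mathbf{cl}}(\tau))d\tau}\Big]=0 .
\end{equation*}
Integrating from $t_{k+1}$ to $t$ (or more precisely from $0$ to $t$ through all segments) and imposing the bounce-back gluing $u(t_k,x_k,v_k)=u(t_k,x_k,-v_{k-1})$ at each interior node $t_k>0$ shows that $u$ must equal the right-hand side of \eqref{bouncebackformular}. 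This gives uniqueness and also identifies the only candidate for the solution.

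Next I would verify that the candidate defined by \eqref{bouncebackformular} is indeed a (weak/mild) solution. By construction it is constant along each cycle segment up to the integrating factor $e^{-\int \phi}$, so away from the boundary bounces $\{t\in(t_{k+1},t_k)\}$ it satisfies $\partial_t+v\cdot\nabla_x+\phi=0$ in the sense of distributions. At each bounce the bounce-back identity $\{G(t)h_0\}(t,x,v)=\{G(t)h_0\}(t,x,-v)$ on $\partial\Omega$ is automatic because passing through the bounce swaps $(x_k,v_k)\leftrightarrow (x_k,-v_{k-1})=(x_k,v_k)$ by Definition \ref{bouncebackcycles}. The bound $\|G(t)h_0\|_\infty \leq \|h_0\|_\infty e^{t\sup|\phi|}$ follows from the explicit formula and gives $G(t)h_0\in L^\infty$.

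The one delicate point, and the place where the almost-everywhere qualifier is needed, is the measurability and well-definedness of the cycle map $(x,v)\mapsto (t_k,x_k,v_k)_{k\geq 0}$. On $\bar\Omega\times\mathbb{R}^3\setminus\gamma_0$ the first exit time $t_{\mathbf{b}}(x,v)$ is lower semicontinuous by Lemma \ref{huang} and is smooth wherever $v\cdot n(x_{\mathbf{b}})<0$; the exceptional set $\gamma_0$ has measure zero (the relevant measure-zero statement uses Lemma \ref{hongjie}), and the iterated grazing sets $\{v_k\cdot n(x_{k+1})=0\}$ are also measure zero by the same argument applied along the trajectory. Outside this null set the bounce-back cycle is uniquely determined and the function in \eqref{bouncebackformular} is a pointwise Borel representative of the unique $L^\infty$ solution. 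The main obstacle in the argument is precisely this measure-theoretic control over the union of grazing sets along infinitely many bounces; one handles it by noting that for fixed $t$ only finitely many cycles contribute (since $t_k-t_{k+1}\geq t_{\mathbf{b}}(x,v)>0$) and then using Lemma \ref{hongjie} to discard the grazing iterates.
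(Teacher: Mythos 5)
This lemma is imported verbatim from \cite{Guo08}; the paper offers no proof of its own, so the only comparison is with the cited source, and your proposal is essentially that standard argument (explicit construction along the bounce-back cycles, integrating factor $e^{\int\phi}$ on each segment, gluing at each bounce through the boundary identity since $v_k=-v_{k-1}$, then reading off uniqueness and the $L^\infty$ bound from the explicit formula); it is correct. Two small remarks: the ``delicate point'' you flag is simpler than you make it, because by (\ref{bouncebackx}) the cycle only ever visits the two boundary points $x_1=x_{\mathbf{b}}(x,v)$ and $x_2=x_{\mathbf{b}}(x,-v)$ with $v_k=(-1)^k v$, so the union of the grazing sets over all iterates is just $\mathfrak{G}\cup\{(x,v):(x,-v)\in\mathfrak{G}\}$, null by Lemma \ref{hongjie} and Fubini, and the period $d=2t_{\mathbf{b}}(x,v)+t_{\mathbf{b}}(x,-v)>0$ guarantees finitely many bounces on $[0,t]$, as you note. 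Also, since the solution is only $L^\infty$, the uniqueness step ``restrict $u$ to a characteristic and differentiate'' should be stated for almost every characteristic (or run through an approximation), which is how the cited reference handles it; with that phrasing your argument goes through.
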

Next we prove a generalized version of Lemma 16 in \cite{Guo08}.
\begin{lemma}[{Continuity away from $\mathfrak{D}_{bb}$ : Transport Equation}] \label{bbtransportequation}
Let $\Omega$ be an open subset of $\mathbb{R}^3$ with a smooth boundary $\partial\Omega$ and an initial datum $h_0(x,v)$ be continuous in $\Omega\times\mathbb{R}^3 \cup \{\gamma_- \cup \gamma_+ \cup \gamma_0^{{I}}\}$. Also assume $q(t,x,v)$ and $\phi(t,x,v)$ be continuous in the interior of $[0,T]\times\Omega\times\mathbb{R}^3$ and $\sup_{[0,T]\times\Omega\times\mathbb{R}^3} |q(t,x,v)|< \infty$ and $\sup_{[0,T]\times\Omega}|\phi(\cdot,\cdot,v)|< \infty$ for all $v\in\mathbb{R}^3$. Let $h(t,x,v)$ be the solution of
\begin{equation*}
\{\partial_t + v\cdot\nabla_x + \phi \}h = q \ , \ \ \ h(0,x,v)=h_0 \ , \ \ \ h|_{\gamma_-}(t,x,v) =h(t,x,-v).
\end{equation*}
Assume the compatibility condition on $\gamma_- \cup \gamma_0^{I-}$
\begin{equation*}
h_0(x,v) = h_0(x,-v).
\end{equation*}
Then the Boltzmann solution $h(t,x,v)$ is continuous on $\mathfrak{C}_{bb}$. Further, if the boundary $\partial\Omega$ does not include a line segment (\ref{linesegment}) then $h(t,x,v)$ is continuous on a complementary set of the discontinuity set, i.e. $[0,T]\times \{\bar{\Omega}\times\mathbb{R}^3\}\backslash \mathfrak{D}_{bb}$.
\end{lemma}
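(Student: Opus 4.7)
The plan is to combine the bounce-back representation of Lemma \ref{bouncebackformula} with Duhamel's formula so that the solution is expressed as a finite sum over completed bounce segments of a cycle $(X_{\mathbf{cl}}, V_{\mathbf{cl}})$. On each such segment the cycle is a straight line with the velocity flipped once at every bounce, so the argument inside a single segment is exactly the one already carried out for the in-flow case in Lemma \ref{transportequation}. What is genuinely new is passing the continuity \emph{across} bounces: the bounce times $t_k$ and bounce points $x_k$ must depend continuously on the base point $(t,x,v)$, and the initial datum $h_0$ must be consistent with the bounce-back reflection at every bounce that happens to coincide with $s=0$.

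Next I would fix $(t,x,v)\in\mathfrak{C}_{bb}$. From (\ref{bouncebackx}) and (\ref{tk1}) the cycle alternates between $x_{\mathbf{b}}(x,v)$ and $x_{\mathbf{b}}(x,-v)$ with velocities $(-1)^{k}v$, so by the defining clauses of $\mathfrak{C}_{bb}$ every bounce point actually reached before time $0$ lies in $\gamma_-\cup\gamma_0^{I-}$. Lemma \ref{tbcon} then yields that $t_{\mathbf{b}}(\cdot,\cdot)$ is continuous at each $(x_k,v_{k-1})$, which by induction on $k$ gives
\begin{equation*}
t_k(\bar{t},\bar{x},\bar{v})\to t_k(t,x,v),\qquad x_k(\bar{x},\bar{v})\to x_k(x,v),\qquad (\bar{t},\bar{x},\bar{v})\to(t,x,v),
\end{equation*}
for every $k$ with $t_k(t,x,v)>0$. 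Away from the transitional values $t_k(t,x,v)=0$, the cycle $(X_{\mathbf{cl}},V_{\mathbf{cl}})(\cdot;\bar{t},\bar{x},\bar{v})$ therefore converges uniformly on each open segment to $(X_{\mathbf{cl}},V_{\mathbf{cl}})(\cdot;t,x,v)$, and the continuity of $h_0$, $q$ and $\phi$ combined with dominated convergence in the Duhamel integral yields continuity of $h$ at $(t,x,v)$, exactly as in the proof of Lemma \ref{transportequation}.

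The hard part will be the degenerate case $t_k(t,x,v)=0$ for some $k\ge 1$, in which a nearby point $(\bar{t},\bar{x},\bar{v})$ may have either one more or one fewer completed bounce than $(t,x,v)$; the two possibilities give different terms in the cycle-sum representation and could a priori produce a jump. I would treat this in the same spirit as the $t=t_{\mathbf{b}}(x,v)$ case of Lemma \ref{transportequation}: split the integration into an $\varepsilon$-window around the ambiguous bounce time, which contributes at most $2\varepsilon\sup_{[0,T]}\|q\|_\infty\, e^{T\sup\|\phi\|_\infty}$ to the difference, and its complement, which is handled by uniform continuity of the data together with the remaining continuous $t_j$'s. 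The key estimate that makes the two representations match up in the limit is precisely the compatibility condition $h_0(y,u)=h_0(y,-u)$ for $(y,u)\in\gamma_-\cup\gamma_0^{I-}$: the value $h_0(x_k,v_{k-1})$ seen from one side of the transition equals $h_0(x_k,v_k)=h_0(x_k,-v_{k-1})$ seen from the other. Once continuity on $\mathfrak{C}_{bb}$ is established, the final sentence of the lemma is immediate, since the no-line-segment hypothesis (Definition \ref{linesegment}) forces $\mathfrak{C}_{bb}=[0,T]\times\bar{\Omega}\times\mathbb{R}^3\setminus\mathfrak{D}_{bb}$.
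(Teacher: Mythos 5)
Your proposal takes essentially the same route as the paper's proof: the cycle-sum representation from Lemma \ref{bouncebackformula} with Duhamel, continuity of $t_k,x_k,v_k$ through the explicit formulas (\ref{bouncebackx})--(\ref{tk1}) under the defining clauses of $\mathfrak{C}_{bb}$, and the compatibility condition $h_0(x,v)=h_0(x,-v)$ invoked exactly at the transitional times $t_k=0$, in the spirit of the in-flow Lemma \ref{transportequation}. The only small correction is that for bounce points lying in $\gamma_-$ the continuity of $t_{\mathbf{b}}$ comes from Lemma \ref{huang} (Lemma \ref{tbcon} covers the $\gamma_0^{I-}$ case); otherwise your explicit handling of the bounce-count transition is a careful spelling-out of what the paper's case analysis treats tersely.
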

\begin{proof}
The proof is similar to the proof of Lemma 16 of \cite{Guo08}. Take any point $(t,x,v)\in[0,T]\times\bar{\Omega}\times\mathbb{R}^3$ and recall its back-time cycle and (\ref{bouncebackformular}). Assume $t_{m+1}\leq 0 < t_m$. Using (\ref{bouncebackformular}), $h(t,x,v)$ takes the form
\begin{eqnarray}
&&h_0(x_m -t_m v_m ,v_m) e^{ -\sum_{k=0}^{m-1} \int_{t_{k+1}}^{t_k} \phi(\tau,x_k -(t_k -\tau)v_k,v_k)d\tau -\int_0^{t_m} \phi(\tau,x_m -(t_m -\tau)v_m ,v_m)d\tau}\nonumber\\
&& \ \ \ \  + \sum_{k=0}^{m-1}\int_{t_{k+1}}^{t_k} q(s,x_k-(t_k-s)v_k,v_k) e^{ -\sum_{i=0}^{k-1} \int_{t_{i+1}}^{t_i} \phi(\tau,x_i -(t_i -\tau)v_i,v_i)d\tau -\int_s^{t_k} \phi(\tau,x_k -(t_k -\tau)v_k ,v_k)d\tau}\nonumber\\
&& \ \ \ \ + \int_{0}^{t_m} q(s,x_m-(t_m-s)v_m,v_m) e^{ -\sum_{i=0}^{m-1} \int_{t_{i+1}}^{t_i} \phi(\tau,x_i -(t_i -\tau)v_i,v_i)d\tau -\int_s^{t_m} \phi(\tau,x_m -(t_m -\tau)v_m ,v_m)d\tau}.\label{hmmm}
\end{eqnarray}
Take any point $(t,x,v)\in \mathfrak{C}_{bb}$. By the definition of $\mathfrak{C}_{bb}$ we assume that $(x,v)\in\Omega\times\mathbb{R}^3$ or $(x,v)\in \gamma_- \cup \gamma_0^{I-}$ and we can separate three cases : $t-t_{\mathbf{b}}(x,v)<0 \ , \ (x_{\mathbf{b}}(x,v),v)\in\gamma_- \cup \gamma_0^{I-}$ with $t < 2t_{\mathbf{b}}(x,v)+t_{\mathbf{b}}(x,-v),$ and $(x_{\mathbf{b}}(x,-v),-v)\in\gamma_- \cup\gamma_0^{I-}$ with $(x_{\mathbf{b}}(x,v),v)\in\gamma_-\cup\gamma_0^{I-}$.\\
\newline \underline{Case of $t<t_{\mathbf{b}}(x,v)$} \ Simply we have $h(t,x,v)=h_0(x-tv,v)e^{-\int_0^t \phi(\tau,x-(t-\tau)v,v)d\tau} + \int_0^t q(s,x-(t-s)v,v)e^{\int_s^t\phi(\tau,x-(t-\tau)v,v)d\tau}ds$ and use the continuity of $q(t,x,v)$ and $\phi(t,x,v)$ to conclude the continuity of $h(t,x,v)$.\\
\newline \underline{Case of $(x_{\mathbf{b}}(x,v),v)\in\gamma_- \cup \gamma_0^{I-}$ with $t < 2t_{\mathbf{b}}(x,v)+t_{\mathbf{b}}(x,-v)$} \ A representation of $h(t,x,v)$ takes the form
\begin{eqnarray*}
&&h_0(x_1 -t_1 v_1, v_1) e^{-\int_{t_1}^{t} \phi(\tau,x-(t-\tau)v,v)d\tau - \int_0^{t_1}\phi(\tau,x_1 -(t_1-\tau)v_1,v_1)d\tau} + \int_{t_1}^t q(s,x-(t-s)v,v) e^{-\int_s^t \phi(\tau,x-(t-\tau)v,v)d\tau}ds\\
&& \ \ \ \ + \int_0^{t_1} q(s,x_1 -(t_1-s)v_1,v_1) e^{-\int_{t_1}^t \phi(\tau,x-(t-\tau)v,v) d\tau -\int_s^{t_1}\phi(\tau,x_1-(t_1-\tau)v_1,v_1)d\tau}ds.
\end{eqnarray*}
Thanks to Lemma \ref{huang} and Lemma \ref{tbcon}, the condition $(x_{\mathbf{b}}(x,v),v)\in\gamma_- \cup \gamma_0^{I-}$ implies continuity of $x_1(x,v) = x-x_{\mathbf{b}}(x,v) \ , \ t_1(t,x,v) = t-t_{\mathbf{b}}(x,v)$. Therefore we can show the continuity of $h(t,x,v)$.\\
\newline \underline{Case of $(x_{\mathbf{b}}(x,-v),-v)\in\gamma_- \cup\gamma_0^{I-}$ with $(x_{\mathbf{b}}(x,v),v)\in\gamma_-\cup\gamma_0^{I-}$} \ We have (\ref{hmmm}) for $h(t,x,v)$. Thanks to (\ref{bouncebackx}) and (\ref{tk1}) and Lemma \ref{huang} and Lemma \ref{tbcon}, the conditions $(x_{\mathbf{b}}(x,-v),-v)\in\gamma_- \cup\gamma_0^{I-}$ and $(x_{\mathbf{b}}(x,v),v)\in\gamma_-\cup\gamma_0^{I-}$ imply continuity of $x_k(x,v), v_k(x,v), t_k(t,x,v)$. Therefore we can show the continuity of $h(t,x,v)$.
\end{proof}
\\
\newline \textbf{Proof of Part 1 of Theorem 3}\\
Following the in-flow and diffuse cases, we use the iteration scheme (\ref{hm}) which is equivalent to (\ref{hmequation}) with bounce-back boundary condition $h^{m+1}|_{\gamma_-}(t,x,v) = h^{m+1}(t,x,-v)$ and an initial condition $h^{m+1}|_{t=0}=h_0$.
\newline \textbf{Step 1} : We claim that $h^i$ is a continuous function in $\mathfrak{C}_{bb,T}$ for all $i\in\mathbb{N}$ and for any $T>0$ where $\mathfrak{C}_{bb,T}= \mathfrak{C}_{bb} \cap \{[0,T]\times\bar{\Omega}\times\mathbb{R}^3\}$. Choose $h^0 \equiv 0$ and use mathematical induction. Assume $h^i$ is continuous $\mathfrak{C}_{bb,T}$ for $i=0,1,2,...,m$. Apply Lemma \ref{bbtransportequation} to concluse that $h^{m+1}$ is continuous in $\mathfrak{C}_{bb,T}$.
\newline \textbf{Step 2} : We claim that there exist $C>0$ and $\delta>0$ such that if $C||h_0||_{\infty}<\delta$ then there exists $T=T(C,\delta)>0$ so that $\sup_{0\leq s\leq T}||h^m(s)||_{\infty}\leq C||h_0||_{\infty}$ and $\{h^m\}_{m=0}^{\infty}$ is Cauchy in $L^{\infty}([0,T]\times\bar{\Omega}\times\mathbb{R}^3)$. \\
Fisrt we will show the boundedness using mathematical induction. Assume $\sup_{0\leq s\leq T}||h^{m}(s)||_{\infty}\leq C||h_0||_{\infty}$ where $T>0$ will be chosen later. Applying Lemma \ref{bouncebackformula}, $\phi$ and $q$ correspond with $\nu$ and the right hand side of (\ref{hm}) respectively to have a representation of $h^{m+1}(t,x,v)$
\begin{equation*}
h_0(X_{\mathbf{cl}}(0),V_{\mathbf{cl}}(0)) e^{-\nu(v)t} + \int_0^t e^{-\nu(v)(t-s)}\{K_w h^m + w\Gamma_+\left(\frac{h^m}{w},\frac{h^m}{w}\right)-w\Gamma_-\left(\frac{h^m}{w},\frac{h^{m+1}}{w}\right)\}(s,X_{\mathbf{cl}}(s),V_{\mathbf{cl}}(s))
ds,
\end{equation*}
where $\left[X_{\mathbf{cl}}(s),V_{\mathbf{cl}}(s)\right]=\left[X_{\mathbf{cl}}(s;t,x,v),V_{\mathbf{cl}}(s;t,x,v)\right]$ is in (\ref{bouncebackcycle}). The above term is bounded by
\begin{equation*}
||h_0||_{\infty} + t C_{\mathbf{k}}\sup_{0\leq s\leq t}||h^m(s)||_{\infty} + C_{\Gamma} \sup_{0\leq s\leq t}||h^m(s)||_{\infty} \sup_{0\leq s\leq t}(||h^m(s)||_{\infty} + ||h^{m+1}(s)||_{\infty}),
\end{equation*}
where the constants are coming from basic estimates, (\ref{wk}) and (\ref{Gamma1}). Choose $C>4$ and $\delta< \frac{1}{2C_{\Gamma}}$ and $T=\frac{C-3}{2C_{\mathbf{k}}C}$. Then we have $\sup_{0\leq s\leq T}||h^{m+1}(s)||_{\infty}\leq C||h_0||_{\infty}$.\\
Next we will show $\{h^m\}_{m=0}^{\infty}$ is Cauchy in $L^{\infty}([0,T]\times\bar{\Omega}\times\mathbb{R}^3)$. Recall $\tilde{q}^m(t,x,v)$ from (\ref{tildeqm}). The equation of $h^{m+1}-h^m$ is (\ref{hm1}) with a zero initial condition $(h^{m+1}-h^m)|_{t=0}=0$ and a bounce-back boundary condition $(h^{m+1}-h^m)|_{\gamma_-}(t,x,v) = (h^{m+1}-h^m)(t,x,-v)$. Applying Lemma \ref{bouncebackformula} to (\ref{hm1}) we have
\begin{equation*}
(h^{m+1}-h^m)(t,x,v) = \int_0^t e^{-\nu(v)(t-s)} \tilde{q}^m (s,X_{\mathbf{cl}}(s),V_{\mathbf{cl}}(s)) ds,
\end{equation*}
where $\left[X_{\mathbf{cl}}(s),V_{\mathbf{cl}}(s)\right]=\left[X_{\mathbf{cl}}(s;t,x,v),V_{\mathbf{cl}}(s;t,x,v)\right]$ is in (\ref{bouncebackcycle}). Then we have exactly same estimates of in-flow case to conclude $\{h^m\}$ is Cauchy.
\newline \textbf{Step 3} : Same argument as in-flow case but substitute $\mathfrak{C}_{bb,T} \ , \ \mathfrak{C}_{bb} \ , \ \mathfrak{D}_{bb,T} \ , \ \mathfrak{D}_{bb}$ for $\mathfrak{C}_{T} \ , \ \mathfrak{C} \ , \ \mathfrak{D}_{T} \ , \ \mathfrak{D}$ respectively.
\subsection{Propagation of Discontinuity}
\textbf{Proof of 2 of Theorem \ref{propagation}}\\
\textbf{Proof of (\ref{1sidedinequality})} : The proof is exactly same as in-flow case in Section 4.3.
\\
\textbf{Proof of (\ref{decaydiscontinuity}) }
Recall that we have $[h(t_0)]_{x_0,v_0}\neq 0$ for $(x_0,v_0)\in\gamma_0^{\mathbf{S}}$ and $t_0 \in (0,\min\{t_{\mathbf{b}}(x_0,-v_0),t_{\mathbf{b}}(x_0,v_0)\})$. The proof is exactly same as the proof of in-flow case in Section 4.3 except \textbf{Step 2}. We need to show a continuity of a boundary datum on $\gamma_- \cup \gamma_0^{\mathbf{S}}$. In bounce-back reflection boundary condition case, we need to show
\begin{eqnarray*}
&&0=[ \ h|_{[0,\infty)\times\gamma_-}]_{t_0,x_0,v_0} = \lim_{\delta\downarrow 0} \sup_{\begin{scriptsize}\begin{array}{ccc} t^{\prime},t^{\prime\prime}\in B(t;\delta)\\(y^{\prime},v^{\prime}), (y^{\prime\prime},v^{\prime\prime})\in \gamma_- \cap B((x_0,v_0);\delta) \backslash (x_0,v_0)  \end{array}\end{scriptsize}}
|h(t^{\prime},y^{\prime},v^{\prime})- h(t^{\prime\prime},y^{\prime\prime},v^{\prime\prime})|.
\end{eqnarray*}
Because $(y^{\prime},v^{\prime})$ is in the incoming boundary $\gamma_-$, using the bounce-back boundary condition, we have $h(t^{\prime},y^{\prime},v^{\prime}) = h(t^{\prime},y^{\prime},-v^{\prime})$. Further due to the condition $0< t_0 < t_{\mathbf{b}}(x_0,-v_0)$ we have $0<t^{\prime}< t_{\mathbf{b}}(y^{\prime},-v^{\prime})$ and
\begin{eqnarray*}
h(t^{\prime},y^{\prime},v^{\prime}) &=& h(t^{\prime},y^{\prime},-v^{\prime}) = h_0 (y^{\prime}+t^{\prime}v^{\prime},v^{\prime})e^{-\nu(v^{\prime})t^{\prime}-\int_0^{t^{\prime}}\nu(\sqrt{\mu}\frac{h}{w})(\tau,y^{\prime}+(t^{\prime}-\tau)v^{\prime},v^{\prime})d\tau}\\
&&+\int_0^{t^{\prime}} \{K_w h + w\Gamma_+ (\frac{h}{w},\frac{h}{w})\}(s,y^{\prime}+(t^{\prime}-s)v^{\prime},v^{\prime}) e^{-\nu(v^{\prime})(t^{\prime}-s)-\int_0^{t^{\prime}}\nu(\sqrt{\mu}\frac{h}{w})(\tau,y^{\prime}+(t^{\prime}-\tau)v^{\prime},v^{\prime})d\tau}
ds,
\end{eqnarray*}
and similar representation for $h(t^{\prime},y^{\prime},v^{\prime})$. Using the continuity of $\nu(\sqrt{\mu}\frac{h}{w}), K_w h$ and $w\Gamma_+ (\frac{h}{w},\frac{h}{w})$ we have
\begin{eqnarray*}
0=[ \ h|_{[0,\infty)\times \gamma_-}]_{t_0,x_0,v_0} &=& \lim_{\delta\downarrow 0} \sup_{\begin{scriptsize}\begin{array}{ccc} t^{\prime},t^{\prime\prime}\in B(t;\delta)\\(y^{\prime},v^{\prime}), (y^{\prime\prime},v^{\prime\prime})\in \gamma_- \cap B((x_0,v_0);\delta) \backslash (x_0,v_0)  \end{array}\end{scriptsize}}
|h_0(y^{\prime}+t^{\prime}v^{\prime},v^{\prime})-h_0(y^{\prime\prime}+t^{\prime\prime}v^{\prime\prime},v^{\prime\prime})|\\
&&\times e^{-\nu(v_0)t_0 -\int_0^{t_0}\nu(\sqrt{\mu}\frac{h}{w})(\tau,x_0+(t_0-\tau)v_0,v_0)d\tau},
\end{eqnarray*}
where we used the continuity of the initial datum $h_0$ in the last equality.
\\
\\
\newline \textbf{Acknowledgements. } The author is indebted to his advisor Yan Guo for helpful discussions. The research is supported in part by FRG No. 524230.

\end{document}